\newtheorem{thm}{Theorem}[section]
\newtheorem{lemma}[thm]{Lemma}
\newtheorem{prop}[thm]{Proposition}
\theoremstyle{definition}
\newtheorem{dfn}[thm]{Definition}
\newtheorem{rmk}[thm]{Remark}
\newcommand{\ep}{\epsilon}
\newcommand{\vr}{\varrho}
\newcommand{\pa}{\partial}
\newcommand{\ma}{\mathcal{A}}
\newcommand{\mb}{\mathcal{B}}
\newcommand{\mc}{\mathcal{C}}
\newcommand{\md}{\mathcal{D}}
\newcommand{\me}{\mathcal{E}}
\newcommand{\mg}{\mathcal{G}}
\newcommand{\mh}{\mathcal{H}}
\newcommand{\ml}{\mathcal{L}}
\newcommand{\mq}{\mathcal{Q}}
\newcommand{\mcr}{\mathcal{R}}
\newcommand{\mw}{\mathcal{W}}
\newcommand{\mz}{\mathcal{Z}}
\newcommand{\mn}{\mathbb{N}}
\newcommand{\mr}{\mathbb{R}}
\newcommand{\omrn}{\overline{\mr^N_+}}
\newcommand{\bg}{\bar{g}}
\newcommand{\bx}{\bar{x}}
\newcommand{\by}{\bar{y}}
\newcommand{\hh}{\hat{h}}
\newcommand{\hw}{\hat{w}}
\newcommand{\whw}{\widehat{W}}
\newcommand{\tg}{\tilde{g}}
\newcommand{\tm}{\tilde{m}}
\newcommand{\tz}{\tilde{z}}
\newcommand{\wtg}{\widetilde{G}}
\newcommand{\wtp}{\widetilde{P}}
\newcommand{\wtu}{\widetilde{U}}
\newcommand{\wtw}{\widetilde{W}}
\newcommand{\wtps}{\widetilde{\Psi}}
\newcommand{\oh}{\overline{H}}
\newcommand{\drn}{D^{1,2}(\mr_+^N; x_N^{1-2\gamma})}
\newcommand{\dz}{{\delta, 0}}
\newcommand{\dt}{\delta, \tau}
\newcommand{\edt}{\ep \delta, \ep \tau}
\newcommand{\edte}{\ep \delta(\ep), \ep \tau(\ep)}
\newcommand{\ls}{{\lambda, \sigma}}
\newcommand{\la}{\left\langle}
\newcommand{\ra}{\right\rangle}
\renewcommand{\(}{\left(}
\renewcommand{\)}{\right)}
\begin{document}
\title{A non-compactness result on the fractional Yamabe problem \\ in large dimensions}

\author{Seunghyeok Kim, Monica Musso, Juncheng Wei}

\newcommand{\Addresses}{{\bigskip \footnotesize

\medskip
\noindent (Seunghyeok Kim) \textsc{Facultad de Matem\'{a}ticas, Pontificia Universidad Cat\'{o}lica de Chile, Avenida Vicu\~{n}a Mackenna 4860, Santiago, Chile}\par\nopagebreak
\noindent \textit{E-mail address}: \texttt{shkim0401@gmail.com}

\medskip
\noindent (Monica Musso) \textsc{Facultad de Matem\'{a}ticas, Pontificia Universidad Cat\'{o}lica de Chile, Avenida Vicu\~{n}a Mackenna 4860, Santiago, Chile}\par\nopagebreak
\noindent \textit{E-mail address}: \texttt{mmusso@mat.puc.cl}

\medskip
\noindent (Juncheng Wei) \textsc{Department of Mathematics, University of British Columbia, Vancouver, B.C., Canada, V6T 1Z2 and Department of Mathematics, Chinese University of Hong Kong, Shatin, NT, Hong Kong}
\par\nopagebreak
\noindent \textit{E-mail address}: \texttt{jcwei@math.ubc.ca}}}

\date{\today}
\maketitle

\begin{abstract}
Let $(X^{n+1}, g^+)$ be an $(n+1)$-dimensional asymptotically hyperbolic manifold with a conformal infinity $(M^n, [\hh])$.
The fractional Yamabe problem addresses to solve
\[P^{\gamma}[g^+,\hh] (u) = cu^{n+2\gamma \over n-2\gamma}, \quad u > 0 \quad \text{on } M\]
where $c \in \mr$ and $P^{\gamma}[g^+,\hh]$ is the fractional conformal Laplacian whose principal symbol is $(-\Delta)^{\gamma}$.
In this paper, we construct a metric on the half space $X = \mr^{n+1}_+$, which is conformally equivalent to the unit ball,
for which the solution set of the fractional Yamabe equation is non-compact
provided that $n \ge 24$ for $\gamma \in (0, \gamma^*)$ and $n \ge 25$ for $\gamma \in [\gamma^*,1)$ where $\gamma^* \in (0, 1)$ is a certain transition exponent.
The value of $\gamma^*$ turns out to be approximately 0.940197.
\end{abstract}

{\footnotesize \textit{2010 Mathematics Subject Classification.} Primary: 53C21, Secondary: 35R11, 53A30.}

{\footnotesize \textit{Key words and Phrases.} Fractional Yamabe problem, blow-up analysis.}

\allowdisplaybreaks
\numberwithin{equation}{section}
{\footnotesize \tableofcontents}

\section{Introduction}\label{sec-intro}
Given $n \in \mn$, let $(X^{n+1}, g^+)$ be an $(n+1)$-dimensional asymptotically hyperbolic manifold with a conformal infinity $(M^n, [\hh])$.
In \cite{GZ}, Graham and Zworski introduced the fractional conformal Laplacian $P^{\gamma}_{\hh} = P^{\gamma}[g^+, \hh]$ for $\gamma \in (0,n/2)$
whose principal symbol is given as $(-\Delta)^{\gamma}$ and which obeys the conformal covariance property:
\begin{equation}\label{eq-conf-cov}
P^{\gamma}\left[g^+,w^{4 \over n-2\gamma}\hh\right] = w^{-{n+2\gamma \over n-2\gamma}}P^{\gamma}\left[g^+,\hh\right](w\cdot)
\end{equation}
holds for any positive function $w$ on $M$.
If we denote by $Q^{\gamma}_{\hh} = P^{\gamma}_{\hh}(1)$ the associated fractional scalar curvature and further assume that $(X, g^+)$ is a Poincar\'{e}-Einstein manifold,
then $P^1_{\hh}$ and $Q^1_{\hh}$ become the conformal Laplacian and the scalar curvature (up to constant multiples)
\[P^1_{\hh} = -\Delta_{\hh} + {n-2 \over 4(n-1)}R_{\hh}, \quad Q^1_{\hh} = {n-2 \over 4(n-1)}R_{\hh}\]
respectively,
while $P^2_{\hh}$ and $Q^2_{\hh}$ coincide the Paneitz operator and Branson's Q-curvature
\begin{align*}
P^2_{\hh} &= \Delta_{\hh}^2 + \text{div}_{\hh}\(a_{1n}R_{\hh}\hh + a_{2n} \text{Ric}_{\hh}\)d + {n-4 \over 2} Q^2_{\hh},\\
Q^2_{\hh} &= a_{3n}\Delta_{\hh}R_{\hh} + a_{4n}R_{\hh}^2 + a_{5n}|\text{Ric}_{\hh}|^2
\end{align*}
where $a_{1n}, \cdots, a_{5n} \in \mr$ are constants depending only on $n$.
(Here $R_{\hh}$ and $\text{Ric}_{\hh}$ are the scalar curvature and the Ricci curvature tensor of the manifold $(M,\hh)$, respectively.)
Therefore, by recalling the Yamabe problem and the $Q$-curvature problem,
it is natural to ask whether there is a metric $h_0 \in [\hh]$ such that the corresponding curvature $Q^{\gamma}_{h_0}$ is a constant.
This problem is referred to as the fractional Yamabe problem and explored by Gonz\'{a}lez-Qing \cite{GQ} (non-umbilic cases) and Gonz\'{a}lez-Wang \cite{GW} (umbilic and non-locally conformally flat cases) in the case of $\gamma \in (0,1)$.
Owing to \eqref{eq-conf-cov}, it is equivalent to find a solution of
\begin{equation}\label{eq-main}
P^{\gamma}_{\hh} (u) = cu^{n+2\gamma \over n-2\gamma}, \quad u > 0 \quad \text{on } M
\end{equation}
for some constant $c \in \mr$.

\medskip
The classical Yamabe problem $(\gamma = 1)$ was completely solved by a series of works, starting from Yamabe \cite{Ya}.
Trudinger \cite{Tr} proved existence of a (least energy) solution for the Yamabe problem under the additional assumption that the metric $\hh$ has non-positive scalar curvature.
Aubin \cite{Au} obtained a solution assuming that $n \geq 6$ and that $(M,\hh)$
is not locally conformally flat.
Schoen \cite{Sc0} completed the remaining cases, using the positive mass theorem. See also Lee-Parker \cite{LP} and Bahri \cite{abbas}.
On the other hand, the variational theory for high Morse index solutions was also actively investigated
(see e.g. \cite{Sc3} for the examples such as $S^1 \times S^{n-1}$, and \cite{Po} for general manifolds with $n \ge 3$ and positive scalar curvature).
In this point of view, it is natural to take into account the full set of the solutions.

Schoen \cite{Sc1} raised the conjecture that the solution set for the classical Yamabe problem is compact in the $C^2$-topology,
unless the underlying manifold is conformally equivalent to $S^n$ with the canonical metric.
The case of the round sphere $S^n$ is exceptional since (\ref{eq-main}) is invariant under the action of the conformal group on $S^n$, which is not compact.
Then numerous progress on this direction was achieved by several researchers.
Schoen himself proved compactness of the solution set in the locally conformally flat case \cite{Sc1,Sc2}.
Li and Zhu proved it in dimension $3$ \cite{LZ}, Druet in dimensions $4$ and $5$ \cite{Dr}, see also \cite{LZh,LZh2}.
In dimension $n \geq 6$, the analysis is much more subtle and it is related to the so called Weyl Vanishing conjecture which asserts that the Weyl tensor should vanish at an order greater than $[{n-6 \over 2} ]$ at a blow-up point.
Li and Zhang in \cite{LZh,LZh2} proved the Weyl Vanishing conjecture up to dimension $11$, which in combination with the positive mass theorem allow them to show compactness of the solution set for Yamabe problem up to dimension $11$.
See also Marques \cite{Ma} which treated the dimension up to 7.
The recent work of Khuri, Marques and Schoen \cite{KMS} verified the Weyl Vanishing conjecture up to dimension $24$ and  revealed that the compactness of the solution set for the classical Yamabe problem holds when the dimension of the manifold is strictly less than 25.
Somewhat surprisingly,
the compactness conjecture is not valid in dimension $n\geq 25$: indeed, in this case it is possible to construct a Riemannian manifold $(M, [\hh ])$ such that the set of constant scalar curvature metrics in the conformal class of $\hh$ fails to be compact.  This is shown by Brendle \cite{Br}, for $n \geq 52$,  and Brendle-Marques \cite{BM}, for $n\geq 25$.
We also refer to \cite{AmM,BeM} for construction of non-smooth background metrics.

\medskip
In 1992, Escobar \cite{Es1, Es2} formulated an analogue of the Yamabe problem for manifolds with boundary, which is now called the boundary Yamabe problem.
This corresponds to the fractional Yamabe problem with $\gamma = 1/2$ as Gonz\'alez and Qing observed in  \cite{GQ}. The solvability issue was solved in most of the cases:
in \cite{Es1} solvability is proved in dimension  $2$, in dimension is $3$ or $4$ under the assumption that boundary  is umbilic, in dimension $n\geq 5$ if the manifold is locally conformally flat and the boundary is umbilic. We refer the reader for developments on this issue to \cite{Ma2, Ma3, Al0, Es2, BC} and reference therein.
The problem of compactness of the solution set for the ${1\over 2}$-fractional Yamabe problem is studied in the conformally flat case with umbilic boundary in \cite{FO},
and in the case of dimension $2$ in \cite{FO1}.
Related results on compactness were obtained by Almaraz in \cite{Al2} and by Han-Li \cite{HL}.
Notably, compactness is lost for high dimensions, but this time for dimensions $n \geq 24$.
Indeed, there are examples of metrics on the unit ball $B^{n+1}$, with $n\geq 24$, for which the set of scalar-flat metrics on $B^{n+1}$ in the same conformal class with respect to which $\partial B^{n+1}$ has  constant mean curvature, is not compact.
This construction is done in \cite{Al}. Just a remark: In the boundary Yamabe problem studied by Almaraz \cite{Al}, the author denoted by $n$ the dimension of the upper-half space.
Since in this paper we assume $n$ to be the dimension of its boundary, the critical dimension in our main theorem for $\gamma = 1/2$ reads to be 24 instead of 25 as in \cite{Al}.
Thus, when $\gamma={1\over 2}$, compactness of the set of solutions to the fractional Yamabe problem is lost  at least from $n\geq 24$.
See also Disconzi-Khuri \cite{DK}.

\medskip
Interestingly enough, also for the $\gamma=2$ case, it is again from dimension $n=25$ that compactness for the set of solutions to the $2$-fractional Yamabe Problem (namely, the $Q$-curvature problem) is lost:
in \cite{WZ}, Wei and Zhao showed the existence of a non-compact set of metrics on the sphere $S^n$  for which the curvature $Q^2_{\hh}$ is constant,
or equivalently the solution set for Problem  (\ref{eq-main}), with $\gamma=2$, is non compact.
Concerning compactness of solutions to the $Q$-curvature problem, as far as we know, the only available results are contained in \cite{HR, QR, Li, LX}, see also \cite{HH}.

\medskip
Given these results, one can expect that the starting dimension for non compactness of the $\gamma$-fractional Yamabe Problem depends on $\gamma$.

\medskip
In this paper, we explore precisely this problem.
We are interested in non-compactness property for the fractional Yamabe problem provided that $\gamma \in (0,1)$ and the background dimension is sufficiently high.
We show that there is a transition of the critical dimension at some $\gamma \in (0,1)$,
which takes into account that the smaller $\gamma$ tends to be, the stronger the nonlocal effect becomes.
Our result in particular bridges the classical Yamabe problem and the boundary Yamabe problem.

\medskip
Our result is the following

\begin{thm}\label{thm-main}
There exists a number $\gamma^* \simeq 0.940197$ such that the following properties hold:

\medskip \noindent 1. There are a $C^{\infty}$ Riemannian metric $g^+$ and a boundary defining function $\rho$ on $\mr^{n+1}_+$ such that
$(\mr^{n+1}_+, g^+)$ is an asymptotically hyperbolic manifold with the conformal infinity $(\mr^n, [\hh])$ where $\hh = \rho^2g^+|_{\mr^n}$.
They can be taken to be independent of the choice of $\gamma$.

\medskip \noindent 2. Fix any $\gamma \in (0,1)$ and suppose that $n \ge 24$ if $\gamma \in (0, \gamma^*)$ and $n \ge 25$ if $\gamma \in [\gamma^*, 1)$.
Then one has a sequence of positive solutions $\{u^{\gamma}_m\}_{m \in \mn}$ to the fractional Yamabe equation \eqref{eq-main} with the Yamabe constant $c = 1$,
satisfying $\|u^{\gamma}_m\|_{L^{\infty}(\mr^n)} \to \infty$ as $m \to \infty$.
\end{thm}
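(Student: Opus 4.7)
The plan is to follow the Lyapunov--Schmidt reduction / gluing scheme developed by Brendle \cite{Br} and Brendle--Marques \cite{BM} for the classical Yamabe problem, adapted to the boundary Yamabe problem by Almaraz \cite{Al} and to the $Q$-curvature problem by Wei--Zhao \cite{WZ}. The first step is to use the Chang--Gonz\'alez extension to rewrite the fractional Yamabe equation \eqref{eq-main} as a degenerate elliptic boundary value problem on $\mr^{n+1}_+$ with weight $x_N^{1-2\gamma}$ in the bulk and the nonlinear Neumann-type condition $-\lim_{x_N\to 0^+} x_N^{1-2\gamma}\pa_{x_N} U = c_{n,\gamma}\, U^{(n+2\gamma)/(n-2\gamma)}$ on $\mr^n$; this converts the nonlocal equation into a local boundary value problem that is amenable to variational methods in the weighted space $\drn$. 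For Part 1, I would prescribe $\hh$ on $\mr^n$ as a smooth compactly supported perturbation of the Euclidean metric modelled on the Brendle-type test metrics, schematically $\hh_{ij}(x) = \delta_{ij} + \eta(x)\, h_{ij}(x)$ with $\eta$ a cutoff and $h$ a symmetric, trace-free tensor vanishing to a prescribed high order at the origin, and then build $g^+$ on $\mr^{n+1}_+$ in Graham normal form $g^+ = \rho^{-2}(d\rho^2 + h_\rho)$ by solving the formal asymptotic expansion at $\rho = 0$. Since this construction uses only the boundary geometric data, the resulting triple $(g^+, \rho, \hh)$ is independent of $\gamma$.

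For Part 2, I would take as ansatz a two-parameter family of corrected bubbles $\wtw_{\sigma,\xi}$ built from the Aubin--Talenti profile $W(x) = c_{n,\gamma}(1+|x|^2)^{-(n-2\gamma)/2}$, translated to $\xi$ and rescaled by $\sigma > 0$, together with an explicit geometric corrector designed to cancel the leading error produced by the curvature of $\hh$. A Lyapunov--Schmidt decomposition $u = \wtw_{\sigma,\xi} + \phi$ with $\phi$ orthogonal to the approximate kernel spanned by $\pa_\sigma \wtw$ and $\pa_{\xi_i}\wtw$, solved for a small correction $\phi_{\sigma,\xi}$ in $\drn$, reduces the search for solutions to finding critical points of the finite-dimensional reduced functional $F(\sigma,\xi) = \me(\wtw_{\sigma,\xi}+\phi_{\sigma,\xi})$. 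Careful expansion should yield
\[F(\sigma,\xi) = F_0 - A(n,\gamma)\,\sigma^{n-2\gamma}\, \mathcal{W}(\xi) + B(n,\gamma)\,\sigma^{2\gamma}\,\mathcal{M}(\xi) + \text{l.o.t.},\]
where $\mathcal{W}$ is a local Weyl-type quantity built from $h$ and $\mathcal{M}$ is a positive global mass-like term. Balancing the two leading powers by choosing $\sigma = \sigma_m \to 0^+$ produces a sequence of non-degenerate critical points, and therefore solutions $u^\gamma_m$ with $\|u^\gamma_m\|_{L^\infty}\to \infty$.

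The main obstacle, and the source of the threshold $\gamma^*\simeq 0.940197$, is the precise computation of the coefficient $A(n,\gamma)$ together with sharp error control in the reduction. Because $P^{\gamma}$ is nonlocal one cannot Taylor-expand the equation pointwise as in the $\gamma=1$ case; instead, $A(n,\gamma)$ arises from one-dimensional integrals involving the extended bubble and the boundary weight $x_N^{1-2\gamma}$, which reduce to combinations of Beta and Gamma values in $(n,\gamma)$. Constructing the geometric corrector hidden inside $\wtw$ requires solving a linear inhomogeneous equation whose invertibility and decay estimates depend delicately on $\gamma$ through the same weight. The sign of $A(n,\gamma)$ at the minimal admissible dimension (either $n=24$ or $n=25$) flips as $\gamma$ crosses a critical value; solving the resulting transcendental equation $A(\lceil n_*(\gamma)\rceil, \gamma) = 0$ numerically produces $\gamma^* \approx 0.940197$, so that the critical dimension jumps from $24$ to $25$ at exactly this $\gamma^*$. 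Finally, verifying that the remainder $o(\sigma^{n-2\gamma} + \sigma^{2\gamma})$ is genuinely of lower order, so that the two displayed terms control the critical points, requires weighted $L^p$ and $C^0$ bounds for $\phi_{\sigma,\xi}$ that are uniform as $\sigma \to 0$; these are the most demanding estimates in the proof.
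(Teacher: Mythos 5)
Your overall framework (Chang--Gonz\'alez extension, Lyapunov--Schmidt reduction in the weighted space, Brendle-type perturbed metrics) matches the paper, but the core mechanism of your Part 2 is not the one that works here, and it hides the actual source of $\gamma^*$.

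The expansion you propose for the reduced functional, $F = F_0 - A\,\sigma^{n-2\gamma}\mathcal{W}(\xi) + B\,\sigma^{2\gamma}\mathcal{M}(\xi)$, is the ``Weyl term versus mass term'' balance that governs existence and compactness proofs in the conformally flat regime; it is precisely the structure under which compactness \emph{holds} (via positive-mass arguments), and no mass-like term appears in the non-compactness construction. What the paper does instead is engineer the boundary metric as $\bg = \exp(h)$ with $h_{ij} = \mu\,\ep^{2d_0} f(\ep^{-2}|\bx|^2) W_{ikjl}x^k x^l$ for a polynomial $f$ of degree $d_0$ ($d_0=1$ or $4$), and shows that the reduced energy in the rescaled concentration parameter $\delta$ (kept in a fixed compact set near $1$, not sent to $0$) equals, to leading order in $\ep$, a polynomial $P(\delta^2)$ whose coefficients are explicit in the coefficients of $f$ and in weighted integrals $F_{i,n,\gamma}(\alpha,\beta)$ of the extended bubble. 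One then \emph{chooses} the coefficients of $f$ so that $P$ has a strict interior local minimum at $\delta=1$; the condition for this to be possible is that the discriminant of a quadratic $Q(a_0)=P'(1)$ in the free coefficient $a_0$ be positive. The threshold $\gamma^*\simeq 0.940197$ is exactly the value at which $\mathrm{disc}(Q)(24,\gamma)$ changes sign, not a sign change of a Weyl coefficient $A(n,\gamma)$ at a minimal dimension. Your proposal gives no mechanism for producing such a tunable critical point, and the transcendental equation you suggest solving does not arise.

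Two further gaps. First, since for $\gamma\in(0,1)\setminus\{1/2\}$ the bubble extension $W_{1,0}$ has no closed form, the coefficients of $P$ cannot be computed by pointwise Taylor expansion as in the local cases; the paper computes all the integrals $F_{i,n,\gamma}(\alpha,\beta)$ via Fourier transform and Bessel-function ODE identities (the Gonz\'alez--Qing method), and this computation is where most of the $\gamma$-dependence enters. You acknowledge the nonlocality but do not supply a substitute for the explicit formulas. Second, a single critical point yields a single solution; to get a sequence $\{u_m^\gamma\}$ with $\|u_m^\gamma\|_{L^\infty}\to\infty$ for one fixed metric, the paper superposes countably many such perturbations centered at points $x_m\to 0$ with scales $\ep_m\to 0$ in one metric $\bg$, and runs the reduction near each bump separately; your scheme of letting $\sigma_m\to 0$ inside a single reduced functional does not produce this, because without the mass term there is no second scale to balance against.
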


Recently numerous results on nonlocal conformal operators have been established.
This includes \cite{QR} for the higher-order fractional Yamabe problem, \cite{GMS} for the fractional singular Yamabe problem,
\cite{JX} for the fractional Yamabe flow and \cite{ACH, CLZ, JLX, JLX2, JLX3} for the fractional Nirenberg problem.
Furthermore, Druet \cite{Dr2}, Druet-Hebey \cite{DH}, Micheletti-Pistoia-V{\'e}tois \cite{MPV}, Esposito-Pistoia-V{\'e}tois \cite{EPV} (for $\gamma = 1$),
Deng-Pistoia \cite{DP}, Pistoia-Vaira \cite{PV} (for $\gamma = 2$)
and Choi-Kim \cite{CK} (for $\gamma \in (0,1)$) dealt with compactness issue of lower order perturbations of Eq. \eqref{eq-main}.

\medskip \noindent \textbf{Structure of this paper.}
In the next section, we will describe the setting of our problem.
Whilst our program is adopted from \cite{Br}, \cite{BM} and \cite{Al},
we need to recall two more ingredients to handle the nonlocal conformal operators
- the singular Yamabe problem (refer to \cite{ACF} and \cite{AM}) and the Caffarelli-Silvestre type extension result (\cite{CS}) for the fractional conformal Laplacian obtained in \cite{CG}.
To be more precise, we first define a Riemannian metric $\bg$ on the closure of the half space $\omrn$, slightly perturbing the canonical metric $g_c$.
Then we select a suitable boundary defining function $\rho$
by imposing the scalar curvature of $(\mr_+^N, g^+)$ where $g^+ = \rho^{-2}\bg$ to be $-n(n+1)$ and solving the associated singular Yamabe problem (see Appendix \ref{subsec-app-a}).
Because the precise information of $\rho$ near the origin will be required, we will also achieve it in Appendix \ref{subsec-app-b}.
Now $(\mr_+^N, g^+)$ becomes an asymptotically hyperbolic manifold, and the fractional conformal Laplacian is well defined.
Instead of treating it directly, we consider its localization due to Chang-Gonz\'alez \cite{CG}.

In Section \ref{sec-red}, the finite dimensional Lyapunov-Schmidt reduction method is applied to show that our desired solution will be attained
once we find a critical point of a certain functional $J_0^{\gamma}$ (in \eqref{eq-loc-energy}).
At this point, it is necessary to understand the global behavior of $\rho$ and the spectral property of $-\Delta_{g+}$ to establish the linear theory and to ensure the positivity of solutions.
This will be touched in Appendix \ref{subsec-app-c}.

An important property is that $J_0^{\gamma}$ can be approximated at main order by a polynomial $P$ (in \eqref{eq-poly-3}) as it will be shown in Section \ref{sec-exp}.
To do so, we have to calculate a number of integrals regarding the bubbles $W_{\ls}$ in \eqref{eq-Poisson}.
In the local case ($\gamma = 1/2, 1$ or $2$), the formulae of the bubbles are explicit, so it is relatively plain to obtain the value of the integrals (refer to \cite[Proposition 27]{Br}).
However, in the non-local case, only the representation formula is available for the bubbles.
In order to get over this difficulty, we further develop the approach of Gonz\'alez-Qing \cite{GQ} where they utilized the Fourier transform.
Finally, Section \ref{sec-conc} is devoted to search a critical point of $P$, thereby proving our main theorem.

\bigskip \noindent \textbf{Notation.}

\medskip \noindent - Throughout the paper, we use the Einstein convention. The indices $a, b, c$ and $d$ always run from 1 to $n+1$, while $i, j, k, \tilde{k}, l, p, q, s$ and $\tilde{s}$ run from 1 to $n$.

\medskip \noindent - We denote $N = n+1$. Also, for $x = (x_1, \cdots, x_N) \in \mr^N_+ = \{(x_1, \cdots, x_N) \in \mr^N: x_N > 0\}$,
we use $\bx = (x_1, \cdots, x_n, 0) \in \partial \mr^N_+ \simeq \mr^n$ and $r = |\bx| \ge 0$.

\medskip \noindent - For any $\vr > 0$, we write $B_+^N(0,\vr)$ to denote the upper-half open ball in $\mr^N_+$ centered at the origin whose radius is $\vr$.
Also, $B^n(0,\vr)$ and $S^{n-1}(0,\vr)$ are the $n$-dimensional ball and the $(n-1)$-dimensional sphere, respectively, whose centers are located at 0 and radii are $\vr$.
We use $S^{n-1} = S^{n-1}(0,1)$ for the sake of brevity.
Furthermore, $dS_\vr$ is the surface measure of the sphere $S^{n-1}(0,\vr)$ in $\mr^n$ and $dS = dS_1$.

\medskip \noindent - For a Riemannian manifold $(X,g)$, $\Delta_g$ stands for the Laplace-Beltrami operator (of negative spectrum). If $(X,g)$ is the standard Euclidean space, we denote $\Delta = \Delta_g$.

\medskip \noindent - $\chi_A$ is the characteristic function of a set $A$.

\medskip \noindent - $t_+ = \max\{t, 0\}$ and $t_- = \max\{-t, 0\}$ for any $t \in \mr$.

\medskip \noindent - For fixed $n \in \mn$ and $\gamma \in (0,1)$ such that $n > 2\gamma$,
the space $\drn$ is defined as the completion of the space $C^{\infty}_c(\omrn)$ with respect to the norm
\begin{equation}\label{eq-d12}
\|U\|_{\drn} := \(\int_{\mr_+^N} x_N^{1-2\gamma}|\nabla U|^2 dx\)^{1/2} \quad \text{for } U \in C^{\infty}_c(\omrn)
\end{equation}
(refer to Remark \ref{rmk-sobolev}).
Let also $\md^{1,2}(\vr)$ be the completion of $C^{\infty}_c\(B^N_+(0,\vr) \cup B^n(0,\vr)\)$ with respect to the norm \eqref{eq-d12}.

\medskip \noindent - For a function $f \in L^2(\mr^n)$, the Fourier transform $\hat{f}$ of $f$ is defined by
\[\hat{f}(\xi) = {1 \over (2\pi)^{n/2}} \int_{\mr^n} f(x)e^{-ix \cdot \xi}dx \quad \text{for } \xi \in \mr^n.\]
We also use $\rho = |\xi|$.

\medskip \noindent - The letters $C$ and $\widetilde{C}$ (without subscripts) denote positive numbers that may vary from line to line.

\section{Setting of the problem}\label{sec-setting}
The following setting is due to Brendle \cite{Br} and Almaraz \cite{Al}.
Fix $W: (\mr^n)^4 \to \mr$ be a multi-linear form such that its tensor norm
\[|W| = \(\sum_{i,j,k,l = 1}^n (W_{ikjl} + W_{iljk})^2\)^{1/2}\]
is positive everywhere
and it satisfies all algebraic properties the Weyl tensor has:
$W_{ijkl} = - W_{jikl} = - W_{ijlk} = W_{klij}$ (symmetry and anti-symmetry), $W_{ijkl} + W_{iklj} + W_{iljk}= 0$ (the Bianchi identity)
and any contraction of $W$ gives 0 (which is equivalent to $\sum_{i=1}^nW_{ijik} = 0$ by the symmetric property).
Then we set a tensor
\begin{equation}\label{eq-h0}
H_{ij}(x) = H_{ij}(\bx) = W_{ikjl}x^kx^l \quad \text{and} \quad H_{aN}(x) = H_{Nb}(x) = 0
\end{equation}
for any $x \in \mr^N_+$, and using this we also define a trace-free symmetric two-tensor $h$ in $\mr^N_+$ which satisfies
\begin{equation}\label{eq-h}
h_{ab}(x) = \begin{cases}
\mu \ep^{2d_0}f\(\ep^{-2}|\bx|^2\)H_{ab}(\bx) &\text{for } |x| \le \nu,\\
0 &\text{for } |x| \ge 1.
\end{cases}
\end{equation}
Here $0 < \ep \ll \nu \le 1$ (e.g., $\nu |\log \ep| \ge 1/100$ would suffice), $\mu = \ep^{1/3}$ and $f(t) = \sum_{m=0}^{d_0} a_m t^m$ is a polynomial of degree $d_0$ ($1 \le d_0 \le 4$ and $a_m \in \mr$).
Moreover we impose further conditions on the tensor $h$ that
\begin{equation}\label{eq-eta_0}
h_{aN}(x) = 0 \quad \text{and} \quad \sum_{m=0}^{2(2d_0+2)} \left|D^mh_{ab}(x)\right| \le \eta_0 \quad \text{for all } x \in \mr_+^N
\end{equation}
where $\eta_0 \gg \ep > 0$ is a small number to be determined in Section \ref{sec-red},
and that it relies only on the first $n$ variables (so that $\pa_Nh_{ab} = 0$ where $\pa_N = \pa_{x_N}$) if $0 \le x_N \le \nu$.
By virtue of our construction, it immediately follows that
\begin{equation}\label{eq-h-prop}
x^ah_{ab}(x) = \sum_{a=1}^N\pa_ah_{ab}(x) = 0 \quad \text{for any } |x| \le \nu.
\end{equation}
Now if we define $\bg = \exp{(h)}$, then $\(\omrn, \bg\)$ is a smooth Riemannian manifold with a boundary.
Moreover, it is easy to check that the submanifold $(\mr^n, \hh)$ where $\hh = \bg|_{T\mr^n}$ is totally geodesic.
This is equivalent to say that the second fundamental form $\pi_{ij}$ satisfies $\pi_{ij} = \pa_N\bg_{ij}/2 = 0$.
This fact implies in particular that the mean curvature $H = \bg^{ij}\pi_{ij}/n$ also vanishes on $\mr^n$.

Furthermore, since the trace of the tensor $h$ is zero, we have the following expansion of the scalar curvature of the manifold $(\mr_+^N, \bg)$: For some $C = C(n) > 0$,
\begin{multline}\label{eq-R-bg-est-0}
\left|R_{\bg} - \(\sum_{i,j=1}^n \pa_{ij} h_{ij} - \sum_{i,j,k=1}^n \pa_i\(h_{ij}\pa_kh_{kj}\)
+ {1 \over 2} \sum_{i,j,k=1}^n \pa_ih_{ij}\pa_kh_{kj} - {1 \over 4} \sum_{i,j,k=1}^n \(\pa_kh_{ij}\)^2\) \right| \\
\le C\(|h|^2\left|D^2h\right|+|h||Dh|^2\) \quad \text{in } C\(\omrn\).
\end{multline}
See \cite[Proposition 26]{Br} for the detailed explanation.
In particular, a further inspection with \eqref{eq-h-prop} shows that
\begin{equation}\label{eq-R-bg-est}
R_{\bg} = -{1 \over 4}\sum_{i,j,k=1}^n\(\pa_kh_{ij}\)^2 + O\(|h|^2\left|D^2h\right|+|h||Dh|^2\) \quad \text{in } C^{\infty}(\{|x| \le \nu\}).
\end{equation}

\medskip
In order to make the space $\mr_+^N$ to be asymptotically hyperbolic with conformal infinity $(\mr^n, [\hh])$,
we solve the singular Yamabe problem.
Precisely, we construct a metric $g^+ \in [\bg]$ in $\mr^N_+$
such that its scalar curvature $R_{g^+}$ is equal to $-n(n+1)$ and $\rho^2 g^+|_{T\mr^n} = \hh$ for some boundary defining function $\rho$ of $\mr^n = \partial \mr^N_+$.
By the results of Aviles-McOwen \cite{AM} and Andersson-Chru\'{s}ciel-Friedrich \cite{ACF},
it is known that this problem is solvable for $N \ge 3$ and the defining function $\rho$ has the form
\begin{equation}\label{eq-bd-dfn}
\rho = x_N\(1 + Ax_N + Bx_N^N\)^{-{2 \over N-2}}
\end{equation}
near the boundary $\mr^n$, where $A \in C^{\infty}(\omrn)$, $B \in C^{\infty}(\mr^N_+)$ and $B$ has a polyhomogeneous expansion in the $x_N$-variable near the boundary.

To obtain the existence of the metric $g^+$, one can take the following procedure:
Let us assume that $g^+ = w^{4 \over N-2}\bg$ for some positive function $w$ in $\mr^N_+$ such that $wx_N^{N-2 \over 2} \to 1$ as $x_N \to 0+$.
If we put $u = wx_N^{N-2 \over 2}$ and $\tg = x_N^{-2}\bg$, then the problem boils down to the Loewner-Nirenberg problem \cite{LN}
\begin{equation}\label{eq-LN}
-{4(N-1) \over N-2}\Delta_{\tg} u + R_{\tg} u + N(N-1)u^{N+2 \over N-2} = 0 \quad \text{in } \mr_+^N \quad \text{and} \quad u = 1 \quad \text{on } \mr^n.
\end{equation}
By employing a stereographic projection, we may assume that the domain of the equation is $B^N$ instead of $\mr_+^N$.
Then it turns out that this equation admits positive upper and lower solutions, which gives the unique positive solution $u$ continuous up to the boundary $S^n$ (or $\mr^n$ after transforming back - see Appendix \ref{subsec-app-a} for further discussion on the conformal change).
This also guarantees the existence of the defining function $\rho = u^{-{2 \over N-2}}x_N$.

Very recently, Han and Jiang \cite{HJ} established optimal asymptotic expansions of solutions to the Dirichlet problem for minimal graphs in the hyperbolic space.
As it will be discussed in Appendix \ref{subsec-app-b}, their approach also alludes that
the formal expansion of the solution $u$ to Eq. \eqref{eq-LN} in the $x_N$-variable is accurate up to $O(x_N^N \log x_N)$ order.
Because the coefficient of the $x_N$-order in the expansion of $u$ is a constant multiple of the mean curvature $H$ of $(\mr^n, \hh) \subset \(\omrn, \bg\)$ and it holds that $H = 0$ due to our construction of $\bg$,
it is expected that the asymptotic expansion of $\rho$ contains only even powers of $x_N$.
Indeed, we have the following description on $\rho$ up to the $4(d_0+1)$-th order of $x_N$.
\begin{prop}\label{prop-rho-est}
Assume that $N \ge 22$ (and $n \ge 21$) and let $x = (\bx, x_N) \in \mr^N_+$.
\begin{enumerate}
\item It holds that $C^{-1}x_N < \rho(\bx, x_N) < Cx_N$ in $\mr^N_+$ for some $C > 0$ independent of the points $x \in \mr^N_+$.
\item Denote $\oh_{ab}(x) = f(|\bx|^2)H_{ab}(x)$ and fix numbers $\nu, \eta > 0$ sufficiently small.
Then we have
\begin{equation}\label{eq-rho-est}
\rho(\ep x) = \left[1 + \mu^2\ep^{4(d_0+1)} \sum_{m=1}^{2d_0+2} C_{2m}(\bx) x_N^{2m} + O\(\mu^3 \ep^{4(d_0+1)}|x|^2\(1+|x|^{4d_0}\)x_N^2\) + O\((\ep x_N)^{4(d_0+1)+2-\eta}\) \right] \ep x_N
\end{equation}
in $C^2(B_+^N(0,\nu/\ep))$ where the function $C_{2m}$ is defined as
\begin{equation}\label{eq-C}
C_{2m}(\bx) = -{1 \over 24(N-1)(N-2)} \left[\prod_{\tm=1}^{m-1} {1 \over (2\tm+3)(N-2(\tm+1))}\right] \sum_{i,j,k=1}^n \Delta^{m-1} \(\pa_k \oh_{ij}(\bx)\)^2
\end{equation}
for all $m = 1, \cdots, 2(d_0+2)$.
The value in the bracket is understood as 1 if $m = 1$.
\end{enumerate} \end{prop}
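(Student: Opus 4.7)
The plan is to exploit the representation $\rho = u^{-2/(N-2)} x_N$, where $u$ solves the Loewner-Nirenberg equation \eqref{eq-LN}, and to analyze $u$ via an asymptotic expansion in the $x_N$-variable. For Part (1) I would invoke the existence theory developed in Appendix \ref{subsec-app-a}: because \eqref{eq-eta_0} forces $\bg = \exp(h)$ to be a uniformly small smooth perturbation of the Euclidean metric, the sub-/super-solution argument (transplanted from $B^N$ via stereographic projection) produces a positive solution $u$ continuous up to $\mr^n$ with $c_1 \le u \le c_2$ for constants $0 < c_1 < c_2$ independent of $x \in \mr^N_+$, and then $\rho = u^{-2/(N-2)} x_N$ delivers $C^{-1} x_N < \rho(\bx, x_N) < C x_N$.

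For Part (2) I would combine three ingredients. First, the Aviles-McOwen / Andersson-Chru\'sciel-Friedrich representation \eqref{eq-bd-dfn} provides the a priori polyhomogeneous structure of $\rho$. Second, the Han-Jiang refinement (discussed in Appendix \ref{subsec-app-b}) sharpens this to show that the formal power-series expansion of $u$ in $x_N$ is accurate up to an $O(x_N^N \log x_N)$ remainder; after rescaling $x \mapsto \ep x$ this is what ultimately accounts for the $O((\ep x_N)^{4(d_0+1)+2-\eta})$ term in \eqref{eq-rho-est}. Third, because the boundary is totally geodesic (so $\pi_{ij} \equiv 0$, hence $H = 0$) and $\pa_N h_{ab} \equiv 0$ for $x_N \le \nu$, all odd powers of $x_N$ in the expansion of $u$ drop out up to the stated threshold. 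I therefore write $u(\bx, x_N) = 1 + \sum_{m \ge 1} c_{2m}(\bx) x_N^{2m} + (\text{remainder})$ and plug this ansatz into \eqref{eq-LN}.

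The core of the calculation uses $\tg = x_N^{-2} \bg$ together with the expansions
\[
\Delta_{\tg} = x_N^2 \Delta_{\bg} + (2-N) x_N \pa_N + O(x_N^2 |h|), \qquad R_{\tg} = x_N^2 R_{\bg} + O(x_N),
\]
combined with \eqref{eq-R-bg-est}. Matching the coefficient of $x_N^0$ determines $c_2(\bx)$ in terms of $R_{\bg}(\bx,0)$, which by \eqref{eq-R-bg-est} equals a constant multiple of $\sum_{i,j,k}(\pa_k \oh_{ij}(\bx))^2$; after converting from $u$ to $\rho$ via the binomial expansion of $u^{-2/(N-2)}$ one recovers exactly the $m = 1$ case of \eqref{eq-C}, the explicit $-1/(24(N-1)(N-2))$ arising from combining the $-1/4$ in \eqref{eq-R-bg-est}, the conformal factor $-2/(N-2)$, and universal constants from the $\Delta_{\tg}$-expansion. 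For $m \ge 2$ the coefficient matching produces a two-term recursion of the schematic form
\[
(2m+3)\bigl(N - 2(m+1)\bigr)\, c_{2(m+1)}(\bx) = \Delta_{\bx} c_{2m}(\bx) + (\text{lower-order corrections}),
\]
and iterating from $c_2$ yields exactly the product $\prod_{\tm=1}^{m-1} 1/((2\tm+3)(N-2(\tm+1)))$ together with $\Delta^{m-1}$ applied to $\sum_{i,j,k}(\pa_k \oh_{ij})^2$.

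The main obstacle is careful bookkeeping on two fronts. First, one must rigorously verify the absence of odd-power terms up to order $x_N^{4(d_0+1)+2-\eta}$; this blends the Han-Jiang polyhomogeneity with the totally-geodesic / trace-free structure \eqref{eq-h-prop} and is not automatic from \eqref{eq-bd-dfn} alone. Second, the cubic-in-$h$ remainder $O(|h|^2|D^2 h| + |h||Dh|^2)$ from \eqref{eq-R-bg-est-0} must be tracked through each step of the recursion; after rescaling $x \mapsto \ep x$ and using $\mu = \ep^{1/3}$, this remainder produces precisely the announced $O(\mu^3 \ep^{4(d_0+1)} |x|^2 (1+|x|^{4d_0}) x_N^2)$ error. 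I expect both steps to be elementary but computationally heavy, and the bulk of the verification is naturally deferred to Appendix \ref{subsec-app-b}.
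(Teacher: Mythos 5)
Your plan coincides with the paper's argument in its core. Part (1) is exactly the paper's two-line observation that $u$ is bounded above and below away from zero. For Part (2) the paper likewise sets $z=u-1$, inserts the even-power ansatz $z_{d_0}=\sum_{m=1}^{2d_0+2}D_{2m}(\bx)x_N^{2m}$ into \eqref{eq-LN} rewritten via the exact identities $\Delta_{\tg}=x_N^2\Delta_{\bg}-(N-2)x_N\pa_N$ and $R_{\tg}=-N(N-1)+R_{\bg}x_N^2$, and obtains precisely your recursion $D_2=-R_{\hh}/(12(N-1))$, $(2m+3)(N-2(m+1))D_{2(m+1)}=\Delta_{\hh}D_{2m}+\mathcal{R}_m(\cdots)$; combining this with \eqref{eq-R-bg-est-2} and the binomial expansion of $(1+z)^{-2/(N-2)}$ gives \eqref{eq-C} and the $O(\mu^3\ep^{4(d_0+1)}|x|^2(1+|x|^{4d_0})x_N^2)$ error exactly as you describe. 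One slip: your displayed formula $R_{\tg}=x_N^2R_{\bg}+O(x_N)$ omits the constant $-N(N-1)$, which is what cancels against $N(N-1)u^{(N+2)/(N-2)}$ and produces the singular zeroth-order term $-Nx_N^{-2}\tz$ of the linearization; as written it is wrong, though your subsequent coefficient matching clearly presupposes the correct identity.

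The genuine gap is the remainder estimate $|z-z_{d_0}|\le Cx_N^{2(2d_0+3)-\eta_3}$ and its $C^2$ analogue, which you propose to extract from Han--Jiang polyhomogeneity ``accurate up to $O(x_N^N\log x_N)$''. The paper does not and cannot simply cite such a result (Han--Jiang concerns minimal graphs and is only an inspiration); it proves the bound from scratch by establishing a comparison principle for the singular linearized operator $\ml_1(\tz)=\ml(\tz)-N(N-1)x_N^{-2}\ell(x)\tz$ on thin cylinders (Lemma \ref{lemma-L1-mp}, which needs $\eta_0$ small so that $\ell\ge-C\eta_1$) and then constructing an explicit barrier $z^*$ containing the term $C_2^*x_N^{2(2d_0+3)-\eta_3}$. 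It is exactly there that the hypothesis $N\ge22$ enters: one needs $\ml_1(x_N^{\alpha_0})\le-\widetilde Cx_N^{\alpha_0-2}$ for $\alpha_0=2(2d_0+3)-\eta_3$, which requires $\alpha_0^2-(N-1)\alpha_0-N<0$, i.e.\ $\alpha_0<N$, and with $d_0\le4$ this forces $N\ge22$. Your plan never identifies where the dimension restriction is used, and qualitative polyhomogeneity does not by itself yield this quantitative, uniform-in-$\ep$ bound with the stated power; the barrier construction is the missing ingredient you would have to supply. The $C^2$ assertion additionally requires the rescaled elliptic derivative estimates of Lemma \ref{lemma-z-reg}, which your proposal does not address.
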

\begin{proof}
Since $u$ is a bounded function in $\mr^N_+$ away from 0 and $\rho = u^{-{2 \over N-2}}x_N$, the first assertion is true.
The proof of \eqref{eq-rho-est} is postponed to Appendix \ref{subsec-app-b}.
\end{proof}

Our proof for Theorem \ref{thm-main} strongly relies not only on the results on the singular Yamabe problem,
but also on the following local interpretation of the conformal fractional Laplacian found by Chang and Gonz\'{a}lez.
\begin{prop}\label{prop-CG}
(\cite[Theorem 5.1]{CG}, see also \cite[Proposition 2.1]{GQ})
Let $\gamma \in (0,1)$, $\bg$ be a Riemannian metric on $\omrn$ and $\hh$ its induced metric on the boundary $\mr^n$.
Also we suppose that the mean curvature on $\mr^n$ is 0 and
the last component $x_N$ of $\mr_+^N$ serves as the boundary defining function, namely, $\bg = \bg_{x_N} + dx_N^2$
for some one parameter family of metrics $\bg_{x_N}$ on $\mr^n$.
Then one can construct an asymptotically hyperbolic metric $g^+$ in $\mr_+^N$ conformal to $\bg$
such that $R_{g^+} = - n(n+1)$ and a defining function $\rho$ satisfying $\rho^2g^+|_{T\mr^n} = \hh$ as well as \eqref{eq-bd-dfn} and \eqref{eq-rho-est}.
(This is what we explained in the previous paragraphs.)
Moreover if $U$ is a solution of the following extension problem
\[\begin{cases}
-\textnormal{div}_{\bg}\(\rho^{1-2\gamma}\nabla U\) + E(\rho)U = 0 &\text{in } \(\mr^N_+, \bg\),\\
U = f &\text{on } \mr^n
\end{cases}\]
for a given function $f$ in the Sobolev space $H^{\gamma}(\mr^n)$, where $E$ is the error term given by
\begin{equation}\label{eq-error}
E(\rho) = -\Delta_{\bg}\(\rho^{1-2\gamma \over 2}\)\rho^{1-2\gamma \over 2}+ \(\gamma^2-{1 \over 4}\) \rho^{-1-2\gamma} + {n-1 \over 4n}R_{\bg}\rho^{1-2\gamma},
\end{equation}
then
\begin{equation}\label{eq-wnormal}
P^{\gamma}[g^+,\hh] f = - \kappa_{\gamma} \(\lim_{\rho \to 0+} \rho^{1-2\gamma}{\pa U \over \pa \rho}\) = - \kappa_{\gamma} \(\lim_{x_N \to 0+} x_N^{1-2\gamma}{\pa U \over \pa x_N}\) := \pa^{\gamma}_{\nu} U.
\end{equation}
Here $\kappa_{\gamma} = 2^{2\gamma-1}\Gamma(\gamma)/\Gamma(1-\gamma)$ and $\nu$ designates the unit outer normal vector $-\pa_{x_N}$ to the boundary $\mr^n$.
\end{prop}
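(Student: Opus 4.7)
The plan is to separate the proposition into two independent assertions --- the geometric construction of the pair $(g^+, \rho)$ and the local extension identity \eqref{eq-wnormal} --- and to address each with tools already referenced in the excerpt. Both parts are, strictly speaking, known results, so the proposal is really a bookkeeping blueprint rather than a new derivation.

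For the geometric assertion, I would follow the procedure outlined around \eqref{eq-LN}. Writing $g^+ = w^{4/(N-2)}\bg$ with $u := w x_N^{(N-2)/2}$ and $\tg := x_N^{-2}\bg$, the prescribed curvature condition $R_{g^+} = -n(n+1)$ becomes the Loewner--Nirenberg equation \eqref{eq-LN} for $u$ on $(\mr_+^N, \tg)$. After a stereographic projection to $B^N$, Aviles--McOwen \cite{AM} and Andersson--Chru\'sciel--Friedrich \cite{ACF} provide ordered positive sub- and supersolutions, producing a unique positive $u$ continuous up to the boundary of the form \eqref{eq-bd-dfn} and hence the defining function $\rho = u^{-2/(N-2)} x_N$. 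The first claim of the proposition is then immediate: $u$ is bounded away from $0$ and $\infty$ on compact sets, so $\rho$ is comparable to $x_N$. The refined expansion \eqref{eq-rho-est} is the delicate point; it follows from the recent optimal expansion of Han--Jiang \cite{HJ}, which validates the formal $x_N$-expansion of $u$ up to $O(x_N^N \log x_N)$, combined with the scalar curvature formula \eqref{eq-R-bg-est} and the vanishing of odd-order coefficients forced by $\pa_N \bg_{ij}|_{\mr^n}=0$ (equivalently, the mean curvature $H$ vanishes thanks to our construction of $\bg$ via \eqref{eq-h}). One then reads off the coefficients $C_{2m}(\bx)$ in \eqref{eq-C} by an inductive ODE in $x_N$; the bookkeeping is deferred to Appendix \ref{subsec-app-b}.

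For the extension/localization identity, I would adopt the Chang--Gonz\'alez scheme. Beginning from the Graham--Zworski scattering definition of $P^\gamma[g^+, \hh]$ in \cite{GZ}, the action of $P^\gamma$ on $f$ is captured by an extension $V$ on $(\mr_+^N, g^+)$ solving a degenerate Poisson equation whose weighted normal derivative along a geodesic defining function reproduces $-\kappa_\gamma P^\gamma f$. The conformal substitution $U = \rho^{(2\gamma-n)/2} V$, together with the conformal change $\bg = \rho^2 g^+$, rewrites the equation as $-\mathrm{div}_{\bg}(\rho^{1-2\gamma}\nabla U) + E(\rho)U = 0$. The three summands in \eqref{eq-error} record, in order, the conformal change from $g^+$ to $\bg$ through the weight $\rho^{(1-2\gamma)/2}$, the hyperbolic spectral shift $\gamma^2 - \tfrac14$ inherited from scattering, and the scalar curvature contribution $\tfrac{n-1}{4n}R_{\bg}$ in boundary-normalized form. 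The identity \eqref{eq-wnormal} then follows by expanding $U$ in powers of $\rho$ near $\mr^n$ and extracting the coefficient of $\rho^{2\gamma}$; because $\rho = x_N(1 + O(x_N))$ by \eqref{eq-bd-dfn}, the $\rho \to 0^+$ and $x_N \to 0^+$ limits in the weighted conormal derivative agree.

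The main technical obstacle is not the derivation itself, which is essentially algebraic once the conformal change is in place, but the verification that our specific $(g^+, \rho)$ --- produced from the singular Yamabe problem rather than by a geodesic normalization --- actually satisfies the hypotheses of the Chang--Gonz\'alez theorem. Concretely, one must confirm (i) $\rho^2 g^+|_{T\mr^n} = \hh$, (ii) the product structure $\bg = \bg_{x_N} + dx_N^2$ near $\mr^n$ coming from the vanishing mean curvature, and (iii) enough regularity of $\rho$ to justify the boundary limit. Items (i) and (ii) are built into our construction of $h$ via \eqref{eq-h}--\eqref{eq-eta_0}, which enforces $\pa_N h_{ab}|_{\mr^n} = 0$ and hence the second fundamental form $\pi_{ij}\equiv 0$; item (iii) is guaranteed by the polyhomogeneous expansion \eqref{eq-bd-dfn} together with \eqref{eq-rho-est}. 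With these in hand, the proof reduces to the statements in \cite{CG,GQ,AM,ACF,HJ}.
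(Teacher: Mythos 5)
Your proposal follows essentially the same route as the paper: the pair $(g^+,\rho)$ is obtained from the Loewner--Nirenberg problem \eqref{eq-LN} via the sub/supersolution results of \cite{AM,ACF} (with the refined expansion \eqref{eq-rho-est} deferred to the Han--Jiang-type analysis of Appendix \ref{subsec-app-b}), and the extension identity \eqref{eq-wnormal} with the error term \eqref{eq-error} is exactly the Chang--Gonz\'alez localization \cite[Theorem 5.1]{CG}, \cite[Proposition 2.1]{GQ}, whose hypotheses are met because the construction of $h$ in \eqref{eq-h}--\eqref{eq-eta_0} forces $\pi_{ij}=0$ and the product structure near $\mr^n$. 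This is precisely how the paper treats the proposition, so no further comment is needed.
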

\noindent Therefore, in order to solve the nonlocal Eq. \eqref{eq-main} with $c = 1$,
it suffices to find a positive solution of the degenerate local problem
\begin{equation}\label{eq-main-ext}
\begin{cases}
-\text{div}_{\bg}\(\rho^{1-2\gamma}\nabla U\) + E(\rho)U = 0 &\text{in } \(\mr^N_+, \bg\),\\
U = u &\text{on } \mr^n,\\
\pa^{\gamma}_{\nu} U = U^{n+2\gamma \over n-2\gamma} &\text{on } \mr^n.
\end{cases} \end{equation}
Besides, by \eqref{eq-wnormal}, 
a critical point of the energy functional
\begin{equation}\label{eq-energy}
I^{\gamma}(U) = {\kappa_{\gamma} \over 2} \int_{\mr^N_+}\(\rho^{1-2\gamma}|\nabla U|_{\bg}^2 + E(\rho)U^2\) dv_{\bg}
- {n-2\gamma \over 2n}\int_{\mr^n} U_+^{2n \over n-2\gamma} dv_{\hh} \quad \text{for } U \in \mh_1
\end{equation}
solves \eqref{eq-main-ext}.
Here $dv_{\bg}$ and $dv_{\hh}$ represent the volume forms of $\(\omrn, \bg\)$ and $(\mr^n, \hh)$ respectively.
Because $\det \bg = \det \hh = 1$ due to our construction, it holds that $dv_{\bg} = dx$ and $dv_{\hh} = d\bx$.
Eq. \eqref{eq-lin-04} and the Sobolev trace inequality $\drn \hookrightarrow L^{2n \over n-2\gamma}(\mr^n)$
ensure that $I^{\gamma}$ is well-defined in the space $\mh_1$ defined in \eqref{eq-mh-1}.

In the special case $\bg = dx^2$, $g^+ = dx^2/x_N^2$ and $\rho = x_N$,
the fractional Paneitz operator $P^{\gamma}_{d\bx^2}$ reduces to the usual fractional Laplacian $(-\Delta)^\gamma$
and the corresponding result to Proposition \ref{prop-CG} was established by Caffarelli and Silvestre \cite{CaS}.
As it is now well-understood through a series of works conducted by many mathematicians (see for instance \cite{CS, CS2, CKL, DDW, DLS, GQ, SV, T} and references therein),
this observation allows one to apply well-known techniques such as the mountain pass theorem, blow-up analysis, the finite dimensional reduction method, the moving plane method, the Moser iteration method and so on,
for local, but degenerate, equations to analyze the corresponding nonlocal equations.
On the other hand, the results of Yang \cite{Yan} and Case-Chang \cite{CC}, which present the extension results for the higher order fractional conformal Laplacians, would allow one to apply similar approaches for the case $\gamma \in (1, n/2)$.

\medskip
Before finishing this section, we recall the bubbles $w_{\ls}$ and their $\gamma$-harmonic extensions $W_{\ls}$.
Given $\lambda > 0$ and $\sigma \in \mr^n$, the function $w_{\ls}$ is defined as
\[w_{\ls}(\bx) = c_{n,\gamma} \(\lambda \over \lambda^2 + |\bx-\sigma|^2\)^{n-2\gamma \over 2} = {1 \over \lambda^{n-2\gamma \over 2}} w_{1,0}\({\bx-\sigma \over \lambda}\) \quad \text{for any } \bx \in \mr^n\]
for some normalizing constant $c_{n,\gamma}$ whose value is presented below,
and $W_{\ls}(x) = \lambda^{-{n-2\gamma \over 2}}W_{1,0}(\lambda^{-1}(\bx-\sigma), \lambda^{-1}x_N)$ is a unique solution of the degenerate elliptic equation
\begin{equation}\label{eq-har-ext}
\begin{cases}
\text{div}\(x_N^{1-2\gamma}\nabla U\) = 0 &\text{in } \mr_+^N,\\
U(\cdot,0) = w_{\ls} &\text{on } \mr^n,\\
U \in \drn.
\end{cases} \end{equation}
Each bubble $w_{\ls}$ solves the equation
\begin{equation}\label{eq-rn}
(-\Delta)^{\gamma} u = u^{n+2\gamma \over n-2\gamma} \quad \text{in } \mr^n.
\end{equation}
Hence $\partial_{\nu}^{\gamma} W_{\ls} = w_{\ls}^{n+2\gamma \over n-2\gamma}$ in $\mr^n$ by Proposition \ref{prop-CG}.
Besides, it is possible to describe $W_{\ls}$ in terms of the Poisson kernel $K_{\gamma}$:
\begin{equation}\label{eq-Poisson}
W_{\ls}(\bx,x_N) = (K_{\gamma}(\cdot, x_N) \ast w_{\ls})(\bx) = p_{n,\gamma} \int_{\mr^n} {x_N^{2\gamma} \over (|\bx-\xi|^2+|x_N|^2)^{n+2\gamma \over 2}} w_{\ls}(\xi) d\xi.
\end{equation}
The values of constants $c_{n,\gamma}$ and $p_{n,\gamma}$ are
\begin{equation}\label{eq-cng}
c_{n,\gamma} = 2^{n-2\gamma \over 2}\({\Gamma\({n+2\gamma \over 2}\) \over \Gamma\({n-2\gamma \over 2}\)}\)^{n-2\gamma \over 4\gamma}
\quad \text{and} \quad p_{n,\gamma} = {\Gamma\({n+2\gamma \over 2}\) \over \pi^{n \over 2}\Gamma(\gamma)}.
\end{equation}

The nondegeneracy result of \cite{DDS} tells us that the set of bounded solutions for the linearized problem to \eqref{eq-rn}
\[(-\Delta)^{\gamma} u = \({n+2\gamma \over n-2\gamma}\) w_{\ls}^{4\gamma \over n-2\gamma}u \quad \text{in } \mr^n\]
is spanned by
\[z_{\ls}^1 := {\pa w_{\ls} \over \pa \sigma_1} = -{\pa w_{\ls} \over \pa x_1}
, \cdots, z_{\ls}^n := {\pa w_{\ls} \over \pa \sigma_n} = -{\pa w_{\ls} \over \pa x_n}\]
and
\[z_{\ls}^0 := {\pa w_{\ls} \over \pa \lambda} = - \({n-2\gamma \over 2}\) w_{\ls} - x \cdot \nabla w_{\ls}\]
where $\lambda > 0$ and $\sigma = (\sigma_1, \cdots, \sigma_n)$.
Also, if we let $Z_{\ls}^m$ be the $\gamma$-harmonic extension of $z_{\ls}^m$ for $m = 0, \cdots, n$, i.e., the solution of \eqref{eq-har-ext} whose second equality is replaced by $U(\cdot,0) = z_{\ls}^m$ on $\mr^n$,
then the following decay properties can be checked.

\begin{lemma}\label{lemma-W-decay}
There exists a constant $C = C(n,\gamma) > 0$ such that
\begin{equation}\label{eq-W-decay-1}
\left|\nabla^m_{\bx} W_{\ls}(x)\right| \le {C \lambda^{n-2\gamma \over 2} \over \lambda^{n-2\gamma+m} +|x-(\sigma,0)|^{n-2\gamma+m}} \quad (m = 0, 1, 2)
\end{equation}
and
\[\left|\nabla^m_{\bx} \pa_N W_{\ls}(x)\right| \le C \lambda^{n-2\gamma \over 2} \({x_N^{2\gamma-1} \over \lambda^{n+m} + |x-(\sigma,0)|^{n+m}}
+ {1 \over \lambda^{n-2\gamma+1+m} + |x-(\sigma,0)|^{n-2\gamma+1+m}}\) \quad (m = 0, 1)\]
for all $(\ls) \in (0,\infty) \times \mr^n$ and $x \in \mr^N_+$.
Here $\nabla^m_{\bx}$ means the $m$-th derivative with respect to the $\bx$-variable.
\end{lemma}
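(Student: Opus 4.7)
By the scaling identity $W_{\ls}(x) = \lambda^{-(n-2\gamma)/2}\, W_{1,0}\bigl(\lambda^{-1}(\bx-\sigma),\, \lambda^{-1}x_N\bigr)$, an $m$-th order tangential derivative of $W_{\ls}$ acquires an extra factor $\lambda^{-m}$, while $\pa_N W_{\ls}$ acquires $\lambda^{-1}$, so both claimed inequalities reduce at once to the case $(\lambda, \sigma) = (1, 0)$. The bounded regime $|x| \le 2$ follows from interior $C^{2,\alpha}$ regularity for the degenerate equation \eqref{eq-har-ext} combined with the classical boundary behavior $\pa_N W_{1,0}(\bx, x_N) \sim -\kappa_{\gamma}^{-1}\, x_N^{2\gamma-1}\, w_{1,0}^{(n+2\gamma)/(n-2\gamma)}(\bx)$ as $x_N \to 0+$; hence the remaining task is the large-$|x|$ decay.

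For the tangential bound \eqref{eq-W-decay-1}, differentiation in $\bx$ commutes with the Poisson convolution \eqref{eq-Poisson}, giving $\nabla^{m}_{\bx} W_{1,0}(x) = (K_{\gamma}(\cdot, x_N) \ast \nabla^{m} w_{1,0})(\bx)$ with $|\nabla^{m} w_{1,0}(\xi)| \le C(1+|\xi|)^{-(n-2\gamma+m)}$. This reduces the estimate to the general assertion that the Poisson extension of a function whose absolute value is bounded by $C(1+|\xi|)^{-\beta}$ inherits the pointwise decay rate $(1+|x|)^{-\beta}$ on $\omrn$ for $\beta$ in the relevant range. One proves this by the standard decomposition $\mr^n = A_1 \cup A_2 \cup A_3$ with $A_1 = \{|\xi - \bx| \le |x|/4\}$, $A_2 = \{|\xi| \le |x|/4\}$ and $A_3$ the rest: on $A_1$ the pointwise decay of the integrand passes through since $|\xi| \gtrsim |x|$, and $\int K_{\gamma}(\cdot, x_N)\, d\bx = 1$ absorbs the kernel; on $A_2$ one uses $|\bx-\xi| \gtrsim |x|$ together with the pointwise bound $K_{\gamma}(\bx-\xi, x_N) \le C\, x_N^{2\gamma}(|\bx-\xi|^2+x_N^2)^{-(n+2\gamma)/2}$; on $A_3$ both decays are simultaneously available.

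For $\pa_N W_{1,0}$, differentiating the Poisson kernel under the integral and combining the two resulting terms into a single fraction yields the compact representation
$$\pa_N W_{1,0}(x) \,=\, p_{n,\gamma}\, x_N^{2\gamma-1}\int_{\mr^n} \frac{2\gamma|\bx-\xi|^2 - n\, x_N^2}{(|\bx-\xi|^2+x_N^2)^{(n+2\gamma+2)/2}}\, w_{1,0}(\xi)\, d\xi.$$
The triangle inequality on the numerator plus the splitting of the previous paragraph on the resulting kernel produces the $x_N^{2\gamma-1}/(1+|x|^{n})$ contribution. To recover the complementary $(1+|x|^{n-2\gamma+1})^{-1}$ contribution, substitute $\xi = \bx + x_N\eta$ to rewrite the right-hand side as $(p_{n,\gamma}/x_N)\int (2\gamma|\eta|^2 - n)(|\eta|^2+1)^{-(n+2\gamma+2)/2}\, w_{1,0}(\bx+x_N\eta)\, d\eta$, and exploit the cancellation $\int_{\mr^n} (2\gamma|\eta|^2 - n)(|\eta|^2+1)^{-(n+2\gamma+2)/2}\, d\eta = 0$ (an elementary Beta-function identity) to subtract the vanishing quantity $w_{1,0}(\bx)\int(\cdots)\,d\eta$ inside the integral at no cost. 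Estimating the remaining integrand by $C\, x_N|\eta|\, (1+|\bx|)^{-(n-2\gamma+1)}$ where $x_N|\eta| \le |\bx|/2$, and by the raw decay of $w_{1,0}$ otherwise, delivers the claim. Tangential derivatives $\nabla^{m}_{\bx}$ commute through the integral and transfer onto $w_{1,0}$, supplying the extra $m$ factors of $(1+|\xi|)^{-1}$.

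The main obstacle is precisely the cancellation in the last step: the naive bound $|\pa_N W_{1,0}| \lesssim (2\gamma/x_N)\, W_{1,0}$ coming from the two-term expansion of $\pa_N K_{\gamma}$ is too weak in the regime $x_N \sim 1$, $|\bx|$ large, so one must fold the two explicit contributions into the single kernel above, whose zero mean converts the apparent $x_N^{-1}$ blow-up into either the boundary-type factor $x_N^{2\gamma-1}$ or the interior decay rate $(1+|x|)^{-(n-2\gamma+1)}$, in agreement with the sharp boundary asymptotics of $\pa_N W_{1,0}$.
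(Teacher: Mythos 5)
Your route is genuinely different from the paper's: the paper disposes of the case $(\lambda,\sigma)=(1,0)$ by citing \cite[Lemma A.2]{CK}, which works from the Green representation \eqref{eq-Green} with Neumann datum $w_{1,0}^{(n+2\gamma)/(n-2\gamma)}$, and then rescales; you instead work from the Dirichlet (Poisson) representation \eqref{eq-Poisson}. Your scaling reduction and your treatment of $\pa_N W_{1,0}$ (in particular the zero-mean identity for the kernel $(2\gamma|\eta|^2-n)(|\eta|^2+1)^{-(n+2\gamma+2)/2}$, which is just $\int_{\mr^n}\pa_{x_N}K_\gamma(z,x_N)\,dz=\pa_{x_N}1=0$) are correct. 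However, the tangential part contains a genuine gap: the ``general assertion'' that the Poisson extension of a datum bounded by $C(1+|\xi|)^{-\beta}$ decays like $(1+|x|)^{-\beta}$ is \emph{false} once $\beta\ge n$. On $A_2=\{|\xi|\le |x|/4\}$ the kernel is only $O\big(x_N^{2\gamma}|x|^{-(n+2\gamma)}\big)$ while the datum is merely integrable there, so this region contributes $O(|x|^{-n})$ along interior directions $x_N\sim|\bx|$ (a nonnegative compactly supported datum shows this is sharp). For $m=2$ one has $\beta=n-2\gamma+2>n$ for every $\gamma\in(0,1)$, and for $m=1$ one has $\beta=n-2\gamma+1\ge n$ whenever $\gamma\le 1/2$; in these cases the $A_2$ bound $|x|^{-n}$ (with a logarithm at the borderline) is strictly weaker than the claimed $|x|^{-(n-2\gamma+m)}$, so your decomposition does not prove \eqref{eq-W-decay-1} for $m=2$, nor for $m=1$ when $\gamma\le 1/2$.

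The repair is the same cancellation you already deploy for $\pa_N W_{1,0}$: for $m\ge 1$ one has $\int_{\mr^n}\nabla^m w_{1,0}\,d\xi=0$ (and, for $m=2$, also $\int_{\mr^n}\xi_k\,\pa_{ij}w_{1,0}\,d\xi=0$ by one integration by parts), so on $A_2$ you may replace $K_\gamma(\bx-\xi,x_N)$ by its Taylor remainder about $\xi=0$ of order $m$ and use $\left|\nabla_z^{m}K_\gamma(z,x_N)\right|\le C x_N^{2\gamma}\big(|z|^2+x_N^2\big)^{-(n+2\gamma+m)/2}$ together with $\int_{A_2}|\xi|^{m}\left|\nabla^m w_{1,0}(\xi)\right|d\xi\le C|x|^{2\gamma}$ to recover the missing factor $|x|^{-(m-2\gamma)}$. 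Equivalently, place the $\bx$-derivatives on the kernel over $A_2$ and on $w_{1,0}$ over $A_1\cup A_3$ (they cannot sit on the kernel near $\xi=\bx$, where $\int\left|\nabla^m K_\gamma\right|\sim x_N^{-m}$ degenerates as $x_N\to 0$). With that modification the tangential estimate closes and the rest of your argument stands.
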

\begin{proof}
In \cite[Lemma A.2]{CK} the authors proved the assertion under the assumption that $(\ls) = (1,0)$ by treating Green's representation formula \eqref{eq-Green} (cf. Lemma \ref{lemma-lin-3} below).
The estimate for general $(\ls)$ is achieved by rescaling of the variables.
\end{proof}

\begin{lemma}\label{lemma-W-decay-2}
Fix any $\gamma \in (0,1)$, $\alpha \ge 1$ and $\beta \ge 0$. Given $t_0 > 0$ fixed, we have that
\begin{align*}
\int_{B^N_+(0,t_0)^c} x_N^{\alpha-2\gamma} |\bx|^{\beta} W_{1,0}^2(\bx,x_N) d\bx dx_N &\le C t_0^{-n+1+2\gamma+\alpha+\beta}
\intertext{and}
\int_{B^N_+(0,t_0)^c} x_N^{\alpha-2\gamma} |\bx|^{\beta} |\nabla W_{1,0}(\bx,x_N)|^2 d\bx dx_N &\le C t_0^{-n-1+2\gamma+\alpha+\beta}
\end{align*}
where $C = C(n, \gamma, \alpha, \beta)$ is a positive constant relying only on $n$, $\gamma$, $\alpha$ and $\beta$.
\end{lemma}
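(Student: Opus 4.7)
The plan is to insert the pointwise decay estimates for $W_{1,0}$ and $\nabla W_{1,0}$ from Lemma \ref{lemma-W-decay} into each integral and reduce to one-dimensional radial integrals via spherical coordinates on the upper half-space $\mr_+^N$. Throughout, I would write $x = r\omega$ with $r > t_0$ and $\omega = (\bar\omega,\omega_N) \in S^{N-1}_+$, so that $x_N = r\omega_N$ and $|\bx| = r|\bar\omega|$. It is convenient to first treat the case $t_0 \geq 1$, so that $(1+|x|)^{-1}$ is comparable to $|x|^{-1}$ on the domain of integration.

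For the first estimate I use $W_{1,0}(x)^2 \leq C(1+|x|)^{-2(n-2\gamma)}$, coming from the $m=0$ case of Lemma \ref{lemma-W-decay}. In polar form the integral factors as
\[
\left(\int_{t_0}^{\infty} r^{-n+2\gamma+\alpha+\beta-1}\,dr\right)\cdot\left(\int_{S^{N-1}_+} \omega_N^{\alpha-2\gamma}|\bar\omega|^{\beta}\,dS(\omega)\right),
\]
after using $N = n+1$ to combine the exponents. The angular factor is finite because $\alpha - 2\gamma > -1$ (from $\alpha \geq 1$ and $\gamma < 1$) and $\beta \geq 0$, and the radial integral equals a constant multiple of $t_0^{-n+1+2\gamma+\alpha+\beta}$, as required.

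For the second estimate I would combine the $m=1$ bound for $\nabla_{\bx} W_{1,0}$ with the bound for $\partial_N W_{1,0}$ provided by Lemma \ref{lemma-W-decay}, obtaining on $\{|x|\geq 1\}$
\[
|\nabla W_{1,0}(x)|^2 \leq C|x|^{-2(n-2\gamma+1)} + Cx_N^{4\gamma-2}|x|^{-2n}.
\]
The first summand reproduces the previous calculation with one extra factor of $|x|^{-2}$ in the radial integrand and yields the desired power $t_0^{-n-1+2\gamma+\alpha+\beta}$. The second summand is the delicate one because of the singular weight $x_N^{4\gamma-2}$: the polar reduction now produces a radial integral of $r^{\alpha+2\gamma+\beta-n-2}$ and an angular integral of $\omega_N^{\alpha+2\gamma-2}|\bar\omega|^{\beta}$. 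The radial exponent still integrates over $(t_0,\infty)$ to a constant times $t_0^{-n-1+2\gamma+\alpha+\beta}$, and the critical angular condition $\alpha + 2\gamma - 2 > -1$ is precisely guaranteed by $\alpha \geq 1$ and $\gamma > 0$.

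Finally, for $0 < t_0 < 1$ the right-hand sides are $\gtrsim 1$, since the exponents $-n+1+2\gamma+\alpha+\beta$ and $-n-1+2\gamma+\alpha+\beta$ are negative (which is precisely the convergence condition on the radial integrals above). So it suffices to control each left-hand side by a uniform constant; this follows by splitting at $|x|=1$, applying the above estimates on $\{|x|>1\}$, and on the bounded region $\{t_0 < |x| < 1,\, x_N > 0\}$ using boundedness of $W_{1,0}$ and $\nabla W_{1,0}$ together with the same angular-integrability conditions $\alpha-2\gamma>-1$ and $\alpha+2\gamma-2>-1$. The only real technical point is the verification of these two borderline angular integrabilities, and the hypothesis $\alpha \geq 1$ is what makes both of them hold simultaneously.
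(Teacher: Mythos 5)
Your proposal is correct and follows essentially the same route as the paper: insert the pointwise decay bounds of Lemma \ref{lemma-W-decay} and reduce to a product of a radial integral and an angular integral in polar coordinates on $\mr^N_+$, with the angular integrability coming from $\alpha-2\gamma>-1$ (and, for the $\pa_N W_{1,0}$ term, from $\alpha+2\gamma-2>-1$). The only difference is that you spell out the gradient case — in particular the handling of the singular factor $x_N^{2\gamma-1}$ in $\pa_N W_{1,0}$ — which the paper dispatches with ``similar reasoning,'' so your write-up is, if anything, more complete.
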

\begin{proof}
Integrating in the polar coordinate and taking advantage of \eqref{eq-W-decay-1}, we have
\begin{align*}
&\ \int_{B^N_+(0,t_0)^c} x_N^{\alpha-2\gamma} |\bx|^{\beta} W_{1,0}^2(\bx,x_N) d\bx dx_N \\
&\le C \int_{\left\{(r,x_N) \in \mr^2:\ r^2+x_N^2 \ge t_0^2,\ r, x_N > 0\right\}} {x_N^{\alpha-2\gamma} r^{n-1+\beta} \over 1+(r^2+x_N^2)^{n-2\gamma}} drdx_N \quad (r = |\bx|)\\
&\le \int_0^{\pi \over 2} \int_{t_0}^{\infty} {t^{\alpha-2\gamma} (\sin\theta)^{\alpha-2\gamma} t^{n-1+\beta}(\cos\theta)^{n-1+\beta} \over t^{2(n-2\gamma)}} tdtd\theta \quad (r = t\cos\theta,\ x_N = t\sin\theta)\\
&= C \int_{t_0}^{\infty} t^{-n+2\gamma+\alpha+\beta} dt = C t_0^{-n+1+2\gamma+\alpha+\beta}.
\end{align*}
In the above formula, that $\int_0^{\pi \over 2} (\sin\theta)^{\alpha-2\gamma} (\cos\theta)^{n-1+\beta} d\theta < \infty$ is guaranteed by the assumption that $\alpha-2\gamma > -1$ and $\beta \ge 0$.

The other equation can be derived in similar reasoning.
Therefore the conclusion of Lemma \ref{lemma-W-decay-2} follows.
\end{proof}
\noindent For the remaining part of the paper, we write $W_\dz = W_{\delta}$ and $w_\dz = w_{\delta}$ for simplicity.

\section{Reduction process} \label{sec-red}
Recall the parameter $\ep \in (0,1)$ in the definition of the tensor $h$ (refer to \eqref{eq-h}).
From now on, for each sufficiently small fixed $\ep > 0$, we look for a positive solution to \eqref{eq-main-ext} of the form $W_{\edte} + \Psi_{\edte}$
where $W_{\edte}$ is the $\gamma$-harmonic extension of the bubble $w_{\edte}$
and $\Psi_{\edte}$ is a remainder term which is small in a suitable sense,
by choosing the constant $\delta(\ep) > 0$ and the point $\tau(\ep) \in \mr^n$ appropriately.

Let us consider the admissible set $\ma := \(1-\varepsilon_0, 1+\varepsilon_0\) \times B^n(0,\varepsilon_0)$ where $\varepsilon_0 \in (0,1)$ is some small number.
In this section, given any $(\dt) \in \ma$, we shall choose a function $\Psi_{\edt}$ for each $W_{\edt}$ so that $W_{\edt} + \Psi_{\edt}$ solves an auxiliary equation to \eqref{eq-main-ext}.
The selection of the special pairs $(\edte)$, which gives a desired solution of \eqref{eq-main-ext} for each $\ep > 0$, will be performed in the subsequent sections.
Throughout this section, it is assumed that $(\ls) = (\edt)$.

\subsection{Weighted Sobolev inequality and regularity results for degenerate elliptic equations}
In this subsection, we derive Sobolev inequalities for the spaces $\drn$ and $\md^{1,2}(\vr)$.
After proving them, we also examine regularity of solutions to degenerate elliptic equations.

\begin{lemma}\label{lemma-sobolev}
The followings hold.
\begin{enumerate}
\item Fix any $\vr_1, \vr_2 > 0$. Then we have
\[\(\int_0^{\vr_1} \int_{B^n(0, \vr_2)} x_N^{1-2\gamma} |U|^2 dx\)^{1/2} \le C \|U\|_{\drn}
\quad \text{for } U \in \drn\]
where $C > 0$ depends only on $N$, $\gamma$, $\vr_1$ and $\vr_2$.
\item Given $\vr > 0$ fixed, there exist positive constants $C$ and $\eta$ depending only on $N$, $\gamma$ and $\vr$ such that for all $1 \le m \le {N \over N-1} + \eta$
\begin{equation}\label{eq-sobolev-2}
\(\int_{B^N_+(0,\vr)} x_N^{1-2\gamma} |U|^{2m} dx\)^{1/2m} \le C \(\int_{B^N_+(0,\vr)} x_N^{1-2\gamma} |\nabla U|^2 dx\)^{1/2}, \quad U \in \md^{1,2}(\vr).
\end{equation}
\end{enumerate}
\end{lemma}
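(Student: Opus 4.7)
My plan is to handle the two parts separately. Part (1) will be obtained by a direct Hardy-type argument that reduces the weighted $L^2$ integral to the Sobolev trace inequality already recalled in the excerpt. Part (2) will be obtained by H\"older interpolation starting from the scaling-critical weighted Sobolev embedding on $\mr^N_+$.

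For part (1), by density it suffices to take $U \in C^{\infty}_c(\omrn)$. Writing $U(\bx, x_N) = U(\bx, 0) + \int_0^{x_N} \pa_N U(\bx, s)\,ds$ and applying Cauchy--Schwarz with the weight split as $s^{(1-2\gamma)/2} \cdot s^{-(1-2\gamma)/2}$ (the latter integrates because $\gamma > 0$), one obtains
\[
|U(\bx, x_N)|^2 \le 2|U(\bx,0)|^2 + \frac{x_N^{2\gamma}}{\gamma}\int_0^{\infty} s^{1-2\gamma}|\pa_N U(\bx,s)|^2\,ds.
\]
Multiplying by $x_N^{1-2\gamma}$ and integrating in $x_N \in (0,\vr_1)$ and then in $\bx \in B^n(0,\vr_2)$, the second piece is controlled by $C(\vr_1,\vr_2)\|U\|^2_{\drn}$ because $\int_0^{\vr_1} x_N\,dx_N < \infty$. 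The first piece becomes a constant multiple of $\|U(\cdot,0)\|^2_{L^2(B^n(0,\vr_2))}$, which by H\"older on the bounded set $B^n(0,\vr_2)$ and the Sobolev trace embedding $\drn \hookrightarrow L^{2n/(n-2\gamma)}(\mr^n)$ cited in the paper is controlled by $C\|U\|^2_{\drn}$.

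For part (2), scaling $U_{\lambda}(x) := \lambda^{(N-1-2\gamma)/2} U(\lambda x)$ leaves both sides invariant precisely when the Lebesgue exponent is the critical $2^*_{\gamma} := 2(N+1-2\gamma)/(N-1-2\gamma)$, so one expects and indeed has the critical inequality
\[
\(\int_{\mr^N_+} x_N^{1-2\gamma}|U|^{2^*_{\gamma}}\,dx\)^{1/2^*_{\gamma}} \le C\(\int_{\mr^N_+} x_N^{1-2\gamma}|\nabla U|^2\,dx\)^{1/2}.
\]
Extending $U \in \md^{1,2}(\vr)$ by zero to all of $\mr^N_+$ and combining this embedding with H\"older on the bounded set $B^N_+(0,\vr)$ (noting $\int_{B^N_+(0,\vr)} x_N^{1-2\gamma}\,dx < \infty$ since $1-2\gamma > -1$) yields \eqref{eq-sobolev-2} for every $m \in [1, 2^*_{\gamma}/2]$. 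A direct computation shows that $N/(N-1) < 2^*_{\gamma}/2$ reduces, after cross-multiplying, to $N-1+2\gamma > 0$, so strictly positive room remains and one can choose $\eta = \eta(N,\gamma) > 0$ accordingly.

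The main obstacle is supplying the critical weighted Sobolev inequality used in part (2). The cleanest route is to reflect $U$ evenly across $\{x_N = 0\}$: for $\gamma \in (0,1)$ the exponent $1-2\gamma$ lies in $(-1,1)$, so $|x_N|^{1-2\gamma}$ belongs to the Muckenhoupt class $A_2$ on $\mr^N$, and the weighted Sobolev inequality of Fabes--Kenig--Serapioni type then applies with the prescribed scaling exponent. Alternatively, this embedding is contained in the Caffarelli--Silvestre extension framework already central to the paper, so it can be invoked directly.
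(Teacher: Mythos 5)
Your part (1) is correct and is essentially the paper's own argument: the paper defers the pointwise step to the proof of Lemma 3.1 of \cite{CK}, which is precisely your fundamental-theorem-of-calculus plus Cauchy--Schwarz computation in the $x_N$-variable, and then closes in the same way with the trace embedding $\drn \hookrightarrow L^{2n/(n-2\gamma)}(\mr^n)$ and H\"older on the bounded ball $B^n(0,\vr_2)$.

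Part (2) is where you diverge from the paper, and where there is a gap. The paper simply quotes \cite[Lemma 2.2]{TX} (i.e.\ the Fabes--Kenig--Serapioni theorem \cite{FKS}): for an $A_2$ weight on a ball in $\mr^N$ there is a $\delta>0$ such that the weighted Sobolev inequality holds for all exponents $2m$ with $1 \le m \le N/(N-1)+\delta$ --- which is verbatim the statement of \eqref{eq-sobolev-2}. You instead assert the scaling-critical embedding with exponent $2^*_{\gamma} = 2(N+1-2\gamma)/(N-1-2\gamma)$ and interpolate downwards. The interpolation and the verification that $N/(N-1) < 2^*_{\gamma}/2$ are fine, but the critical inequality itself does not follow from the Fabes--Kenig--Serapioni theorem ``with the prescribed scaling exponent,'' as you claim: for a general $A_2$ weight that theorem only reaches $N/(N-1)+\delta$, which is strictly below $2^*_{\gamma}/2$, and the reflection across $\{x_N=0\}$ does not change this. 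The critical embedding for the specific weight $|x_N|^{1-2\gamma}$ is in fact true, but it needs either a proof exploiting the one-dimensional power structure of the weight or a dedicated reference; the Caffarelli--Silvestre framework does not hand it to you either, since what that framework supplies is the \emph{trace} inequality on $\mr^n$, not the bulk weighted embedding on $\mr^N_+$. The irony is that the theorem you mis-cite already yields exactly the range $1 \le m \le N/(N-1)+\eta$ demanded by the lemma, so the detour through the critical exponent is simultaneously the only unsupported step and an unnecessary one: citing \cite{FKS} or \cite{TX} for the subcritical range directly, as the paper does, finishes part (2).
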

\begin{proof}
1. By density, we may assume that $U \in C_c^{\infty}(\mr^N_+)$.
By the proof of Lemma 3.1 of \cite{CK}, we obtain
\[\int_0^{\vr_1} \int_{B^n(0, \vr_2)} x_N^{1-2\gamma} |U(\bx,x_N)|^2 d\bx dx_N
\le C \(\int_{B^n(0, \vr_2)} |U(\bx,0)|^2 d\bx + \int_0^{\vr_1} \int_{B^n(0, \vr_2)} x_N^{1-2\gamma} |\nabla U(\bx,x_N)|^2 d\bx dx_N\).\]
Thus the Sobolev trace inequality $\drn \hookrightarrow L^{2n \over n-2\gamma}(\mr^n)$ (proved in \cite{Xi}) gives the desired inequality.

\medskip \noindent
2. Since $x_N^{1-2\gamma}$ is an element of the class of Muckenhoupt weights $A_2$, \cite[Lemma 2.2]{TX} (cf. \cite[Theorem 1.2]{FKS}) implies that
\eqref{eq-sobolev-2} holds for arbitrary $U \in C_c^{\infty}\(B^N_+(0,\vr) \cup B^n(0,\vr)\)$.
By the standard density argument, \eqref{eq-sobolev-2} can be extended to all $U \in \md^{1,2}(\vr)$.
\end{proof}
\begin{rmk}\label{rmk-sobolev}
Since $x_N^{1-2\gamma}, x_N^{2\gamma-1} \in L^1_{\text{loc}}(\mr^N_+)$,
Lemma \ref{lemma-sobolev} (1) shows that the gradient $\nabla U$ is well-defined for any $U \in \drn$. See \cite[Subsection 2.1]{FKS}.
\end{rmk}

\medskip Consider the equation
\begin{equation} \label{eq-lin-12}
\begin{cases}
-\textnormal{div}_{\bg}\(\rho^{1-2\gamma}\nabla U\) = x_N^{1-2\gamma} \Phi &\text{in } \(\mr^N_+, \bg\),\\
\pa^{\gamma}_{\nu} U = \zeta &\text{on } \mr^n
\end{cases} \end{equation}
and its related inequalities for given $\Phi \in L^1_{\text{loc}}(\mr^N_+)$ and $\zeta \in L^1_{\text{loc}}(\mr^n)$.

\begin{dfn}\label{dfn-w-sol}
We say that a function $U \in \drn$ is a weak solution of \eqref{eq-lin-12} if
\[\kappa_{\gamma} \int_{\mr^N_+} \rho^{1-2\gamma} \la \nabla U, \nabla V \ra_{\bg} dx
= \kappa_{\gamma} \int_{\mr^N_+} x_N^{1-2\gamma} \Phi V dx + \int_{\mr^n} \zeta v d\bx \quad \text{for any } V \in C^{\infty}_c(\omrn)\]
where $V = v$ on $\mr^n$.
\end{dfn}
\noindent Because the norm $U \mapsto \(\int_{\mr^N_+} \rho^{1-2\gamma} |\nabla U|_{\bg} dx\)^{1/2}$ is equivalent to $\|\cdot\|_{\drn}$,
the space $\drn$ is suitable to deal with \eqref{eq-lin-12}.
Also, we can immediately extend the space $C_c^{\infty}(\omrn)$ of test functions
in Definition \ref{dfn-w-sol} to $C_c(\omrn) \cap \drn$ by the method of mollifiers. 

\medskip The following local regularity result for a weak solution to \eqref{eq-lin-12} can be proved.
\begin{lemma}\label{lemma-reg}
Suppose that $U \in L^{\infty}(\mr^N_+)$ is a weak solution of \eqref{eq-lin-12}. Fix any $\varrho > 0$.
If $\Phi \in L^{\infty}(B^N_+(0,\varrho))$ and $\zeta \in L^{\infty}(B^n(0,\varrho))$,
then $U \in C^{\vartheta}(B^N_+(0,\varrho/2))$ for some $\vartheta \in (0,1)$ and
\begin{equation}\label{eq-reg-1}
\|U\|_{C^{\vartheta}(B^N_+(0,\varrho/2))} \le C\(\|U\|_{L^2(B^N_+(0,\varrho))}
+ \|\Phi\|_{L^{\infty}(B^N_+(0,\varrho))} + \|\zeta\|_{L^{\infty}(B^n(0,\varrho))}\).
\end{equation}
The constant $C > 0$ depends only on $N$, $\gamma$ and $\varrho$.
\end{lemma}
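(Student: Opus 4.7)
The plan is to combine interior and boundary regularity estimates for degenerate elliptic equations with an $A_2$ Muckenhoupt weight. Because $\det\bg=1$ by construction, the weak formulation in Definition \ref{dfn-w-sol} reads
\[
\kappa_\gamma\int_{\mr^N_+}\rho^{1-2\gamma}\bg^{ab}\pa_aU\,\pa_bV\,dx=\kappa_\gamma\int_{\mr^N_+}x_N^{1-2\gamma}\Phi V\,dx+\int_{\mr^n}\zeta v\,d\bx
\]
for all $V\in C_c(\omrn)\cap\drn$ with trace $v$. By Proposition \ref{prop-rho-est}(1) the coefficient matrix $A^{ab}(x):=\rho(x)^{1-2\gamma}\bg^{ab}(x)$ satisfies the two-sided bound $c_1x_N^{1-2\gamma}|\xi|^2\le A^{ab}\xi_a\xi_b\le c_2x_N^{1-2\gamma}|\xi|^2$. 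Since $x_N^{1-2\gamma}\in A_2$, the equation \eqref{eq-lin-12} falls in the class of weighted divergence-form equations treated by Fabes--Kenig--Serapioni, and, in the form closest to the present situation, in \cite{TX} and \cite{CK}.

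I would first derive a local $L^\infty$ bound up to the boundary via weighted Moser iteration. Testing against $V=\eta^2(|U|+k)^{\beta}\,\mathrm{sgn}\,U$, with $k:=\|\Phi\|_{L^\infty(B^N_+(0,\vr))}+\|\zeta\|_{L^\infty(B^n(0,\vr))}+1$ and a cut-off $\eta$ supported in $B^N_+(0,\vr)\cup B^n(0,\vr)$, produces a Caccioppoli-type inequality in which the boundary integral $\int\zeta v$ is absorbed using the Sobolev trace inequality $\drn\hookrightarrow L^{2n/(n-2\gamma)}(\mr^n)$ recalled in the proof of Lemma \ref{lemma-sobolev}(1). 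The weighted Sobolev embedding of Lemma \ref{lemma-sobolev}(2), which delivers a strict gain in integrability above the exponent $2$, supplies the geometric step of the iteration and yields
\[
\|U\|_{L^\infty(B^N_+(0,3\vr/4))}\le C\bigl(\|U\|_{L^2(B^N_+(0,\vr))}+\|\Phi\|_{L^\infty(B^N_+(0,\vr))}+\|\zeta\|_{L^\infty(B^n(0,\vr))}\bigr).
\]

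Once the $L^\infty$ bound is in hand, Hölder regularity follows from the weighted De Giorgi--Nash--Moser theorem applied to $A^{ab}$: in the interior directly, and at the boundary by the even reflection $\wtu(\bx,x_N):=U(\bx,|x_N|)$, which transforms the Neumann-type condition $\pa_\nu^\gamma U=\zeta$ into a bounded distributional source concentrated on $\{x_N=0\}$ for an equation in a full Euclidean ball with weight $|x_N|^{1-2\gamma}\in A_2$. Carrying the boundary contribution through each dyadic oscillation-decay step gives a boundary Hölder exponent $\vartheta\in(0,1)$, and a standard covering argument combining interior and boundary balls yields \eqref{eq-reg-1}. The main obstacle is precisely the bookkeeping of the $\zeta$-term through the iteration: it must be absorbed into a small portion of the weighted Dirichlet energy at the Moser step and remain uniformly controlled after each rescaling at the oscillation-decay step. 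This is by now routine in the weighted Muckenhoupt literature (compare \cite{TX} and \cite[proof of Lemma 3.1]{CK}), and with it the lemma is proved.
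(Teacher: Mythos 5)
Your argument is correct and follows essentially the same route as the paper: the paper also invokes weighted Moser iteration for the $A_2$ weight $x_N^{1-2\gamma}$ (citing \cite{TX} and \cite{JLX}), obtaining a boundary Harnack inequality whose standard consequence is the oscillation decay you derive via reflection. The only presentational difference is that the paper states the intermediate step as a Harnack inequality rather than as an $L^\infty$ bound followed by even reflection, but these are the same circle of ideas and your handling of the $\zeta$-term matches the cited sources.
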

\begin{rmk}
We may relax the integrability condition of $U$, $\Phi$ and $\zeta$ to get more general results.
However, the current setting is sufficient for our purpose, so we do not pursue in this direction.
\end{rmk}
\begin{proof}[Proof of Lemma \ref{lemma-reg}]
By applying the standard Moser iteration technique with the John-Nirenberg inequality for $BMO(B^N_+(0,\vr); x_N^{1-2\gamma} dx)$, we obtain Moser's Harnack inequality:
If $U \in L^{\infty}(\mr^N_+)$ is a nonnegative weak solution to \eqref{eq-lin-12}, then there exists $C > 0$ depending only on $N, \gamma$ and $\varrho$ such that for any $0 < \varrho' < \varrho/2$
\[\sup_{x \in B^N_+(0,\varrho')} U \le C \(\inf_{x \in B^N_+(0,\varrho')} U
+ (\varrho')^2 \|\Phi\|_{L^{\infty}(B^N_+(0,\varrho))} + (\varrho')^{2\gamma} \|\zeta\|_{L^{\infty}(B^n(0,\varrho))}\).\]
Inequality \eqref{eq-reg-1} is its consequence.
For a proof in a similar setting, refer to Proposition 2.6 of \cite{JLX} and Propositions 3.1, 3.2 of \cite{TX}.
In fact, our case is simpler because we assumed that $U \in L^{\infty}(\mr^N_+)$ so that we do not need to trim it.
\end{proof}

\medskip In the remaining part of this subsection, we are concerned about the weak maximum principles for weighted Neumann problems.
\begin{lemma}\label{lemma-lin-1}
Suppose that $U \in \drn$ satisfies the inequality 
\[\begin{cases}
-\textnormal{div}_{\bg}\(\rho^{1-2\gamma}\nabla U\) \ge 0 &\text{in } \(\mr^N_+, \bg\),\\
\pa^{\gamma}_{\nu} U \ge 0 &\text{on } \mr^n
\end{cases}\]
weakly. Then $U \ge 0$ in $\mr^N_+$.
\end{lemma}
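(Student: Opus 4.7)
The plan is the standard Stampacchia-type weak maximum principle argument: test the weak inequality against the negative part $U_-:=\max\{-U,0\}$ of $U$, force its Dirichlet-type energy to vanish, and conclude that $U_-\equiv 0$.

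First, I would translate the two hypotheses into a single integral inequality. The assumptions $-\textnormal{div}_{\bg}(\rho^{1-2\gamma}\nabla U)\ge 0$ in $\mr^N_+$ and $\pa^\gamma_\nu U\ge 0$ on $\mr^n$ together mean, via the inequality version of the weak formulation in Definition \ref{dfn-w-sol}, that
\[\int_{\mr^N_+}\rho^{1-2\gamma}\la\nabla U,\nabla V\ra_{\bg}\,dx\;\ge\;0\qquad\text{for every nonnegative }V\in C_c^\infty(\omrn).\]
By Proposition \ref{prop-rho-est}(1), $\rho\sim x_N$, so $\rho^{1-2\gamma}$ is a Muckenhoupt $A_2$-weight comparable to $x_N^{1-2\gamma}$. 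The Stampacchia chain rule for such weighted Sobolev spaces then yields $U_-\in\drn$ with $\nabla U_-=-\chi_{\{U<0\}}\nabla U$, and $U_-$ can be approximated in the $\drn$-norm by a sequence of nonnegative $V_k\in C_c^\infty(\omrn)$ obtained by truncating $U_-$ at height $k$, multiplying by a smooth cutoff in $|x|$, and mollifying in the interior of $\mr^N_+$. Inserting $V=V_k$ into the displayed inequality (legitimate after extending the class of test functions to $C_c(\omrn)\cap\drn$ as mentioned after Definition \ref{dfn-w-sol}) and passing to the limit, using continuity of the bilinear form $(U,V)\mapsto\int_{\mr^N_+}\rho^{1-2\gamma}\la\nabla U,\nabla V\ra_{\bg}\,dx$ on $\drn\times\drn$ via Cauchy--Schwarz, one obtains
\[\int_{\mr^N_+}\rho^{1-2\gamma}\la\nabla U,\nabla U_-\ra_{\bg}\,dx\;\ge\;0.\]

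Since $\nabla U_-=-\chi_{\{U<0\}}\nabla U$, one has $\la\nabla U,\nabla U_-\ra_{\bg}=-|\nabla U_-|_{\bg}^2$ a.e., so the inequality rearranges to $\int_{\mr^N_+}\rho^{1-2\gamma}|\nabla U_-|_{\bg}^2\,dx\le 0$. As $\bg$ is positive definite and $\rho>0$ a.e., this forces $\nabla U_-\equiv 0$ a.e., i.e.\ $\|U_-\|_{\drn}=0$. Because $\drn$ is defined as the completion of $C_c^\infty(\omrn)$ in this norm, $U_-=0$ as an element of $\drn$; combining the Sobolev trace embedding $\drn\hookrightarrow L^{2n/(n-2\gamma)}(\mr^n)$ (quoted in the proof of Lemma \ref{lemma-sobolev}) with the localized weighted inequality of Lemma \ref{lemma-sobolev}(1), one concludes $U_-\equiv 0$ a.e.\ on $\mr^N_+$. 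Hence $U\ge 0$.

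The main technical hurdle is not the algebra but the two ``soft'' steps hidden in the second paragraph: verifying that $U_-$ genuinely lies in $\drn$ (the Stampacchia chain rule in the $A_2$-weighted setting) and that it is the $\drn$-limit of a sequence of nonnegative $C_c^\infty(\omrn)$-test functions. Once these facts are secured, the rest of the argument is entirely routine.
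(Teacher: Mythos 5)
Your argument is correct and is essentially the paper's own proof: the paper likewise combines the two weak inequalities into $\kappa_{\gamma}\int_{\mr^N_+}\rho^{1-2\gamma}\la\nabla U,\nabla V\ra_{\bg}\,dx\ge 0$ for nonnegative test functions and then inserts $V=U_-$ by density to force $U_-=0$. The only difference is that you spell out the two approximation steps (that $U_-\in\drn$ and that it is a $\drn$-limit of nonnegative smooth test functions) which the paper leaves implicit.
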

\begin{proof}
It holds that
\[\kappa_{\gamma} \int_{\mr^N_+} \rho^{1-2\gamma} \la \nabla U, \nabla V \ra_{\bg} dx \ge \int_{\mr^n} \pa^{\gamma}_{\nu} U \cdot V dx \ge 0 \quad \text{for any nonnegative } V \in C^{\infty}_c(\omrn).\]
Since the space $C^{\infty}_c(\omrn)$ is dense in $\drn$, we can insert $V = U_- \in \drn$ in the above inequality to get $V = 0$.
Hence $U \ge 0$ in $\mr^N_+$.
\end{proof}

\noindent The following generalized maximum principle will be used in Lemma \ref{lemma-red-2}.
\begin{lemma}\label{lemma-gen-max}
Suppose that $U \in L^{\infty}(\mr^N_+) \cap \drn$ satisfies
\begin{equation}\label{eq-gen-max-1}
\begin{cases}
LU := -\textnormal{div}_{\bg}\(\rho^{1-2\gamma}\nabla U\) + E(\rho)U \ge 0 &\text{in } \(\mr^N_+, \bg\),\\
\pa^{\gamma}_{\nu} U \ge 0 &\text{on } \mr^n
\end{cases} \end{equation}
weakly (in the sense of the adequate modification of Definition \ref{dfn-w-sol}).
Assume also that there exists a function $W \in \drn$ such that $W \in C(\omrn)$ , $\nabla W \in L^{\infty}(\omrn)$,
\begin{equation}\label{eq-gen-max-3}
\begin{cases}
LW = -\textnormal{div}_{\bg}\(\rho^{1-2\gamma}\nabla W\) + E(\rho)W \ge 0 &\text{in } \(\mr^N_+, \bg\),\\
W > 0 &\text{on } \omrn,\\
\pa^{\gamma}_{\nu} W \ge 0 &\text{on } \mr^n,
\end{cases} \end{equation}
and $|U(x)|/W(x) \to 0$ uniformly as $|x| \to \infty$, then $U \ge 0$ in $\mr^N_+$.
\end{lemma}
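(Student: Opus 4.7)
The plan is to reduce the problem to a weighted Dirichlet-type inequality for $V := U/W$, which eliminates the sign-indefinite zeroth-order coefficient $E(\rho)$. Since $W \in C(\omrn)$ is strictly positive, $V$ is well-defined on all of $\omrn$ and locally of class $\drn$ by the chain rule for Sobolev functions with a Lipschitz coefficient. The decay hypothesis $|U|/W \to 0$ uniformly at infinity forces the negative part $V_- := \max\{-V, 0\}$ to vanish uniformly at infinity. For each $\epsilon > 0$ I would introduce the truncation $V_\epsilon := (V_- - \epsilon)_+$, which then has compact support in $\omrn$ contained in $\{U < 0\}$, and aim to prove $V_\epsilon \equiv 0$ for every $\epsilon > 0$.

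The key trick is a combination of test functions that produces an exact cancellation of the $E(\rho)$ terms. I would test the weak formulation of \eqref{eq-gen-max-1} against $\phi_1 := W V_\epsilon \ge 0$, and the weak formulation of \eqref{eq-gen-max-3} against $\phi_2 := -U V_\epsilon \ge 0$ (the latter being nonnegative since $U = WV < 0$ on $\textnormal{supp}(V_\epsilon)$). Summing the two resulting inequalities, the terms $\pm\, \kappa_\gamma E(\rho) U W V_\epsilon$ cancel identically, and a product-rule computation condenses the remaining gradient contributions as
\[
\la \nabla U, \nabla(W V_\epsilon) \ra_{\bg} - \la \nabla W, \nabla(U V_\epsilon) \ra_{\bg} = \la W \nabla U - U \nabla W, \nabla V_\epsilon \ra_{\bg} = W^2 \la \nabla V, \nabla V_\epsilon \ra_{\bg}.
\]
On $\textnormal{supp}(V_\epsilon)$, where $V_\epsilon = -V - \epsilon$, one has $\nabla V_\epsilon = -\nabla V$ and this quantity reduces pointwise to $-W^2 |\nabla V_\epsilon|_{\bg}^2$. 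The resulting inequality $0 \le -\int_{\mr^N_+} \rho^{1-2\gamma} W^2 |\nabla V_\epsilon|_{\bg}^2\, dx$ then forces $\nabla V_\epsilon = 0$ almost everywhere, so that $V_\epsilon$ is constant; since it vanishes outside a bounded set, $V_\epsilon \equiv 0$. Letting $\epsilon \to 0^+$ yields $V_- \equiv 0$ and hence $U = WV \ge 0$, as required.

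The main obstacle I anticipate is verifying the functional-analytic admissibility of the test functions $\phi_1$ and $\phi_2$ in $\drn$ and handling the boundary contributions that arise from integration by parts. Both tasks rely on the continuity and positivity of $W$ (which gives a positive lower bound on the compact set $\textnormal{supp}(V_\epsilon)$), the bound $\nabla W \in L^\infty(\omrn)$, the boundedness of $U$, and Stampacchia's chain rule applied to the Lipschitz truncations defining $V_\epsilon$. The boundary pieces combine as $(W \pa_\nu^\gamma U - U \pa_\nu^\gamma W) V_\epsilon$, which is pointwise nonnegative on $\textnormal{supp}(V_\epsilon) \cap \mr^n$ by the sign assumptions on $\pa_\nu^\gamma U$ and $\pa_\nu^\gamma W$ together with $U < 0$ and $V_\epsilon \ge 0$, so these terms contribute with the correct sign and are absorbed seamlessly into the two weak inequalities.
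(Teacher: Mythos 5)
Your proof is correct, and it shares the paper's two essential ingredients: the substitution $V = U/W$ and the pairing of the two differential inequalities so that the zeroth-order contributions $E(\rho)UWV_{\ep}$ cancel exactly. Where you genuinely diverge is in the endgame. The paper tests \eqref{eq-gen-max-1} against $W^{-1}\Phi$ with $\Phi = (V_m)_-$ for a level $m$ just below $-\inf V$, is left with the first-order cross term $2\rho^{1-2\gamma}\la \nabla V, \nabla W\ra_{\bg}W^{-1}(V_m)_-$, and disposes of it via H\"older's inequality and the weighted Sobolev inequality \eqref{eq-sobolev-2}, arriving at the lower bound $\int_{\mathrm{supp}(V_m)_-}x_N^{1-2\gamma}\,dx \ge C>0$, which contradicts the fact that $|\mathrm{supp}(V_m)_-|\to 0$ as $m$ approaches $-\inf V$. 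Your symmetric choice of test functions $WV_{\ep}$ and $-UV_{\ep}$ makes the first-order cross terms cancel as well, collapsing the sum of the two tested inequalities to $0 \le -\int_{\mr^N_+}\rho^{1-2\gamma}W^2|\nabla V_{\ep}|_{\bg}^2\,dx$. This removes any appeal to the Sobolev inequality and to the shrinking-support contradiction, and it yields $V\ge 0$ directly rather than by excluding a negative infimum. The only price is the (routine) verification that $WV_{\ep}$ and $-UV_{\ep}$ are admissible nonnegative, compactly supported elements of $\drn$, which you correctly reduce to the uniform positivity of $W$ on the compact set $\mathrm{supp}(V_{\ep})$ (guaranteed by $W\in C(\omrn)$, $W>0$ and the uniform decay of $|U|/W$), the bounds $U,\nabla W\in L^{\infty}$, and Stampacchia's chain rule; this is the same density step the paper itself invokes after \eqref{eq-gen-max-2}.
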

\begin{proof}
The proof is in the spirit of that of \cite[Lemma A.3]{JLX}.
By testing \eqref{eq-gen-max-1} with $W^{-1}\Phi \in C_c(\omrn) \cap \drn$,
we observe that the function $V := W^{-1}U$ satisfies
\begin{equation}\label{eq-gen-max-2}
\int_{\mr^N_+} \left[ \rho^{1-2\gamma} \( \la \nabla V, \nabla \Phi \ra_{\bg} - 2 \la \nabla V, \nabla W \ra_{\bg} W^{-1}\Phi
+ \la \nabla W, \nabla \(VW^{-1}\Phi\)\ra_{\bg} \) + E(\rho) V\Phi \right] dx \ge 0
\end{equation}
for all nonnegative function $\Phi \in C_c^{\infty}(\mr^N_+)$.
By density and \eqref{eq-sobolev-2}, it is also allowed to take any nonnegative $\Phi \in \drn$ with compact support into \eqref{eq-gen-max-2}.

To the contrary, suppose that $\inf\limits_{x \in \mr^N_+} V(x) < -m < 0$.
If we define a function $V_m = V + m$, then $(V_m)_- \ge 0$ has a compact support since $|V(x)| \to 0$ uniformly in $x$ as $|x| \to \infty$ by the hypothesis.
Therefore putting $\Phi = (V_m)_-$ in \eqref{eq-gen-max-2} and employing \eqref{eq-gen-max-3} with the test function $W^{-1}V(V_m)_-$, we obtain
\[\int_{\mr^N_+} \rho^{1-2\gamma} |\nabla (V_m)_-|^2_{\bg} dx \le 2 \int_{\mr^N_+} \rho^{1-2\gamma} \la \nabla W, \nabla (V_m)_- \ra_{\bg} W^{-1} (V_m)_- dx.\]
Now, by applying H\"older's inequality and the boundedness of $W^{-1}$ and $\nabla W$ on the previous inequality,
and then utilizing the weighted Sobolev inequality \eqref{eq-sobolev-2}, we find that
\[\int_{\text{supp}(V_m)_-} x_N^{1-2\gamma} dx \ge C > 0\]
where supp$(V_m)_-$ is the support of $(V_m)_-$.
However since $x_N^{1-2\gamma} \in L^1_{\text{loc}}(\omrn)$ and $|\text{supp}(V_m)_-| \to 0$ as $m \to -\inf\limits_{x \in \mr^N_+} V(x)$,
the left-hand side should go to 0 as well, which is absurd.
We have reached a contradiction, and so $V$ (or $U$) $\ge 0$ in $\mr^N_+$.
\end{proof}

\subsection{Existence and decay estimate for solutions to degenerate elliptic equations}
This part is devoted to study existence and decay property of solutions to degenerate elliptic equations.

\medskip
Assuming that $n > 2\gamma + 4(d_0+1) + 2/3$, let us set three weighted norms
\begin{align*}
\|U\|_* &= \sup_{x \in \mr^N_+} \left[\chi_{\{|x - (\sigma,0)| \le \nu/2\}} \cdot \(\frac{\mu \ep^{\kappa-{n+2\gamma \over 2}}} {\ep^{\kappa-2\gamma-(2d_0+2)} + |x-(\sigma,0)|^{\kappa-2\gamma-(2d_0+2)}}
+ \eta_0 {\ep^{\kappa - {n+2\gamma \over 2}} \over \nu^{\kappa}}\)^{-1} \right.\\
&\hspace{245pt} \left. + \chi_{\{|x - (\sigma,0)| \ge \nu/2\}} \cdot \frac{|x - (\sigma,0)|^{\kappa-2\gamma}}{\eta_0 \ep^{\kappa - {n+2\gamma \over 2}}}\right] \cdot |U(x)|, \\
\|U\|_{**} &= \sup_{x \in \mr^N_+} \left[\chi_{\{|x - (\sigma,0)| \le \nu/2\}} \cdot \frac{\ep^{\kappa-2\gamma-2d_0} + |x-(\sigma,0)|^{\kappa-2\gamma-2d_0}} {\mu \ep^{\kappa-{n+2\gamma \over 2}}}
+ \chi_{\{|x - (\sigma,0)| \ge \nu/2\}} \cdot \frac{|x - (\sigma,0)|^{\kappa-2\gamma+2}}{\eta_0 \ep^{\kappa - {n+2\gamma \over 2}}} \right] \cdot |U(x)|
\end{align*}
and
\[\|v\|_{***} = \sup_{\bx \in \mr^n} \left[\chi_{\{|\bx - \sigma| \le \nu/2\}} \cdot \(\frac{\mu \ep^{\kappa-{n+2\gamma \over 2}}} {\ep^{\kappa-(2d_0+2)} + |\bx-\sigma|^{\kappa-(2d_0+2)}}
+ \eta_0 {\ep^{\kappa - {n+2\gamma \over 2}} \over \nu^{\kappa}}\)^{-1}
+ \chi_{\{|\bx - \sigma| \ge \nu/2\}} \cdot \frac{|\bx - \sigma|^{\kappa}}{\eta_0 \ep^{\kappa - {n+2\gamma \over 2}}} \right] \cdot |v(\bx)|\]
for any fixed number
\begin{equation}\label{eq-kappa}
\kappa \in \(\max\left\{ {n+2\gamma \over 2} + 2(d_0+1) + {1 \over 3}, n-2\gamma \right\}, n\),
\end{equation}
small parameters $\nu, \eta_0 \gg \ep > 0$, points $(\dt) \in \ma$,
and functions $U = U(\bx, x_N)$ in $\mr_+^N$ and $v = v(\bx)$ on $\mr^n$.
(Here the dimension assumption implies that
$\mu \ep^{-{n-2\gamma \over 2}+2d_0+2} \to \infty$ and $\mu^{-2} \ep^{\kappa - {n+2\gamma \over 2}} \nu^{-\kappa} \to 0$ 
as $\ep \to 0$.)
Then we define the Banach spaces
\begin{equation}\label{eq-mh-1}
\mh_1 = \left\{U \in \drn \cap C(\omrn): \|U\|_* < \infty\right\}, \quad \mh_2 = \left\{U \in L^{\infty}(\mr^N_+): \|U\|_{**} < \infty \right\}
\end{equation}
and
\[\mh_3 = \left\{v \in L^{\infty}(\mr^n): \|v\|_{***} < \infty \right\}\]
where the space $\mh_1$ is endowed with the norm $\|\cdot\|_{\drn} + \|\cdot\|_*$.

We solve an inhomogeneous degenerate equation with homogeneous weighted Neumann condition and obtain an estimate for the solution.
\begin{lemma}\label{lemma-lin-2}
Let $\ep$ and $\eta_0$ be the small positive numbers chosen in \eqref{eq-h} and \eqref{eq-eta_0}.
For any fixed point $(\dt) \in \ma$ and a function $\Phi \in \mh_2$, the equation
\begin{equation}\label{eq-lin-21}
\begin{cases}
-\textnormal{div}_{\bg}\(\rho^{1-2\gamma}\nabla U\) = x_N^{1-2\gamma} \Phi &\text{in } \(\mr^N_+, \bg\),\\
\pa^{\gamma}_{\nu} U = 0 &\text{on } \mr^n
\end{cases} \end{equation}
has a unique solution $U_0 \in \mh_1$ satisfying
\begin{equation}\label{eq-lin-22}
\|U_0\|_* \le C \|\Phi\|_{**}.
\end{equation}
Here the constant $C > 0$ relies only on $n, \gamma$ and $\kappa$.
\end{lemma}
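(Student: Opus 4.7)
My plan is to prove existence by Lax--Milgram on $\drn$ and then obtain the pointwise weighted bound $\|U_0\|_* \le C\|\Phi\|_{**}$ by a barrier comparison using the maximum principle of Lemma \ref{lemma-lin-1}. First I set up the weak formulation: seek $U_0 \in \drn$ satisfying
$$a(U_0, V) := \kappa_{\gamma} \int_{\mr^N_+} \rho^{1-2\gamma}\la \nabla U_0, \nabla V\ra_{\bg}\, dx = \kappa_{\gamma} \int_{\mr^N_+} x_N^{1-2\gamma}\Phi V\, dx =: \ell(V)$$
for all $V \in \drn$. By Proposition \ref{prop-rho-est}(1) one has $\rho \sim x_N$, and since $\bg = \exp(h)$ with $|h|$ controlled by $\eta_0 \ll 1$, the bilinear form $a$ is continuous and coercive on $\drn$, equivalent there to the standard inner product, with constants uniform in the parameters. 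Boundedness of $\ell$ follows from the decay enforced by $\Phi \in \mh_2$ (so $|\Phi|$ decays like $|x-(\sigma,0)|^{-(\kappa-2\gamma+2)}$ at infinity) combined with H\"older's inequality and the weighted Sobolev inequality of Lemma \ref{lemma-sobolev}(2); the bound $\kappa > n-2\gamma$ in \eqref{eq-kappa} is precisely what ensures the relevant integrability. Lax--Milgram then yields a unique weak solution $U_0 \in \drn$, and Lemma \ref{lemma-reg} upgrades its regularity so that $U_0 \in C(\omrn)$.

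Next, to get the pointwise bound I construct a barrier $\Psi = C\|\Phi\|_{**}\,\varphi$, where $\varphi \in \drn \cap C(\omrn)$ is a smooth radial function of $|x-(\sigma,0)|$ shaped to match $1/(\text{weight in }\|\cdot\|_*)$. Specifically, I design $\varphi$ to behave in the inner region $|x-(\sigma,0)| \le \nu/2$ like
$$\frac{\mu\,\ep^{\kappa-{n+2\gamma \over 2}}}{\ep^{\kappa-2\gamma-(2d_0+2)}+|x-(\sigma,0)|^{\kappa-2\gamma-(2d_0+2)}} + \eta_0\,\frac{\ep^{\kappa-{n+2\gamma \over 2}}}{\nu^\kappa},$$
and in the outer region $|x-(\sigma,0)| \ge \nu/2$ like $\eta_0\,\ep^{\kappa-{n+2\gamma \over 2}}\,|x-(\sigma,0)|^{-(\kappa-2\gamma)}$, glued smoothly across $|x-(\sigma,0)| = \nu/2$. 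Using the identity
$$\frac{-\textnormal{div}\bigl(x_N^{1-2\gamma}\nabla |x-(\sigma,0)|^{-\alpha}\bigr)}{x_N^{1-2\gamma}} = \alpha(n-\alpha-2\gamma)\,|x-(\sigma,0)|^{-\alpha-2}$$
on $\mr^N_+$, one gets principal coefficients $(\kappa-2\gamma-(2d_0+2))(n-\kappa+(2d_0+2))$ and $(\kappa-2\gamma)(n-\kappa)$ in the two regions, both positive thanks to the range prescribed in \eqref{eq-kappa}. Choosing $C=C(n,\gamma,\kappa)>0$ large enough yields
$$-\textnormal{div}_{\bg}\bigl(\rho^{1-2\gamma}\nabla \Psi\bigr) \ge x_N^{1-2\gamma}|\Phi|\quad\text{in } \mr^N_+,\qquad \pa^{\gamma}_{\nu} \Psi \ge 0\quad\text{on } \mr^n,$$
the Neumann condition being trivial since the normal derivative of a radial function of $|x-(\sigma,0)|$ vanishes as $x_N \to 0+$. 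Applying Lemma \ref{lemma-lin-1} to $\Psi \pm U_0$ gives $|U_0| \le \Psi$, which is exactly $\|U_0\|_* \le C\|\Phi\|_{**}$; uniqueness is immediate from Lemma \ref{lemma-lin-1} applied to the difference of two solutions.

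The main obstacle is the careful verification that $\varphi$ is a global super--solution. The perturbation from replacing $x_N$ by $\rho$ in the principal coefficient, controlled via Proposition \ref{prop-rho-est}(2), and the deviation of $\bg$ from the Euclidean metric, controlled by the smallness of $\eta_0$, each contribute corrections that must be absorbed into the leading term; this works because the dimensional assumption $n > 2\gamma + 4(d_0+1) + 2/3$ (equivalently the smallness of $\mu^{-2}\ep^{\kappa-(n+2\gamma)/2}\nu^{-\kappa}$ noted after \eqref{eq-kappa}) provides the necessary gap. Moreover, the two pieces of $\varphi$ have to be glued across $|x-(\sigma,0)| = \nu/2$ without destroying the super--solution inequality; the additive constant $\eta_0\,\ep^{\kappa-(n+2\gamma)/2}/\nu^\kappa$ plays the role of matching the inner and outer levels at the transition, and one must check that the weak derivative jump across the gluing region is of the correct sign. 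Once this bookkeeping is carried out in detail, the comparison argument concludes the proof.
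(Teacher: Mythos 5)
Your proof is correct and follows essentially the same strategy as the paper: the a priori bound comes from comparing $U_0$ against a piecewise power-type barrier (with the same exponents $\kappa-2\gamma-(2d_0+2)$ and $\kappa-2\gamma$, the same positive principal coefficients $\alpha(n-\alpha-2\gamma)$, and the same matching constant at $|x-(\sigma,0)|=\nu/2$) via the weak maximum principle of Lemma \ref{lemma-lin-1}, and existence comes from Lax--Milgram. The one genuine difference is in the existence step: you apply Lax--Milgram directly on $\drn$, which requires the functional $V \mapsto \int_{\mr^N_+} x_N^{1-2\gamma}\Phi V\,dx$ to be bounded on all of $\drn$; this is true, but it rests on a \emph{global} weighted Sobolev inequality for $\drn$ (the correct integrability threshold is $\kappa > \tfrac{n+2\gamma}{2}$, not $\kappa > n-2\gamma$ as you state — both hold under \eqref{eq-kappa}, but the latter is not the operative condition), whereas the paper only proves the local inequality \eqref{eq-sobolev-2}. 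The paper sidesteps this by solving mixed boundary value problems on the exhausting half-balls $B^N_+(0,\ell)$, where \eqref{eq-sobolev-2} suffices, proving the barrier bound uniformly in $\ell$, and extracting a weak limit; your route is shorter but should either invoke the global inequality explicitly or adopt the exhaustion argument. The remaining bookkeeping you flag (absorbing the $\rho$ versus $x_N$ and $\bg$ versus $g_c$ perturbations, capping the barrier near $(\sigma,0)$, and checking the sign of the derivative jump at the gluing radius) is exactly what the paper's Step 1 carries out, so no idea is missing.
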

\begin{proof}
\noindent \medskip \textsc{Step 1 (A priori estimate).}
Suppose that $U_0 \in \drn$ is a solution of \eqref{eq-lin-21} for a given $\Phi \in \mh_2$.
It holds 
\begin{equation}\label{eq-lin-23}
-\text{div}_{\bg}\(\rho^{1-2\gamma}\nabla U\) = -\text{div}_{g_c}\(x_N^{1-2\gamma}\nabla U\) + \me(U) \quad \text{for any } U \in \mh_1
\end{equation}
where
\begin{equation}\label{eq-lin-24}
\begin{aligned}
\me(U) &:= \(1 - u^{-{2 \over N-2}(1-2\gamma)}\) \text{div}_{g_c}\(x_N^{1-2\gamma}\nabla U\)
+ u^{-{2 \over N-2}(1-2\gamma)} \(\delta^{ij} - \bg^{ij}\) x_N^{1-2\gamma} \pa_{ij}U \\
&\ - \pa_a u^{-{2 \over N-2}(1-2\gamma)} \bg^{ab} x_N^{1-2\gamma} \pa_b U
- u^{-{2 \over N-2}(1-2\gamma)} \pa_i \bg^{ij} x_N^{1-2\gamma} \pa_j U
\end{aligned}
\quad \text{in } \mr^N_+
\end{equation}
and $g_c$ is the standard metric in $\mr^N_+$.
Therefore the function $U_1 \in C(\omrn) \cap \drn$ defined to be
\begin{align*}
C_{11}^{-1}\|\Phi\|_{**}^{-1}U_1(x) = \begin{cases}
\mu \ep^{-{n-2\gamma \over 2}+2d_0+2} \left[2 - \(\dfrac{|x-(\sigma,0)|}{\ep}\)^2\right] + C_{12} &\text{for } |x-(\sigma,0)| \le \ep,\\
\dfrac{\mu \ep^{\kappa - {n+2\gamma \over 2}}} {|x-(\sigma,0)|^{\kappa-2\gamma-(2d_0+2)}}
+ C_{12} &\text{for } \ep \le |x-(\sigma,0)| \le \nu/2,\\
\dfrac{\eta_0 \ep^{\kappa - {n+2\gamma \over 2}}} {|x-(\sigma,0)|^{\kappa-2\gamma}} &\text{for } |x-(\sigma,0)| \ge \nu/2
\end{cases} \end{align*}
with a large constant $C_{11} > 0$ depending only on $n, \gamma$ and $\kappa$ and
\[C_{12} := \ep^{\kappa - {n+2\gamma \over 2}} \({\nu \over 2}\)^{-\kappa+2\gamma} \left[\eta_0 - \mu \({\nu \over 2}\)^{2d_0+2} \right],\]
satisfies
\begin{equation}\label{eq-lin-27}
\begin{cases}
-\text{div}_{\bg}\(\rho^{1-2\gamma}\nabla \(U_1 \pm U_0\)\) \ge 0 &\text{in } \(\mr^N_+, \bg\), \\
\pa^{\gamma}_{\nu} \(U_1 \pm U_0\) = 0 &\text{on } \mr^n.
\end{cases} \end{equation}
See below for the details. Then Lemma \ref{lemma-lin-1} will assert that $|U_0| \le U_1$, and hence \eqref{eq-lin-22} will be valid.

\medskip \noindent \textbf{Derivation of \eqref{eq-lin-27}.}
If $|x - (\sigma,0)| \le \ep$, then we have the inequality
\[- \text{div}_{\bg} \(\rho^{1-2\gamma} \nabla U_1\) \ge {3 \over 2} (n+2-2\gamma) C_{11}\|\Phi\|_{**} x_N^{1-2\gamma} \mu \ep^{-{n-2\gamma \over 2}+2d_0}\]
by \eqref{eq-lin-23}, \eqref{eq-lin-24} and Lemma \ref{lemma-z-reg}. Also since $|x| \le |x-(\sigma,0)| + |\sigma| \le (1+\varepsilon_0) \ep$,
\[\left|x_N^{1-2\gamma}\Phi\right| \le x_N^{1-2\gamma} \|\Phi\|_{L^{\infty}(\{|x-(\sigma,0)| \le \ep\})}
\le \|\Phi\|_{**} x_N^{1-2\gamma} \mu \ep^{-{n-2\gamma \over 2}+2d_0}.\]
In the meantime, we get for $\ep \le |x-(\sigma,0)| \le \nu/2$ that
\begin{align*}
- \text{div}_{\bg} \(\rho^{1-2\gamma} \nabla U_1\)
&\ge {1 \over 2} (\kappa-2\gamma-2(d_0+1))(n-\kappa+2(d_0+1)) C_{11}\|\Phi\|_{**} x_N^{1-2\gamma}
\mu \ep^{\kappa - {n+2\gamma \over 2}} |x - (\sigma,0)|^{-\kappa+2\gamma+2d_0}\\
&\ge x_N^{1-2\gamma} \|\Phi\|_{L^{\infty}\(\left\{\ep \le |x-(\sigma,0)| \le \nu/2\right\}\)} \ge \left|x_N^{1-2\gamma} \Phi\right|.
\end{align*}
Thus $- \text{div}_{\bg} \(\rho^{1-2\gamma} \nabla U_1\) \ge \left|\text{div}_{\bg} \(\rho^{1-2\gamma} \nabla U_0\)\right|$ in both cases.
Moreover a similar estimate can be performed to show that this inequality still holds when $|x-(\sigma,0)| \ge \nu/2$.
The identity $\pa^{\gamma}_{\nu} \(U_1 \pm U_0\) = \pa^{\gamma}_{\nu} U_1 = 0$ is readily checkable from the definition of $U_1$.

\medskip \noindent \textsc{Step 2 (Existence and Uniqueness).}
For each $\ell \in \mn$, we consider the mixed boundary value problem
\begin{equation}\label{eq-lin-26}
\begin{cases}
-\textnormal{div}_{\bg}\(\rho^{1-2\gamma}\nabla U\) = x_N^{1-2\gamma} \Phi &\text{in } B^N_+(0,\ell),\\
U = 0 & \text{on } \pa B^N_+(0,\ell) \cap \{x_N > 0\},\\
\pa^{\gamma}_{\nu} U = 0 &\text{on } B^n(0,\ell) \subset \mr^n.
\end{cases} \end{equation}
Then \eqref{eq-sobolev-2} and the Riesz representation (or the Lax-Milgram) theorem are applied to derive the unique solution $U_{0\ell} \in \md^{1,2}(\ell)$.

Also, by changing the argument in Step 1 a bit, we can obtain that $|U_{0\ell}| \le U_1$ in $B_+^N(0,\ell)$ for all $\ell \in \mn$.
Therefore
\[\kappa_{\gamma} \int_{B^N_+(0,\ell)} \rho^{1-2\gamma} |\nabla U_{0\ell}|_{\bg}^2 dx = \int_{B^N_+(0,\ell)} x_N^{1-2\gamma} \Phi U_{0\ell} dx \le C \|\Phi\|_{**}^2,\]
which implies the existence of the $\drn$-weak limit $U_0$.
It is easy to check with Lemma \ref{lemma-reg} that $U_0$ belongs to $\mh_1$ and satisfies both \eqref{eq-lin-21} and \eqref{eq-lin-22}, so the proof is finished.
\end{proof}

The next lemma provides decay property of a solution to the equation with a nonzero weighted Neumann boundary condition
\begin{equation}\label{eq-Neumann}
\begin{cases}
-\text{div}_{g_c}\(x_N^{1-2\gamma}\nabla U\) = 0 &\text{in } \mr^N_+,\\
\pa^{\gamma}_{\nu} U = \zeta &\text{on } \mr^n
\end{cases}
\end{equation}
for a given function $\zeta$ on $\mr^n$.
\begin{lemma}\label{lemma-lin-3}
Suppose that a function $\zeta$ on $\mr^n$ satisfies $\|\zeta\|_{***}' := \|(1+|x|^{\kappa})\zeta\|_{L^{\infty}(\mr^n)} < \infty$ for any fixed $\kappa \in (2\gamma, n)$.
Then there exists a constant $C > 0$ depending only on $N$, $\gamma$ and $\kappa$ such that
\[|\nabla_{\bx}^m U(x)| \le {C\|\zeta\|_{***}' \over 1+|x|^{\kappa-2\gamma+m}} \quad (m = 0, 1, 2)\]
for the solution $U \in \mh_1$ to problem \eqref{eq-Neumann} and all $x \in \mr^N_+$.
Moreover it holds that
\[|\pa_{x_N} U(x)| \le C\|\zeta\|_{***}' \({1 \over 1+|x|^{\kappa-2\gamma+1}} + {x_N^{2\gamma-1} \over 1+|\bx|^{\kappa}}\)\]
for every $x \in \mr^N_+$.
\end{lemma}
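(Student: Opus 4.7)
The plan is to represent $U$ via the Green's function formula \eqref{eq-Green} for the degenerate elliptic Neumann problem \eqref{eq-Neumann} and then reduce every estimate to a standard convolution bound, exactly in the spirit of \cite[Lemma A.2]{CK} that underlies Lemma \ref{lemma-W-decay}. Precisely, one starts from
\[
U(x) \;=\; c_{n,\gamma} \int_{\mr^n} \frac{\zeta(\by)\, d\by}{\bigl(|\bx-\by|^2+x_N^2\bigr)^{(n-2\gamma)/2}}
\]
for a positive constant $c_{n,\gamma}$ depending only on $n$ and $\gamma$. The right-hand side does belong to $\mh_1$ and does solve \eqref{eq-Neumann}: the kernel is $\gamma$-harmonic in the upper half space and its $\pa^{\gamma}_{\nu}$-trace is a constant multiple of $\delta_0$; uniqueness within $\mh_1$ follows from Lemma \ref{lemma-lin-2} applied to the difference of two candidates with $\Phi \equiv 0$.

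For the horizontal-derivative bounds I would differentiate under the integral sign, justified by dominated convergence together with $|\zeta(\by)| \le \|\zeta\|_{***}' (1+|\by|^\kappa)^{-1}$. Since
\[
\bigl|\nabla^m_{\bx}\bigl(|\bx-\by|^2+x_N^2\bigr)^{-(n-2\gamma)/2}\bigr| \;\le\; C\,\bigl(|\bx-\by|^2+x_N^2\bigr)^{-(n-2\gamma+m)/2} \;=\; \frac{C}{|x-(\by,0)|^{n-2\gamma+m}},
\]
the first claim reduces to the convolution estimate
\[
\int_{\mr^n} \frac{d\by}{|x-(\by,0)|^{n-2\gamma+m}\,(1+|\by|^\kappa)} \;\le\; \frac{C}{1+|x|^{\kappa-2\gamma+m}} \qquad (m=0,1,2),
\]
which I would prove by partitioning the integration domain into $\{|\by| \le |x|/2\}$, $\{|x|/2 \le |\by| \le 2|x|\}$ and $\{|\by| \ge 2|x|\}$. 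The hypothesis $\kappa < n$ furnishes integrability of the singular kernel on the middle annulus (with the regularizing role of the $x_N^2$ in the denominator handling the borderline cases $m=1,2$), while $\kappa > 2\gamma$ ensures convergence at infinity and a uniform bound near $|x|=0$.

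The $\pa_{x_N} U$ estimate is more delicate. Differentiating the representation in $x_N$ produces
\[
\pa_{x_N} U(x) \;=\; -\,c_{n,\gamma}(n-2\gamma) \int_{\mr^n} \frac{x_N\, \zeta(\by)\, d\by}{\bigl(|\bx-\by|^2+x_N^2\bigr)^{(n-2\gamma+2)/2}}.
\]
I would split this integral according to whether $|\bx-\by| \ge x_N$ or $|\bx-\by| \le x_N$. On the off-diagonal piece, the $x_N$ prefactor together with the kernel bound $|\bx-\by|^{-(n-2\gamma+2)}$ and the convolution estimate above yields the regular contribution $\le C\|\zeta\|_{***}' \,(1+|x|^{\kappa-2\gamma+1})^{-1}$. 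On the near-diagonal piece, the substitution $\by = \bx + x_N z$ extracts the Jacobian-born factor $x_N^{2\gamma-1}$, and the pointwise estimate $|\zeta(\bx+x_N z)| \le C \|\zeta\|_{***}' (1+|\bx|^\kappa)^{-1}$ on that set (a separate sub-case handles $x_N \ge |\bx|/2$, in which $|\bx|$ and $|x|$ are comparable) produces the singular-boundary contribution $\le C\|\zeta\|_{***}'\,x_N^{2\gamma-1}/(1+|\bx|^\kappa)$.

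The main obstacle will be calibrating the last splitting so that the singular $x_N^{2\gamma-1}$ factor is paired cleanly with the slow decay $(1+|\bx|^\kappa)^{-1}$ while the interior contribution retains the faster decay $(1+|x|^{\kappa-2\gamma+1})^{-1}$; heuristically, the singular factor is \emph{forced} by the Neumann condition $-\kappa_{\gamma}\lim_{x_N \to 0+} x_N^{1-2\gamma}\pa_{x_N} U = \zeta$ itself. Once the Green's representation is in place, the remainder of the argument is a routine deployment of convolution bounds familiar from the Caffarelli--Silvestre extension theory.
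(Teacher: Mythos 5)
Your overall strategy --- the Green's representation \eqref{eq-Green} followed by convolution estimates on a decomposition of $\mr^n$ --- is exactly the paper's, and your treatment of $m=0$ and of $\pa_{x_N}U$ (region splitting plus the rescaling $\by=\bx+x_N z$ to extract the factor $x_N^{2\gamma-1}$) matches the paper's Steps 1 and 2 up to cosmetic differences in where the regions are cut.

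The gap is in the horizontal derivatives. Differentiating under the integral sign and bounding $|\nabla^m_{\bx}K|$ by $C\,|x-(\by,0)|^{-(n-2\gamma+m)}$ cannot give the stated estimate for $m=2$ (nor for $m=1$ when $\gamma\le 1/2$), because then $n-2\gamma+m\ge n$: the contribution of the near-diagonal region $\{|\by-\bx|\le 1\}$ is
$c\,x_N^{2\gamma-m}\int_0^{1/x_N}s^{n-1}\(1+s^2\)^{-(n-2\gamma+m)/2}ds\sim x_N^{2\gamma-m}$,
so the ``$x_N^2$ regularization'' you invoke produces a bound that blows up as $x_N\to 0^+$, whereas the lemma asserts a bound uniform over all of $\mr^N_+$. (Indeed, for $\zeta$ that is merely bounded the second tangential derivative of $U$ need not stay bounded at the boundary: the cancellation $\int_{\mr^n}\nabla^2_{\bx}K(\bx-\by,x_N)\,d\by=0$ only gains what the modulus of continuity of $\zeta$ allows.) The paper's proof avoids this by integrating by parts on the ball $|\by-\bx|\le|\bx|/2$ containing the singularity, transferring the extra derivatives from the kernel onto the density --- which is why its estimate contains the term $\int|\bx-\by|^{-(n-2\gamma)}(1+|\by|^{\kappa+1})^{-1}d\by$ and a boundary integral over the sphere $|\by-\bx|=|\bx|/2$; only the integrable singularity $|\bx-\by|^{-(n-2\gamma)}$ survives. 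To repair your argument you must either carry out this integration by parts (using that the functions $\zeta$ actually fed into the lemma, such as $(1+|\bx|^{\kappa})^{-1}$, are smooth with each derivative decaying one power faster) or otherwise exploit the cancellation structure of $\nabla^m_{\bx}K$; the brute-force absolute-value bound on the differentiated kernel is insufficient.
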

\begin{proof}
We borrow the idea of the proof of \cite[Lemma A.2]{CK}.
Note that the solution $U \in \mh_1$ can be expressed as
\begin{equation}\label{eq-Green}
U(\bx, x_N) = {1 \over \left|S^{n-1}\right|} \cdot {2^{1-2\gamma}\Gamma({n-2\gamma \over 2}) \over \Gamma({n \over 2})\Gamma(\gamma)} \int_{\mr^n} {1 \over |(\bx-\by,x_N)|^{n-2\gamma}} \zeta(\by) d\by
\end{equation}
(see e.g. \cite{CaS, CS, CKL}).

\medskip \noindent \textsc{Step 1 (Estimate for $U$).}
Without loss of generality, we may assume that $|x| \ge \vr$ for some fixed $\vr > 1$ large enough.
For $|\bx| \ge x_N$, by suitably modifying the proof of \cite[Lemma B.2]{WY}, we find
\begin{equation}\label{eq-Green-1}
\int_{\mr^n} {1 \over |(\bx-\by,x_N)|^{n-2\gamma}} {d\by \over 1+|\by|^{\kappa}} \le \int_{\mr^n} {1 \over |\bx-\by|^{n-2\gamma}} {d\by \over 1+|\by|^{\kappa}}
\le {C \over |\bx|^{\kappa-2\gamma}} \le {C \over |x|^{\kappa-2\gamma}}.
\end{equation}
If $|\bx| \le x_N$, then we immediately get that $|x| \le \sqrt{2}x_N$. This allows us to discover
\begin{equation}\label{eq-Green-2}
\int_{\left\{|\by| \le 2|\bx| \right\}} {1 \over |(\bx-\by,x_N)|^{n-2\gamma}} {d\by \over 1+|\by|^{\kappa}}
\le {1 \over x_N^{n-2\gamma}}\int_{\left\{|\by| \le 2|\bx| \right\}} {d\by \over 1+|\by|^{\kappa}}
\le {C \(1+|\bx|^{n-\kappa}\) \over x_N^{n-2\gamma}} \le {C \over |x|^{\kappa-2\gamma}}
\end{equation}
and
\begin{equation}\label{eq-Green-3}
\begin{aligned}
\int_{\left\{|\by| \ge 2|\bx| \right\}} {1 \over |(\bx-\by,x_N)|^{n-2\gamma}} {d\by \over 1+|\by|^{\kappa}}
&\le C \int_{\left\{|\by| \ge 2|\bx| \right\}} {1 \over (|\by|^2+x_N^2)^{n-2\gamma \over 2}} {d\by \over |\by|^{\kappa}} \\
&\le {C \over x_N^{\kappa-2\gamma}} \int_0^{\infty} {1 \over (t^2+1)^{n-2\gamma \over 2}}{dt \over t^{\kappa}} \le {C \over |x|^{\kappa-2\gamma}}.
\end{aligned} \end{equation}
By combining \eqref{eq-Green}-\eqref{eq-Green-3}, we realize that \eqref{eq-lin-11} is true.

\medskip \noindent \textsc{Step 2 (Estimate for $\nabla_{\bx} U$, $\nabla_{\bx}^2 U$ and $\pa_{x_N} U$).}
We can handle the situation $|\bx| \le x_N$ as in \eqref{eq-Green-2} and \eqref{eq-Green-3}, so assume $|\bx| \ge x_N$.

Consider the function $\nabla_{\bx} U$ first.
By differentiating \eqref{eq-Green} in $\bx$ and applying integration by parts, one sees that
\begin{align*}
|\nabla_{\bx} U(\bx, x_N)| &\le C\|\zeta\|_{***}' \left[\(\int_{\left\{|\by-\bx| \ge {|\bx| \over 2}\right\} \cap \left\{|\by| \ge {|\bx| \over 2}\right\}} + \int_{\left\{ |\by| \le {|\bx| \over 2}\right\}} \)
\nabla_{\by}\({1 \over |\bx-\by|^{n-2\gamma}}\) {d\by \over 1+|\by|^{\kappa}} \right.\\
&\left. \hspace{40pt} + \int_{\mr^n} {1 \over |\bx-\by|^{n-2\gamma}} {d\by \over 1+|\by|^{\kappa+1}}
+ \int_{\left\{|\by-\bx| = {|\bx| \over 2}\right\}} {1 \over |\bx-\by|^{n-2\gamma}} {dS_{\by} \over 1+|\by|^{\kappa}} \right]
\le {C\|\zeta\|_{***}' \over |x|^{\kappa-2\gamma+1}}
\end{align*}
where $dS_{\by}$ is the surface measure on the sphere $|\by-\bx| = {|\bx| \over 2}$.
Also we confirm
\begin{align*}
|\pa_{x_N} U(\bx, x_N)| &\le C\|\zeta\|_{***}' \( \int_{\left\{|\by-\bx| \ge {|\bx| \over 2}\right\}} + \int_{\left\{|\by-\bx| \le {|\bx| \over 2}\right\}} \) {x_N \over |(\by, x_N)|^{n-2\gamma+2}} {d\by \over |\bx-\by|^{\kappa}} \\
&\le C\|\zeta\|_{***}' \left[{x_N^{2\gamma-1} \over |\bx|^{\kappa}} \int_{\mr^n} {d\by \over |(\by,1)|^{n-2\gamma+2}}
+ {x_N \over |x|^{n-2\gamma+2}} \int_{\left\{|\by-\bx| \le {|\bx| \over 2}\right\}} {d\by \over |\bx-\by|^{\kappa}} \right] \\
&\le C\|\zeta\|_{***}' \left[{x_N^{2\gamma-1} \over |\bx|^{\kappa}} + {1 \over |x|^{\kappa-2\gamma+1}} \right].
\end{align*}
The estimate of the function $\nabla_{\bx}^2 U$ is similar to that of $\nabla_{\bx} U$. This establishes the proof.
\end{proof}

\subsection{Linear theory}
The goal of this subsection is to find a function $\Psi \in \mh_1$ and numbers $(c_0, \cdots, c_n) \in \mr^{n+1}$ which solve the linear problem
\begin{equation}\label{eq-lin-0}
\begin{cases}
-\text{div}_{\bg}\(\rho^{1-2\gamma}\nabla \Psi\) + E(\rho)\Psi = x_N^{1-2\gamma}\Phi &\text{in } \(\mr^N_+, \bg\),\\
\Psi = \psi &\text{on } \mr^n,\\
\pa^{\gamma}_{\nu} \Psi - \({n+2\gamma \over n-2\gamma}\) w_{\ls}^{4\gamma \over n-2\gamma}\psi
= \zeta + \sum\limits_{m=0}^n c_mw_{\ls}^{4\gamma \over n-2\gamma}z_{\ls}^m &\text{on } \mr^n,\\
\int_{\mr^n} w_{\ls}^{4\gamma \over n-2\gamma}z_{\ls}^0 \psi d\bx = \int_{\mr^n} w_{\ls}^{4\gamma \over n-2\gamma}z_{\ls}^1 \psi d\bx = \cdots = \int_{\mr^n} w_{\ls}^{4\gamma \over n-2\gamma}z_{\ls}^n \psi d\bx = 0
\end{cases} \end{equation}
for given functions $\Phi \in \mh_2$ and $\zeta \in \mh_3$.

\begin{prop}\label{prop-lin}
Suppose that $n > 2\gamma + 4(d_0+1) + 2/3$.
Then, for all sufficiently small parameters $0 < \ep \ll \nu, \eta_0$ satisfying $\nu |\log \ep| \ge 1/100$, points $(\dt) \in \ma$ and functions $\Phi \in \mh_2,\ \zeta \in \mh_3$, problem \eqref{eq-lin-0} admits
a unique solution $\Psi \in \mh_1$ and $\mathbf{c} = (c_0, \cdots, c_n) \in \mr^{n+1}$.
Moreover, there exists $C > 0$ depending only on $n, \gamma$ and $\kappa$ such that
\begin{equation}\label{eq-lin-11}
\|\Psi\|_* \le C \(\|\Phi\|_{**} + \|\zeta\|_{***}\).
\end{equation}
\end{prop}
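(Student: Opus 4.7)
I would split the argument into two parts: first, derive the a priori estimate \eqref{eq-lin-11} for \emph{any} solution $(\Psi,\mathbf{c})$ of \eqref{eq-lin-0}; second, derive existence and uniqueness by a Fredholm-type scheme based on the linear theory for the unperturbed equations supplied by Lemma \ref{lemma-lin-2} and Lemma \ref{lemma-lin-3}. The a priori step proceeds by contradiction and blow-up: assume there exist sequences $\ep_k\to 0$, $(\delta_k,\tau_k)\in\ma$, data $\Phi_k,\zeta_k$ and solutions $(\Psi_k,\mathbf{c}_k)$ of \eqref{eq-lin-0} with $\|\Psi_k\|_*=1$ but $\|\Phi_k\|_{**}+\|\zeta_k\|_{***}\to 0$, and derive a contradiction.

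\textbf{Controlling the multipliers and blow-up limit.} First I would test the boundary identity in \eqref{eq-lin-0} against each $z^m_{\lam_k,\sigma_k}$ (with $(\lambda_k,\sigma_k)=(\ep_k\delta_k,\ep_k\tau_k)$). After integration by parts against the $\gamma$-harmonic extensions $Z^m_{\lambda_k,\sigma_k}$, and using that the $Z^m_{\lambda_k,\sigma_k}$ solve the unperturbed linearized problem in $\mr^N_+$, the orthogonality on $\psi_k$ gives an almost-diagonal linear system for $\mathbf{c}_k$ with off-diagonal and error terms of order $\eta_0+\ep_k^{\alpha}$ coming from $\me(U)$, $E(\rho)$ and the replacement of $\rho$ by $x_N$ (bounded via Proposition \ref{prop-rho-est} and \eqref{eq-h}); invertibility forces $c_{k,m}\to 0$. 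Next I rescale $\tilde{\Psi}_k(x):=\lambda_k^{(n-2\gamma)/2}\Psi_k(\lambda_k x+(\sigma_k,0))$. The weight in $\|\cdot\|_*$ on $\{|x-(\sigma,0)|\le\nu/2\}$ is comparable, after rescaling, to the bubble-type weight $(1+|x|)^{-(\kappa-2\gamma)}$, so $\tilde{\Psi}_k$ is locally uniformly bounded. By Lemma \ref{lemma-reg} and diagonal extraction, $\tilde{\Psi}_k\to\tilde{\Psi}_\infty$ locally uniformly, with $\tilde{\Psi}_\infty$ a $\drn$-solution of the homogeneous linearized equation at $W_{1,0}$; the nondegeneracy result of \cite{DDS} forces $\tilde{\Psi}_\infty=\sum_{m=0}^n\alpha_m Z^m_{1,0}$, and passing the orthogonality conditions to the limit gives $\alpha_m=0$, hence $\tilde{\Psi}_\infty\equiv 0$.

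\textbf{Locating the mass far from the bubble.} Because $\|\Psi_k\|_*=1$, a weighted supremum is attained at some $x_k\in\omrn$. The previous step rules out $\lambda_k^{-1}|x_k-(\sigma_k,0)|$ being bounded. In the complementary regime I would argue by a barrier argument in the spirit of Step 1 of Lemma \ref{lemma-lin-2}: construct $U_1$ modelled on the weight of $\|\cdot\|_*$ (a patched combination behaving like $\mu\ep^{\kappa-(n+2\gamma)/2}|x-(\sigma,0)|^{2\gamma+2d_0+2-\kappa}$ near the bubble and $\eta_0\ep^{\kappa-(n+2\gamma)/2}|x-(\sigma,0)|^{2\gamma-\kappa}$ far from it), and verify via Proposition \ref{prop-rho-est} and Lemma \ref{lemma-lin-3} applied to the boundary datum $(n+2\gamma)/(n-2\gamma)\,w_{\ls}^{4\gamma/(n-2\gamma)}\psi_k$ that $U_1$ is a positive supersolution for the full operator $L=-\mathrm{div}_{\bg}(\rho^{1-2\gamma}\nabla\cdot)+E(\rho)$ with $\pa^{\gamma}_\nu U_1\ge 0$. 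The generalized maximum principle Lemma \ref{lemma-gen-max} (with $W=U_1$) then yields $|\Psi_k|\le C(\|\Phi_k\|_{**}+\|\zeta_k\|_{***}+o(1))\cdot\text{(weight)}$, contradicting $\|\Psi_k\|_*=1$.

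\textbf{Existence and the main obstacle.} Given the a priori estimate, existence and uniqueness follow by rewriting \eqref{eq-lin-0} as a compact perturbation $\Psi+T_{\ls}\Psi=\Psi_0$ of the identity on $\mh_1\cap\{\text{orthogonality}\}$: the right-hand side $\Psi_0$ is built from Lemma \ref{lemma-lin-2} (interior inhomogeneity) and Lemma \ref{lemma-lin-3} (boundary inhomogeneity), while $T_{\ls}$ collects the lower order boundary coupling with $w_{\ls}^{4\gamma/(n-2\gamma)}\psi$ and the metric-induced perturbation $\me$; compactness is obtained via Lemma \ref{lemma-reg} and the decay at infinity encoded by $\|\cdot\|_*$. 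The a priori bound rules out nontrivial kernel, so the Fredholm alternative gives a unique solution $(\Psi,\mathbf{c})$ satisfying \eqref{eq-lin-11}. The delicate point I expect to be the main obstacle is the outside barrier step: the error $E(\rho)$, the discrepancy $\rho-x_N$ from \eqref{eq-rho-est}, and the perturbation $\bg-g_c$ each generate contributions whose natural decay in $|x-(\sigma,0)|$ must be matched against the chosen weight of $\|\cdot\|_*$. This is precisely why the admissible range of $\kappa$ in \eqref{eq-kappa} is restricted to $(\max\{(n+2\gamma)/2+2(d_0+1)+1/3,\,n-2\gamma\},\,n)$ and why the dimension condition $n>2\gamma+4(d_0+1)+2/3$ is imposed; sharp tracking of these exponents is what makes the barrier argument close.
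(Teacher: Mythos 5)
Your proposal follows essentially the same route as the paper: a contradiction/blow-up argument for the a priori estimate (testing against the $Z^m_{\ls}$ to kill the multipliers, rescaling to the bubble scale and invoking the nondegeneracy of \cite{DDS} plus the orthogonality conditions to annihilate the inner part, then a patched barrier together with the weak maximum principle to control the outer part), followed by the Fredholm alternative on the orthogonal subspace for existence and uniqueness. The only substantive slip is in the rescaling: with $\widetilde{\Psi}_k(x):=\lambda_k^{(n-2\gamma)/2}\Psi_k(\lambda_k x+(\sigma_k,0))$ and $\|\Psi_k\|_*=1$, the sequence tends to zero trivially, because the weight defining $\|\cdot\|_*$ carries the extra small prefactor $\mu\ep^{2(d_0+1)}$ (resp.\ $\eta_0\ep^{\kappa-{n+2\gamma\over2}}\nu^{-\kappa}$) on the inner region; one must normalize as the paper does, $\wtps_{2\ell}=\(\mu\ep^{2d_0+2}\)^{-1}\ep^{n-2\gamma\over2}\Psi_{2\ell}(\ep\cdot+\sigma)$, so that the limit being zero is a genuine consequence of nondegeneracy and yields the needed smallness of $\|w_{\ell}^{4\gamma\over n-2\gamma}\psi_{2\ell}\|_{***}$ entering the barrier step. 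Apart from that, the paper's only organizational difference is that it first splits off $\Psi_{1\ell}$ solving the interior inhomogeneous problem (via Lemma \ref{lemma-lin-2}) so that the barrier is applied to a $\rho^{1-2\gamma}$-harmonic remainder, whereas you fold the inhomogeneity into a single supersolution; both work.
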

\begin{proof}
The proof of this result is divided into two steps.

\medskip \noindent \textsc{Step 1 (A priori estimate).}
In this step, we first show \eqref{eq-lin-11} assuming that $\Psi \in \mh_1$ is a solution of \eqref{eq-lin-0}. For this aim, we argue by contradiction.

To emphasize that the metric $\bg = \exp(h)$ and the defining function $\rho$ depend on the choice of $\ep$ (see \eqref{eq-h}),
we will write $\bg_{\ep} = \bg$ and $\rho_{\ep} = \rho$ throughout the proof.

Suppose that there exists no constant $C > 0$ such that \eqref{eq-lin-11} holds uniformly for any choice of $\ep> 0$ and $\zeta \in \mh_3$.
Then there are sequences of numbers $\ep_{\ell} > 0$ and $\textbf{c}_{\ell} = (c_{0\ell}, \cdots, c_{n\ell}) \in \mr^{n+1}$,
points $\(\delta_{\ell}, \tau_{\ell}\) \in \ma$,
and functions $\Psi_{\ell} \in \mh_1$, $\Phi_{\ell} \in \mh_2$ and $\zeta_{\ell} \in \mh_3$
such that they satisfy \eqref{eq-lin-0} with $\bg = \bg_{\ep_{\ell}}$ and $\rho = \rho_{\ep_{\ell}}$ for each $\ell \in \mn$, as well as
\begin{equation}\label{eq-lin-05}
\|\Psi_{\ell}\|_* = 1, \quad \|\Phi_{\ell}\|_{**} + \|\zeta_{\ell}\|_{***} \to 0,
\end{equation}
$\(\delta_{\ell}, \tau_{\ell}\) \to \(\delta_0, \tau_0\) \in \overline{\ma}$ and $\ep_{\ell} \to 0$ as $\ell \to \infty$.
By \eqref{eq-error}, \eqref{eq-E-tilde}, \eqref{eq-E-tilde-2} and \eqref{eq-z-ann}, we have
\begin{equation}\label{eq-lin-04}
x_N^{2\gamma-1} E(\rho)(x) = \begin{cases}
O\(\mu^2 \(\ep^{4d_0+2} + |x-(\sigma,0)|^{4d_0+2}\)\) &\text{for } |x-(\sigma,0)| \le \nu/2,\\
O\(\eta_0 \(1+|x-(\sigma,0)|^4\)^{-1}\) &\text{for } |x-(\sigma,0)| \ge \nu/2.
\end{cases} \end{equation}
Thus from Lemma \ref{lemma-lin-2} we get a solution $\Psi_{1\ell} \in \mh_1$ to the equation
\[\begin{cases}
-\text{div}_{\bg_{\ell}}\(\rho_{\ell}^{1-2\gamma}\nabla \Psi_{1\ell}\) = - E(\rho_{\ell}) \Psi_{\ell} + x_N^{1-2\gamma} \Phi_{\ell} &\text{in } \(\mr^N_+, \bg_{\ell}\),\\
\pa^{\gamma}_{\nu} \Psi_{1\ell} = 0 &\text{on } \mr^n
\end{cases}\]
such that
\[\|\Psi_{1\ell}\|_* \le C \(\left\|x_N^{2\gamma-1} E(\rho_{\ell}) \Psi_{\ell}\right\|_{**} + \|\Phi_{\ell}\|_{**}\)
\le C\(\eta_0 \|\Psi_{\ell}\|_* + \|\Phi_{\ell}\|_{**}\)\]
where $\bg_{\ell} := \bg_{\ep_\ell}$ and $\rho_{\ell} := \rho_{\ep_\ell}$.
Moreover, by arguing as Step 1 in the proof of Lemma \ref{lemma-lin-2}, we deduce
\begin{equation}\label{eq-lin-08}
\|\Psi_{1\ell}\|_{*,\text{inner}} \le C\(\mu_{\ell} \left|\log \ep_{\ell}\right|^{2d_0 + 2\gamma} \cdot \|\Psi_{\ell}\|_* + \|\Phi_{\ell}\|_{**}\) \to 0 \quad \text{as } \ell \to \infty
\end{equation}
for $\mu_{\ell} := \ep_{\ell}^{1/3}$ and
\[\|U\|_{*,\text{inner}} := \sup_{\left\{x \in \mr^N_+: |x-(\sigma,0)| \le \nu/2 \right\}} \(\frac{\mu \ep^{\kappa-{n+2\gamma \over 2}}} {\ep^{\kappa-2\gamma-(2d_0+2)} + |x-(\sigma,0)|^{\kappa-2\gamma-(2d_0+2)}}
+ \eta_0 {\ep^{\kappa - {n+2\gamma \over 2}} \over \mu^2 \nu^{\kappa}}\)^{-1} \cdot |U(x)|.\]

Now the function $\Psi_{2\ell} := \Psi_{\ell} - \Psi_{1\ell} \in \mh_1$ satisfies
\begin{equation}\label{eq-lin-01}
\begin{cases}
-\text{div}_{\bg_{\ell}}\(\rho_{\ell}^{1-2\gamma}\nabla \Psi_{2\ell}\) = 0 \hspace{235pt} \text{in } \(\mr^N_+, \bg_{\ell}\),\\
\Psi_{2\ell} = \psi_{2\ell} \text{ and } \Psi_{1\ell} = \psi_{1\ell} \hspace{229pt} \text{on } \mr^n,\\
\pa^{\gamma}_{\nu} \Psi_{2\ell} - \({n+2\gamma \over n-2\gamma}\) w_{\ell}^{4\gamma \over n-2\gamma}\psi_{2\ell}
= \left[\zeta_{\ell} + \({n+2\gamma \over n-2\gamma}\) w_{\ell}^{4\gamma \over n-2\gamma}\psi_{1\ell}\right]
+ \sum\limits_{m=0}^n (c_m)_{\ell} w_{\ell}^{4\gamma \over n-2\gamma}z_{\ell}^m \hspace{41pt} \text{on } \mr^n,\\
\int_{\mr^n} w_{\ell}^{4\gamma \over n-2\gamma}z_{\ell}^0 \psi_{2\ell} d\bx = -\int_{\mr^n} w_{\ell}^{4\gamma \over n-2\gamma}z_{\ell}^0 \psi_{1\ell} d\bx, \cdots,
\int_{\mr^n} w_{\ell}^{4\gamma \over n-2\gamma}z_{\ell}^n \psi_{2\ell} d\bx = - \int_{\mr^n} w_{\ell}^{4\gamma \over n-2\gamma}z_{\ell}^n \psi_{1\ell} d\bx,
\end{cases} \end{equation}
and
\begin{equation}\label{eq-lin-02}
1 - C\eta_0 \le \liminf_{\ell \to \infty} \|\Psi_{2\ell}\|_* \le \limsup_{\ell \to \infty} \|\Psi_{2\ell}\|_* \le 1 + C\eta_0
\end{equation}
where $w_{\ell} := w_{\ep_{\ell}\delta_{\ell}, \ep_{\ell}\tau_{\ell}}$, $z^0_{\ell} := z^0_{\ep_{\ell}\delta_{\ell}, \ep_{\ell}\tau_{\ell}},
\cdots, z^n_{\ell} := z^n_{\ep_{\ell}\delta_{\ell}, \ep_{\ell}\tau_{\ell}}$.
Testing \eqref{eq-lin-01} with each $Z^m_{\ell} := Z^m_{\ep_{\ell}\delta_{\ell}, \ep_{\ell}\tau_{\ell}}$ for $m = 0, \cdots, n$ and $\Psi_{2\ell}$, we find with Lemma \ref{lemma-W-decay} and the assumption $\kappa > (n+2\gamma)/2 + 2(d_0+1) + 1/3$ that
\begin{align*}
&\ (c_m)_{\ell} (\ep_{\ell}\delta_{\ell})^{-2} \int_{\mr^n}w_1^{4\gamma \over n-2\gamma} \(z_1^m\)^2 d\bx
= \sum_{\tm=0}^n (c_{\tm})_{\ell} \int_{\mr^n} w_{\ell}^{4\gamma \over n-2\gamma}z_{\ell}^{\tm} z_{\ell}^m d\bx \\
&= \kappa_{\gamma} \int_{\mr^N_+} \rho_{\ell}^{1-2\gamma} \la \nabla \Psi_{2\ell}, \nabla Z^m_{\ell} \ra_{\bg} dx - \int_{\mr^n} \zeta_{\ell} z_{\ell}^m d\bx \\
&= - \({n+2\gamma \over n-2\gamma}\) \int_{\mr^n} w_{\ell}^{4\gamma \over n-2\gamma} z_{\ell}^m \psi_{1\ell} d\bx - \int_{\mr^n} \zeta_{\ell} z_{\ell}^m d\bx + O\(\mu_{\ell}\ep_{\ell}^{2d_0+1} \cdot \(\int_{\mr^N_+} x_N^{1-2\gamma} |\nabla \Psi_{2\ell}|^2 dx\)^{1/2}\) \\
&= O\(\mu_{\ell} \ep_{\ell}^{2d_0+1} \|\Psi_{1\ell}\|_{*,\text{inner}} + \ep_{\ell}^{\kappa-1} \left|\log \ep_{\ell}\right|^{\kappa} + \mu_{\ell} \ep_{\ell}^{2d_0+1}\|\zeta_{\ell}\|_{***} \) 
+ O\(\mu_{\ell}\ep_{\ell}^{2d_0+1} \cdot \(\int_{\mr^N_+} x_N^{1-2\gamma} |\nabla \Psi_{2\ell}|^2 dx\)^{1/2}\)
\end{align*}
and
\begin{align*}
&\ \kappa_{\gamma} \int_{\mr^N_+} x_N^{1-2\gamma} |\nabla \Psi_{2\ell}|^2 dx \le C \kappa_{\gamma} \int_{\mr^N_+} \rho_{\ell}^{1-2\gamma} |\nabla \Psi_{2\ell}|^2_{\bg} dx \\
&= C\left[ \({n+2\gamma \over n-2\gamma}\) \int_{\mr^n} w_{\ell}^{4\gamma \over n-2\gamma}\psi_{2\ell}^2 d\bx
+ \int_{\mr^n} \left[\zeta_{\ell} + \({n+2\gamma \over n-2\gamma}\) w_{\ell}^{4\gamma \over n-2\gamma}\psi_{1\ell}\right] \psi_{2\ell} d\bx
- \sum_{m=0}^n (c_m)_{\ell} \int_{\mr^n} w_{\ell}^{4\gamma \over n-2\gamma}z_{\ell}^m \psi_{1\ell} d\bx \right] \\
&= O\(\mu_{\ell}^2 \ep_{\ell}^{4d_0+4}\) + o\(\mu_{\ell} \ep^{2d_0+1} \sum_{\tm=0}^n |(c_{\tm})_{\ell}|\).
\end{align*}
As a result, it holds that
\begin{equation}\label{eq-lin-06}
\int_{\mr^N_+} x_N^{1-2\gamma} |\nabla \Psi_{2\ell}|^2 dx = O\(\mu_{\ell}^2 \ep_{\ell}^{4d_0+4}\)
\quad \text{and} \quad \sum_{\tm=0}^n |(c_{\tm})_{\ell}| = o\(\mu_{\ell} \ep_{\ell}^{2d_0+3}\) \quad \text{as } \ell \to \infty.
\end{equation}

The previous estimate implies that
\[\wtps_{2\ell} := \(\mu \ep^{2d_0+2}\)^{-1} \ep^{n-2\gamma \over 2} \Psi_{2\ell} (\ep \cdot + \sigma) \rightharpoonup \wtps_{20} \quad \text{weakly in } \drn\]
up to a subsequence (which is still denoted as $\wtps_{2\ell}$).
Additionally, by the Schauder estimates \cite[Proposition 3.2]{GQ} or \cite[Lemma 4.5]{CS}, it can be further assumed that $\wtps_{2\ell}$ converges to a function $\wtps_{20}$ uniformly over compact sets in $\omrn$.
With this convergence property, we observe that $\wtps_{20}$ satisfies
\[\begin{cases}
-\text{div}_{g_c}\(x_N^{1-2\gamma}\nabla \Psi\)= 0 &\text{in } \mr^N_+,\\
\Psi = \psi &\text{on } \mr^n,\\
\pa^{\gamma}_{\nu} \Psi = \({n+2\gamma \over n-2\gamma}\) w_1^{4\gamma \over n-2\gamma}\psi &\text{on } \mr^n,\\
\int_{\mr^n} w_1^{4\gamma \over n-2\gamma}z_1^0 \psi d\bx = \cdots = \int_{\mr^n} w_1^{4\gamma \over n-2\gamma}z_1^n \psi d\bx = 0
\end{cases}\]
so that $\wtps_{20} = 0$ according to \cite{DDS} (see the paragraph after \eqref{eq-cng}).
Hence if we choose any $\varepsilon > 0$, then
\begin{equation}\label{eq-lin-03}
\left\|w_\ell^{4\gamma \over n-2\gamma} \psi_{2\ell}\right\|_{***}
\le \(1+\vr^{\kappa-(2d_0+2)}\) \left\|w_1^{4\gamma \over n-2\gamma} \wtps_{2\ell} \right\|_{L^{\infty}\(B^n(0,\vr)\)}
+ C \({1 \over \vr^{\gamma}} + \ep_{\ell}^{2\gamma} \left|\log \ep_{\ell}\right|^{2\gamma}\) \|\Psi_{2\ell}\|_*
\le {\varepsilon \over 3} + {\varepsilon \over 3} + {\varepsilon \over 3} = \varepsilon
\end{equation}
for $\vr > 0$ and $\ell \in \mn$ large - namely its leftmost side goes to 0 as $\ell \to \infty$.
We also have
\begin{equation}\label{eq-lin-07}
\left\|w_\ell^{4\gamma \over n-2\gamma} \psi_{1\ell}\right\|_{***} \le C \(\|\Psi_{1\ell}\|_{*,\text{inner}} + \ep_{\ell}^{2\gamma} \left|\log \ep_{\ell}\right|^{2\gamma} \|\Psi_{1\ell}\|_{***}\)
\quad \text{and} \quad
\left\|w_{\ell}^{4\gamma \over n-2\gamma}z_{\ell}^m\right\|_{***} \le C \(\mu \ep^{2d_0+3}\)^{-1}.
\end{equation}

Let us introduce a barrier function $U_2$ defined as
\begin{multline*}
C_{21}^{-1} \( \left\|w_\ell^{4\gamma \over n-2\gamma} \psi_{2\ell}\right\|_{***} + \|\zeta_{\ell}\|_{***}
+ \left\|w_\ell^{4\gamma \over n-2\gamma} \psi_{1\ell}\right\|_{***} + \(\mu_{\ell} \ep_{\ell}^{2d_0+3}\)^{-1} \sum_{\tm=0}^n |(c_{\tm})_{\ell}| \)^{-1} \cdot U_2(x) \\
= \begin{cases}
\begin{aligned}
\mu \ep^{2d_0+2} \ep^{-{n-2\gamma \over 2}} \left[2 - \left| \dfrac{x-(\sigma,0)}{\ep} \right|^2 + U_{3;\kappa-(2d_0+2)}\(\dfrac{|x-(\sigma,0)|}{\ep}\) \right]\\
+ \eta_0 \ep^{\kappa - {n+2\gamma \over 2}} \nu^{-\kappa} \left[2-|x-(\sigma,0)|^2 + U_{3;\kappa}(|x-(\sigma,0)|)\right] + C_{22}
\end{aligned} &\text{if } |x-(\sigma,0)| \le \ep,\\
\begin{aligned}
\dfrac{\mu \ep^{\kappa - {n+2\gamma \over 2}}}{|x-(\sigma,0)|^{\kappa-2\gamma-(2d_0+2)}}
+ \mu \ep^{2d_0+2} \ep^{-{n-2\gamma \over 2}} U_{3;\kappa-(2d_0+2)}\(\dfrac{|x-(\sigma,0)|}{\ep}\) \\
+ \eta_0 \ep^{\kappa - {n+2\gamma \over 2}} \nu^{-\kappa} \left[2-|x-(\sigma,0)|^2 + U_{3;\kappa}(|x-(\sigma,0)|)\right] + C_{22}
\end{aligned}
&\text{if } \ep \le |x-(\sigma,0)| \le \nu/2,\\
\eta_0 \ep^{\kappa - {n+2\gamma \over 2}} \(\dfrac{1}{|x - (\sigma,0)|^{\kappa -2\gamma}} + U_{3;\kappa}(|x-(\sigma,0)|)\) & \text{if } |x-(\sigma,0)| \ge \nu/2
\end{cases} \end{multline*}
for constants $C_{21} > 0$ large enough (depending on $n$, $\kappa$, $\gamma$, $\nu$ and $\eta_0$) and $C_{22} \in \mr$ suitably selected so that $U_2$ is continuous in $\mr^N_+$.
Here $U_{3;\kappa}(x) = U_{3;\kappa}(|x|) \in \drn$ is a radial function that solves
\[\begin{cases}
-\text{div}_{g_c}\(x_N^{1-2\gamma}\nabla U\) = 0 &\text{in } \mr^N_+,\\
\pa^{\gamma}_{\nu} U = \(1+|\bx|^{\kappa}\)^{-1} &\text{on } \mr^n.
\end{cases}\]
Then after some calculations using \eqref{eq-lin-23}, \eqref{eq-z-ann} and Lemma \ref{lemma-lin-3}, one finds that for all $\ell \in \mn$
\[\begin{cases}
-\text{div}_{\bg_{\ell}}\(\rho_{\ell}^{1-2\gamma}\nabla \(U_2 \pm \Psi_{2\ell}\)\) \ge 0 &\text{in } \(\mr^N_+, \bg_{\ell}\),\\
\pa^{\gamma}_{\nu} \(U_2 \pm \Psi_{2\ell}\) \ge 0 &\text{on } \mr^n.
\end{cases}\]
Consequently we see from \eqref{eq-lin-05}, \eqref{eq-lin-08}, \eqref{eq-lin-06}-\eqref{eq-lin-07}, Lemma \ref{lemma-lin-1} and Lemma \ref{lemma-lin-3} 
that
\[\|\Psi_{2\ell}\|_* \le C \( \left\|w_\ell^{4\gamma \over n-2\gamma} \psi_{2\ell}\right\|_{***} + \|\zeta_{\ell}\|_{***}
+ \left\|w_\ell^{4\gamma \over n-2\gamma} \psi_{1\ell}\right\|_{***} + \(\mu_{\ell} \ep_{\ell}^{2d_0+3}\)^{-1} \sum_{\tm=0}^n |(c_{\tm})_{\ell}| \) \to 0 \quad \text{as } \ell \to \infty.\]
However it contradicts \eqref{eq-lin-02}, meaning that \eqref{eq-lin-11} should be correct.
This concludes a priori estimate part of the proof.

\medskip \noindent \textsc{Step 2 (Existence).}
Set a subspace of $\mh_1$
\begin{equation}\label{eq-mzp}
\mz^\perp = \left\{\Psi \in \mh_1: \Psi = \psi \text{ on } \mr^n \text{ and } \int_{\mr^n} w_{\ls}^{4\gamma \over n-2\gamma}z_{\ls}^m \psi d\bx = 0 \text{ for all } m = 0, \cdots, n\right\}.
\end{equation}
Then expressed in the weak form, Eq. \eqref{eq-lin-0} is reduced to a problem finding $\Psi \in \mh_1$ such that
\begin{equation}\label{eq-lin-09}
\kappa_{\gamma} \int_{\mr^N_+} \rho^{1-2\gamma} \la \nabla \Psi, \nabla V \ra_{\bg} dx + \kappa_{\gamma} \int_{\mr^N_+} E(\rho) \Psi V dx
= \kappa_{\gamma} \int_{\mr^N_+} x_N^{1-2\gamma} \Phi V dx + \({n+2\gamma \over n-2\gamma}\) \int_{\mr^n} w_{\ls}^{4\gamma \over n-2\gamma} \psi v d\bx + \int_{\mr^n} \zeta v d\bx
\end{equation}
for any $V \in \mz^\perp$ where $V = v$ on $\mr^n$. See below for more explanation.
Moreover the above equation can be rewritten in the operational form
\[(\kappa_{\gamma} \cdot \text{Id}- K)\Psi = \widetilde{\Phi} + \widetilde{\zeta}\]
where $\widetilde{\Phi},\ \widetilde{\zeta} \in \mz^{\perp}$ are defined by the relation
\[\int_{\mr^N_+} \rho^{1-2\gamma} \la \nabla \( \widetilde{\Phi} + \widetilde{\zeta} \), \nabla V \ra_{\bg} dx
= \kappa_{\gamma} \int_{\mr^N_+} x_N^{1-2\gamma} \Phi V dx + \int_{\mr^n} \zeta v d\bx\]
holding for any $V \in \mz^\perp$ and $K$ is a compact operator in $\mz^\perp$ given by
\[\int_{\mr^N_+} \rho^{1-2\gamma} \la \nabla K(\Psi), \nabla V \ra_{\bg} dx
= \({n+2\gamma \over n-2\gamma}\) \int_{\mr^n} w_{\ls}^{4\gamma \over n-2\gamma} \psi v d\bx - \kappa_{\gamma} \int_{\mr^N_+} E(\rho) \Psi V dx\]
for every $V \in \mz^\perp$.
(One can prove existence of $\widetilde{\Phi},\ \widetilde{\zeta}$ and well-definedness and compactness of $K$ by applying the truncation argument
as in the proof of Lemma \ref{lemma-lin-2} with \eqref{eq-sobolev-2}, the Sobolev trace inequality in \cite{Xi} and \eqref{eq-lin-04}.)
In light of \eqref{eq-lin-11}, the operator $\kappa_{\gamma} \cdot \text{Id}- K$ must be injective in $\mz^\perp$.
Thus the Fredholm alternative guarantees that it is also surjective, from which we deduce the unique solvability of \eqref{eq-lin-0}.
\end{proof}

\subsection{Estimate for the error}
Let $E_{\ls} := -x_N^{2\gamma-1}(\me(W_{\ls}) + E(\rho)W_{\ls})$ be the error term where the operator $\me$ is defined in \eqref{eq-lin-24}.
The next lemma contains its estimate, especially showing that it is small as an element of $\mh_2$.
\begin{lemma}\label{lemma-error}
For fixed $\nu, \eta_0 \gg \ep > 0$ small and $(\dt) \in \ma$, we have
\begin{equation}\label{eq-E-est}
\|E_{\ls}\|_{**} \le C
\end{equation}
for $C > 0$ dependent only on $n$, $\gamma$ and $\kappa$.
\end{lemma}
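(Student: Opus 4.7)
The plan is to bound $E_{\ls}(x) = -x_N^{2\gamma-1}\bigl(\me(W_{\ls})+E(\rho)W_{\ls}\bigr)$ pointwise, splitting into the inner region $\{|x-(\sigma,0)|\le \nu/2\}$ and the outer region $\{|x-(\sigma,0)|\ge \nu/2\}$, and checking in each that the resulting bound is, up to a constant, the reciprocal of the weight defining $\|\cdot\|_{**}$. The first simplification is that $W_{\ls}$ is $\gamma$-harmonic with respect to the flat metric, i.e. $\text{div}_{g_c}(x_N^{1-2\gamma}\nabla W_{\ls})=0$, so the first term on the right-hand side of \eqref{eq-lin-24} vanishes at $U = W_{\ls}$. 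Writing $u^{-2(1-2\gamma)/(N-2)} = (\rho/x_N)^{1-2\gamma}$, what remains in $x_N^{2\gamma-1}\me(W_{\ls})$ is a sum of products of quantities involving $h$, $(\rho/x_N)^{1-2\gamma}-1$, and their first derivatives, times $\pa_{\bx}W_{\ls}$ or $\pa_{\bx}^2 W_{\ls}$.

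In the inner region, I will extract from the explicit form \eqref{eq-h} the bounds $|h_{ab}(x)|\lesssim\mu(\ep^{2d_0+2}+|\bx|^{2d_0+2})$ and $|\pa^k h_{ab}(x)|\lesssim\mu(\ep^{2d_0+2-k}+|\bx|^{2d_0+2-k})$ for $k=1,2$, and from Proposition \ref{prop-rho-est} together with \eqref{eq-lin-04} that both $(\rho/x_N)^{1-2\gamma}-1$ and $x_N^{2\gamma-1}E(\rho)$ contribute at order $\mu^2$, hence are strictly lower order than the $h$-contribution. Combining these with the pointwise decay of $\nabla_{\bx}^m W_{\ls}$ from Lemma \ref{lemma-W-decay}, I expect the bound
\[
|E_{\ls}(x)|\lesssim \mu\bigl(\ep^{2d_0+2}+|x-(\sigma,0)|^{2d_0+2}\bigr)\cdot\frac{(\ep\delta)^{(n-2\gamma)/2}}{(\ep\delta)^{n-2\gamma+2}+|x-(\sigma,0)|^{n-2\gamma+2}}.
\]
Dividing by the inner weight $\mu\ep^{\kappa-(n+2\gamma)/2}\bigl/\bigl(\ep^{\kappa-2\gamma-2d_0}+|x-(\sigma,0)|^{\kappa-2\gamma-2d_0}\bigr)$ and performing a case split on $|x-(\sigma,0)|$ vs.\ $\ep$ reduces everything to checking $(\ep/|x-(\sigma,0)|)^{n-\kappa}\lesssim 1$, which is immediate since $\kappa<n$.

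In the outer region I will use the uniform bound \eqref{eq-eta_0} on $h$ and its derivatives, the fact that $h$ is supported in $\{|x|\le 1\}$ (so the $\me(W_{\ls})$ contribution vanishes for $|x|\ge 1$), together with the outer estimate $|x_N^{2\gamma-1}E(\rho)(x)|\lesssim \eta_0/(1+|x-(\sigma,0)|^4)$ from \eqref{eq-lin-04}. Combining with the far-field decay $|\nabla^m_{\bx}W_{\ls}(x)|\lesssim(\ep\delta)^{(n-2\gamma)/2}/|x-(\sigma,0)|^{n-2\gamma+m}$ of Lemma \ref{lemma-W-decay} produces
\[
|E_{\ls}(x)|\lesssim \eta_0\cdot\frac{(\ep\delta)^{(n-2\gamma)/2}}{|x-(\sigma,0)|^{n-2\gamma+2}},
\]
which matches the outer weight $\eta_0\ep^{\kappa-(n+2\gamma)/2}/|x-(\sigma,0)|^{\kappa-2\gamma+2}$ again through the same inequality $(\ep/|x-(\sigma,0)|)^{n-\kappa}\lesssim 1$.

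The step I expect to require the most care is the differentiation of the asymptotic expansion \eqref{eq-rho-est} in both $\bx$ and $x_N$, in order to verify that $\pa_a(\rho/x_N)^{1-2\gamma}$ is genuinely of higher order than $|\pa h|$ throughout the inner region, and that the residual terms $O(\mu^3\ep^{4(d_0+1)}|x|^2(1+|x|^{4d_0})x_N^2)$ and $O((\ep x_N)^{4(d_0+1)+2-\eta})$ do not spoil the leading bound. The dimensional hypothesis $n>2\gamma+4(d_0+1)+2/3$ and the range of $\kappa$ in \eqref{eq-kappa} are calibrated precisely so that these exponent inequalities all close up.
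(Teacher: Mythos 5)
Your proposal follows essentially the same route as the paper: drop the $\gamma$-harmonic term of $\me(W_{\ls})$, bound the remaining terms of $\me(W_{\ls})+E(\rho)W_{\ls}$ pointwise using the decay of $W_{\ls}$ from Lemma \ref{lemma-W-decay}, the polynomial bounds on $h$ and its derivatives, and the estimates \eqref{eq-lin-04} and Lemma \ref{lemma-z-reg} for $E(\rho)$ and $z=u-1$, and then compare with the $\|\cdot\|_{**}$ weight via $(\ep/|x-(\sigma,0)|)^{n-\kappa}\lesssim 1$. The only point to add is that the term $\pa_N u^{-2(1-2\gamma)/(N-2)}\,\pa_N W_{\ls}$ (missing from your list, which mentions only $\bx$-derivatives of $W_{\ls}$) carries the singular factor $x_N^{2\gamma-1}$ from Lemma \ref{lemma-W-decay}; it is harmless precisely because $\pa_N z=O(\mu^2(\cdots)x_N)$, which is the cancellation your final paragraph anticipates when differentiating \eqref{eq-rho-est} in $x_N$.
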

\begin{proof}
We observe from \eqref{eq-lin-24} that
\begin{equation}\label{eq-E-est-1}
E_{\ls} = u^{-{2 \over N-2}(1-2\gamma)} \(\bg^{ij} - \delta^{ij}\) \pa_{ij}W_{\ls} + \pa_a u^{-{2 \over N-2}(1-2\gamma)} \bg^{ab} \pa_b W_{\ls}
+ u^{-{2 \over N-2}(1-2\gamma)} \pa_i \bg^{ij} \pa_j W_{\ls} - x_N^{2\gamma-1} E(\rho)W_{\ls}.
\end{equation}
Hence an application of Lemmas \ref{lemma-W-decay} and \ref{lemma-z-reg}, \eqref{eq-lin-04} and \eqref{eq-z-ann} yields
\begin{equation}\label{eq-E-est-2}
|E_{\ls}(x)| \le \begin{cases}
\dfrac{C \mu \ep^{n-2\gamma \over 2}}{\ep^{n-2\gamma-2d_0} + |x-(\sigma,0)|^{n-2\gamma-2d_0}} &\text{for } |x-(\sigma,0)| \le \nu/2,\\
\dfrac{C \eta_0 \ep^{n-2\gamma \over 2}}{|x-(\sigma,0)|^{n-2\gamma+2}} &\text{for } |x-(\sigma,0)| \ge \nu/2.
\end{cases} \end{equation}
For instance, the second term of $E_{\ls}$ in \eqref{eq-E-est-1} can be estimated as
\begin{align*}
\left|\pa_a u^{-{2 \over N-2}(1-2\gamma)} \bg^{ab} \pa_b W_{\ls}\right|
&\le C\(|\nabla_{\bx} z| \cdot |\nabla_{\bx} W_{\ls}| + |\pa_N z| \cdot |\pa_N W_{\ls}|\) \\
&\le C \mu^2 \ep^{n-2\gamma \over 2} \(\ep^{4d_0} + |x|^{4d_0}\) \left[{|x|^3 \over \ep^{n-2\gamma+1} + |x-(\sigma,0)|^{n-2\gamma+1}}
+ {|x|^2x_N^{2\gamma} \over \ep^n + |x-(\sigma,0)|^n} \right]\\
&\le C \mu^2 \ep^{n-2\gamma \over 2} \left[{\ep^{4d_0+3} + |x-(\sigma,0)|^{4d_0+3} \over \ep^{n-2\gamma+1} + |x-(\sigma,0)|^{n-2\gamma+1}}
+ {\ep^{4d_0+2+2\gamma} + |x-(\sigma,0)|^{4d_0+2+2\gamma} \over \ep^n + |x-(\sigma,0)|^n} \right]\\
&\le {C \nu^{2d_0+2} \mu^2 \ep^{n-2\gamma \over 2} \over \ep^{n-2\gamma-2d_0} + |x-(\sigma,0)|^{n-2\gamma-2d_0}}
\end{align*}
if $|x| \le \nu$.
The norm bound \eqref{eq-E-est} is immediately deduced by \eqref{eq-E-est-2}.
\end{proof}

\subsection{Solvability of the nonlinear problem}
We now prove that an intermediate problem
\begin{equation}\label{eq-inter}
\begin{cases}
-\text{div}_{\bg}\(\rho^{1-2\gamma}\nabla \Psi\) + E(\rho)\Psi = x_N^{1-2\gamma} E_{\ls} &\text{in } \(\mr^N_+, \bg\),\\
\Psi = \psi &\text{on } \mr^n,\\
\pa^{\gamma}_{\nu} \Psi - \({n+2\gamma \over n-2\gamma}\)w_{\ls}^{4\gamma \over n-2\gamma}\psi = N_{\ls}(\psi) + \sum\limits_{m=0}^n c_m w_{\ls}^{4\gamma \over n-2\gamma} z_{\ls}^m &\text{on } \mr^n,\\
\int_{\mr^n} w_{\ls}^{4\gamma \over n-2\gamma}z_{\ls}^0 \psi d\bx = \int_{\mr^n} w_{\ls}^{4\gamma \over n-2\gamma}z_{\ls}^1 \psi d\bx = \cdots = \int_{\mr^n} w_{\ls}^{4\gamma \over n-2\gamma}z_{\ls}^n \psi d\bx = 0
\end{cases}\end{equation}
to our main Eq. \eqref{eq-main-ext}, or \eqref{eq-main}, is solvable by using the contraction mapping argument.
Here
\begin{equation}\label{eq-nls}
N_{\ls}(\psi) := (w_{\ls}+\psi)_+^{n+2\gamma \over n-2\gamma} - w_{\ls}^{n+2\gamma \over n-2\gamma} - \({n+2\gamma \over n-2\gamma}\)w_{\ls}^{4\gamma \over n-2\gamma}\psi \in \mh_3.
\end{equation}
\begin{prop}\label{prop-inter}
For $\nu, \eta_0 \gg \ep > 0$ small enough, $n > 2\gamma + 4(d_0+1) + 2/3$ and $(\dt) \in \ma$ fixed, there exists a unique solution $\Psi_{\ls} \in \mh_1$ and $\textbf{c}_{\ls} = ((c_0)_{\ls}, \cdots, (c_n)_{\ls}) \in \mr^{n+1}$ to Eq. \eqref{eq-inter} such that
\begin{equation}\label{eq-inter-1}
\|\Psi_{\ls}\|_* \le C
\end{equation}
where $C > 0$ depends only on $n$, $\gamma$ and $\kappa$.
\end{prop}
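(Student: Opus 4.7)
The plan is to solve \eqref{eq-inter} by a standard contraction-mapping argument, using Proposition \ref{prop-lin} as the black-box linear solver and Lemma \ref{lemma-error} to control the constant forcing term. Concretely, for each $\psi$ in the trace space, set $\zeta = N_{\ls}(\psi)$ and let $T(\Psi)$ be the unique pair $(\widetilde\Psi, \mathbf{c}) \in \mh_1 \times \mr^{n+1}$ produced by Proposition \ref{prop-lin} with data $\Phi = E_{\ls}$ and $\zeta = N_{\ls}(\psi)$. A solution of \eqref{eq-inter} is then a fixed point of $T$ in the closed ball
\[\mb_R := \{\Psi \in \mz^\perp : \|\Psi\|_* \le R\}\]
of the space $\mz^\perp$ defined in \eqref{eq-mzp}, for a suitably chosen radius $R = R(n,\gamma,\kappa) > 0$.

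The key analytic input is a bound for the nonlinearity \eqref{eq-nls} in the $\|\cdot\|_{***}$ norm. Writing $p = (n+2\gamma)/(n-2\gamma)$ and using the pointwise inequality
\[\bigl| N_{\ls}(\psi) \bigr| \le C \bigl( w_{\ls}^{p-2}\,\psi^2 + |\psi|^p \bigr) \quad \text{if } 1 < p \le 2, \qquad \bigl| N_{\ls}(\psi) \bigr| \le C |\psi|^p \quad \text{if } p > 2,\]
together with the Lipschitz version
\[\bigl| N_{\ls}(\psi_1) - N_{\ls}(\psi_2) \bigr| \le C \bigl( w_{\ls}^{p-1} + |\psi_1|^{p-1} + |\psi_2|^{p-1} \bigr)|\psi_1 - \psi_2|,\]
I would convert $\|\cdot\|_*$-bounds on $\psi$ into $\|\cdot\|_{***}$-bounds on $N_{\ls}(\psi)$. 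The decay of $w_{\ls}$ (like $\ep^{(n-2\gamma)/2}/|x-(\sigma,0)|^{n-2\gamma}$ away from $(\sigma,0)$) together with the weight structure in $\|\cdot\|_{***}$ and $\|\cdot\|_*$ should yield an estimate of the form
\begin{align*}
\|N_{\ls}(\psi)\|_{***} &\le C\,\omega(\ep,\nu,\eta_0)\,\|\psi\|_*, \\
\|N_{\ls}(\psi_1) - N_{\ls}(\psi_2)\|_{***} &\le C\,\omega(\ep,\nu,\eta_0)\,\|\psi_1-\psi_2\|_*,
\end{align*}
on $\mb_R$, where $\omega(\ep,\nu,\eta_0) \to 0$ as the parameters shrink, in analogy with the factors $\ep_\ell^{2\gamma}|\log \ep_\ell|^{2\gamma}$ that already appeared in \eqref{eq-lin-03} and \eqref{eq-lin-07}. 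This is the step where one really uses that $n > 2\gamma + 4(d_0+1) + 2/3$, which ensures integrability and the correct ordering of the exponents in $\kappa$ from \eqref{eq-kappa}.

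Combining these bounds with the linear estimate \eqref{eq-lin-11} and $\|E_{\ls}\|_{**} \le C$ from Lemma \ref{lemma-error}, I obtain
\[\|T(\Psi)\|_* \le C_0 \bigl( \|E_{\ls}\|_{**} + \|N_{\ls}(\psi)\|_{***} \bigr) \le C_0\bigl( C + \omega \,R \bigr),\]
and a similar contraction bound for $T(\Psi_1) - T(\Psi_2)$. Choosing $R = 2C_0 C$ and then taking $\ep$, $\nu$, $\eta_0$ small enough so that $C_0 \omega \le 1/2$, the map $T$ sends $\mb_R$ into itself and is a $\tfrac12$-contraction there. The Banach fixed-point theorem then supplies a unique $\Psi_{\ls} \in \mb_R$ and an associated $\mathbf{c}_{\ls} \in \mr^{n+1}$ solving \eqref{eq-inter}, and the bound $\|\Psi_{\ls}\|_* \le R \le C$ from \eqref{eq-inter-1} follows directly.

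The main obstacle I expect is the first estimate on $N_{\ls}$ in the weighted norm $\|\cdot\|_{***}$: one must carefully track the three separate regimes used in the definitions of $\|\cdot\|_*$ and $\|\cdot\|_{***}$ (the inner region $|x-(\sigma,0)|\le \ep$, the intermediate annulus $\ep \le |\cdot| \le \nu/2$, and the exterior region), and verify that both the quadratic/superlinear terms in $\psi$ and the mixed terms involving $w_{\ls}^{p-1}$ produce the required smallness factor $\omega(\ep,\nu,\eta_0)$. The dimensional lower bound and the range of $\kappa$ in \eqref{eq-kappa} are calibrated precisely so that these weighted pointwise estimates close, which is why they must be invoked here.
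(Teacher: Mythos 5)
Your proposal reproduces the paper's argument essentially verbatim: reformulate \eqref{eq-inter} as the fixed-point equation $\Psi = T_{\ls}(E_{\ls}, N_{\ls}(\psi))$ via Proposition \ref{prop-lin}, control the constant forcing by Lemma \ref{lemma-error}, estimate $N_{\ls}(\psi)$ in $\|\cdot\|_{***}$ with a smallness factor tending to zero as $\ep,\nu,\eta_0 \to 0$ (using $\kappa > n-2\gamma$), and close with the Banach fixed-point theorem on a ball in $\mz^\perp$ of radius twice the norm of the constant term. The only cosmetic difference is that you phrase the pointwise bound on $N_{\ls}$ globally (quadratic plus power term), whereas the paper splits by the regions $|\bx-\sigma|\lessgtr \nu/2$; both yield the same $o(1)$ factor.
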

\begin{proof}
According to Proposition \ref{prop-lin}, one can define an operator $T_{\ls}: \mh_2 \times \mh_3 \to \mh_1$ to be $T_{\ls}(\Phi, \zeta) = \Psi$
where $\Psi \in \mh_1$ solves Eq. \eqref{eq-lin-0} for given pairs $(\ls) \in (0,\infty) \times \mr^n$ and $(\Phi, \zeta) \in \mh_2 \times \mh_3$.
One also has that $\|T_{\ls}(\Phi, \zeta)\|_* \le M_1(\|\Phi\|_{**} + \|\zeta\|_{***})$ for some $M_1 > 0$.
In terms of this operator $T_{\ls}$, \eqref{eq-inter} is reformulated as
\[\Psi = T_{\ls}(E_{\ls}, N_{\ls}(\psi)) =: T_{\ls}'(\Psi) \quad \text{for } \Psi \in \mz^\perp\]
where $\mz^\perp$ is the space defined in \eqref{eq-mzp}. Let us set
\[\mb = \left\{\Psi \in \mz^\perp : \|\Psi\|_* \le M_2\right\}\]
with $M_2 > 0$ a number to be determined.
By using the facts that $\kappa > n-2\gamma$,
\[|N_{\ls}(\psi)| \le C \begin{cases}
w_{\ls}^{-{n-6\gamma \over n-2\gamma}} |\psi|^2 &\text{if } |\bx-\sigma| \le \nu/2, \\
|\psi|^{n+2\gamma \over n-2\gamma} &\text{if } |\bx-\sigma| \ge \nu/2,
\end{cases}\]
and
\[|N_{\ls}(\psi_1) - N_{\ls}(\psi_2)| \le C \begin{cases}
w_{\ls}^{-{n-6\gamma \over n-2\gamma}} (|\psi_1| + |\psi_2|) |\psi_1 - \psi_2| &\text{if } |\bx-\sigma| \le \nu/2, \\
\( |\psi_1|^{4\gamma \over n-2\gamma} + |\psi_2|^{4\gamma \over n-2\gamma} \) |\psi_1 - \psi_2| &\text{if } |\bx-\sigma| \ge \nu/2
\end{cases}\]
(which follows from the mean value theorem),
we easily get that $\|N_{\ls}(\psi)\|_{***} = o(1) \|\Psi\|_*$ and $\|N_{\ls}(\psi_1) - N_{\ls}(\psi_2)\|_{***} \le o(1) \|\Psi_1 - \Psi_2\|_*$.
Then, by \eqref{eq-E-est} also, we see that there exists a constant $M_3 > 0$ such that
\[\|T_{\ls}'(\Psi)\|_* \le M_1 \(\|E_{\ls}\|_{**} + \|N_{\ls}(\psi)\|_{***}\) \le M_1\(M_3 + o(1) \|\Psi\|_*\) \le 2M_1M_3\]
for all $\Psi \in \mb$ and
\[\|T_{\ls}'(\Psi_1) - T_{\ls}'(\Psi_2)\|_* = \|T_{\ls}\(0, N_{\ls}(\psi_1) - N_{\ls}(\psi_2)\)\|_* \le M_1 \|N_{\ls}(\psi_1) - N_{\ls}(\psi_2)\|_{***} \le o(1) \|\Psi_1 - \Psi_2\|_*\]
for any $\Psi_1$ and $\Psi_2 \in \mb$.
Therefore $T_{\ls}'$ is a contraction map on the set $\mb$ with the choice $M_2 = 2M_1M_3$.
The result follows from the contraction mapping theorem.
\end{proof}

One can also analyze the differentiability of the function $\Psi_{\edt}$ with respect to its parameter $(\dt)$.
\begin{lemma}\label{lemma-inter}
Given $n > 2\gamma + 4(d_0+1) + 2/3$ and small fixed numbers $\nu, \eta_0 \gg \ep > 0$, the map $(\dt) \in \ma \mapsto \Psi_{\edt} \in \mh_1$ is of class $C^1$.
Furthermore, there exists $C > 0$ depending only on $n$, $\gamma$ and $\kappa$ such that
\begin{equation}\label{eq-inter-2}
\left\|\nabla_{(\dt)} \Psi_{\edt}\right\|_* \le C \ep^{-1}.
\end{equation}
\end{lemma}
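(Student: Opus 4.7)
The argument is a parameter-dependent upgrade of Proposition \ref{prop-inter}, carried out by the implicit function theorem with Proposition \ref{prop-lin} furnishing the invertibility of the linearization. Introduce the map
\[
\mathcal{F}: \ma \times \mz^{\perp} \times \mr^{n+1} \longrightarrow \mh_2 \times \mh_3 \times \mr^{n+1}
\]
whose three blocks encode the interior equation, the boundary equation and the $n+1$ orthogonality conditions of \eqref{eq-inter}. Since $w_{\edt}, z^m_{\edt}, \rho$, the error $E_{\edt}$ and the nonlinearity $N_{\edt}$ are all $C^1$ in $(\dt)\in \ma$—a direct consequence of the explicit form of the bubbles together with the regularity from Proposition \ref{prop-rho-est}—the map $\mathcal{F}$ is jointly $C^1$. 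At the zero $(\Psi_{\edt}, \mathbf{c}_{\edt})$ produced in Proposition \ref{prop-inter}, the partial derivative $D_{(\Psi, \mathbf{c})}\mathcal{F}$ equals the operator inverted in Proposition \ref{prop-lin}, modulo a perturbation of operator norm $O(\|\Psi_{\edt}\|_*)$ coming from the Fr\'echet derivative of $N_{\edt}$ in $\psi$. Shrinking $\ep, \eta_0$ if necessary, this perturbation is small and $D_{(\Psi,\mathbf{c})}\mathcal{F}$ remains an isomorphism; the implicit function theorem then yields the claimed $C^1$ dependence of $(\Psi_{\edt}, \mathbf{c}_{\edt})$ on $(\dt)$.

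To derive \eqref{eq-inter-2}, formally differentiate \eqref{eq-inter} with respect to $\delta$ (the argument for $\tau_i$ is identical). The derivative $\dot{\Psi} := \pa_\delta \Psi_{\edt}$ solves an instance of \eqref{eq-lin-0} with $(\ls)=(\edt)$, whose sources collect the $\delta$-derivatives of $E_{\edt}$, of the coefficient $w_{\edt}^{4\gamma/(n-2\gamma)}$ appearing in the boundary condition, of $N_{\edt}(\psi_{\edt})$, and of the Lagrange-multiplier term $\sum_m (c_m)_{\edt}\, w_{\edt}^{4\gamma/(n-2\gamma)} z^m_{\edt}$. Writing $\pa_\delta = \ep\,\pa_\lambda|_{\lambda = \ep\delta}$ and invoking Lemma \ref{lemma-W-decay}, Lemma \ref{lemma-error} and the bound \eqref{eq-inter-1}, a direct computation gives
\[
\|\Phi\|_{**} + \|\zeta\|_{***} \le C\ep^{-1},
\]
the $\ep^{-1}$ loss arising because $\pa_\lambda$ acting on bubble-scale objects introduces an extra factor $\lambda^{-1} = (\ep\delta)^{-1}$ that only partially compensates the $\ep$ in $\pa_\delta$.

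A subtlety is that the differentiated orthogonality relations
\[
\int_{\mr^n} w_{\edt}^{\frac{4\gamma}{n-2\gamma}} z^m_{\edt}\,\dot{\psi}\, d\bx = -\int_{\mr^n} \pa_\delta\!\left( w_{\edt}^{\frac{4\gamma}{n-2\gamma}} z^m_{\edt}\right) \psi_{\edt}\, d\bx
\]
are inhomogeneous, so Proposition \ref{prop-lin} does not apply directly. The standard remedy is a Lagrange-multiplier correction: subtract from $\dot{\Psi}$ a suitable combination $\sum_m \alpha_m Z^m_{\edt}$ chosen so that the corrected function lies in $\mz^{\perp}$, and solve for the $\alpha_m$ from the $(n+1)\times(n+1)$ almost-diagonal system above, which is uniformly invertible since $\int w_1^{4\gamma/(n-2\gamma)}(z_1^m)^2 d\bx > 0$. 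The coefficients satisfy $|\alpha_m| \le C\ep^{-1}$, and Proposition \ref{prop-lin} applied to the corrected problem gives $\|\dot{\Psi}\|_* \le C\ep^{-1}$, which is \eqref{eq-inter-2}. The principal obstacle is the bookkeeping: every source of $\dt$-dependence ($w,\ z^m,\ \rho,\ E,\ N,\ \mathbf{c}$ and the projection onto $\mz^{\perp}$) generates correction terms that must be estimated in $\|\cdot\|_{**}$ and $\|\cdot\|_{***}$ with the correct powers of $\ep$ before the linear theory can be invoked.
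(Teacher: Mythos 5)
Your proof takes essentially the same route as the paper, which gives no details here beyond citing \cite[Proposition 6.2]{WZ}: the implicit function theorem applied to the map encoding \eqref{eq-inter}, with Proposition \ref{prop-lin} providing invertibility of the linearization, followed by differentiating the equation in $(\dt)$ and estimating the resulting sources in $\|\cdot\|_{**}$ and $\|\cdot\|_{***}$, including the Lagrange-multiplier correction needed because the differentiated orthogonality conditions become inhomogeneous. One small caveat: your heuristic for the factor $\ep^{-1}$ is off, since $\pa_\delta=\ep\,\pa_\lambda|_{\lambda=\ep\delta}$ combined with the scaling $\pa_\lambda\sim\lambda^{-1}=(\ep\delta)^{-1}$ on bubble-scale quantities yields $\delta^{-1}=O(1)$, i.e.\ full (not partial) compensation; the genuine $\ep^{-1}$ loss comes instead from differentiating the $\ep$-dependent weights in the norms and the coefficients $(c_m)_{\edt}$ (compare \eqref{eq-lin-07}), though this does not affect the validity of the stated upper bound.
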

\begin{proof}
The proof is similar to that of \cite[Proposition 6.2]{WZ}.
\end{proof}

\subsection{Variational reduction}
Provided that the assumptions of Proposition \ref{prop-inter} are fulfilled and in particular a small $\ep > 0$ is fixed, let $J_0^{\gamma}$ be a localized energy functional given by
\begin{equation}\label{eq-loc-energy}
J_0^{\gamma}(\dt) = I^{\gamma}(W_{\edt} + \Psi_{\edt}) \quad \text{for } (\dt) \in \ma \subset (0,\infty) \times \mr^n
\end{equation}
where $I^{\gamma}$ is the functional defined in \eqref{eq-energy}.

\begin{lemma}\label{lemma-red}
The followings are valid provided that $\eta_0, \ep > 0$ small fixed and $n > 2\gamma + 4(d_0+1) + 2/3$.
\begin{enumerate}
\item The functional $J_0^{\gamma}$ is continuously differentiable.
\item If $(\delta(\ep), \tau(\ep)) > 0$ is a critical point of $J_0^{\gamma}$, then $(I^{\gamma})'(W_{\edte} + \Psi_{\edte}) = 0$.
\end{enumerate}
\end{lemma}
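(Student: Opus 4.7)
The plan is to derive both assertions from the Lyapunov--Schmidt framework set up in Proposition~\ref{prop-inter} and Lemma~\ref{lemma-inter}. For~(1), the energy $I^{\gamma}:\mh_1\to\mr$ is $C^1$: its quadratic part is a continuous bilinear form on $\mh_1\times\mh_1$ (invoking \eqref{eq-lin-04} to control the $E(\rho)$-term) and the critical nonlinearity $U\mapsto\int_{\mr^n}U_+^{2n/(n-2\gamma)}d\bx$ is $C^1$ by the Sobolev trace inequality $\drn\hookrightarrow L^{2n/(n-2\gamma)}(\mr^n)$. The map $(\dt)\mapsto W_{\edt}$ is $C^1$ into $\mh_1$ by direct differentiation of the Poisson representation \eqref{eq-Poisson} combined with the decay bounds of Lemma~\ref{lemma-W-decay}, while $(\dt)\mapsto\Psi_{\edt}$ is $C^1$ by Lemma~\ref{lemma-inter}. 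The chain rule then yields
\[
\pa_{\delta}J_0^{\gamma}(\dt)=\la(I^{\gamma})'(W_{\edt}+\Psi_{\edt}),\,\pa_{\delta}(W_{\edt}+\Psi_{\edt})\ra,\quad \pa_{\tau_k}J_0^{\gamma}(\dt)=\la(I^{\gamma})'(W_{\edt}+\Psi_{\edt}),\,\pa_{\tau_k}(W_{\edt}+\Psi_{\edt})\ra.
\]

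For~(2), observe that by construction $W_{\edt}+\Psi_{\edt}$ satisfies~\eqref{eq-inter}, whose weak form reads
\[
\la(I^{\gamma})'(W_{\edt}+\Psi_{\edt}),V\ra=\sum_{m=0}^{n}(c_m)_{\edt}\int_{\mr^n}w_{\ls}^{4\gamma/(n-2\gamma)}z_{\ls}^{m}\,v\,d\bx
\]
for every $V\in\mh_1$ with trace $v$ and $(\ls)=(\edt)$. It thus suffices to show $(c_m)_{\edte}=0$ for $m=0,\ldots,n$. Specializing~(1) to $(\dt)=(\delta(\ep),\tau(\ep))$ and testing with $\pa_{\delta}(W_{\edt}+\Psi_{\edt})$ and $\pa_{\tau_k}(W_{\edt}+\Psi_{\edt})$, and using $\pa_{\delta}W_{\edt}=\ep Z_{\ls}^{0}$, $\pa_{\tau_k}W_{\edt}=\ep Z_{\ls}^{k}$ (whose $\mr^n$-traces are $\ep z_{\ls}^{0}$ and $\ep z_{\ls}^{k}$), the critical point condition becomes the $(n+1)\times(n+1)$ linear system
\[
0=\sum_{m=0}^{n}(c_m)_{\edte}\int_{\mr^n}w_{\ls}^{4\gamma/(n-2\gamma)}z_{\ls}^{m}\bigl(\ep z_{\ls}^{l}+\pa_{l}\psi_{\edt}\bigr)d\bx,\qquad l=0,\ldots,n,
\]
where $\pa_0=\pa_\delta$ and $\pa_l=\pa_{\tau_l}$ for $l\ge 1$.

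The leading block $M_{m l}:=\ep\int_{\mr^n}w_{\ls}^{4\gamma/(n-2\gamma)}z_{\ls}^{m}z_{\ls}^{l}d\bx$ is, after rescaling to the universal bubble $w_{1,0}$, a diagonal matrix with nonzero entries, as follows from the parities of $z_{\ls}^{m}$ (odd in $x_m$ for $m\ge 1$, radial for $m=0$ with respect to $\sigma$). The residual contributions are controlled by differentiating the orthogonality constraint $\int_{\mr^n}w_{\ls}^{4\gamma/(n-2\gamma)}z_{\ls}^{m}\psi_{\edt}\,d\bx=0$ built into~\eqref{eq-inter}, which yields
\[
\int_{\mr^n}w_{\ls}^{4\gamma/(n-2\gamma)}z_{\ls}^{m}\,\pa_{l}\psi_{\edt}\,d\bx=-\int_{\mr^n}\pa_{l}\!\bigl(w_{\ls}^{4\gamma/(n-2\gamma)}z_{\ls}^{m}\bigr)\psi_{\edt}\,d\bx.
\]
Combined with the bound \eqref{eq-inter-1} on $\|\Psi_{\edt}\|_{*}$ and the standard bubble scaling, the right-hand side is of order $\mu=\ep^{1/3}$ times the diagonal entries of $M$, so the perturbed system is invertible for $\ep$ small and all multipliers vanish, proving $(I^{\gamma})'(W_{\edte}+\Psi_{\edte})=0$.

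The main technical obstacle is this last scaling step: the crude estimate $\|\pa_{(\dt)}\Psi_{\edt}\|_{*}\le C\ep^{-1}$ from \eqref{eq-inter-2} is by itself too weak to discard the cross terms, and the orthogonality identity is essential since it transfers the derivative from $\psi_{\edt}$ onto the smooth factor $w_{\ls}^{4\gamma/(n-2\gamma)}z_{\ls}^{m}$, producing the decisive gain of a power of $\mu$.
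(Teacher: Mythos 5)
Your argument is correct and follows the same overall structure as the paper: part (1) via the $C^1$-regularity of $I^{\gamma}$ together with Lemma \ref{lemma-inter}, and part (2) by writing the critical-point condition as the linear system $\sum_{\tm}c_{\tm}\hat{c}_{m\tm}=0$ with $\hat{c}_{m\tm}=\int_{\mr^n}w_{\edte}^{4\gamma/(n-2\gamma)}z_{\edte}^{\tm}\,\pa_{\tau_m}(w_{\edte}+\psi_{\edte})\,d\bx$ and showing this matrix is invertible. The one genuine difference is how you control the cross terms $\int w_{\ls}^{4\gamma/(n-2\gamma)}z_{\ls}^{\tm}\pa_{\tau_m}\psi_{\edt}\,d\bx$: you differentiate the orthogonality constraints to move the derivative onto $w_{\ls}^{4\gamma/(n-2\gamma)}z_{\ls}^{\tm}$, whereas the paper simply invokes the bound \eqref{eq-inter-2}. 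Your closing claim that \eqref{eq-inter-2} is ``by itself too weak'' is not right. A scaling computation gives $\int_{\mr^n}|w_{\ls}^{4\gamma/(n-2\gamma)}z_{\ls}^{\tm}|\cdot(\text{weight of }\|\cdot\|_*)\,d\bx = O\bigl(\mu\,\ep^{2d_0+1}\bigr)$ (the weight at the boundary near $\sigma$ is of size $\mu\ep^{2d_0+2-(n-2\gamma)/2}$ and $\kappa>2d_0+1$ ensures convergence), so with $\|\pa_{\tau_m}\Psi_{\edt}\|_*\le C\ep^{-1}$ the cross terms are $O(\mu\ep^{2d_0})$, which is negligible against the diagonal entries $\ep\int w_{\ls}^{4\gamma/(n-2\gamma)}(z_{\ls}^m)^2\,d\bx\sim\ep^{-1}$; diagonal dominance therefore follows directly from \eqref{eq-inter-2}, exactly as the paper asserts. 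Your orthogonality route is a valid (and in fact slightly sharper) alternative, but it is not needed.
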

\begin{proof}
1. Since the functional $I^{\gamma}: \mh_1 \to \mr$ is a $C^1$-map, the assertion follows from Lemma \ref{lemma-inter}.

\medskip \noindent
2. Suppose that $\(J_0^{\gamma}\)'(\delta(\ep), \tau(\ep)) = 0$.
If we write $(\dt) = (\tau_0, \tau_1, \cdots, \tau_n)$, then
\[0 = \pa_{\tau_m} J_0^{\gamma}(\delta(\ep), \tau(\ep)) = \sum_{\tm=0}^n c_{\tm} \int_{\mr^n} w_{\edte}^{4\gamma \over n-2\gamma} z_{\edte}^{\tm}
\pa_{\tau_m}\(w_{\edte}^{4\gamma \over n-2\gamma} + \psi_{\edte}\) =: \sum_{\tm=0}^n c_{\tm} \hat{c}_{m \tm} \]
for $m = 0, \cdots, n$, where $\Psi_{\edte} = \psi_{\edte}$ on $\mr^n$.
According to \eqref{eq-inter-2}, the matrix $(\hat{c}_{m \tm})_{m,\tm = 0, \cdots, n}$ is diagonal dominant.
Thus $c_0 = \cdots = c_n = 0$ and so $(I^{\gamma})'(W_{\edte} + \Psi_{\edte}) = 0$.
\end{proof}

The next lemma implies that the solution $W_{\edte} + \Psi_{\edte}$ to problem \eqref{eq-main-ext} (or \eqref{eq-main}) has desired properties described in Theorem \ref{thm-main}.
Consequently, in view of the previous lemma, it suffices to find a critical point of $J_0^{\gamma}$ whose domain $\ma$ is finite dimensional.

\begin{lemma}\label{lemma-red-2}
The critical point $W_{\edte} + \Psi_{\edte} \in \mh_1$ of $I^{\gamma}$ is positive in $\omrn$ and of $C^{\vartheta}(\omrn)$ for some $\vartheta \in (0,1)$.
Also, if parameters $\nu, \eta_0 \gg \ep > 0$ are small enough, then there exists a constant $C > 0$ depending only on $n$ and $\gamma$ such that
\begin{equation}\label{eq-lower-bd}
\left\|W_{\edte} + \Psi_{\edte}\right\|_{L^{\infty}(\mr^n)} \ge C \ep^{-{n-2\gamma \over 2}}.
\end{equation}
\end{lemma}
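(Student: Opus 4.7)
Let $U := W_{\edte} + \Psi_{\edte}$. By Lemma \ref{lemma-red}, $U$ is a critical point of $I^{\gamma}$ on $\mh_1$, hence a weak solution of
\begin{equation*}
-\text{div}_{\bg}\(\rho^{1-2\gamma}\nabla U\) + E(\rho)U = 0 \ \text{ in } \(\mr^N_+, \bg\), \qquad \pa^{\gamma}_{\nu} U = U_+^{(n+2\gamma)/(n-2\gamma)} \ \text{ on } \mr^n.
\end{equation*}
The plan is to establish, in order, (a) H\"older regularity, (b) positivity, and (c) the $L^{\infty}$ lower bound.

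For (a), both $W_{\edte}$ and $\Psi_{\edte}$ lie in $L^{\infty}(\mr^N_+)$---the former by its explicit form, the latter because $\|\Psi_{\edte}\|_* < \infty$---so $U$ does as well. Rewriting the interior equation as $-\text{div}_{\bg}(\rho^{1-2\gamma}\nabla U) = x_N^{1-2\gamma}\Phi$ with $\Phi := -x_N^{2\gamma-1}E(\rho)U$, the pointwise bound \eqref{eq-lin-04} forces $\Phi$ to be locally bounded, while $\zeta := U_+^{(n+2\gamma)/(n-2\gamma)}$ obviously is. Lemma \ref{lemma-reg} then delivers $U \in C^{\vartheta}(\omrn)$ for some $\vartheta \in (0,1)$.

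For (b), I would test the weak form against $-U_- \in \drn$. Since $U_+ U_- = 0$ pointwise and $\nabla U = -\nabla U_-$ on $\{U_- > 0\}$, this reduces to the energy identity
\begin{equation*}
\kappa_{\gamma} \int_{\mr^N_+} \rho^{1-2\gamma} |\nabla U_-|^2_{\bg}\, dx + \kappa_{\gamma} \int_{\mr^N_+} E(\rho)\, U_-^2\, dx = 0.
\end{equation*}
Because $E(\rho)$ is not sign-definite, I would invoke the coercivity on $\drn$ of the quadratic form associated with $L := -\text{div}_{\bg}(\rho^{1-2\gamma}\nabla \cdot) + E(\rho)$, which follows from the spectral analysis of $-\Delta_{g^+}$ in Appendix \ref{subsec-app-c}, to force $U_- \equiv 0$, i.e.\ $U \ge 0$. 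Strict positivity on $\omrn$ then comes from the Moser--Harnack inequality noted inside the proof of Lemma \ref{lemma-reg}, combined with $U \not\equiv 0$, which is supplied by step (c).

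For (c), I would evaluate $U$ at the concentration point $p_{\ep} := (\ep\tau(\ep), 0) \in \mr^n$. The explicit form of $w_{\edte}$ gives
\begin{equation*}
W_{\edte}(p_{\ep}) = w_{\edte}(\ep\tau(\ep)) = c_{n,\gamma}\, (\ep\, \delta(\ep))^{-(n-2\gamma)/2} \ge C_1\, \ep^{-(n-2\gamma)/2},
\end{equation*}
using $\delta(\ep) \in (1-\varepsilon_0, 1+\varepsilon_0)$. Meanwhile, $\|\Psi_{\edte}\|_* \le C$ from Proposition \ref{prop-inter} together with the inner regime of the $\|\cdot\|_*$-norm at $|x-(\sigma,0)|=0$ yields
\begin{equation*}
|\Psi_{\edte}(p_{\ep})| \le C\(\mu\, \ep^{2d_0+2-(n-2\gamma)/2} + \eta_0\, \ep^{\kappa-(n+2\gamma)/2}\, \nu^{-\kappa}\),
\end{equation*}
which, thanks to $\mu = \ep^{1/3}$, the rate $\nu|\log\ep| \ge 1/100$, and $\kappa > 2\gamma$, is $o(\ep^{-(n-2\gamma)/2})$ as $\ep, \eta_0 \to 0$. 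Hence $U(p_{\ep}) \ge \tfrac{1}{2} C_1\, \ep^{-(n-2\gamma)/2}$, which proves \eqref{eq-lower-bd}. The main obstacle is step (b): the sign-indefiniteness of $E(\rho)$ blocks a direct conclusion from the energy identity, and I must lean on the coercivity input from Appendix \ref{subsec-app-c}; a barrier route via Lemma \ref{lemma-gen-max} looks impractical, since the weighted estimate on $\Psi_{\edte}$ decays strictly slower than $W_{\edte}$ at infinity, so the hypothesis $|U|/W \to 0$ fails with the natural choice $W = W_{\edte}$.
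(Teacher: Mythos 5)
Your steps (a) and (c) coincide with the paper's: H\"older regularity via Lemma \ref{lemma-reg}, and the lower bound by evaluating at $(\sigma,0)$ and absorbing $|\Psi_{\edte}(\sigma,0)| \le C\bigl(\mu\ep^{2d_0+2-{n-2\gamma \over 2}}+\eta_0\ep^{\kappa-{n+2\gamma \over 2}}\nu^{-\kappa}\bigr) = o\bigl(\ep^{-{n-2\gamma \over 2}}\bigr)$, exactly as in the paper.

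Step (b) is where you diverge, and your closing remark that ``a barrier route via Lemma \ref{lemma-gen-max} looks impractical'' is mistaken: that is precisely the paper's argument. The point you miss is that the comparison function $W$ in Lemma \ref{lemma-gen-max} need not be the bubble $W_{\edte}$; the paper takes $W$ to be the profile defining the $\|\cdot\|_*$-norm with a slightly \emph{smaller} exponent $\kappa'<\kappa$ (still satisfying \eqref{eq-kappa}), so that $W$ decays like $|x|^{-(\kappa'-2\gamma)}$ while $U$ decays like $|x|^{-(\kappa-2\gamma)}$, whence $|U|/W \sim |x|^{-(\kappa-\kappa')}\to 0$ and the hypothesis of Lemma \ref{lemma-gen-max} is met. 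Your alternative --- testing with $-U_-$ and invoking nonnegativity of the quadratic form $V\mapsto \int \rho^{1-2\gamma}|\nabla V|_{\bg}^2+E(\rho)V^2$ --- is viable in outline, but the ``coercivity'' you lean on is nowhere stated in the paper; it is a consequence of \eqref{eq-eig} through the Chang--Gonz\'alez change of defining function (the identity $\int\rho^{1-2\gamma}|\nabla V|^2_{\bg}+E(\rho)V^2 = \int(\rho^*)^{1-2\gamma}|\nabla\widetilde V|^2$ with $E(\rho^*)=0$), i.e.\ exactly the machinery the paper deploys in its second half of Step 1, so you would have to develop it anyway. More seriously, your passage from $U\ge 0$ to $U>0$ on $\omrn$ is under-justified: the Harnack inequality in Lemma \ref{lemma-reg} has the inhomogeneous terms $\|\Phi\|_\infty$, $\|\zeta\|_\infty$ added to the infimum, and since $\Phi$ and $\zeta$ are built from $U$ itself a vanishing infimum does not immediately force a vanishing supremum; moreover strict positivity \emph{at boundary points} $x_0\in\mr^n$ (which is the case that matters for the Yamabe equation) requires the Hopf lemma for the degenerate operator, which the paper supplies by passing to the pure-divergence equation \eqref{eq-main-ext-2} in the variable $\wtu=(\rho/\rho^*)^{(n-2\gamma)/2}U$ and citing \cite[Theorem 3.5]{GQ}. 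You should either adopt that argument or close the gap by noting $|\Phi|\le CU$, $|\zeta|\le CU$ and absorbing the supremum for small radii, together with a boundary Hopf-type statement.
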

\begin{proof}
\textsc{Step 1 (Positivity of $W_{\edte} + \Psi_{\edte}$).}
For the brevity, we write $U = W_{\edte} + \Psi_{\edte}$ for a fixed $\ep > 0$.
Fixing any $\kappa' < \kappa$ which satisfy \eqref{eq-kappa}, let us define $W$ (which should not be confused with the bubbles $W_{\ls}$) by
\[W(x) = C_{31} \begin{cases}
\mu \ep^{-{n-2\gamma \over 2}+2d_0+2} \left[2 - \(\dfrac{|x-(\sigma,0)|}{\ep}\)^2\right] + C_{32} &\text{for } |x-(\sigma,0)| \le \ep,\\
\dfrac{\mu \ep^{\kappa' - {n+2\gamma \over 2}}} {|x-(\sigma,0)|^{\kappa'-2\gamma-(2d_0+2)}}
+ C_{32} &\text{for } \ep \le |x-(\sigma,0)| \le \nu/2,\\
\dfrac{\eta_0 \ep^{\kappa' - {n+2\gamma \over 2}}} {|x-(\sigma,0)|^{\kappa'-2\gamma}} &\text{for } |x-(\sigma,0)| \ge \nu/2
\end{cases}\]
where $C_{31} > 0$ large and $C_{32} \in \mr$ chosen so that $W \in C(\omrn)$.
Then $W$ is a suitable barrier which makes it possible to apply Lemma \ref{lemma-gen-max}.
This leads us to deduce that $U$ is nonnegative in $\omrn$.

For the moment, we admit
\begin{equation}\label{eq-eig}
\lambda_1(-\Delta_{g^+}) > {n^2 \over 4} - \gamma^2 = {(N-1)^2 \over 4} - \gamma^2
\end{equation}
where $\lambda_1(-\Delta_{g^+})$ is the first eigenvalue (or the infimum of the spectra) of the operator $-\Delta_{g^+}$ acting on the space $L^2(\mr^N_+, g^+)$.
Its validity will be proved in the end of Appendix \ref{subsec-app-c}.
Then by Lemma 4.5-Theorem 4.7 and the discussion in Section 5 of Chang-Gonz\'alez \cite{GZ} (or \cite[Lemma 6.1]{CC}),
we realize that there is a special boundary defining function $\rho^*$ in $\mr^N_+$ such that $E(\rho^*) = 0$
and $\wtu := (\rho/\rho^*)^{(n-2\gamma)/2} U$ satisfies a degenerate elliptic equation of pure divergent form
\begin{equation}\label{eq-main-ext-2}
\begin{cases}
-\text{div}\(\(\rho^*\)^{1-2\gamma}\nabla \wtu\) = 0 &\text{in } \(\mr^N_+, \bg\),\\
\pa^{\gamma}_{\nu} \wtu = \wtu^{n+2\gamma \over n-2\gamma} - Q^{\gamma}_{\hh} \wtu &\text{on } \mr^n,
\end{cases} \end{equation}
where $Q^{\gamma}_{\hh}$ is the fractional scalar curvature.
By the strong maximum principle for uniformly elliptic operators, it is immediately obtained that $\wtu > 0$ in $\mr^N_+$.
On the other hand, if there is a point $x_0 \in \mr^n$ such that $\wtu(x_0) = 0$,
then the Hopf lemma \cite[Theorem 3.5]{GQ} for \eqref{eq-main-ext-2} implies that
\[0 > \pa^{\gamma}_{\nu} \wtu(x_0) = \wtu^{n+2\gamma \over n-2\gamma}(x_0) - Q^{\gamma}_{\hh}(x_0) \wtu(x_0) = 0,\]
a contradiction.
Therefore the function $\wtu$, or equivalently, $U$ must be positive in $\omrn$.

\medskip \noindent \textsc{Step 2 (Regularity property and Estimate of the lower bound).}
Because of \eqref{eq-inter-1}, our solution $U$ is essentially bounded in $\mr^N_+$.
Hence it is in $C^{\vartheta}(\omrn)$ for some $\vartheta \in (0,1)$ by Lemma \ref{lemma-reg} (1). Moreover we have
\[\(W_{\edte} + \Psi_{\edte}\)(\sigma,0) \ge w_{\edte}(\sigma) - \|\Psi_{\edte}\|_* \(\mu \ep^{2d_0+2} \ep^{-{n-2\gamma \over 2}} + \eta_0 \ep^{\kappa-{n+2\gamma \over 2}}\nu^{-\kappa}\)
\ge C \ep^{-{n-2\gamma \over 2}}.\]
Therefore \eqref{eq-lower-bd} is obtained.
\end{proof}

\section{Energy expansion} \label{sec-exp}
This section is devoted to compute the localized energy $J_0^{\gamma}$.
We initiate it by getting a further estimation of the term $\Psi_{\ls} = \Psi_{\edt}$.
Recall that $\oh_{ab}(x) = f(|\bx|^2)H_{ab}(x)$ for $x = (\bx, x_N) \in \mr_+^N$.

\subsection{Refined estimation of the term $\Psi_{\ls}$} \label{subsec-ref-est}
Suppose that $\ep > 0$ is small and $(\dt) = (\ep^{-1}\lambda, \ep^{-1}\sigma) \in \ma$.
By applying Proposition \ref{prop-lin} with $h = 0$, one can deduce that there exists a solution $\Psi_{\ls}^A$ of
\begin{equation}\label{eq-aux}
\begin{cases}
-\text{div}_{g_c}\(x_N^{1-2\gamma}\nabla \Psi\) = - \sum\limits_{i,j=1}^n x_N^{1-2\gamma} \mu \ep^{2d_0}f\(\ep^{-2}|\bx|^2\)H_{ij}(\bx) \pa_{ij}W_{\ls} &\text{in } \mr^N_+,\\
\Psi = \psi &\text{on } \mr^n,\\
\pa^{\gamma}_{\nu} \Psi - \({n+2\gamma \over n-2\gamma}\) w_{\ls}^{4\gamma \over n-2\gamma}\psi
= \sum\limits_{m=0}^n c_mw_{\ls}^{4\gamma \over n-2\gamma}z_{\ls}^m &\text{on } \mr^n,\\
\int_{\mr^n} w_{\ls}^{4\gamma \over n-2\gamma}z_{\ls}^0 \psi d\bx = \int_{\mr^n} w_{\ls}^{4\gamma \over n-2\gamma}z_{\ls}^1 \psi d\bx = \cdots = \int_{\mr^n} w_{\ls}^{4\gamma \over n-2\gamma}z_{\ls}^n \psi d\bx = 0.
\end{cases} \end{equation}
In fact, \eqref{eq-aux} has a scaling invariance:
If we put $\Psi_{\dt}(x) = (\mu_0 \ep_0^{2d_0+2})^{-1} \ep_0^{n-2\gamma \over 2} \Psi_{\ls}(\ep x)$ for any fixed $\ep_0 > 0$ (and $\mu_0 = \ep_0^{1/3}$) small,
then it solves the equation with $\ep = 1$.
This implies that \eqref{eq-aux} admits a solution for any $\ep > 0$.

Let us introduce norms
\begin{align*}
\|U\|_*' &= \sup_{x \in \mr^N_+} \left[\chi_{\{|x - (\sigma,0)| \le \nu/2\}} \cdot \(\frac{\mu^2 \ep^{\kappa-{n+2\gamma \over 2}}} {\ep^{\kappa-2\gamma-(4d_0+4)} + |x-(\sigma,0)|^{\kappa-2\gamma-(4d_0+4)}}
+ \eta_0 {\ep^{\kappa - {n+2\gamma \over 2}} \over \nu^{\kappa}}\)^{-1} \right.\\
&\hspace{245pt} \left. + \chi_{\{|x - (\sigma,0)| \ge \nu/2\}} \cdot \frac{|x - (\sigma,0)|^{\kappa-2\gamma}}{\eta_0 \ep^{\kappa - {n+2\gamma \over 2}}}\right] \cdot |U(x)|,\\
\|U\|_{**}' &= \sup_{x \in \mr^N_+} \left[\chi_{\{|x - (\sigma,0)| \le \nu/2\}} \cdot
\(\frac{\mu^2 \ep^{\kappa-{n+2\gamma \over 2}}} {\ep^{\kappa-2\gamma-(4d_0+2)} + |x-(\sigma,0)|^{\kappa-2\gamma-(4d_0+2)}} + \eta_0 {\ep^{\kappa - {n+2\gamma \over 2}} \over \nu^{\kappa}}\)^{-1} \right. \\
&\hspace{235pt} \left. + \chi_{\{|x - (\sigma,0)| \ge \nu/2\}} \cdot \frac{|x - (\sigma,0)|^{\kappa-2\gamma+2}}{\eta_0 \ep^{\kappa - {n+2\gamma \over 2}}} \right] \cdot |U(x)|,\\
\|v\|_{***}' &= \sup_{\bx \in \mr^n} \left[\chi_{\{|\bx - \sigma| \le \nu/2\}} \cdot \(\frac{\mu^2 \ep^{\kappa-{n+2\gamma \over 2}}} {\ep^{\kappa-(4d_0+4)} + |\bx-\sigma|^{\kappa-(4d_0+4)}}
+ \eta_0 {\ep^{\kappa - {n+2\gamma \over 2}} \over \nu^{\kappa}}\)^{-1}
+ \chi_{\{|\bx - \sigma| \ge \nu/2\}} \cdot \frac{|\bx - \sigma|^{\kappa}}{\eta_0 \ep^{\kappa - {n+2\gamma \over 2}}} \right] \cdot |v(\bx)|
\end{align*}
for $U = U(\bx, x_N)$ in $\mr_+^N$ and $v = v(\bx)$ on $\mr^n$, and set a function $\Psi_{\ls}^B := \Psi_{\ls} - \Psi_{\ls}^A$.
Then it can be estimated as in the following lemma.
\begin{lemma}\label{lemma-Psi-B}
Suppose that $n > 2\gamma + 4(d_0+1) + 2/3$. Then we have
\[\left\| \Psi_{\ls}^A \right\|_* \le C, \quad \left\| \Psi_{\ls}^B \right\|_*' \le C\]
and
\[\left\| \Psi_{\ls}^A \right\|_{\drn} \le C \mu \ep^{2(d_0+1)},
\quad \left\| \Psi_{\ls}^B \right\|_{\drn} = o\(\mu \ep^{2(d_0+1)}\)\]
for some $C > 0$ independent of $\ep > 0$ and $(\dt) \in \ma$.
\end{lemma}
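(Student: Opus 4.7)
\bigskip\noindent\textbf{Proof plan.} The plan is to bootstrap from Proposition \ref{prop-lin}, first bounding $\Psi^A_{\ls}$ and then the difference $\Psi^B_{\ls}=\Psi_{\ls}-\Psi^A_{\ls}$. The whole point of introducing $\Psi^A_{\ls}$ is that it already absorbs the leading linear-in-$h$ contribution to $x_N^{1-2\gamma}E_{\ls}$, so the equation for $\Psi^B_{\ls}$ carries only quadratic-in-$h$, $E(\rho)$-mediated and nonlinear remainders, which is precisely what the primed norms (with an extra factor of $\mu$ and an extra $|\bx|^{2d_0+2}$ in the weight) are designed to measure.

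For $\Psi^A_{\ls}$, first I would estimate the source of \eqref{eq-aux},
\[
F^A := -\sum_{i,j=1}^n x_N^{1-2\gamma}\mu\ep^{2d_0}f(\ep^{-2}|\bx|^2)H_{ij}(\bx)\pa_{ij}W_{\ls},
\]
in the $\|\cdot\|_{**}$ norm. Using $|H_{ij}(\bx)|\le C|\bx|^2$ and $|f(\ep^{-2}|\bx|^2)|\le C(1+\ep^{-2d_0}|\bx|^{2d_0})$, together with the decay of $\pa_{ij}W_{\ls}$ from Lemma \ref{lemma-W-decay} (recalling $\lambda=\ep\delta\sim\ep$), a direct weight-matching computation yields $\|F^A\|_{**}\le C$, and Proposition \ref{prop-lin} gives $\|\Psi^A_{\ls}\|_*\le C$. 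For the $\drn$ estimate I would exploit the scaling noted after \eqref{eq-aux}: the rescaled function $\tilde\Psi(x)=(\mu\ep^{2d_0+2})^{-1}\ep^{(n-2\gamma)/2}\Psi^A_{\ls}(\ep x)$ solves the unit-scale version of \eqref{eq-aux} with $O(1)$ data, so the standard energy test (pair the equation with $\tilde\Psi$, use the Sobolev trace inequality, and rely on the orthogonality conditions to make the quadratic form coercive) yields $\|\tilde\Psi\|_{\drn}\le C$. Undoing the scaling produces the factor $\mu\ep^{2(d_0+1)}$ quoted in the lemma.

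For $\Psi^B_{\ls}$ I would subtract \eqref{eq-aux} from \eqref{eq-inter} and use \eqref{eq-lin-23}--\eqref{eq-lin-24} to write the resulting equation as
\[
\begin{cases}
-\textnormal{div}_{\bg}(\rho^{1-2\gamma}\nabla\Psi^B_{\ls})+E(\rho)\Psi^B_{\ls}=x_N^{1-2\gamma}\wtd_{\ls} &\text{in }(\mr^N_+,\bg),\\
\pa^\gamma_\nu\Psi^B_{\ls}-\tfrac{n+2\gamma}{n-2\gamma}w_{\ls}^{4\gamma/(n-2\gamma)}\psi^B_{\ls}=N_{\ls}(\psi_{\ls})+\sum_{m=0}^n c_m w_{\ls}^{4\gamma/(n-2\gamma)}z^m_{\ls} &\text{on }\mr^n,
\end{cases}
\]
together with the orthogonality conditions on $\psi^B_{\ls}$. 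Here $\wtd_{\ls}$ collects three pieces: (i) the quadratic-in-$h$ remainder in $-x_N^{2\gamma-1}\me(W_{\ls})$, which arises because the linear-in-$h$ part of $\me(W_{\ls})$ cancels the source of \eqref{eq-aux} up to a prefactor $u^{-2(1-2\gamma)/(N-2)}=1+O(\eta_0)$; (ii) $-x_N^{2\gamma-1}E(\rho)W_{\ls}$, controlled by \eqref{eq-lin-04}; and (iii) $-x_N^{2\gamma-1}(\me(\Psi^A_{\ls})+E(\rho)\Psi^A_{\ls})$, controlled via Step~1 and the smallness of $\eta_0$. Each gains either a $\mu^2$ factor (from $|h|^2,|h||Dh|^2,|h|^2|D^2 h|$) or an extra $\ep^{2d_0+2}|\bx|^{2d_0+2}$ over the linear source, matching exactly the primed weights. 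A parallel check using the pointwise bounds on $N_{\ls}$ from the proof of Proposition \ref{prop-inter} together with $\psi_{\ls}=\psi^A_{\ls}+\psi^B_{\ls}$ gives $\|N_{\ls}(\psi_{\ls})\|_{***}'\le C$. An analogue of Proposition \ref{prop-lin} in the primed norms then yields $\|\Psi^B_{\ls}\|_*'\le C$, and a direct $\drn$-energy estimate on the $\Psi^B_{\ls}$-equation produces $\|\Psi^B_{\ls}\|_{\drn}=O(\mu^2\ep^{4(d_0+1)})=o(\mu\ep^{2(d_0+1)})$ since $\mu\ep^{2(d_0+1)}\to 0$.

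The hardest step will be establishing the primed-norm version of Proposition \ref{prop-lin} and, in particular, the barrier construction of Lemma \ref{lemma-lin-2}. This requires $\kappa-2\gamma-(4d_0+4)>0$, which in view of \eqref{eq-kappa} is exactly guaranteed by the dimension hypothesis $n>2\gamma+4(d_0+1)+2/3$; once this is in place, the blow-up contradiction a priori estimate and Fredholm existence carry over with identical structure. Apart from that, the remaining work is systematic bookkeeping of powers of $\mu$, $\ep$ and $|x-(\sigma,0)|$ across the three regions $|x-(\sigma,0)|\le\ep$, $\ep\le|x-(\sigma,0)|\le\nu/2$ and $|x-(\sigma,0)|\ge\nu/2$, modelled on Lemma \ref{lemma-error}.
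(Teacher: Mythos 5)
Your proposal is correct and takes essentially the same route as the paper: write the equation \eqref{eq-aux-2} for $\Psi^B_{\ls}$, check that its data (the quadratic-in-$h$ remainders, the $E(\rho)$ terms and $N_{\ls}(\psi_{\ls})$) gain exactly the extra $\mu$ and $|\bx|^{2(d_0+1)}$ measured by the primed norms by a computation parallel to Lemma \ref{lemma-error}, rerun the a priori estimate of Step 1 of Proposition \ref{prop-lin} in those norms, and obtain the $\drn$ bounds by testing \eqref{eq-aux} and \eqref{eq-aux-2} with $\Psi^A_{\ls}$ and $\Psi^B_{\ls}$ themselves, using the pointwise bounds and \eqref{eq-kappa}. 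The only quibble is that your intermediate claim $\left\|\Psi^B_{\ls}\right\|_{\drn}=O\(\mu^2\ep^{4(d_0+1)}\)$ slightly overstates what the outer region $|x-(\sigma,0)|\ge \nu/2$ delivers (its contribution to the squared energy is only $O\(\eta_0^2\,\ep^{2\kappa-n-2\gamma}\)$), but by \eqref{eq-kappa} this is still $o\(\mu^2\ep^{4(d_0+1)}\)$, so the asserted $o\(\mu\ep^{2(d_0+1)}\)$ stands.
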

\begin{proof}
We find easily that
\begin{equation}\label{eq-aux-2}
\begin{cases}
-\text{div}_{\bg}\(\rho^{1-2\gamma} \nabla \Psi_{\ls}^B\)
= x_N^{1-2\gamma} E_{\ls}' &\text{in } \mr^N_+,\\
\Psi_{\ls}^B = \psi_{\ls}^B \text{ and } \Psi_{\ls} = \psi_{\ls} &\text{on } \mr^n,\\
\pa^{\gamma}_{\nu} \Psi_{\ls}^B - \({n+2\gamma \over n-2\gamma}\) w_{\ls}^{4\gamma \over n-2\gamma}\psi_{\ls}^B
= N_{\ls}(\psi_{\ls}) + \sum\limits_{m=0}^n c_mw_{\ls}^{4\gamma \over n-2\gamma}z_{\ls}^m &\text{on } \mr^n,\\
\int_{\mr^n} w_{\ls}^{4\gamma \over n-2\gamma}z_{\ls}^0 \psi_{\ls}^B d\bx = \cdots = \int_{\mr^n} w_{\ls}^{4\gamma \over n-2\gamma}z_{\ls}^n \psi_{\ls}^B d\bx = 0,
\end{cases} \end{equation}
where the nonlinear operator $N_{\ls}$ is given in \eqref{eq-nls} and
\begin{align*}
E_{\ls}' &:= \(u^{-{2 \over N-2}(1-2\gamma)} - 1\) \(\bg^{ij} - \delta^{ij}\) \pa_{ij}W_{\ls}
+ \left[\bg^{ij} - \delta^{ij} + \mu \ep^{2d_0}f\(\ep^{-2}|\bx|^2\)H_{ij} \right] \pa_{ij}W_{\ls} \\
&\ + \pa_a u^{-{2 \over N-2}(1-2\gamma)} \bg^{ab} \pa_b W_{\ls} + u^{-{2 \over N-2}(1-2\gamma)} \pa_i \bg^{ij} \pa_j W_{\ls} - x_N^{2\gamma-1} E(\rho) \(W_{\ls} + \Psi_{\ls}\).
\end{align*}
Computing similarly to the proof of Lemma \ref{lemma-error}, we obtain
\[\|E_{\ls}'\|_{**}' \le C.\]
Moreover we have
\[\|N_{\ls}(\psi_{\ls})\|_{***}' \le C\]
under the assumption $\kappa > n - 2\gamma$. 
Hence, following the argument in Step 1 of the proof of Proposition \ref{prop-lin}, we infer that
\[\left\| \Psi_{\ls}^B \right\|_*' \le C \(\|E_{\ls}'\|_{**}' + \|N_{\ls}(\psi)\|_{***}'\) \le C.\]
The second inequality is now verified. The first inequality is direct consequence of \eqref{eq-inter-1}.

In order to derive the third and fourth estimates, one can test $\Psi_{\ls}^A$ and $\Psi_{\ls}^B$ in \eqref{eq-aux} and \eqref{eq-aux-2}, respectively,
and then use their $L^{\infty}$-bounds and \eqref{eq-kappa}.
The details are omitted.
\end{proof}

\begin{lemma}\label{lemma-rem}
It holds that
\begin{equation}\label{eq-J0-exp}
J_0^{\gamma}(\dt) = I^{\gamma}(W_{\edt}) + \mu^2\ep^{4(d_0+1)} \cdot {\kappa_{\gamma} \over 2} J_1^{\gamma}(\dt) + o\(\mu^2\ep^{4(d_0+1)}\)
\end{equation}
uniformly in the admissible set $\ma = (1-\varepsilon_0, 1+\varepsilon_0) \times B^n(0,\varepsilon_0)$ where $J_1^{\gamma}(\dt)$ is a $C^2$-function defined by
\begin{equation}\label{eq-J1}
J_1^{\gamma}(\dt) := \sum\limits_{i,j=1}^n \int_{\mr^N_+} x_N^{1-2\gamma} \oh_{ij}(\bx) \pa_{ij}W_{\dt} \Psi_{\dt}^A dx.
\end{equation}
\end{lemma}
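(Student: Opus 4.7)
The plan is to carry out a Taylor expansion of $I^\gamma$ at $W := W_{\edt}$ in the direction $\Psi := \Psi_{\edt}$, and exploit the intermediate equation \eqref{eq-inter} satisfied by $\Psi$ (Proposition \ref{prop-inter}) to produce a cancellation that isolates a tractable leading term. I write
$$I^\gamma(W+\Psi) = I^\gamma(W) + DI^\gamma(W)[\Psi] + \tfrac{1}{2}D^2I^\gamma(W)[\Psi,\Psi] + R(\Psi),$$
where $R(\Psi)$ collects the cubic-and-higher Taylor terms of $U \mapsto \tfrac{n-2\gamma}{2n}\int_{\mr^n} U_+^{2n/(n-2\gamma)}$ at $W$. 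Since $W$ is the $\gamma$-harmonic extension of $w_{\edt}$, one has $-\textnormal{div}_{\bg}(\rho^{1-2\gamma}\nabla W) = \me(W)$ and $\pa_\nu^\gamma W = w_{\edt}^{(n+2\gamma)/(n-2\gamma)}$, so integration by parts together with the definition of $E_{\edt}$ yields
$$DI^\gamma(W)[\Psi] = \kappa_\gamma\int_{\mr^N_+}\bigl(\me(W) + E(\rho)W\bigr)\Psi\, dv_{\bg} = -\kappa_\gamma\int_{\mr^N_+} x_N^{1-2\gamma} E_{\edt}\Psi\, dv_{\bg}.$$

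For the quadratic term I test the bulk equation in \eqref{eq-inter} against $\Psi$; the Neumann condition supplies the boundary term $\int \pa_\nu^\gamma \Psi \cdot \psi\, d\bx$, while the orthogonality constraints kill the $\sum c_m$ contribution, giving
$$\kappa_\gamma\int_{\mr^N_+}\bigl(\rho^{1-2\gamma}|\nabla \Psi|_{\bg}^2 + E(\rho)\Psi^2\bigr) dv_{\bg} = \kappa_\gamma\int x_N^{1-2\gamma}E_{\edt}\Psi + \tfrac{n+2\gamma}{n-2\gamma}\int w_{\edt}^{4\gamma/(n-2\gamma)}\psi^2 + \int N_{\edt}(\psi)\psi.$$
Substituting into $\tfrac{1}{2}D^2I^\gamma(W)[\Psi,\Psi]$ the $\int w^{4\gamma/(n-2\gamma)}\psi^2$-terms cancel, and adding the first-variation contribution produces
$$J_0^\gamma(\dt) - I^\gamma(W) = \tfrac{\kappa_\gamma}{2}\int_{\mr^N_+}\bigl(\me(W) + E(\rho)W\bigr)\Psi\, dv_{\bg} + \tfrac{1}{2}\int_{\mr^n} N_{\edt}(\psi)\psi\, d\bx + R(\Psi).$$

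To extract the leading order from the first right-hand-side integral, I isolate the dominant part of $\me(W_{\edt})$ from \eqref{eq-lin-24}, using $\bg^{ij} - \delta^{ij} = -h_{ij} + O(|h|^2)$ and $u^{-2(1-2\gamma)/(N-2)} = 1 + O(\mu^2)$ to obtain
$$\me(W_{\edt}) = \mu\ep^{2d_0} f(\ep^{-2}|\bx|^2) H_{ij}(\bx)\, x_N^{1-2\gamma}\pa_{ij} W_{\edt} + \text{(lower-order terms)},$$
where the lower-order parts of $\me$ and the $E(\rho)W_{\edt}$-term are controlled by Proposition \ref{prop-rho-est}, the estimate \eqref{eq-lin-04}, and Lemma \ref{lemma-W-decay}. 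Splitting $\Psi = \Psi^A_{\edt} + \Psi^B_{\edt}$ as in Subsection \ref{subsec-ref-est} and rescaling $x = \ep y$ through $W_{\edt}(x) = \ep^{-(n-2\gamma)/2} W_{\dt}(x/\ep)$, $\Psi^A_{\edt}(x) = \mu\ep^{2d_0+2-(n-2\gamma)/2}\Psi^A_{\dt}(x/\ep)$, $H_{ij}(\ep\by) = \ep^2 H_{ij}(\by)$ and $f(\ep^{-2}|\bx|^2) = f(|\by|^2)$, a tally of the powers of $\ep$ collapses (the $\gamma$- and $n$-dependent pieces cancel) to exactly $4(d_0+1)$, turning the leading integral into $\tfrac{\kappa_\gamma}{2}\mu^2\ep^{4(d_0+1)}J_1^\gamma(\dt)$.

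The main obstacle will be to verify that every remaining contribution is $o(\mu^2\ep^{4(d_0+1)})$ uniformly in $(\dt)\in\ma$: the integral of the leading part of $\me(W)$ against $\Psi^B_{\edt}$ (controlled by the refined estimate $\|\Psi^B_{\edt}\|_{\drn} = o(\mu\ep^{2(d_0+1)})$ from Lemma \ref{lemma-Psi-B} together with the weighted decay of $\pa_{ij}W_{\edt}$ from Lemma \ref{lemma-W-decay}); the subleading parts of $\me(W)$ and the $E(\rho)W\Psi$-contribution (via \eqref{eq-lin-04} and Proposition \ref{prop-rho-est}); and the remaining terms $\tfrac{1}{2}\int N_{\edt}(\psi)\psi\, d\bx$ and $R(\Psi)$, which are essentially cubic in $\Psi$ and hence of order $O(\mu^3\ep^{\cdots})$, smaller than $\mu^2\ep^{4(d_0+1)}$ thanks to $\mu = \ep^{1/3}$. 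Finally, $J_1^\gamma \in C^2(\ma)$ follows from smoothness of $W_{\dt}$ in $(\dt)$ and the $C^2$-dependence of $\Psi^A_{\dt}$ on $(\dt)$, obtained by differentiating its defining linear problem and invoking Proposition \ref{prop-lin}.
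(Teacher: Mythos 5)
Your proposal is correct and is essentially the paper's argument in a different packaging: the second-order Taylor expansion combined with testing the bulk equation of \eqref{eq-inter} against $\Psi_{\edt}$ reproduces the paper's use of the identity $(I^{\gamma})'(W_{\edt}+\Psi_{\edt})\Psi_{\edt}=0$, and your identification of the leading term via the splitting $\Psi_{\edt}=\Psi^A_{\edt}+\Psi^B_{\edt}$, Lemma \ref{lemma-Psi-B} and the scaling invariance is exactly what the paper does (there, by testing the auxiliary problem \eqref{eq-aux} against $\Psi^A_{\edt}$ to convert the Dirichlet energy of $\Psi^A_{\edt}$ into the $J_1^{\gamma}$ integral). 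The one imprecision worth correcting is your description of $R(\Psi)$ as ``essentially cubic'': since $2n/(n-2\gamma)<3$ in the relevant dimensions, the Taylor remainder of the boundary term is only bounded by $C\int_{\mr^n}|\psi_{\edt}|^{2n/(n-2\gamma)}d\bx$, which is still $o\(\mu^2\ep^{4(d_0+1)}\)$ — because the exponent strictly exceeds $2$ and $\mu\ep^{2(d_0+1)}\to 0$, the gain being the small factor $\(\mu\ep^{2(d_0+1)}\)^{4\gamma/(n-2\gamma)}$ rather than a full extra power of $\mu$ — and this is precisely the estimate the paper invokes.
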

\begin{proof}
Since $(I^{\gamma})'(W_{\edt} + \Psi_{\edt})\Psi_{\edt} = 0$, we get by Lemma \ref{lemma-Psi-B}, \eqref{eq-lin-04} and \eqref{eq-inter-1} that
\[J_0^{\gamma}(\dt) = I^{\gamma}(W_{\edt}) - {\kappa_{\gamma} \over 2} \int_{\mr^N_+} \rho^{1-2\gamma} \left|\nabla \Psi_{\edt}^A\right|_{\bg}^2 dx
+ {1 \over 2} \({n+2\gamma \over n-2\gamma}\) \int_{\mr^n} w_{\edt}^{4\gamma \over n-2\gamma} \(\psi_{\edt}^A\)^2 d\bx + o\(\mu^2\ep^{4(d_0+1)}\).\]
Here $\Psi_{\edt}^A = \psi_{\edt}^A$ on $\mr^n$ and the inequality
\[\left|\(w_{\edt}+\psi_{\edt}\)_+^{2n \over n-2\gamma} - w_{\edt}^{2n \over n-2\gamma}
- \({2n \over n-2\gamma}\) \(w_{\edt}+\psi_{\edt}\)_+^{n+2\gamma \over n-2\gamma} \psi_{\edt}
+ {(n+2\gamma)n \over (n-2\gamma)^2} w_{\edt}^{4\gamma \over n-2\gamma} \psi_{\edt} \right|
\le C \left|\psi_{\edt}\right|^{2n \over n-2\gamma}\]
is applied to control the nonlinear term.
Besides, by making use of \eqref{eq-aux}, we discover
\begin{align*}
&\ \kappa_{\gamma} \int_{\mr^N_+} \rho^{1-2\gamma} \left|\nabla \Psi_{\edt}^A\right|_{\bg}^2 dx \\
&= - \kappa_{\gamma} \sum\limits_{i,j=1}^n \int_{\mr^N_+} \mu \ep^{2d_0} x_N^{1-2\gamma} f\(\ep^{-2}|\bx|^2\)H_{ij}(\bx) \pa_{ij}W_{\edt} \Psi_{\edt}^A dx
+ \({n+2\gamma \over n-2\gamma}\) \int_{\mr^n} w_{\edt}^{4\gamma \over n-2\gamma} \(\psi_{\edt}^A\)^2 d\bx.
\end{align*}
Putting these facts together, we obtain \eqref{eq-J0-exp}.

On the other hand, by \eqref{eq-Poisson}, we have that $\pa_{\delta} W_{\dt} = K_{\gamma}(\cdot, x_N) \ast \pa_{\delta} w_{\dt}$, etc.
Thus we can employ the standard difference quotient argument to verify that the first and second order derivatives of $\Psi^A_{\dt}$ with respect to $(\dt)$ are continuous.
(Check \cite[Propostion 2.13]{JLX}.)
The $C^2$-differentiability of $J_1^{\gamma}$ follows from it.
\end{proof}
\noindent The previous lemma ensures that if there exists a minimizer of the function $(\dt) \mapsto I^{\gamma}(W_{\edt}) + \mu^2\ep^{4(d_0+1)} \cdot \kappa_{\gamma} J_1^{\gamma}(\dt)/2$ in the set $\ma$,
then $J_0^{\gamma}$ also has a minimizer in $\ma$ provided that $\ep > 0$ is sufficiently small.

\subsection{Expansion of the localized energy}
We derive an expansion of the map $(\dt) \mapsto I^{\gamma}(W_{\edt})$.
\begin{prop}\label{prop-expand}
Suppose that $n > 2\gamma + 4(d_0+1)$. 
If we choose $\eta = 1$ in the statement of Proposition \ref{prop-rho-est}, then the following estimation holds.
\begin{align*}
I^{\gamma}(W_{\edt}) &= {\kappa_{\gamma} \over 2} \left[\int_{\mr_+^N} x_N^{1-2\gamma}|\nabla W_1|^2 dx +
\mu^2 \ep^{4(d_0+1)} J_2^{\gamma}(\dt) \right] - {n-2\gamma \over 2n} \int_{\mr^n} w_1^{2n \over n-2\gamma} d\bx \\
&\ + O\(\mu^3 \ep^{4(d_0+1)} + ({\ep/\nu})^{n-2\gamma}\)
\end{align*}
where 
\begin{align*}
J_2^{\gamma}(\dt) &:= {1 \over 2} \sum_{i,j,l=1}^n \int_{\mr_+^N} x_N^{1-2\gamma} \(\oh_{il}\oh_{jl}\)(\bx) \pa_i W_{\dt}\pa_j W_{\dt} dx \\
&\ + {3 \over 2} \left\{(N-2)^2 - (1-2\gamma)^2\right\} \int_{\mr_+^N} x_N^{1-2\gamma} C_2 W_{\dt}^2 dx
\stepcounter{equation}\tag{\theequation} \label{eq-wjg} \\
&\ + (1-2\gamma) \sum_{m=1}^{2d_0+2} \int_{\mr_+^N} x_N^{1+2m-2\gamma} C_{2m} |\nabla W_{\dt}|^2 dx \\
&\ + \left\{N\(\gamma-{1 \over 2}\) - 2\(\gamma^2-{1 \over 4}\)\right\} \sum_{m=1}^{2d_0+1} \int_{\mr_+^N} x_N^{1+2m-2\gamma} (2m+3) C_{2(m+1)}W_{\dt}^2 dx.
\end{align*}
The functions $C_m \in C^{\infty}(\mr^n)$ for $m = 1, \cdots, 2d_0+2$ are defined in \eqref{eq-C}.
\end{prop}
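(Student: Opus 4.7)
The plan is to reduce the estimate to a fixed-scale computation by the change of variables $x = \ep y$ and then expand every piece of $I^\gamma$ in powers of $\mu$. Under this rescaling, $W_{\edt}(\ep y) = \ep^{-(n-2\gamma)/2}W_{\dt}(y)$ (which makes the unperturbed Dirichlet energy scale-invariant), while \eqref{eq-h}--\eqref{eq-h0} give $h_{ab}(\ep y) = \mu\,\ep^{2(d_0+1)}\,\oh_{ab}(y)$ on $|\ep y|\le\nu$, and Proposition \ref{prop-rho-est} (with $\eta = 1$) yields
\[
\rho(\ep y) = \ep y_N\,\Bigl[\,1 + \mu^2\ep^{4(d_0+1)}\sum_{m=1}^{2d_0+2} C_{2m}(\by)\,y_N^{2m} + O(\mu^3\ep^{4(d_0+1)}) + O((\ep y_N)^{4(d_0+1)+1})\,\Bigr]
\]
on $B_+^N(0,\nu/\ep)$. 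A key feature is that this expansion of $\rho$ carries no linear-in-$\mu$ term, so only the metric $\bg$ contributes a first-order perturbation. The zeroth-order contribution gives the unperturbed Dirichlet energy $\int x_N^{1-2\gamma}|\nabla W_1|^2 dx$ (equal at every $(\dt)$ by translation and dilation invariance of the bubble) together with $-\frac{n-2\gamma}{2n}\int w_1^{2n/(n-2\gamma)} d\bx$ (exact, since $\det\hh = 1$). The target $J_2^\gamma$ will emerge as the sum of the second-order corrections at the scale $\mu^2\ep^{4(d_0+1)}$.

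For the Dirichlet term $\int\rho^{1-2\gamma}\bg^{ab}\pa_a W\pa_b W\,dx$, I would expand $\bg^{ab} = \delta^{ab} - h^{ab} + \tfrac12 h^{ac}h^b{}_c + O(h^3)$ and $\rho^{1-2\gamma} = x_N^{1-2\gamma}[1 + (1-2\gamma)\mu^2\ep^{4(d_0+1)}\sum_m C_{2m}y_N^{2m} + \cdots]$. The linear-in-$\mu$ piece $-\mu\ep^{2(d_0+1)}\int y_N^{1-2\gamma}\oh^{ij}\pa_i W_{\dt}\pa_j W_{\dt}\,dy$ vanishes up to acceptable errors by an integration-by-parts argument: using \eqref{eq-h-prop} (equivalently $\sum_a\pa_a\oh_{ab}=0$ for $|y|\le\nu/\ep$) and $\oh_{aN}=0$, the bulk integrand rewrites as $-\pa_i(y_N^{1-2\gamma}\oh^{ij}W_{\dt}\pa_j W_{\dt})$ plus terms with $\pa_i\oh^{ij} = 0$, so the whole contribution collapses to a boundary flux at $|y| = \nu/\ep$, which is absorbed into the $O((\ep/\nu)^{n-2\gamma})$ and $O(\mu^3\ep^{4(d_0+1)})$ errors via Lemma \ref{lemma-W-decay-2}. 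The quadratic-in-$\mu$ terms then give exactly the first line of $J_2^\gamma$ (from $\tfrac12 h^{il}h^{jl}\pa_i W\pa_j W$) and the third line (from $\rho^{1-2\gamma}$).

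The delicate piece is $\int E(\rho)\,W^2\,dx$. The observation is that $E(\rho)\equiv 0$ in the flat model ($\bg = dx^2$, $\rho = x_N$), so every surviving contribution is of order $\mu^2$. Setting $\alpha = (1-2\gamma)/2$ and $\Phi(y) := \sum_m C_{2m}(\by)y_N^{2m}$, I would expand each of the three pieces of \eqref{eq-error}: (i) $-\rho^\alpha\Delta_{\bg}\rho^\alpha$ splits into the flat Laplacian $-x_N^\alpha\Delta x_N^\alpha$, a metric-correction $(\Delta_{\bg}-\Delta)x_N^\alpha \cdot x_N^\alpha$, and a defining-function correction $\Delta(x_N^\alpha\Phi)\cdot x_N^\alpha$ (the linear-in-$h$ part of $\Delta_{\bg}x_N^\alpha$ vanishes because $h_{aN}=0$ kills $h^{ab}\pa_{ab}x_N^\alpha$ and $\sum_a\pa_a h^{ab}=0$ kills the first-derivative term, pushing all metric contributions to order $\mu^2$); (ii) $(\gamma^2-\tfrac14)\rho^{-1-2\gamma}$ expands so that its zeroth order cancels the flat Laplacian piece and leaves a $\Phi$-dependent $\mu^2$-correction; (iii) $\tfrac{n-1}{4n}R_{\bg}\rho^{1-2\gamma}$, by \eqref{eq-R-bg-est}, has leading part $-\tfrac14\sum_{ijk}(\pa_k h_{ij})^2\cdot x_N^{1-2\gamma}$, which by the very definition \eqref{eq-C} of $C_2$ is a constant multiple of $x_N^{1-2\gamma}C_2$. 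Collecting all terms against $x_N^{1-2\gamma}W_{\dt}^2$ produces, after an algebraic simplification using the explicit prefactors in \eqref{eq-C}, the second and fourth lines of $J_2^\gamma$ with the stated $N$- and $\gamma$-dependent coefficients.

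Finally, the error bookkeeping is routine. Cubic and higher terms in the $\bg$- and $\rho$-expansions, together with the explicit $O(\mu^3\ep^{4(d_0+1)})$ remainder in Proposition \ref{prop-rho-est}, integrate against $W_{\dt}^2$ and $|\nabla W_{\dt}|^2$ to contribute at most $O(\mu^3\ep^{4(d_0+1)})$, with absolute integrability guaranteed by the assumption $n > 2\gamma + 4(d_0+1)$ via Lemma \ref{lemma-W-decay-2}. Restricting the spatial integrals to $|\ep y|\le\nu$, outside of which $h$ is no longer given by the explicit formula and the $\rho$-expansion is unavailable, contributes the tail error $O((\ep/\nu)^{n-2\gamma})$ by the same decay estimates. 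The main obstacle is the algebra of the $E(\rho)$ computation: the coefficients $\tfrac32\{(N-2)^2 - (1-2\gamma)^2\}$ and $\{N(\gamma-\tfrac12) - 2(\gamma^2-\tfrac14)\}(2m+3)$ are sums of several individually singular contributions in which divergent $y_N^{-2}$-type pieces have to cancel exactly, and the identification at each order $m$ depends crucially on the recursive factor $\prod_{\tm=1}^{m-1}((2\tm+3)(N-2(\tm+1)))^{-1}$ in the definition \eqref{eq-C} of $C_{2m}$.
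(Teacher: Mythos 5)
Your overall architecture (rescale by $\ep$, expand $\bg^{ab}$ and $\rho^{1-2\gamma}$ to second order in $\mu$, treat $E(\rho)$ via \eqref{eq-E-tilde} and the scalar-curvature expansion \eqref{eq-R-bg-est}, control tails with Lemma \ref{lemma-W-decay-2}) is the paper's, and your account of the $E(\rho)$ computation and the error bookkeeping is essentially correct. The gap is in the one step the paper singles out as the genuinely new difficulty: the vanishing of the first-order term
\[
\mu\ep^{2(d_0+1)}\sum_{i,j=1}^n\int_{B_+^N(0,\nu/\ep)} x_N^{1-2\gamma}\,\oh_{ij}(\bx)\,\pa_iW_{\dt}\,\pa_jW_{\dt}\,dx .
\]
Your integration by parts does not make this collapse to a boundary flux. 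After using $\sum_i\pa_i\oh_{ij}=0$ the product rule gives $\oh_{ij}\pa_iW\pa_jW=\pa_i\(\oh_{ij}W\pa_jW\)-\oh_{ij}W\pa_{ij}W$, so a bulk term $\int x_N^{1-2\gamma}\oh_{ij}W_{\dt}\pa_{ij}W_{\dt}$ survives; for $\tau\neq 0$ its integrand is not pointwise zero (only the trace part and the fully contracted piece $W_{ikjl}x^kx^lx^ix^j$ vanish, not the cross terms in $\tau$), and integrating by parts a second time merely reproduces the original integral, so the argument is circular. Since this term is of size $\mu\ep^{2(d_0+1)}$, far larger than the claimed correction $\mu^2\ep^{4(d_0+1)}J_2^{\gamma}(\dt)$, the proposition would be false if it did not genuinely vanish.

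What is actually needed, and what the paper does in deriving \eqref{eq-energy-exp-1}, is to translate so that $W_{\delta}$ is radial in $\bx$, write $\pa_iW_{\delta}\pa_jW_{\delta}=x^ix^j|\bx|^{-2}(\pa_rW_{\delta})^2$, and reduce everything to the spherical averages
\[
W_{ikjl}\int_{S^{n-1}(0,r)}x^ix^j\(x^k+\tau^k\)\(x^l+\tau^l\)(\bx\cdot\tau)^t\,dS_r ,
\]
which vanish identically by the antisymmetry, trace-free and Bianchi identities of the multilinear form $W$ (identity \eqref{eq-energy-exp-b}); the remainder is then only the tail $O\(\mu\nu^{2(d_0+1)}(\ep/\nu)^{n-2\gamma}\)$. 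This is precisely where the paper departs from the local cases of Brendle and Almaraz, where pointwise relations of explicit bubbles are available. Without this algebraic step your proof does not close.
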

\begin{proof}
We start the proof by calculating $\int_{\mr^N_+} \rho^{1-2\gamma}|\nabla W_{\edt}|_{\bg}^2 dv_{\bg}$.
From an estimate
\begin{equation}\label{eq-hh-inv-est}
\left|\bg^{ij} - \(\delta_{ij} - h_{ij} + {1 \over 2} \sum_{l=1}^nh_{il}h_{jl}\)\right| \le C|h|^3 \quad \text{in } C^1(\{0 \le x_N \le \nu\}),
\end{equation}
Proposition \ref{prop-rho-est} and Lemma \ref{lemma-W-decay-2}, and the facts that $\pa_Nh_{ab} = 0$ in $\{0 \le x_N \le \nu\}$ and $\det\bg = 1$, we see that
\begin{align*}
&\ \int_{\mr^N_+} \rho^{1-2\gamma}\bg^{ab} \pa_a W_{\edt}\pa_b W_{\edt} dv_{\bg}\\
& = \int_{B_+^N(0,\nu)} \rho^{1-2\gamma} \left[\bg^{ij} \pa_i W_{\edt} \pa_j W_{\edt} + (\pa_N W_{\edt})^2 \right] dx + O\(\({\ep \over \nu}\)^{n-2\gamma}\)\\
&= \int_{B_+^N(0,\nu)} x_N^{1-2\gamma} \left[1 + (1-2\gamma) \mu^2\ep^{4(d_0+1)-2m} \sum_{m=1}^{2d_0+2} C_{2m}\(\ep^{-1}\bx\) x_N^{2m} \right]\\
&\quad \times \left[|\nabla W_{\edt}|^2 + \sum_{i,j=1}^n\(- h_{ij}(x) + \sum_{l=1}^n{h_{il}h_{jl}(x) \over 2}\) \pa_i W_{\edt} \pa_j W_{\edt} \right]dx
+ O\(\mu^3 \ep^{4(d_0+1)} + \({\ep \over \nu}\)^{n-2\gamma}\)\\
&= \int_{\mr_+^N} x_N^{1-2\gamma} |\nabla W_1|^2 dx
+ \sum_{i,j=1}^n\int_{B_+^N(0,\nu/\ep)} x_N^{1-2\gamma} \(- h_{ij}(\ep \bx)dx + {1 \over 2} \sum_{l=1}^nh_{il}h_{jl}(\ep \bx)\) \pa_i W_{\dt}\pa_j W_{\dt}dx \\
&\quad + (1-2\gamma) \mu^2\ep^{4(d_0+1)} \sum_{m=1}^{2d_0+2} \int_{B_+^N(0,\nu/\ep)} x_N^{1+2m-2\gamma} C_{2m}(\bx) |\nabla W_{\dt}|^2dx + O\(\mu^3 \ep^{4(d_0+1)} + \({\ep \over \nu}\)^{n-2\gamma}\).
\end{align*}
Furthermore, the algebraic properties of the tensor $W$ give 
\begin{equation}\label{eq-energy-exp-1}
\sum_{i,j=1}^n \int_{B_+^N(0,\nu/\ep)} x_N^{1-2\gamma} h_{ij}(\ep \bx)\pa_i W_{\dt}\pa_j W_{\dt}dx = O\(\mu\nu^{2(d_0+1)} \cdot \({\ep \over \nu}\)^{n-2\gamma}\)
\end{equation}
whose proof is deferred to the end of the proof, whereas we immediately obtain from the definition of the tensor $\oh_{ij}$ that
\begin{multline*}
\int_{B_+^N(0,\nu/\ep)} x_N^{1-2\gamma} \(h_{il}h_{jl}\)(\ep \bx) \pa_i W_{\dt}\pa_j W_{\dt}dx \\
= \mu^2 \ep^{4(d_0+1)} \int_{\mr_+^N} x_N^{1-2\gamma} \(\oh_{il}\oh_{jl}\)(\bx) \pa_i W_{\dt}\pa_j W_{\dt}dx + O\(\mu^2\nu^{4(d_0+1)} \cdot \({\ep \over \nu}\)^{n-2\gamma}\).
\end{multline*}
This completes the estimation on the gradient part of the energy $I^{\gamma}$.

Next, we compute $\int_{\mr^N_+} E(\rho)W_{\edt}^2 dx$.
Let
\begin{equation}\label{eq-E-tilde}
\begin{aligned}
\widetilde{E}(\rho) &= - \Delta_{\bg}\(\rho^{1-2\gamma \over 2}\)\rho^{1-2\gamma \over 2} + \(\gamma^2-{1 \over 4}\) \rho^{-1-2\gamma} \\
&= \(\gamma-{1 \over 2}\) \rho^{-2\gamma} \left[ \pa_i\bg^{ij} \pa_j\rho + \bg^{ij} \pa_{ij}\rho + \pa_{NN}\rho\right]
- \(\gamma^2 - {1 \over 4}\) \rho^{-1-2\gamma}
\left[\bg^{ij} \pa_i\rho \pa_j\rho + \((\pa_N \rho)^2-1\)\right].
\end{aligned}
\end{equation}
Then putting Proposition \ref{prop-rho-est} and \eqref{eq-hh-inv-est} together leads us to deduce
\begin{align*}
&\ \widetilde{E}(\rho)(\ep x) \\
&= \(\gamma-{1 \over 2}\) (\ep x_N)^{-2\gamma} \left[\mu^4 \ep^{8d_0+7} \sum_{i,j,l=1}^n \sum_{m=1}^{2d_0+2} \oh_{il} \pa_i\oh_{jl} \pa_jC_{2m} x_N^{2m+1}
+ \mu^2 \ep^{4d_0+3} \sum_{i,j=1}^n \sum_{m=1}^{2d_0+2} \delta_{ij} \pa_{ij}C_{2m}x_N^{2m+1} \right.\\
&\hspace{85pt} \left. + \mu^2 \ep^{4d_0+3} \sum_{m=0}^{2d_0+1} 2(m+1)(2m+3) C_{2(m+1)}x_N^{2m+1} \right]
\stepcounter{equation}\tag{\theequation} \label{eq-E-tilde-2} \\
&\ - \(\gamma^2 - {1 \over 4}\) (\ep x_N)^{-1-2\gamma}
\left[ \mu^4\ep^{8(d_0+1)} \sum_{i,j=1}^n \sum_{m=1}^{2d_0+2} \delta_{ij} \pa_iC_{2m}\pa_jC_{2m}x_N^{2(m+1)} \right.\\
&\hspace{55pt} \left. + \mu^2\ep^{4(d_0+1)} \sum_{m=0}^{2d_0+1} 2(2m+3) C_{2(m+1)}x_N^{2(m+1)} \right] + O\(\mu \cdot \mu^2 \ep^{4d_0+3-2\gamma} x_N^{1-2\gamma}|x|^2\(1+|x|^{4d_0}\)\)
\end{align*}
in $B^N_+(0,\nu/\ep)$. Therefore, recalling the definition \eqref{eq-error} of $E(\rho)$ and the expansion \eqref{eq-R-bg-est} (or \eqref{eq-R-bg-est-2}) of the scalar curvature $R_{\bg}$, we find that
\begin{align*}
&\ \int_{\mr^N_+} E(\rho)W_{\edt}^2 dv_{\bg}\\
&= \int_{\mr^N_+} \(\widetilde{E}(\rho) + {N-2 \over 4(N-1)}R_{\bg}\rho^{1-2\gamma} \) W_{\edt}^2 dx\\
&= \mu^2\ep^{4(d_0+1)} \int_{B^N_+(0,\nu/\ep)} x_N^{1-2\gamma} \left[ \(\gamma-{1 \over 2}\) \(\sum_{m=1}^{2d_0+2} \Delta C_{2m}x_N^{2m} + \sum_{m=0}^{2d_0+1} 2(m+1)(2m+3) C_{2(m+1)}x_N^{2m} \) \right.\\
&\hspace{120pt} \left. - \(\gamma^2 - {1 \over 4}\) \sum_{m=0}^{2d_0+1} 2(2m+3) C_{2(m+1)}x_N^{2m} + {3(N-2)^2 \over 2} C_2 \right] W_{\dt}^2 dx \\
&\ + O\(\mu^3\ep^{4(d_0+1)} + (\ep/\nu)^{n-2\gamma}\).
\end{align*}
Here we utilized the fact that $|R_{\bg}|$ and $x_N^{-1+2\gamma}|\widetilde{E}(\rho)|$ are bounded in $\mr^N_+$ (refer to \eqref{eq-lin-04}) to compute the remainder term.
We further note that $\Delta C_{2m} = (2m+3)(N-2(m+1))C_{2(m+1)}$ is satisfied for all $m = 1, \cdots, 2d_0+2$ by \eqref{eq-C}.

Finally, it holds that $\int_{\mr^n} w_{\edt}^{2n \over n-2\gamma} dv_{\hh} = \int_{\mr^n} w_1^{2n \over n-2\gamma} d\bx$ by scaling invariance and the observation that $\det{\hh} = 1$.
Thus collecting all the computations made here, we can conclude the proof.

\medskip
\noindent \textbf{Derivation of \eqref{eq-energy-exp-1}.}
Unlike the local cases where pointwise relations of the bubbles were used (see \cite[Proposition 13]{Br} for $\gamma =1$ or \cite[Proposition 3.2]{Al} for $\gamma = 1/2$),
our proof heavily relies on the algebraic properties of the tensor $W_{ijkl}$ instead.
Write $\tau = (\tau^1, \cdots, \tau^n) \in \mr^n$ and \[f\(|\bx+\tau|^2\) = \sum_{m=0}^{d_0} a_m \(|\bx|^2+|\tau|^2+2\bx \cdot \tau\)^m
= \sum_{m=0}^{d_0}\sum_{t=0}^m a_mb_{m,t} \(|\bx|^2+|\tau|^2\)^{m-t}(\bx \cdot \tau)^{t}\]
where $a_m,\ b_{m,t} \in \mr$. By the definition of $h_{ij}$ in \eqref{eq-h}, then we have
\begin{align*}
&\ \sum_{i,j=1}^n\int_{B_+^N(0,\nu/\ep)} x_N^{1-2\gamma} h_{ij}(\ep \bx)\pa_i W_{\dt}\pa_j W_{\dt} dx \\
&= \mu \ep^{2d_0} \sum_{i,j=1}^n \int_{\mr_+^N} x_N^{1-2\gamma} H_{ij}(\ep \bx) f\(|\bx|^2\) \pa_i W_{\dt}\pa_j W_{\dt} dx + O\(\mu\nu^{2(d_0+1)} \cdot \({\ep \over \nu}\)^{n-2\gamma}\) \\
&= \mu \ep^{2(d_0+1)} \int_{\mr_+^N} x_N^{1-2\gamma} W_{ikjl} \(x^k+\tau^k\)\(x^l+\tau^l\)
f\(|\bx+\tau|^2\) x^ix^j|\bx|^{-2} (\pa_rW_{\delta} \pa_r W_{\delta})(|\bx|,x_N) dx \\
&\quad + O\(\mu\nu^{2(d_0+1)} \cdot (\ep/\nu)^{n-2\gamma}\) \stepcounter{equation}\tag{\theequation}\label{eq-energy-exp-a} \\
&= \sum_{m=0}^{d_0}\sum_{t=0}^m a_mb_{m,t} \int_0^{\infty} x_N^{1-2\gamma} \int_0^{\infty} \(|\bx|^2+|\tau|^2\)^{m-t} |\bx|^{-2} (\pa_rW_{\delta} \pa_r W_{\delta})(|\bx|,x_N) \\
&\hspace{30pt} \times W_{ikjl} \int_{S^{n-1}(0,r)} x^ix^j\(x^k+\tau^k\)\(x^l+\tau^l\)(\bx \cdot \tau)^t dS_rdrdx_N + O\(\mu\nu^{2(d_0+1)} \cdot \({\ep \over \nu}\)^{n-2\gamma}\).
\end{align*}
However, since $W_{iijk} = 0$ and $\sum_{i=1}^nW_{ijik} = 0$ hold, we have the validity of
\begin{equation}\label{eq-energy-exp-b}
\begin{aligned}
W_{ikjl} \int_{S^{n-1}(0,1)} x^ix^j\(x^k+\tau^k\)\(x^l+\tau^l\)\(x^1\tau^1 + \cdots + x^n\tau^n\)^t dS = 0
\end{aligned}
\end{equation}
for any given $\tau \in \mr^n$ and $t = 0, 1, \cdots, d_0$ (where $d_0$ will be chosen to be $d_0 \le 4$).
Plugging \eqref{eq-energy-exp-b} into \eqref{eq-energy-exp-a}, we obtain \eqref{eq-energy-exp-1}.
\end{proof}
\noindent The previous proposition implies that searching a critical point of the function $J_0^{\gamma}$ can be reduced to looking for that of $J_1^{\gamma} + J_2^{\gamma}$.
Since the tensor $H_{ab}$ in \eqref{eq-h0} satisfies the symmetric condition $H_{ab}(\bx, x_N) = H_{ab}(-\bx, x_N)$ for all $(\bx, x_N) \in \mr^N_+$, so does the tensor $\oh$.
Also, it is a simple task to check that $\Psi^A_{\delta, -\tau}(\bx, x_N) = \Psi^A_{\dt}(-\bx, x_N)$. 
Therefore $(J_1^{\gamma} + J_2^{\gamma})(\dt) = (J_1^{\gamma} + J_2^{\gamma})(\delta, -\tau)$ for any $(\dt) \in \ma$.
As an immediate consequence, we have
\begin{equation}\label{eq-J-diff}
{\pa \(J_1^{\gamma} + J_2^{\gamma}\) \over \pa \tau} (\dz) = {\pa^2 \(J_1^{\gamma} + J_2^{\gamma}\) \over \pa \tau \pa \delta} (\dz) = 0 \quad \text{for every } \delta > 0.
\end{equation}
In the next subsection, we carry out some computations necessary to find a critical point (specifically, a local minimizer) of $J_1^{\gamma} + J_2^{\gamma}$.
Actually, with the aid of these computations, we are able to deduce that $(J_1^{\gamma} + J_2^{\gamma})(\dz)$ can be expressed with a polynomial $P = P(\delta)$ (see Subsection \ref{subsec-exp-3}).
As a result, our problem is translated into obtaining a suitable critical point of the polynomial $P$ which we shall take care of in Section \ref{sec-conc}.
It will turn out that for sufficiently large dimensions (for instance $n \ge 52$ if $\gamma = 1/2$),
an appropriate choice of a linear function $f$ in the definition of the metric $\bg$ (see \eqref{eq-h}) gives a desirable critical point of the polynomial $P$.
However, it is inevitable to introduce a polynomial $f$ of degree $d_0 = 4$ in the metric $\bg$ instead
so as to enable to find a necessary critical point of $P$ in lower dimensions (e.g. $24 \le n \le 51$ for $\gamma = 1/2$).
Since the computation is extremely complicated in the case that $d_0 = 4$,
we will take into account only when the dimension $n$ is large enough (so that $d_0 = 1$) in most part of the paper to clarify the exposition.
Changes required to consider lower dimensions will be described in Subsection \ref{subsec-low}.

\subsection{Preparation for an expansion of $J_1^{\gamma} + J_2^{\gamma}$}\label{subsec-exp-2}
Let us introduce some functions.

\medskip \noindent
- Denote the Bessel function of the first kind and the modified Bessel function of the second kind of order $\gamma$ by $J_{\gamma}$ and $K_{\gamma}$, respectively.
Their definitions and properties can be found in \cite{AS}.

\medskip \noindent - Set $\varphi$ by the solution of the ordinary differential equation in the variable $t > 0$:
\begin{equation}\label{eq-ode-1}
\phi''(t) + {1-2\gamma \over t} \phi'(t) - \phi(t) = 0,\quad \phi(0) = 1 \quad \text{and } \phi(\infty) = 0.
\end{equation}
In particular, $\varphi(t) = d_1 t^{\gamma}K_{\gamma}(t)$ for $t > 0$ where $d_1 = 2^{1-\gamma}/\Gamma(\gamma)$.

\medskip \noindent - Notice that the Fourier transform of $w_1$ is a radially symmetric function.
We shall denote by $\hw_1(\xi) = \hw_1(\rho)$ with a slight abuse of the notation.

\medskip \noindent - Let $A_{\alpha}$ and $B_{\alpha}$ be numbers defined to be
\[A_{\alpha} = \int_0^{\infty} t^{\alpha-2\gamma}\varphi^2(t)dt \quad \text{and} \quad B_{\alpha} = \int_0^{\infty} \rho^{-\alpha+2\gamma}\hw_1^2(\rho)\rho^{n-1}d\rho\]
for ${\alpha} \in \mn \cup \{0\}$. Also, we set functions
\begin{equation}\label{eq-F}
\begin{aligned}
F_{1,n,\gamma}(\alpha, \beta) &= \int_0^{\infty} \int_{\mr^n} x_N^{\alpha-2\gamma} |\bx|^{\beta} W_1^2(\bx, x_N) d\bx dx_N,\\
F_{2,n,\gamma}(\alpha, \beta) &= \int_0^{\infty} \int_{\mr^n} x_N^{\alpha-2\gamma} |\bx|^{\beta} |\nabla_{\bx} W_1|^2(\bx, x_N) d\bx dx_N,\\
F_{3,n,\gamma}(\alpha, \beta) &= \int_0^{\infty} \int_{\mr^n} x_N^{\alpha-2\gamma} |\bx|^{\beta} |\pa_{x_N} W_1|^2(\bx, x_N) d\bx dx_N
\end{aligned}
\end{equation}
for $\alpha \in 2\mn+1$ and $\beta \in 2\mn$ as far as they are finite.

\medskip The main objective of this subsection is to depict how to express the values of functions in \eqref{eq-F} in terms of numbers $A_1$ and $B_2$.
Especially, the following lemma will be established as one of the consequences.
See also Appendix \ref{sec-app-b} below and \cite{KMW}.

\begin{lemma}\label{lemma-W-rel}
Suppose that $d_0 = 1$ and $n > 2\gamma + 8$. Then it holds
\begin{align}
F_{1,n,\gamma}(1,2) &= \left|S^{n-1}\right| \left[{n \(3(n-3)^2+\(1-4\gamma^2\)\) \over 3(n-4)(n-2\gamma-4)(n+2\gamma-4)}\right]A_1B_2, \label{eq-W-rel-0}\\
F_{1,n,\gamma}(1,4) &= \left|S^{n-1}\right| \left[{n(n+2) \(15(n-3)^2(n-5)^2+R_{1,n,\gamma}(1,4)\)
\over 15(n-4)(n-6) (n-2\gamma-4)(n+2\gamma-4) (n-2\gamma-6)(n+2\gamma-6)}\right] A_1B_2, \label{eq-W-rel-01}\\
F_{1,n,\gamma}(1,6) &= \left|S^{n-1}\right| \left[ \tfrac{n(n+2)(n+4) \(35(n-3)^2(n-5)^2(n-7)^2+R_{1,n,\gamma}(1,6)\)}
{35(n-4)(n-6)(n-8) (n-2\gamma-4)(n+2\gamma-4) (n-2\gamma-6)(n+2\gamma-6) (n-2\gamma-8)(n+2\gamma-8)}\right] A_1B_2. \label{eq-W-rel-02}
\end{align}
Moreover we have
\begin{align*}
F_{2,n,\gamma}(3,2) &= \left|S^{n-1}\right| \left[{\(1-\gamma^2\)2(n+2)\(5(n-1)(n-3) + \(1-4\gamma^2\)\) \over 15(n-4)(n-2\gamma-4)(n+2\gamma-4)}\right] A_1B_2,\\
F_{3,n,\gamma}(3,2) &= \left|S^{n-1}\right| \left[{2(1-\gamma)(2-\gamma)\(5(n-1)(n-2)(n-3) - R_{3,n,\gamma}(3,2)\) \over 15(n-4)(n-2\gamma-4)(n+2\gamma-4)}\right] A_1B_2
\end{align*}
where
\begin{align*}
R_{1,n,\gamma}(1,4) &= \(1-4\gamma^2\)\left[10n^2 - 80n + 177 - 12\gamma^2\right],\\
R_{1,n,\gamma}(1,6) &= \(1-4\gamma^2\)\left[35n^4 - 700n^3 + 5299n^2 - 17990n + 23469 \right. \\
&\hspace{150pt} \left. + 80\gamma^4 - 4\gamma^2\(21n^2-210n+611\)\right],\\
R_{3,n,\gamma}(3,2) &= (1-2\gamma) \left[3n - 14 - 2\gamma(n+2)\right].
\end{align*}
\end{lemma}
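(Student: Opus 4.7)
\medskip\noindent\textbf{Proof plan for Lemma \ref{lemma-W-rel}.} The plan is to transfer the integrals to the Fourier side in $\bx$, where the $\gamma$-harmonic extension $W_1$ becomes separable, decouple the resulting double integrals by a one-variable substitution, and reduce everything to the natural building block $A_1 B_2$, which, by Plancherel applied to \eqref{eq-Poisson}, is $|S^{n-1}|^{-1}\!\int_{\mr^N_+}\! x_N^{1-2\gamma} W_1^2\, dx$.

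Taking the Fourier transform of \eqref{eq-har-ext} in $\bx$ reduces it to an ODE in $x_N$ whose unique decaying solution with boundary value $\hw_1$ is $\widehat{W}_1(\xi, x_N) = \varphi(\rho x_N)\, \hw_1(\rho)$, where $\rho := |\xi|$ and $\varphi$ is from \eqref{eq-ode-1}. Using $\widehat{|\bx|^{2m} f} = (-\Delta_\xi)^m \widehat f$ and Plancherel,
\[F_{1,n,\gamma}(1,2m) = \int_0^{\infty}\! x_N^{1-2\gamma}\! \int_{\mr^n}\! \widehat{W}_1\, (-\Delta_\xi)^m \widehat{W}_1\, d\xi\, dx_N;\]
for $F_{2,n,\gamma}$ an extra $|\xi|^2$ appears via $\widehat{\nabla_{\bx} W_1} = i\xi \widehat{W}_1$, and for $F_{3,n,\gamma}$ one replaces $\varphi$ by $\varphi'$ since $\widehat{\pa_{x_N}W_1} = \rho\, \varphi'(\rho x_N)\, \hw_1(\rho)$. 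Because $\widehat{W}_1$ is radial in $\xi$, one writes $-\Delta_\xi = -\pa_\rho^2 - \frac{n-1}{\rho}\pa_\rho$ and expands by the product rule, producing sums of terms of the form $\rho^a x_N^b\, \varphi^{(i)}(\rho x_N)\varphi^{(j)}(\rho x_N)\, \hw_1^{(k)}(\rho)\hw_1^{(l)}(\rho)$. The substitution $t = \rho x_N$ then separates the variables,
\[\int_0^{\infty}\! x_N^{\alpha-2\gamma} \varphi^{(i)}(\rho x_N)\varphi^{(j)}(\rho x_N)\, dx_N = \rho^{-\alpha+2\gamma-1}\!\int_0^{\infty}\! t^{\alpha-2\gamma} \varphi^{(i)}(t)\varphi^{(j)}(t)\, dt,\]
so each term factorizes into an $A$-type $t$-integral and a $B$-type $\rho$-integral.

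Each $A$-type integral has $i+j\le 2$ and is reduced to a rational multiple of $A_1$ by combining the ODE $(t^{1-2\gamma}\varphi')' = t^{1-2\gamma}\varphi$ to eliminate $\varphi''$, the identity $\int_0^{\infty} t^{\alpha-2\gamma}\varphi\varphi'\, dt = -\frac{\alpha-2\gamma}{2}A_{\alpha-1}$ obtained from $\varphi\varphi' = \frac12(\varphi^2)'$, and repeated integration by parts on $\int t^{\alpha-2\gamma}(\varphi')^2 dt$. On the $\hw_1$ side, integration by parts in $\rho$ together with the bridging identity $\rho^{2\gamma}\hw_1(\rho) \propto \varphi(\rho)$ coming from the explicit Bessel representations $\hw_1(\rho) \propto \rho^{-\gamma}K_\gamma(\rho)$ and $\varphi(t) = d_1 t^\gamma K_\gamma(t)$ reduces every $B$-type integral to $B_2$; this identity converts shifted $B_\alpha$'s into shifted $A_\beta$'s, and the overall normalization is fixed by $A_1 B_2 = F_{1,n,\gamma}(1,0)/|S^{n-1}|$. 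The hypothesis $n > 2\gamma + 8$ ensures that all boundary terms in the integrations by parts vanish.

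Collecting the reduced $A$- and $B$-coefficients with the spherical factor $|S^{n-1}|$ produces the displayed rational function of $n$ and $\gamma$ times $A_1 B_2$: the dominant numerators $(n-3)^2$, $(n-3)^2(n-5)^2$ and $(n-3)^2(n-5)^2(n-7)^2$ come from iterating the $\frac{n-1}{\rho}\pa_\rho$ part of the radial Laplacian on $\hw_1$; the denominators $(n-2\gamma-2k)(n+2\gamma-2k)$ come from the integration-by-parts shifts; and the residuals $R_{1,n,\gamma}$ and $R_{3,n,\gamma}$ gather the subleading cross-term contributions, with the $(1-4\gamma^2)$ factors being the signature of the $\varphi$-ODE reduction. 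The main obstacle is purely combinatorial: for $F_{1,n,\gamma}(1,6)$, the operator $(-\Delta_\xi)^3$ expands into dozens of terms whose consolidation into the displayed closed form requires careful tracking, as detailed in Appendix \ref{sec-app-b}.
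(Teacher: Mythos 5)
Your plan follows essentially the same route as the paper: pass to the Fourier side via Plancherel, use the separation $\widehat{W}_1(\xi,x_N)=\hw_1(|\xi|)\varphi(|\xi|x_N)$, expand $(-\Delta_\xi)^m$ by the product rule, decouple via $t=\rho x_N$, and reduce the resulting $A$- and $B$-type integrals to $A_1B_2$ using the second-order ODEs satisfied by $\varphi$ and $\hw_1$ (the paper states these reductions as Lemma \ref{lemma-ode-3}, derived by the integrations by parts you describe, rather than invoking the bridging identity $\rho^{2\gamma}\hw_1\propto\varphi$, but the two are equivalent). Your treatment of $F_{2,n,\gamma}$ is stated a bit loosely — one needs the commutator relation $(-\Delta_\xi)^m(\xi_i\whw_1)=-2m\,\pa_i(-\Delta_\xi)^{m-1}\whw_1+\xi_i(-\Delta_\xi)^m\whw_1$ rather than just "an extra $|\xi|^2$" — but that is a detail inside the same method.
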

\noindent In order to verify the lemma, the following observation of Gonz\'alez \cite[Lemma 14]{Go} and Gonz\'alez-Qing \cite[Section 7]{GQ} is needed.
\begin{lemma}\label{lemma-ode-1}
For each $x_N > 0$ fixed, let $\whw_1(\xi,x_N)$ be the Fourier transform of $W_1(\bx,x_N)$ with respect to the variable $\bx \in \mr^n$.
Then we have that
\begin{equation}\label{eq-W-FT}
\whw_1(\xi,x_N) = \hw_1(\xi)\varphi(|\xi|x_N) \quad \text{for all } \xi \in \mr^n \text{ and } x_N > 0,
\end{equation}
where $\varphi(t) = d_1 t^{\gamma}K_{\gamma}(t)$ is the solution to \eqref{eq-ode-1}.
\end{lemma}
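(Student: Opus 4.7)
The plan is to extract the factorization from the divergence-form extension equation \eqref{eq-har-ext} satisfied by $W_1$. Since $W_1$ is the $\gamma$-harmonic extension of $w_1$, we may write
\begin{equation*}
\pa^2_{x_N x_N} W_1 + \frac{1-2\gamma}{x_N}\pa_{x_N} W_1 + \Delta_{\bx} W_1 = 0 \quad \text{in } \mr_+^N, \qquad W_1(\cdot,0) = w_1 \quad \text{on } \mr^n.
\end{equation*}
By Lemma \ref{lemma-W-decay}, the section $W_1(\cdot,x_N)$ belongs to $L^2(\mr^n)$ for every fixed $x_N \ge 0$, so the partial Fourier transform in $\bx$ is well-defined, and differentiation under the Fourier integral in $x_N$ is justified by the same decay estimates. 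Taking the Fourier transform term by term yields, for every $\xi\in\mr^n$ and $x_N>0$,
\begin{equation*}
\pa^2_{x_N x_N} \whw_1(\xi,x_N) + \frac{1-2\gamma}{x_N}\pa_{x_N}\whw_1(\xi,x_N) - |\xi|^2 \whw_1(\xi,x_N) = 0, \qquad \whw_1(\xi,0) = \hw_1(\xi).
\end{equation*}

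Next I would make the separation-of-variables ansatz $\whw_1(\xi,x_N) = \hw_1(\xi)\phi(|\xi|x_N)$ and substitute $t = |\xi|x_N$. A direct chain-rule computation gives $\pa_{x_N}\whw_1 = \hw_1(\xi)|\xi|\phi'(t)$ and $\pa^2_{x_N x_N}\whw_1 = \hw_1(\xi)|\xi|^2\phi''(t)$, so the ODE reduces to
\begin{equation*}
|\xi|^2 \hw_1(\xi) \left[\phi''(t) + \frac{1-2\gamma}{t}\phi'(t) - \phi(t)\right] = 0,
\end{equation*}
which is exactly \eqref{eq-ode-1}. The trace condition forces $\phi(0)=1$, while the decay of $W_1$ at infinity (inherited from $W_1 \in \drn$) forces $\phi(\infty)=0$; hence $\phi$ coincides with the unique solution $\varphi$ of \eqref{eq-ode-1}, proving \eqref{eq-W-FT}.

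To verify that this $\varphi$ has the explicit form $d_1 t^\gamma K_\gamma(t)$ (already stated in the notation section), I would substitute $\phi(t) = t^\gamma \psi(t)$ into \eqref{eq-ode-1}. A straightforward computation collapses the equation to the modified Bessel equation $t^2\psi''(t) + t\psi'(t) - (t^2+\gamma^2)\psi(t) = 0$. Among its two linearly independent solutions $I_\gamma$ and $K_\gamma$, only $K_\gamma$ decays at infinity, so $\psi(t) = cK_\gamma(t)$. The constant is fixed by the normalization $\phi(0)=1$ combined with the classical asymptotic $K_\gamma(t) \sim \tfrac{1}{2}\Gamma(\gamma)(t/2)^{-\gamma}$ as $t\to 0^+$, yielding $c = 2^{1-\gamma}/\Gamma(\gamma) = d_1$.

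The only delicate point is the justification of the Fourier transform manipulations and the passage to a pointwise ODE: one must verify that $\whw_1(\xi,\cdot)$ is a classical $C^2$ function of $x_N$ on $(0,\infty)$ with the claimed boundary behavior. This follows from the Schauder-type regularity in \cite[Lemma 4.5]{CS} or \cite[Proposition 3.2]{GQ} applied to the extension equation, together with the decay in Lemma \ref{lemma-W-decay}. Once these regularity issues are handled, the identification is mechanical.
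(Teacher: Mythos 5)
Your argument is correct and is exactly the standard derivation: the paper itself gives no proof of this lemma but cites Gonz\'alez \cite[Lemma 14]{Go} and Gonz\'alez--Qing \cite[Section 7]{GQ}, where the result is obtained precisely by taking the partial Fourier transform of the extension equation \eqref{eq-har-ext}, rescaling $t=|\xi|x_N$, and invoking uniqueness of the decaying solution of \eqref{eq-ode-1} with $\phi(0)=1$. Your closing appeal to uniqueness (the decaying solution space of the ODE is one-dimensional, spanned by $t^{\gamma}K_{\gamma}(t)$) is what makes the separation-of-variables ansatz legitimate rather than merely formal, so no gap remains.
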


\noindent Next, we obtain the explicit form of the Fourier transform $\hw_1(\rho) = \hw_1(\xi)$ of the standard bubble $w_1(x)$.
\eqref{eq-hatw} is also obtained in \cite{GW} up to the constant multiple.
\begin{lemma}\label{lemma-ode-2}
If $n > 4\gamma - 1$, then it is true that
\begin{equation}\label{eq-hatw}
\hw_1(\rho) = d_2 \rho^{-\gamma}K_{\gamma}(\rho) \quad \text{for any } \rho > 0 \quad \text{where } d_2 = {2\Gamma\({n+2\gamma \over 2}\)^{n-2\gamma \over 4\gamma} \over \Gamma\({n-2\gamma \over 2}\)^{n+2\gamma \over 4\gamma}}.
\end{equation}
As a result, it is a solution of the equation in $t > 0$:
\begin{equation}\label{eq-ode-2}
\phi''(t) + {1+2\gamma \over t} \phi'(t) - \phi(t) = 0
\end{equation}
with the asymptotic behavior
\[\phi(t) = \begin{cases}
d_2 \Gamma(\gamma) 2^{\gamma-1} t^{-2\gamma} (1+o(1)) &\text{as } t \to 0+,\\
d_2 \sqrt{\pi/2} t^{-\gamma-{1 \over 2}}e^{-\gamma} (1+o(1)) &\text{as } t \to \infty.
\end{cases}\] 
\end{lemma}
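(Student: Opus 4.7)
The plan is to derive \eqref{eq-hatw} via the Gaussian subordination identity, and then to read off the ODE \eqref{eq-ode-2} and the asymptotics directly from the resulting formula, with no new integration.

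First I would write $w_1(\bx) = c_{n,\gamma}(1+|\bx|^2)^{-s}$ with $s := (n-2\gamma)/2$ and use the Bessel-potential representation
\[
(1+|\bx|^2)^{-s} = \frac{1}{\Gamma(s)} \int_0^\infty t^{s-1}\, e^{-t}\, e^{-t|\bx|^2}\, dt.
\]
Substituting this into the definition of the Fourier transform, applying Fubini (legitimate under the assumption $n > 4\gamma - 1$, which secures integrability of the iterated integrand after the Gaussian step), and evaluating the inner Gaussian Fourier transform $\int_{\mr^n} e^{-t|\bx|^2} e^{-i\bx\cdot\xi}\, d\bx = (\pi/t)^{n/2} e^{-|\xi|^2/(4t)}$ reduces the computation to the one-dimensional integral
\[
\hw_1(\rho) = \frac{c_{n,\gamma}\, \pi^{n/2}}{(2\pi)^{n/2}\,\Gamma(s)} \int_0^\infty t^{s-n/2-1}\, e^{-t-\rho^2/(4t)}\, dt.
\]

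Next I would invoke the classical Schl\"afli-type representation $\int_0^\infty t^{\nu-1} e^{-t-\rho^2/(4t)}\, dt = 2(\rho/2)^\nu K_\nu(\rho)$ applied with $\nu = s - n/2 = -\gamma$; combined with $K_{-\gamma}=K_\gamma$ this produces the factor $2^{1+\gamma}\rho^{-\gamma}K_\gamma(\rho)$. Inserting the explicit value of $c_{n,\gamma}$ from \eqref{eq-cng} and simplifying, the powers of $2$ collapse to a single factor via $(n-2\gamma)/2 + 1+\gamma - n/2 = 1$, and the $\Gamma((n-2\gamma)/2)$ factors combine through $1 + (n-2\gamma)/(4\gamma) = (n+2\gamma)/(4\gamma)$, producing exactly the constant $d_2$ stated in \eqref{eq-hatw}. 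This is essentially the only part of the proof that requires any care; the content is concentrated in the subordination identity and the $K_\gamma$ representation, both off-the-shelf, so the chief obstacle is simply the constant bookkeeping.

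Finally, writing $\phi(t) = d_2 t^{-2\gamma}\, u(t)$ with $u(t) := t^{\gamma}K_\gamma(t)$, it is classical that $u$ satisfies \eqref{eq-ode-1}, i.e.\ $u'' + \tfrac{1-2\gamma}{t}u' - u = 0$ (this is how $\varphi$ is identified in the paragraph after \eqref{eq-ode-1}). A direct product-rule computation then yields
\[
\phi'' + \frac{1+2\gamma}{t}\phi' - \phi = d_2\, t^{-2\gamma}\Bigl(u'' + \frac{1-2\gamma}{t}u' - u\Bigr) = 0,
\]
which is precisely \eqref{eq-ode-2}. The asymptotic behavior is then immediate by substituting the standard asymptotics $K_\gamma(t) \sim \Gamma(\gamma)2^{\gamma-1}t^{-\gamma}$ as $t \to 0^+$ and $K_\gamma(t) \sim \sqrt{\pi/(2t)}\,e^{-t}$ as $t \to \infty$ into $\phi(t) = d_2 t^{-\gamma}K_\gamma(t)$, matching the formulas stated in the lemma (modulo the obvious typo $e^{-\gamma}$ for $e^{-t}$ in the large-$t$ asymptotic).
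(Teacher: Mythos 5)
Your proof is correct, but it reaches \eqref{eq-hatw} by a genuinely different route than the paper. The paper exploits radial symmetry to rewrite $\hw_1$ as a Hankel transform of order $(n-2)/2$ of $r\mapsto r^{(n-2)/2}w_1(r)$ and then simply cites the tabulated value in Piessens's handbook \cite{Pi}; there the hypothesis $n>4\gamma-1$ is precisely the convergence condition for that oscillatory Hankel integral. You instead give a self-contained derivation via the subordination identity $(1+|\bx|^2)^{-s}=\Gamma(s)^{-1}\int_0^\infty t^{s-1}e^{-t}e^{-t|\bx|^2}\,dt$ together with the Schl\"afli-type representation of $K_\nu$, which buys you the constant $d_2$ by explicit bookkeeping rather than a table lookup, and your arithmetic checks out: the exponent of $2$ collapses to $1$ and the exponents of $\Gamma\big(\tfrac{n-2\gamma}{2}\big)$ combine to $\tfrac{n+2\gamma}{4\gamma}$. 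The one imprecision is your justification of the interchange of integrals: since $w_1\notin L^1(\mr^n)$, the double integral is not absolutely convergent (the $t$-integral of the $L^1$-norms of the Gaussians behaves like $t^{-\gamma-1}$ near $t=0$), so Tonelli does not apply, and the hypothesis $n>4\gamma-1$ is not what rescues it --- the iterated integral after the Gaussian step converges for every fixed $\rho>0$ regardless. The interchange should instead be justified in the sense of tempered distributions or by an approximation argument; this is routine, and the paper is no more rigorous on this point, but it is not what you wrote. The reduction of \eqref{eq-ode-2} to \eqref{eq-ode-1} via the substitution $\phi=t^{-2\gamma}u$ with $u(t)=t^{\gamma}K_\gamma(t)$ is a clean way to carry out the paper's ``direct computation,'' the asymptotics follow correctly from the standard behavior of $K_\gamma$ at $0$ and $\infty$, and you are right that $e^{-\gamma}$ in the statement is a typo for $e^{-t}$.
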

\begin{proof}
Since $w$ is radial, namely, $w_1(\bx) = w_1(r)$, so is its Fourier transform and can be expressed in terms of the Bessel function $J_{n-2 \over 2}$:
\[\hw_1(\xi) = \hw_1(\rho) = {1 \over \rho^{n-2 \over 2}} \int_0^{\infty} w_1(r)J_{n-2 \over 2}(\rho r)r^{n \over 2}dr \quad \text{for } \rho = |\xi| \ge 0.\]
We observe that the integral is the $\({n-2 \over 2}\)$-th order Hankel transform of the function $r \mapsto r^{n-2 \over 2}w_1(r)$,
whose precise value can be computed under the assumption on the dimension $n > 4\gamma - 1$ as listed in \cite{Pi}.
As a result, we find from \eqref{eq-cng} that \eqref{eq-hatw} has the validity.
The fact that $\hw_1$ solves Eq. \eqref{eq-ode-2} results from a direct computation.
\end{proof}

\noindent The following lemma can be obtained by modifying the proof of \cite[Lemma 7.2]{GQ}.
\begin{lemma}\label{lemma-ode-3}
Suppose that $\phi = \varphi$ or $\hw_1$ in Lemmas \ref{lemma-ode-1} and \ref{lemma-ode-2}.
Set also $\alpha = 1-2\gamma$ if $\phi = \varphi$, or $\alpha = 1 + 2\gamma$ if $\phi = \hw_1$. Then we have
\begin{align}
\int_0^{\infty} \(\phi'(\rho)\)^2\rho^{\eta} d\rho &= \({\eta+1 \over 2}\)\({\eta+1 \over 2}-\alpha\)^{-1} \int_0^{\infty} \phi(\rho)^2 \rho^{\eta} d\rho \label{eq-ode-5}
\intertext{and}
\int_0^{\infty} \phi(\rho)^2 \rho^{\eta} d\rho &= (\eta-\alpha)\({\eta-1 \over 2}\)
\left[1 + \({\eta+1 \over 2}\)\({\eta+1 \over 2}-\alpha\)^{-1}\right]^{-1} \int_0^{\infty} \phi(\rho)^2 \rho^{\eta-2} d\rho \label{eq-ode-6}
\end{align}
provided that $\eta > 1$ for $\phi = \varphi$, and $\eta > 4\gamma + 1$ for $\phi = \hw_1$.
\end{lemma}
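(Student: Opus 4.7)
\textbf{Proof Proposal for Lemma \ref{lemma-ode-3}.}

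The plan is to exploit the fact that both admissible functions satisfy a common ODE of the form $\phi''(t) + \alpha t^{-1}\phi'(t) - \phi(t) = 0$ on $(0,\infty)$, with $\alpha = 1-2\gamma$ when $\phi = \varphi$ and $\alpha = 1+2\gamma$ when $\phi = \hw_1$, and to derive \eqref{eq-ode-5} and \eqref{eq-ode-6} by two well-chosen integration-by-parts identities. The range restrictions on $\eta$ are exactly what is needed to make all boundary contributions vanish, using the asymptotics of $K_\gamma$ recorded in Lemma \ref{lemma-ode-2} (and an analogous expansion for $\varphi$ obtained from the same formula with $\gamma$ replaced so that $\varphi(0)=1$).

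First I would establish \eqref{eq-ode-5}. Multiply the ODE by $\phi'(t)\,t^{\eta+1}$ and integrate over $(0,\infty)$. Writing $\phi'\phi'' = \tfrac{1}{2}((\phi')^2)'$ and $\phi\phi' = \tfrac{1}{2}(\phi^2)'$, a single integration by parts produces
\[
-\tfrac{\eta+1}{2}\int_0^\infty (\phi')^2 t^{\eta}\,dt + \alpha \int_0^\infty (\phi')^2 t^{\eta}\,dt + \tfrac{\eta+1}{2}\int_0^\infty \phi^2 t^{\eta}\,dt = 0,
\]
and solving for the $(\phi')^2$-integral yields \eqref{eq-ode-5}. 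The boundary terms $[(\phi')^2 t^{\eta+1}]_0^\infty$ and $[\phi^2 t^{\eta+1}]_0^\infty$ vanish because of the exponential decay of $K_\gamma$ at infinity and the asymptotics $\varphi(t)\sim 1$, $\varphi'(t)=O(t)$ near $0$ in the first case, and $\hw_1(t)\sim c\,t^{-2\gamma}$, $\hw_1'(t) = O(t^{-2\gamma-1})$ near $0$ in the second; the restrictions $\eta > 1$ and $\eta > 4\gamma+1$ are precisely what is needed so that $t^{\eta+1-4\gamma}\to 0$ as $t\to 0^+$ in the second case.

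For \eqref{eq-ode-6}, multiply the same ODE by $\phi(t)\,t^\eta$ and integrate. Two integrations by parts (using the same $(\phi^2)'$ trick on the first-order term) give
\[
-\int_0^\infty (\phi')^2 t^\eta\,dt + \tfrac{(\eta-1)(\eta-\alpha)}{2}\int_0^\infty \phi^2 t^{\eta-2}\,dt - \int_0^\infty \phi^2 t^\eta\,dt = 0,
\]
again with vanishing boundary contributions under the given range of $\eta$. Substituting \eqref{eq-ode-5} into the $(\phi')^2$-integral and solving produces \eqref{eq-ode-6}.

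There is no real conceptual obstacle; the whole argument is a weighted Pohozaev-type computation. The only delicate point is the careful bookkeeping of the boundary terms at $t=0$, where the behavior is different for $\varphi$ (bounded) and for $\hw_1$ (blowing up like $t^{-2\gamma}$), which is exactly why the two admissible ranges $\eta>1$ and $\eta>4\gamma+1$ appear in the statement.
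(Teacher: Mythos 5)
Your proposal is correct and follows essentially the same route as the paper: the paper likewise multiplies the ODE \eqref{eq-ode-2} by $\rho^{\eta+1}\phi'$ (resp. $\rho^{\eta}\phi$), integrates by parts to obtain exactly your two identities (the paper's \eqref{eq-ode-4} and \eqref{eq-ode-7}), and combines them to get \eqref{eq-ode-5}--\eqref{eq-ode-6}, treating only $\phi=\hw_1$ explicitly and remarking that the $\varphi$ case is identical. One small inaccuracy in your boundary-term bookkeeping: near $t=0$ one has $\varphi'(t)=O\bigl(t^{2\gamma-1}\bigr)$ rather than $O(t)$ when $\gamma<1/2$ (this follows from $(t^{1-2\gamma}\varphi')'\approx t^{1-2\gamma}$), but the boundary contributions $t^{\eta+1}(\varphi')^2$ and $t^{\eta}\varphi\varphi'$ still vanish under the hypothesis $\eta>1$, so the argument is unaffected.
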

\begin{proof}
We only take into account the case that $\phi = \hw_1$ since the other case can be covered in the same way.
If we multiply $\rho^{\eta}\hw_1'(\rho)$ on the both sides of \eqref{eq-ode-2} and then integrate the results over $(0,\infty)$, we get
\begin{equation}\label{eq-ode-4}
\(\alpha - {\eta+1 \over 2}\) \int_0^{\infty} \(\hw_1'(\rho)\)^2\rho^{\eta} d\rho = -\({\eta+1 \over 2}\) \int_0^{\infty} \(\hw_1(\rho)\)^2 \rho^{\eta} d\rho,
\end{equation}
which is \eqref{eq-ode-5}.
Since it is known that $K_{\gamma}'(\rho)$ is of order $\rho^{-\gamma-1}$ near 0,
it holds that $\rho^{\eta}\hw_1'(\rho)^2|_{\rho = 0} = 0$ if $\eta > 4\gamma + 1$, which validates the above calculation.

On the other hand, if we test $\rho^{\eta+2}\hw_1(\rho)$ on \eqref{eq-ode-2} instead, we then discover that
\begin{equation}\label{eq-ode-7}
-\int_0^{\infty} \(\hw_1'(\rho)\)^2\rho^{\eta} d\rho + (\alpha-\eta) \int_0^{\infty} \hw_1'(\rho)\hw_1(\rho)\rho^{\eta-1} d\rho
= \int_0^{\infty} \hw_1(\rho)^2\rho^{\eta} d\rho.
\end{equation}
Since an application of integration by parts shows that
\[\int_0^{\infty} \hw_1'(\rho)\hw_1(\rho)\rho^{\eta-1} d\rho = -\({\eta-1 \over 2}\)\int_0^{\infty} \hw_1(\rho)^2\rho^{\eta-2}d\rho,\]
we conclude with \eqref{eq-ode-4} and \eqref{eq-ode-7} that \eqref{eq-ode-6} holds.
\end{proof}

\noindent With the previous lemmas, it is now possible to proceed the proof of Lemma \ref{lemma-W-rel}.
\begin{proof}[Proof of Lemma \ref{lemma-W-rel}]
We remark that the basic idea of this proof is motivated from \cite[Lemma 7.3]{GQ}.

\medskip
We first deal with $F_{1,n,\gamma}$.
By taking the Fourier transform on the variable $\bx$ and applying \eqref{eq-W-FT}, one derives
\begin{equation}\label{eq-W-rel-1}
\begin{aligned}
\int_{\mr^n} |\bx|^{2(1+m)}W_1^2(\bx, x_N) d\bx &= \left\||\cdot|^{1+m} W_1(\cdot, x_N)\right\|_{L^2(\mr^n)}^2
= \left\|\((-\Delta)^{1+m \over 2} \whw_1\)(\cdot,x_N)\right\|_{L^2(\mr^n)}^2 \\
&= \int_{\mr^n} \hw_1(|\xi|)\varphi(|\xi|x_N) \cdot (-\Delta)_{\xi}^{1+m}\(\hw_1(|\xi|)\varphi(|\xi|x_N)\) d\xi
\end{aligned} \end{equation}
for each fixed $m \in \mn \cup \{0\}$ and $x_N \in (0,\infty)$.
Assume $m = 0$ first. By \eqref{eq-ode-1} and \eqref{eq-ode-2}, we find
\begin{multline}\label{eq-W-rel-2}
\Delta_{\xi}\(\hw_1(|\xi|)\varphi(|\xi|x_N)\) = \hw_1(\rho) \varphi(\rho x_N)\(1+x_N^2\) + 2\hw_1'(\rho)\varphi'(\rho x_N)x_N \\
+ \({n-2\gamma-2 \over \rho}\) \hw_1'(\rho)\varphi(\rho x_N) + \({n+2\gamma-2 \over \rho}\) \hw_1(\rho) \varphi'(\rho x_N)x_N
\end{multline}
where $\Delta_{\xi}$ stands for the Laplacian with respect to the $\xi$-variable.
Moreover the substitution $t = \rho x_N$ enables us to get
\[\int_0^{\infty} \int_0^{\infty} x_N^{1-2\gamma} \(\hw_1(\rho)\varphi(\rho x_N)\)^2\left(1+x_N^2\) \rho^{n-1} dx_N d\rho = A_1B_2 + A_3B_4\]
and
\begin{align*}
&\ \int_0^{\infty} \int_0^{\infty} x_N^{2-2\gamma} \hw_1(\rho)\varphi(\rho x_N)\hw_1'(\rho)\varphi'(\rho x_N) \rho^{n-1} dx_Nd\rho \\
&= \(\int_0^{\infty} t^{2-2\gamma}\varphi(t)\varphi'(t)dt\)\(\int_0^{\infty} \rho^{-3+2\gamma} \hw_1(\rho)\hw_1'(\rho)\rho^{n-1}d\rho\)
= \left[{2(1-\gamma)(n-3) \over (n-4)(n-2\gamma-4)}\right] A_1B_2,
\end{align*}
which are finite for $n > 2\gamma + 4$.
Therefore, treating the other two terms of the right-hand side of \eqref{eq-W-rel-2} in this fashion, we deduce from \eqref{eq-W-rel-1} and Lemma \ref{lemma-ode-3} that
\begin{align*}
\int_0^{\infty} x_N^{1-2\gamma} \int_{\mr^n} |\bx|^2W_1^2(\bx, x_N) d\bx dx_N
&= \int_0^{\infty} x_N^{1-2\gamma} \int_{\mr^N} \hw_1(|\xi|)\varphi(|\xi|x_N) \cdot (-\Delta)_{\xi}\(\hw_1(|\xi|)\varphi(|\xi|x_N)\) d\xi dx_N \\
&= \left|S^{n-1}\right| \left[{n \(3(n-3)^2+\(1-4\gamma^2\)\) \over 12(n-4)(n-2\gamma-4)(n+2\gamma-4)}\right]A_1B_2,
\end{align*}
getting \eqref{eq-W-rel-0}.
Similar technique also can be applied for $m = 1$ and 2, which gives \eqref{eq-W-rel-01} and \eqref{eq-W-rel-02}.
To derive \eqref{eq-W-rel-02} for instance, we first observe that
\begin{multline}\label{eq-W-rel-5}
\int_0^{\infty} x_N^{1-2\gamma} \int_{\mr^n} |\bx|^6W_1^2(\bx, x_N) d\bx dx_N \\
= \int_0^{\infty} x_N^{1-2\gamma} \int_{\mr^N} (-\Delta)_{\xi}\(\hw_1(|\xi|)\varphi(|\xi|x_N)\) \cdot \Delta^2_{\xi}\(\hw_1(|\xi|)\varphi(|\xi|x_N)\) d\xi dx_N.
\end{multline}
Furthermore, one can check that
\begin{align*}
&\ \Delta^2_{\xi}\(\hw_1(|\xi|)\varphi(|\xi|x_N)\) \\
& = \hw_1(\rho) \varphi(\rho x_N)\(1+6x_N^2+x_N^4\)
+ \left[{n^2 - 2n(3+2\gamma) + 4\(2+3\gamma+\gamma^2\) \over \rho^2}\right] \hw_1(\rho) \varphi(\rho x_N) \\
& + \left[{n^2 - 2n(3-2\gamma) + 4\(2-3\gamma+\gamma^2\) \over \rho^2}\right] \hw_1(\rho) \varphi(\rho x_N) x_N^2 \\
& + 4 \hw_1'(\rho)\varphi'(\rho x_N) x_N \(1+x_N^2\) + \left[{n^2-10n+4\(5+\gamma^2\) \over \rho^2}\right] 2 \hw_1'(\rho)\varphi'(\rho x_N) x_N \\
& + \({n-2\gamma-2 \over \rho}\) 2 \hw_1'(\rho)\varphi(\rho x_N) + \({3n-2\gamma-8 \over \rho}\) 2 \hw_1'(\rho)\varphi(\rho x_N)x_N^2 \\
& - \left[{(1+\gamma)\(n^2 - 2n(3+2\gamma) + 4\(2+3\gamma+\gamma^2\)\) \over \rho^3}\right] 2 \hw_1'(\rho)\varphi(\rho x_N) \\
& + \({n+2\gamma-2 \over \rho}\) 2 \hw_1(\rho) \varphi'(\rho x_N)x_N^3 + \({3n+2\gamma-8 \over \rho}\) 2 \hw_1(\rho) \varphi'(\rho x_N)x_N \\
& + \left[{(-1+\gamma)\(n^2 - 2n(3-2\gamma) + 4\(2-3\gamma+\gamma^2\)\) \over \rho^3}\right] 2 \hw_1(\rho)\varphi'(\rho x_N)x_N.
\end{align*}
Putting this into \eqref{eq-W-rel-5} and computing term-by-term as before, we can determine \eqref{eq-W-rel-02}.

We next turn to the analysis of $F_{2,n,\gamma}$ and $F_{3,n,\gamma}$.
As in \eqref{eq-W-rel-1}, one has
\begin{equation}\label{eq-W-rel-8}
\begin{aligned}
\int_{\mr^n}|\bx|^{2(1+m)} |\nabla_{\bx} W_1(\bx,x_N)|^2 d\bx
&
= \sum_{i=1}^n \left\|(-\Delta)^{1+m \over 2}\widehat{\pa_iW_1}(\cdot,x_N)\right\|_{L^2(\mr^n)}^2 \\
&= \sum_{i=1}^n \int_{\mr^n} \xi_i \whw_1(|\xi|,x_N) \cdot (-\Delta)_{\xi}^{1+m}\(\xi_i\whw_1(|\xi|,x_N)\) d\xi
\end{aligned} \end{equation}
for any $m \in \mn \cup \{0\}$.
Therefore it is possible to perform a computation using \eqref{eq-W-rel-8} and the relation
\[(-\Delta_{\xi})^m \(\xi_i\whw_1\) = -2m \pa_i(-\Delta_{\xi})^{m-1}\whw_1 + \xi_i (-\Delta_{\xi})^m \whw_1\]
to show that
\begin{align*}
&\ - \left|S^{n-1}\right|^{-1} F_{2,n,\gamma}(3,2)\\
&= \int_0^{\infty}\int_0^{\infty} x_N^{3-2\gamma} \left[2\rho^n \whw_1(\rho,x_N) \whw_1'(\rho,x_N)
+ \rho^{n+1} \whw_1(\rho,x_N) \(\whw_1''(\rho,x_N) + {n-1 \over \rho}\whw_1'(\rho,x_N)\)\right] d\rho dx_N \\
&= - \left[{(1-\gamma^2)2(n+2)\(5(n-1)(n-3) + \(1-4\gamma^2\)\) \over 15(n-4)(n-2\gamma-4)(n+2\gamma-4)}\right] A_1B_2
\end{align*}
($\whw_1'$ signifies the derivative of $\whw_1$ in the radial variable $\rho = |\bx|$).
Likewise, one sees that
\begin{equation}\label{eq-W-rel-9}
\begin{aligned}
\int_{\mr^n}|\bx|^{2(1+m)} |\pa_N W_1(\bx,x_N)|^2 d\bx &= \left\|(-\Delta)^{1+m \over 2}\widehat{\pa_NW_1}(\cdot,x_N)\right\|_{L^2(\mr^n)}^2
= \left\|(-\Delta)^{1+m \over 2}\pa_N\whw_1(\cdot,x_N)\right\|_{L^2(\mr^n)}^2 \\
&= \int_{\mr^n} |\xi|\hw_1(\xi)\varphi'(|\xi|x_N) \cdot (-\Delta)_{\xi}^{1+m}\(|\xi|\hw_1(\xi)\varphi'(|\xi|x_N)\) d\xi.
\end{aligned} \end{equation}
Thus by employing \eqref{eq-W-rel-9} and
\begin{multline*}
\Delta_{\xi}\(|\xi|\hw_1(\xi)\varphi'(|\xi|x_N)\) = (n+2\gamma) \hw_1(\rho) \varphi(\rho x_N)x_N + (n+2\gamma-2) \hw_1'(\rho)\varphi'(\rho x_N) \\
+ 2\rho\hw_1'(\rho)\varphi(\rho x_N)x_N + 2\gamma \({n+2\gamma-2 \over \rho}\) \hw_1(\rho) \varphi'(\rho x_N) + \rho \hw_1(\rho) \varphi'(\rho x_N) \(1 + x_N^2\),
\end{multline*}
we can find the value of $F_{3,n,\gamma}(3,2)$.

This completes the proof.
\end{proof}

\subsection{Reduction of $\(J_1^{\gamma} + J_2^{\gamma}\)(\cdot,0)$ into a polynomial $P$}\label{subsec-exp-3}
Lemma \ref{lemma-W-rel} allows us to obtain the following proposition.
\begin{prop}\label{prop-poly}
Assume that the degree of the polynomial $f$ in \eqref{eq-h} is $d_0 = 1$ so that it is written as $f(r) = a_0 + a_1r$ where $a_0$ and $a_1$ are arbitrarily chosen and fixed.
Also, we denote
\[F_{4,n,\gamma}(\alpha, \beta) = F_{2,n,\gamma}(\alpha, \beta) + F_{3,n,\gamma}(\alpha, \beta) \quad \text{for } (\alpha, \beta) \in (2\mn+1) \times (2\mn)\]
and set polynomials
\begin{equation}\label{eq-poly-2}
\begin{aligned}
P_1(t) &= {1 \over n(n+2)} \left[a_1^2(n+8) F_1(1,6) t^4 + 2a_0a_1(n+4) F_1(1,4) t^3 + a_0^2(n+2) F_1(1,2) t^2 \right],\\
P_{31}(t) &= {1 \over n(n+2)} \left[ 6a_1^2(n+4)(n+8) F_1(3,4) t^4 + 8a_0a_1(n+2)(n+4) F_1(3,2) t^3 \right.\\
&\hspace{50pt} \left. + 2a_0^2n(n+2) F_1(3,0) t^2 \right],\\
P_{32}(t) &= {1 \over n} \left[ 24a_1^2(n+4)(n+8) F_1(5,2) t^4 + 16a_0a_1(n+4) F_1(5,0) t^3 \right],\\
P_{33}(t) &= 48a_1^2(n+4)(n+8) F_1(7,0) t^4
\end{aligned}
\end{equation}
where $F_1 = F_{1,n,\gamma}$ (refer to \eqref{eq-F}).
We further define polynomials $P_{21}$, $P_{22}$, $P_{23}$ and $P_{24}$ by substituting each $F_{1,n,\gamma}(\alpha,\beta)$ appearing in $P_1$, $P_{31}$, $P_{32}$ and $P_{33}$ by $F_{4,n,\gamma}(\alpha+2,\beta)$, respectively. If the polynomial $P$ is given by
\begin{equation}\label{eq-poly-3}
\begin{aligned}
&\ -24n(n-1) \cdot P(t)\\
&= {3 \over 2} \((n-1)^2-(1-2\gamma)^2\) P_1(t)\\
&\quad + (1-2\gamma) \sum_{m=1}^{2d_0+2} \left[\prod_{\tm=1}^{m-1} {1 \over (2\tm+3)(N-2(\tm+1))}\right] P_{2m}(t) \\
&\quad + \left\{(n+1)\(\gamma-{1 \over 2}\) - 2\(\gamma^2-{1 \over 4}\) \right\} \sum_{m=1}^{2d_0+1} \left[\prod_{\tm=1}^m {1 \over (2\tm+3)(N-2(\tm+1))}\right] (2m+3)P_{3m}(t)
\end{aligned}
\end{equation}
for $n > 2\gamma + 8 (= 2\gamma + 4(d_0+1))$, where the value in the bracket in front of $P_{21}$ is understood as 1, then it is true that
\begin{equation}\label{eq-poly-0}
\(J_1^{\gamma} + J_2^{\gamma}\)(\dz) = P\big(\delta^2\big).
\end{equation}
\end{prop}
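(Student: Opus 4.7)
My approach is a direct computation in three stages: rescale, reduce angular integrals using the algebraic identities of the Weyl-like tensor $W_{ikjl}$, and assemble the resulting radial integrals into the polynomial $P$ via Lemmas \ref{lemma-W-rel} and \ref{lemma-ode-3}. First I would substitute $x = \delta z$ in every integral defining $J_1^\gamma(\delta, 0)$ and in each summand of $J_2^\gamma(\delta, 0)$. The scaling $W_{\delta,0}(x) = \delta^{-(n-2\gamma)/2} W_1(z)$, together with $\hh_{ij}(\delta \bar z) = \delta^2 (a_0 + a_1 \delta^2 |\bar z|^2) H_{ij}(\bar z)$ (valid because $d_0 = 1$) and the analogous $\delta$-scaling of $C_{2m}$ dictated by \eqref{eq-C}, turn each integrand into a polynomial in $\delta^2$ with $\delta$-free integral coefficients. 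The surviving powers $t^2, t^3, t^4$ of $t = \delta^2$ correspond respectively to the $a_0^2$, $a_0 a_1$, and $a_1^2$ contributions, explaining the shape of $P_1, P_{2m}, P_{3m}$ in \eqref{eq-poly-2}. For the corrector $\Psi^A$ entering $J_1^\gamma$, the linearity of \eqref{eq-aux} permits the decomposition $\Psi^A_{\delta, 0}(\delta z) = \delta^{2 - (n-2\gamma)/2}\bigl[a_0 \Psi^{(0)}(z) + a_1 \delta^2 \Psi^{(1)}(z)\bigr]$ with $\delta$-independent $\Psi^{(0)}, \Psi^{(1)}$; Lemma \ref{lemma-ode-1} then represents each of these in the frequency variable via $\hw_1$ and the profile $\varphi$.

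Next, in each resulting integral the angular integration over $S^{n-1}$ is a polynomial contraction of $W_{ikjl}$ (or of a tensor product $W \otimes W$) with a monomial $\omega^{i_1} \cdots \omega^{i_{2k}}$ for some $k \le 4$. Using the antisymmetry, symmetry, Bianchi identity, and trace-freeness of $W$ exactly as in the derivation of \eqref{eq-energy-exp-b}, each such contraction collapses to a multiple of $|W|^2$ times a combinatorial dimension factor of the form $1/(n(n+2))$, $1/(n(n+2)(n+4))$, and so on --- precisely the denominators appearing in the definitions \eqref{eq-poly-2} of $P_1, P_{2m}, P_{3m}$. The residual radial integrals are by construction the quantities $F_{1,n,\gamma}, F_{2,n,\gamma}, F_{3,n,\gamma}$ of \eqref{eq-F}: Lemma \ref{lemma-W-rel} evaluates them explicitly, while Lemma \ref{lemma-ode-3} converts the $|\nabla_{\bar z} W|^2$-weighted integrals produced by $J_1^\gamma$ and by the first summand of $J_2^\gamma$ into the $W^2$-weighted $F_{1,n,\gamma}(1, \cdot)$ integrals that constitute $P_1$.

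Finally, I assemble. The third and fourth summands of $J_2^\gamma$ match termwise with the $(1-2\gamma)$- and $\{(n+1)(\gamma - 1/2) - 2(\gamma^2 - 1/4)\}$-blocks of \eqref{eq-poly-3}, producing $P_{2m}$ (via $|\nabla W|^2 = |\nabla_{\bar z} W|^2 + (\partial_N W)^2 \leadsto F_{4,n,\gamma}$) and $(2m+3) P_{3m}$ (via $W^2 \leadsto F_{1,n,\gamma}$) respectively. The nested products $\prod_{\tilde m = 1}^{m-1}[(2\tilde m + 3)(N - 2(\tilde m + 1))]^{-1}$ originate from iterating the identity $\Delta C_{2m} = (2m+3)(N - 2(m+1)) C_{2(m+1)}$ already used in Proposition \ref{prop-expand}. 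The second summand of $J_2^\gamma$, combined with $J_1^\gamma$ and the first summand of $J_2^\gamma$, then yields the $P_1$-block, and the overall normalization $-24n(n-1)$ in \eqref{eq-poly-3} comes from the defining identity $-24(N-1)(N-2) C_2 = \sum (\partial_k \hh_{ij})^2$. The principal obstacle is precisely this last synthesis: the Fourier-space contribution of $J_1^\gamma$ and the $|\nabla_{\bar z} W|^2$-piece of the first summand of $J_2^\gamma$ look structurally very different from the $C_2 W^2$-piece of the second summand, yet the three collapse to the single polynomial $P_1(t)$; tracking the precise numerical coefficients through the Fourier inversion, the Weyl angular averages, and the ODE identities so that the combined prefactor reduces exactly to $\frac{3}{2}((n-1)^2 - (1-2\gamma)^2)$ is the heart of the proof.
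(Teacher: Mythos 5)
Your overall template (rescale, evaluate the spherical averages via the algebraic identities of $W_{ikjl}$, read off the surviving radial integrals as the quantities $F_{i,n,\gamma}$) is right for the terms that actually survive, but you miss the two vanishing facts on which the paper's proof hinges, and as a result you assign $P_1$ the wrong origin. First, $\Psi^A_{\delta,0}\equiv 0$: the source term of \eqref{eq-aux} at $\tau=0$ is $\sum_{i,j}\oh_{ij}\pa_{ij}W_\delta$, and since $W_\delta(\bx,x_N)$ is radial in $\bx$, the Hessian $\pa_{ij}W_\delta$ is a combination of $\delta_{ij}$ and $x^ix^j$; contracting $H_{ij}=W_{ikjl}x^kx^l$ against either kills the term (trace-freeness, respectively antisymmetry, of $W$). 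Hence $J_1^{\gamma}(\delta,0)=0$, there is no ``Fourier-space contribution of $J_1^\gamma$'' to compute, and your proposed decomposition $\Psi^A_{\delta,0}(\delta z)=\delta^{2-(n-2\gamma)/2}[a_0\Psi^{(0)}+a_1\delta^2\Psi^{(1)}]$ is a decomposition of the zero function. Second, the first summand of $J_2^{\gamma}$ also vanishes at $\tau=0$, because $W_{ikls}W_{jplq}\int_{S^{n-1}(0,r)}x^ix^jx^kx^px^qx^s\,dS_r=0$ by the contraction and antisymmetry properties of $W$.

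Consequently the ``principal obstacle'' you single out --- reconciling $J_1^{\gamma}$, the gradient piece of the first summand of $J_2^{\gamma}$, and the $C_2W^2$ piece into the single block $\frac{3}{2}((n-1)^2-(1-2\gamma)^2)P_1(t)$ --- does not exist: two of the three contributions are identically zero, and the coefficient $\frac{3}{2}((n-1)^2-(1-2\gamma)^2)=\frac{3}{2}((N-2)^2-(1-2\gamma)^2)$ is carried over verbatim from the $C_2W_{\dt}^2$ term in \eqref{eq-wjg}. Without recognizing these vanishings your computation would generate spurious nonzero-looking terms and you could not close the identity \eqref{eq-poly-0}. The rest of your assembly is essentially the paper's: the spherical averages are handled by Lemmas \ref{lemma-lap-paH} and \ref{lemma-J-zero-a}, the nested products come from iterating $\Delta C_{2m}=(2m+3)(N-2(m+1))C_{2(m+1)}$, and the $-24n(n-1)$ normalization comes from the prefactor in \eqref{eq-C}. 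Note finally that Lemmas \ref{lemma-W-rel} and \ref{lemma-ode-3} play no role here, since $P$ is stated in terms of the unevaluated integrals $F_{1,n,\gamma}$ and $F_{4,n,\gamma}$; their explicit evaluation is only needed later, in Section \ref{sec-conc}.
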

\begin{rmk}\label{rmk-al-br}
Clearly the above result recovers the polynomial found by Almaraz for the case of $\gamma = 1/2$ (up to a multiplicative constant). See (4.6) of \cite{Al}.
Furthermore, putting $\gamma = 1$ and $a_0 = -a_1 = 1$ allows us to regain the polynomial of Brendle in \cite[Proposition 19]{Br}.
\end{rmk}

It is notable that $\Psi^A_{\delta,0} = 0$ holds since $\sum_{i,j=1}^n H_{ij}\pa_{ij} W_{\delta} = 0$ in $\mr^N_+$.
Furthermore, it is straightforward to check
\begin{align*}
&\ \int_{\mr_+^N} x_N^{1-2\gamma} \(\oh_{il}\oh_{jl}\)(\bx) \(\pa_i W_{\delta}\pa_j W_{\delta}\)(\bx,x_N) dx \\
&= \int_0^{\infty} x_N^{1-2\gamma} \int_0^{\infty} f(r^2) \({\pa_rW_{\delta} \over r}\)^2 \(W_{ikls}W_{jplq} \int_{S^{n-1}(0,r)} x^ix^jx^kx^px^qx^s dS_r\) dr dx_N = 0
\end{align*}
by using the contraction and anti-symmetry properties of the tensor $W$.
Hence $J_1^{\gamma}$ and the first term of $J_2^{\gamma}$ vanishes if $\tau = 0$.
Besides, it can be easily seen that $W_{\delta}(x) = W_{\delta}(|\bx|,x_N)$ for any $x = (\bx, x_N) \in \mr_+^N$ owing to the representation formula \eqref{eq-Poisson} of $W_{\delta}$.
Therefore, in view of \eqref{eq-C} and \eqref{eq-wjg}, the proof of the proposition is reduced to computing
\begin{align*}
&\ \int_{\mr^N_+} x_N^{1+2m-2\gamma} \sum_{i,j,k=1}^n \Delta^m\(\pa_k \oh_{ij}\)^2(\bx) W_{\delta}^2(\bx,x_N)dx\\
&= \int_0^{\infty} \int_0^{\infty} \left[\int_{S^{n-1}(0,r)} \sum_{i,j,k=1}^n \Delta^m\(\pa_k \oh_{ij}\)^2(\bx) dS_r\right] x_N^{1+2m-2\gamma} W_{\delta}^2 (r,x_N) drdx_N
\end{align*}
for $m = 0, \cdots, 2d_0+1$ and
\begin{align*}
&\ \int_{\mr^N_+} x_N^{1+2m-2\gamma} \sum_{i,j,k=1}^n \Delta^{m-1}\(\pa_k \oh_{ij}(\bx)\)^2 |\nabla W_{\delta}|^2(\bx,x_N)dx \\
&= \int_0^{\infty} \int_0^{\infty} \left[\int_{S^{n-1}(0,r)} \sum_{i,j,k=1}^n \Delta^{m-1}\(\pa_k \oh_{ij}\)^2(\bx) dS_r \right] x_N^{1+2m-2\gamma} \(|\pa_r W_{\delta}|^2 + |\pa_{x_N} W_{\delta}|^2\) drdx_N
\end{align*}
for $m = 1, \cdots, 2d_0+2$ where $r = |\bx|$.
The most crucial part is to obtain the value of the integrals over the spheres $S^{n-1}(0,r)$.
To do so, it is necessary to look at how the terms $\sum_{i,j,k=1}^n \Delta^m\(\pa_k \oh_{ij}\)^2$ look like.
\begin{lemma}\label{lemma-lap-paH}
For any $m = 0, \cdots, 2d_0+1$, there are radial functions $G_{1,m}$, $G_{2,m}$ and $G_{3,m}$ in $\mr^n$ such that
\begin{equation}\label{eq-lap-paH}
\sum_{i,j,k=1}^n \Delta^m\(\pa_k \oh_{ij}\)^2(\bx) = G_{1,m}(r) \sum_{i,j,k=1}^n \(\pa_kH_{ij}\)^2(\bx) + G_{2,m}(r) \sum_{i,j=1}^n H_{ij}^2(\bx) + G_{3,m}(r)|W|^2
\end{equation}
where $r = |\bx|$.
\end{lemma}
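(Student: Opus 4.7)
The plan is to proceed by induction on $m$, the core observation being that the three tensorial quantities
\[ A(\bx) := \sum_{i,j=1}^n H_{ij}^2(\bx), \qquad B(\bx) := \sum_{i,j,k=1}^n (\pa_k H_{ij})^2(\bx), \qquad C := |W|^2 \]
span a three-dimensional class that is invariant under multiplication by radial functions followed by application of $\Delta$ in the $\bx$-variable. Once this invariance is established, \eqref{eq-lap-paH} is just the statement that $\Delta^m \sum_{i,j,k}(\pa_k\oh_{ij})^2$ lives in this class, and the radial functions $G_{1,m},G_{2,m},G_{3,m}$ are extracted from an explicit recursion.

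First I would record three algebraic identities, all consequences of the Weyl-type symmetries of $W$. Since $H_{ij}(\bx)=W_{ikjl}x^k x^l$ is a homogeneous polynomial of degree $2$, the trace condition $\sum_k W_{ikjk}=0$ gives $\Delta H_{ij}=0$, so each $\pa_k H_{ij}$ is harmonic as well. The second partials are constant, $\pa_{kl}H_{ij}=W_{ikjl}+W_{iljk}$, and therefore $\sum_{i,j,k,l}(\pa_{kl}H_{ij})^2 = |W|^2$. Combined with the product rule these yield the chain
\[ \Delta A = 2B, \qquad \Delta B = 2C, \qquad \Delta C = 0. \]

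For the base case $m=0$, differentiating $\oh_{ij}(\bx)=f(|\bx|^2)H_{ij}(\bx)$ gives $\pa_k\oh_{ij}=2x_k f'(|\bx|^2)H_{ij}+f(|\bx|^2)\pa_k H_{ij}$, and squaring and summing over $i,j,k$ collapses the cross term via Euler's identity $x^k\pa_k H_{ij}=2H_{ij}$, leaving
\[ \sum_{i,j,k}(\pa_k\oh_{ij})^2 = f(|\bx|^2)^2\, B(\bx) + \bigl(4|\bx|^2 f'(|\bx|^2)^2 + 8 f(|\bx|^2)f'(|\bx|^2)\bigr) A(\bx), \]
with no contribution from $C$; this identifies $G_{1,0},G_{2,0},G_{3,0}$ as radial. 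For the inductive step, a direct calculation using the identities above together with the Euler relations $\bx\cdot\nabla A=4A$ and $\bx\cdot\nabla B=2B$ yields, for every radial $\Phi(\bx)=\Phi(r)$,
\begin{align*}
\Delta(\Phi\, A) &= \bigl(\Delta\Phi + 8\Phi'(r)/r\bigr)A + 2\Phi\, B,\\
\Delta(\Phi\, B) &= \bigl(\Delta\Phi + 4\Phi'(r)/r\bigr)B + 2\Phi\, C,\\
\Delta(\Phi\, C) &= (\Delta\Phi)\, C.
\end{align*}
Each right-hand side lives in the span of $A,B,C$ with radial coefficients, so if \eqref{eq-lap-paH} holds at step $m$ it holds at step $m+1$, and the three formulas above give an explicit recursion determining $G_{i,m+1}$ from $G_{i,m}$.

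The main difficulty is conceptual rather than technical: one has to recognize at the outset the correct three-dimensional invariant subspace $\mathrm{span}\{A,B,C\}$, together with the fact that the harmonicity of $H_{ij}$ (hence of each $\pa_k H_{ij}$) is exactly what prevents any new tensorial structures from appearing under $\Delta$. Once that is identified the proof reduces to bookkeeping of radial coefficients, and the range $m\le 2d_0+1$ is precisely the range in which $\Delta^m\sum(\pa_k\oh_{ij})^2$, a polynomial of degree $4d_0+2-2m$, is still of nonnegative degree so that the decomposition \eqref{eq-lap-paH} is nontrivial.
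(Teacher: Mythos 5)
Your proof is correct and follows essentially the same route as the paper's: induction on $m$ with the base case \eqref{eq-paH}, using the harmonicity of $H_{ij}$, the Euler homogeneity relations, and $\sum_{i,j,k,l}(\pa_{kl}H_{ij})^2=|W|^2$ to close the computation; indeed your three formulas for $\Delta(\Phi A)$, $\Delta(\Phi B)$, $\Delta(\Phi C)$ combine to give exactly the recursion for $G_{1,m+1},G_{2,m+1},G_{3,m+1}$ displayed in the paper's proof.
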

\begin{proof}
We will use mathematical induction to justify the statement.
By the definition of the tensor $\oh_{ij}$, we know
\begin{equation}\label{eq-paH}
\sum_{i,j,k=1}^n \(\pa_k \oh_{ij}\)^2(\bx) = f(r^2)^2 \sum_{i,j,k=1}^n \(\pa_kH_{ij}\)^2(\bx) + \left[8f(r^2)f'(r^2) + 4r^2f'(r^2)^2 \right] \sum_{i,j=1}^n H_{ij}^2(\bx)
\end{equation}
so that \eqref{eq-lap-paH} is valid for $m = 0$.
See the proof of \cite[Proposition 15]{BM} for its detailed derivation.
Suppose that \eqref{eq-lap-paH} holds for $m = \tm$.
Then we verify by direct computations utilizing $x^k\pa_kH_{ij}(\bx) = 2H_{ij}(\bx)$, $x^l\pa_{kl}H_{ij}(\bx) = \pa_kH_{ij}(\bx)$, $\sum_{i,j,k,l=1}^n(\pa_{kl}H_{ij}(\bx))^2 = |W|^2$ and $\sum_{k=1}^n \pa_{kk}H_{ij}(\bx) = 0$ that
\begin{align*}
\sum_{i,j,k=1}^n \Delta^{\tm+1}\(\pa_k \oh_{ij}\)^2(\bx)
&= \left[G_{1,\tm}''(r) + \({n+3 \over r}\)G_{1,\tm}'(r) + 2G_{2,\tm}(r) \right] \sum_{i,j,k=1}^n \(\pa_kH_{ij}\)^2(\bx) \\
&\ + \left[G_{2,\tm}''(r) + \({n+7 \over r}\)G_{2,\tm}'(r)\right] \sum_{i,j=1}^n H_{ij}^2(\bx) \\
&\ + \left[G_{3,\tm}''(r) + \({n-1 \over r}\)G_{3,\tm}'(r) + 2G_{1,\tm}(r) \right] |W|^2.
\end{align*}
Here $G'(r)$ represents the differentiation of $G(r)$ with respect to the radial variable $r$.
Thus \eqref{eq-lap-paH} holds for $m = \tm+1$ as well.
The proof is finished.
\end{proof}

\noindent By the previous lemma, the desired integrals will be evaluated once we get
\begin{lemma}\label{lemma-J-zero-a}
It holds that
\[\sum_{i,j=1}^n \int_{S^{n-1}} H_{ij}^2(\bx) dS
= {1 \over 2(n+2)} \sum_{i,j,k=1}^n \int_{S^{n-1}} \(\pa_kH_{ij}\)^2(\bx) dS
= {\left|S^{n-1}\right| \over 2n(n+2)} |W|^2.\]
\end{lemma}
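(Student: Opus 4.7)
The plan is to reduce both integrals to standard monomial integrals over $S^{n-1}$ after a symmetrization of the tensor $W$, using only the algebraic properties of $W$.

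First I would absorb the symmetry of $x^k x^l$ into the tensor. Set
\[
S_{ikjl} := \tfrac{1}{2}\bigl(W_{ikjl}+W_{iljk}\bigr),
\]
so that $H_{ij}(\bx)=W_{ikjl}x^k x^l = S_{ikjl}x^k x^l$ with $S_{ikjl}$ symmetric in $(k,l)$. Two facts about $S$ are essential: (i) $\sum_{i,j,k,l} S_{ikjl}^2 = \tfrac{1}{4}|W|^2$, directly from the definition of $|W|^2$; and (ii) $\sum_{k} S_{ikjk}=0$, which follows because every contraction of $W$ vanishes. Also, from $\partial_k H_{ij}=W_{ikjl}x^l+W_{imjk}x^m = 2 S_{ikjl}x^l$, the second expression is immediately linear in $S$ times $x^l$.

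Next, I would invoke the standard formulas for angular integrals,
\[
\int_{S^{n-1}} x^k x^l \, dS = \tfrac{|S^{n-1}|}{n}\,\delta_{kl}, \qquad \int_{S^{n-1}} x^k x^l x^p x^q \, dS = \tfrac{|S^{n-1}|}{n(n+2)}\bigl(\delta_{kl}\delta_{pq}+\delta_{kp}\delta_{lq}+\delta_{kq}\delta_{lp}\bigr).
\]
For the derivative integral, using $\int x^l x^m dS = \tfrac{|S^{n-1}|}{n}\delta_{lm}$ gives
\[
\sum_{i,j,k}\int_{S^{n-1}}(\partial_k H_{ij})^2 dS = 4\cdot\tfrac{|S^{n-1}|}{n}\sum_{i,j,k,l} S_{ikjl}^2 = \tfrac{|S^{n-1}|}{n}|W|^2,
\]
which already proves the second equality since $\frac{1}{2(n+2)}\cdot\frac{|S^{n-1}|}{n}|W|^2 = \frac{|S^{n-1}|}{2n(n+2)}|W|^2$.

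For the $H_{ij}^2$ integral, the four-index formula produces three terms. The $\delta_{kl}\delta_{pq}$ term contributes $\bigl(\sum_k S_{ikjk}\bigr)\bigl(\sum_p S_{ipjp}\bigr)=0$ by the trace-free property. The $\delta_{kp}\delta_{lq}$ term gives $\sum S_{ikjl}^2 = \tfrac{|W|^2}{4}$; the $\delta_{kq}\delta_{lp}$ term gives $\sum_{i,j,k,l} S_{ikjl}S_{iljk}$, which equals the same quantity because $S_{iljk}=S_{ikjl}$ (symmetry in the inner pair). Adding,
\[
\sum_{i,j}\int_{S^{n-1}} H_{ij}^2\, dS = \tfrac{|S^{n-1}|}{n(n+2)}\cdot 2 \cdot \tfrac{|W|^2}{4} = \tfrac{|S^{n-1}|}{2n(n+2)}|W|^2,
\]
completing the identification with the right-hand side. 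No step is genuinely hard; the only thing to be careful about is keeping track of which contractions of $W$ are zero when the three Kronecker-delta terms are expanded.
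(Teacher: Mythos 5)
Your proof is correct, and it is essentially the argument the paper has in mind: the paper simply defers to Brendle's Proposition 16, which is exactly this computation with the symmetrized tensor $S_{ikjl}$, the vanishing contractions, and the standard second- and fourth-order monomial integrals over $S^{n-1}$. You have merely written out in full what the paper cites.
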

\begin{proof}
We deduce it by adapting the proof of \cite[Proposition 16]{Br}.
\end{proof}

\noindent Combining all results of this subsection, we are able to complete the proof of Proposition \ref{prop-poly}.
Actually the explicit expression of $G_{1,m}$, $G_{2,m}$ and $G_{3,m}$ is also necessary, but it can be derived from the proof of Lemma \ref{lemma-lap-paH}.
Observe that the definition of the polynomials in \eqref{eq-poly-2} are motivated from the value of
\[{1 \over \left|S^{n-1}\right||W|^2 r^{n-1}} \int_{S^{n-1}(0,r)} \sum_{i,j,k=1}^n \Delta^m\(\pa_k \oh_{ij}\)^2(\bx) dS_r \quad \text{for } m = 0, \cdots, 3 (= 2d_0+1).\]
We leave the details to the reader.

\subsection{The second derivative of $\(J_1^{\gamma} + J_2^{\gamma}\)(\dt)$ at $(\dt) = (\dz)$} \label{subsec-exp-4}
Our goal in this subsection is to calculate the function $\pa_{\tau_i\tau_j} \(J_1^{\gamma} + J_2^{\gamma}\)(\cdot, 0)$ for each fixed $i, j = 1, \cdots, n$.
This observation will be used in Section \ref{sec-conc} on finding a local minimizer of $J_1^{\gamma} + J_2^{\gamma}$ (see (C3) below).

We start this subsection by establishing variants of Lemmas \ref{lemma-lap-paH} and \ref{lemma-J-zero-a}. Set a symmetric two-tensor
\begin{equation}\label{eq-wtw}
\wtw_{ij} = \sum_{k,p,q=1}^n (W_{ikpq}+W_{kqip})(W_{jkpq}+W_{kqjp}).
\end{equation}

\begin{lemma}\label{lemma-J-sec-1}
For any $i, j \in \{1, \cdots, n\}$ and $m = 0, \cdots, 2d_0+1$, there are radial functions $\wtg_{1,m}, \cdots, \wtg_{11,m}$ in $\mr^n$ such that
\begin{align*}
&\ \sum_{k,p,q=1}^n \Delta^m \left[\pa_{ij}\(\pa_k \oh_{pq}\)^2\right](\bx) \\
&= \wtg_{1,m}(r) \delta_{ij} \sum_{k,p,q=1}^n \(\pa_kH_{pq}\)^2(\bx) + \wtg_{2,m}(r) \delta_{ij} \sum_{p,q=1}^n H_{pq}^2(\bx) + \wtg_{3,m}(r) \delta_{ij} |W|^2 \\
&\ + \wtg_{4,m}(r) \sum_{p,q=1}^n \left[x_i \(H_{pq} \pa_j H_{pq}\)(\bx) + x_j \(H_{pq} \pa_i H_{pq}\)(\bx)\right] + \wtg_{5,m}(r) \sum_{p,q=1}^n \(\pa_i H_{pq} \pa_j H_{pq}\)(\bx) \\
&\ + \wtg_{6,m}(r) \sum_{k,p,q=1}^n \left[x_i \(\pa_kH_{pq} \pa_{jk}H_{pq}\)(\bx) + x_j \(\pa_kH_{pq} \pa_{ik}H_{pq}\)(\bx)\right] + \wtg_{7,m}(r) \wtw_{ij}
\stepcounter{equation}\tag{\theequation}\label{eq-lap-paH-2}\\
&\ + \wtg_{8,m}(r) x_ix_j \sum_{k,p,q=1}^n \(\pa_k H_{pq}\)^2(\bx) + \wtg_{9,m}(r) x_ix_j \sum_{p,q=1}^n H_{pq}^2(\bx) \\
&\ + \wtg_{10,m}(r) x_ix_j|W|^2 + \wtg_{11,m}(r) \sum_{p,q=1}^n \(H_{pq} \pa_{ij} H_{pq}\)(\bx)
\end{align*}
where $r = |\bx|$.
\end{lemma}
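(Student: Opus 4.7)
The plan is to prove Lemma \ref{lemma-J-sec-1} by induction on $m$, in direct parallel with the argument for Lemma \ref{lemma-lap-paH}.

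For the base case $m=0$, expand by Leibniz:
\[\pa_{ij}\(\pa_k\oh_{pq}\)^2 = 2(\pa_{ik}\oh_{pq})(\pa_{jk}\oh_{pq}) + 2(\pa_k\oh_{pq})(\pa_{ijk}\oh_{pq}).\]
Writing $\oh_{pq}(\bx)=f(r^2)H_{pq}(\bx)$ and differentiating, each derivative either produces a factor $2x^{a}f'(r^2)$ (possibly differentiated again, involving $f''$ and $f'''$) or lands on $H_{pq}$; since $H_{pq}$ is quadratic in $\bx$, its third derivatives vanish and its second derivatives $\pa_{ij}H_{pq}=W_{piqj}+W_{pjqi}$ are constants. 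Summing over $k,p,q$ and invoking the contraction identities
\[\sum_{k,p,q}(\pa_{ik}H_{pq})(\pa_{jk}H_{pq}) = c_1\wtw_{ij} + c_2\,\delta_{ij}|W|^2\]
for some explicit constants $c_1,c_2\in\mr$ (which is exactly the role for which $\wtw$ in \eqref{eq-wtw} was introduced), one verifies that every term produced falls into the span of the eleven tensor blocks on the right side of \eqref{eq-lap-paH-2}, giving explicit radial $\wtg_{\alpha,0}$.

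For the induction step, assume \eqref{eq-lap-paH-2} at level $\tm$. For any radial $G(r)$ and tensor polynomial $T(\bx)$,
\[\Delta\bigl(G(r)T(\bx)\bigr) = \Bigl(G''(r)+\tfrac{n-1}{r}G'(r)\Bigr)T(\bx) + \tfrac{2}{r}G'(r)\,(x^{a}\pa_{a}T)(\bx) + G(r)\,\Delta T(\bx).\]
Hence the induction reduces to a closure statement: for each of the eleven tensor blocks $T_\alpha(\bx)$ appearing in \eqref{eq-lap-paH-2}, both $x^{a}\pa_{a}T_\alpha$ and $\Delta T_\alpha$ are $\mr$-linear combinations of the same eleven blocks (possibly multiplied by radial monomials $r^{2s}$). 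Once this is granted, one reads off $\wtg_{\alpha,\tm+1}$ from a block-triangular linear recursion in the $\wtg_{\alpha,\tm}$ and their first two radial derivatives.

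The technical heart is exactly this closure verification, which is mechanical bookkeeping guided by the tensorial identities that $H$ enjoys: the Euler relation $x^{k}\pa_{k}H_{pq}=2H_{pq}$ and its derivative $x^{k}\pa_{ik}H_{pq}=\pa_{i}H_{pq}$, the vanishing of third derivatives of $H_{pq}$, the trace-freeness $\sum_{k}\pa_{kk}H_{pq}=0$ inherited from the contraction property of $W$, and the identity $\sum_{k,\ell,p,q}(\pa_{k\ell}H_{pq})^{2}=|W|^{2}$. Iterated application of these identities reabsorbs any apparently new tensor structure back into the eleven listed blocks; for instance $\sum_{p,q}\pa_{i}H_{pq}\pa_{j}H_{pq}$ carries an $x^{\ell}x^{m}$ factor and contracts via $\wtw$ plus lower-order blocks, and $x^{a}\pa_{a}(x_{i}x_{j}|W|^{2})=2x_{i}x_{j}|W|^{2}$ stays in the list. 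The main obstacle is therefore not any single computation but the combinatorial check that the list of eleven is indeed invariant under $\Delta$ and under $x^{a}\pa_{a}$; once each of the eleven blocks has been tested individually against these two operations (a finite, if tedious, table), the induction closes and the lemma follows.
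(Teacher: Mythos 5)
Your proposal is correct and follows essentially the same route as the paper: the base case $m=0$ is checked directly from \eqref{eq-paH}, and the inductive step applies the Laplacian to each block $\wtg_{\alpha,\tm}(r)T_\alpha(\bx)$ and reabsorbs the result using the Euler relations $x^k\pa_kH_{pq}=2H_{pq}$, $x^l\pa_{kl}H_{pq}=\pa_kH_{pq}$, the vanishing of traces and third derivatives of $H$, and the identity $\sum(\pa_{kl}H_{pq})^2=|W|^2$ — precisely the scheme the paper invokes by reference to Lemma \ref{lemma-lap-paH}. Your explicit framing of the induction as a closure property of the eleven tensor blocks under $\Delta$ and $x^a\pa_a$ is a clean way to organize the same bookkeeping.
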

\begin{proof}
It is plain to check that \eqref{eq-lap-paH-2} is correct for $m = 0$ by employing \eqref{eq-paH}.
Now apply mathematical induction on $m$, referring to the proof of Lemma \ref{lemma-lap-paH}.
The explicit values of $\wtg_{1,m}, \cdots, \wtg_{11,m}$ can be found in \cite{KMW}.
\end{proof}

\begin{lemma}\label{lemma-J-sec-2}
It is valid that
\begin{align*}
\sum_{p,q=1}^n \int_{S^{n-1}} x_ix_j H_{pq}^2(\bx) dS
&= {\left|S^{n-1}\right| \over 2n(n+2)(n+4)} |W|^2 \delta_{ij} + {2 \left|S^{n-1}\right| \over n(n+2)(n+4)} \wtw_{ij},\\
\sum_{k,p,q=1}^n \int_{S^{n-1}} x_ix_j \(\pa_kH_{pq}\)^2(\bx) dS
&= {\left|S^{n-1}\right| \over n(n+2)} |W|^2 \delta_{ij} + {2 \left|S^{n-1}\right| \over n(n+2)} \wtw_{ij},\\
\sum_{p,q=1}^n \int_{S^{n-1}} x_i\(H_{pq}\pa_jH_{pq}\)(\bx) dS
&= {1 \over n+2} \sum_{p,q=1}^n \int_{S^{n-1}} \(\pa_iH_{pq}\pa_jH_{pq}\)(\bx) dS \\
&= {1 \over n+2} \sum_{k,p,q=1}^n \int_{S^{n-1}} x_i \(\pa_kH_{pq}\pa_{jk}H_{pq}\)(\bx) dS
= {\left|S^{n-1}\right| \over n(n+2)} \wtw_{ij},
\end{align*}
and
\[\sum_{p,q=1}^n \int_{S^{n-1}} \(H_{pq}\pa_{ij}H_{pq}\)(\bx) dS = 0\]
for each $i, j \in \{1, \cdots, n\}$.
\end{lemma}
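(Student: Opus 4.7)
All the identities follow from the classical formula for spherical integration of monomials,
\[
\int_{S^{n-1}} x^{a_1}\cdots x^{a_{2\ell}}\, dS = \frac{|S^{n-1}|}{n(n+2)\cdots(n+2\ell-2)} \sum_{\pi} \delta_{a_{\pi(1)}a_{\pi(2)}}\cdots \delta_{a_{\pi(2\ell-1)}a_{\pi(2\ell)}},
\]
with $\pi$ ranging over the $(2\ell-1)!!$ pairings of $\{1,\dots,2\ell\}$, combined with the algebraic properties of $W$ recalled in Section~\ref{sec-setting}: the symmetries $W_{abcd}=-W_{bacd}=-W_{abdc}=W_{cdab}$, the first Bianchi identity, and the trace-free property $\sum_a W_{abac}=0$.

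\textbf{Step 1 (expansions).} From $H_{pq}(x)=W_{prqs}x^rx^s$ one computes
\[
\pa_k H_{pq}(x)=(W_{pkqs}+W_{psqk})x^s, \qquad \pa_{ij} H_{pq}(x)=W_{piqj}+W_{pjqi},
\]
the latter being constant in $x$. Substituting these into each integrand reduces every left-hand side to a finite sum of monomials in $x$ (of degrees at most $6$) whose coefficients are explicit contractions of one or two copies of $W$. Applying the sphere formula then converts each integral into a linear combination of fully contracted products of $W\otimes W$ against Kronecker deltas arising from the pairings $\pi$.

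\textbf{Step 2 (classification of pairings).} For each identity, the contributing pairings $\pi$ fall into three classes: \emph{(i)} \emph{vanishing} pairings, where $\pi$ contracts two indices belonging to the same pair of a single copy of $W$, killed by the trace-free property $\sum_a W_{abac}=0$; \emph{(ii)} pairings producing $\delta_{ij}$, where the free indices $i,j$ pair with each other, giving a fully-contracted scalar which, after applying pair-symmetry and the Bianchi identity, equals the appropriate multiple of $|W|^2=\sum_{i,j,k,l}(W_{ikjl}+W_{iljk})^2$; \emph{(iii)} \emph{cross-pairings}, where $i$ contracts with a leg of one copy of $W$ and $j$ with a leg of the other, reducing by the same symmetries to a multiple of $\wtw_{ij}$ defined in \eqref{eq-wtw}. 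Adding the contributions of each class with the explicit combinatorial weights from the sphere formula then yields the stated coefficients.

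\textbf{Step 3 and the main obstacle.} The last identity falls out immediately, since $\pa_{ij}H_{pq}$ is a constant tensor and
\[
\sum_{p,q=1}^n \int_{S^{n-1}} (H_{pq}\pa_{ij}H_{pq})(\bx)\, dS = \frac{|S^{n-1}|}{n} \sum_{p,q,r=1}^n (W_{piqj}+W_{pjqi})\, W_{prqr}=0
\]
by trace-freeness. Among the remaining identities, the chain of three equal integrals reduces in the same way to a single cross-pairing term, with the factor $1/(n+2)$ reflecting the ratio of sphere constants between degrees $2$ and $4$, together with Euler's identity $\sum_k x^k\pa_k H_{pq}=2H_{pq}$ to relate the different forms. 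The real difficulty is the sixth-order integral $\sum_{p,q}\int x_ix_j H_{pq}^2\,dS$: there the sphere formula produces $5!!=15$ pairings of the index set $\{i,j,r,s,r',s'\}$, and for each one must check which class it belongs to and, in classes (ii) and (iii), renormalise the resulting contractions of $W\otimes W$ into the canonical forms $|W|^2$ and $\wtw_{ij}$ using pair-symmetry, anti-symmetry, and the Bianchi identity. The classification of Step 2 is what guarantees that no tensorial invariants other than $|W|^2\delta_{ij}$ and $\wtw_{ij}$ can appear, so that the entire computation amounts to tabulating the combinatorial multiplicity of each class.
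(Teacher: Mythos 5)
Your approach is exactly the one the paper takes: Lemma \ref{lemma-J-sec-2} is proved there by citing \cite[Proposition 16]{Br} for the first two identities and "arguing as its proof" for the rest, and Brendle's argument is precisely the spherical monomial integration formula combined with the symmetries, trace-freeness and Bianchi identity of $W$ (which, as you note, is also what forces every cross-pairing contraction to collapse onto a multiple of $\wtw_{ij}$). You stop short of tabulating the fifteen pairings for the sixth-order integral, but that is no less detail than the paper itself provides, and the classification you set up is the correct mechanism.
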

\begin{proof}
The first and second identities in the statement are precisely the ones examined in \cite[Proposition 16]{Br}.
We can deduct the other identities by arguing as its proof.
\end{proof}

\noindent By the previous lemmas, we discover
\begin{prop}\label{prop-J-sec}
Assume that $d_0 = 1$ and $n > 2\gamma+8$, and define
\begin{align*}
\wtp_{1;0}(t) &= {1 \over 2n(n+2)}\left[a_1^2F_2(1,6)t^3 + 2a_0a_1F_2(1,4)t^2 + a_0^2F_2(1,2)t\right],\\
\wtp_{1;1}(t) &= {1 \over n(n+2)}\left[2a_1^2(n+6)(n+16)F_1(1,4)t^3 + 4a_0a_1(n+2)(n+8)F_1(1,2)t^2 \right. \\
&\hspace{50pt} \left.+ 2a_0^2n(n+2)F_1(1,0)t\right],\\
\wtp_{1;31}(t) &= {1 \over n}\left[8a_1^2(n+6)(n+16)F_1(3,2)t^3 + 8a_0a_1n(n+8)F_1(3,0)t^2 \right],\\
\wtp_{1;32}(t) &= 16a_1^2(n+6)(n+16)F_1(5,0)t^3,\\
\wtp_{2;0}(t) &= 0,\\
\wtp_{2;1}(t) &= {1 \over n(n+2)}\left[4a_1^2(n+7)F_1(1,4)t^3 + 4a_0a_1(n+2)F_1(1,2)t^2 \right],\\
\wtp_{2;31}(t) &= {1 \over n}\left[16a_1^2(n+7)F_1(3,2)t^3 + 8nF_1(3,0)t^2 \right],\\
\wtp_{2;32}(t) &= 16a_1^2(2n+13)F_1(5,0)t^3,
\end{align*}
where $F_1 = F_{1,n,\gamma}$ and $F_2 = F_{2,n,\gamma}$ are given in \eqref{eq-F}.
Also, for each $\hat{m} = 1$ and 2, we set the polynomials $\wtp_{\hat{m};21}, \wtp_{\hat{m};22}$ and $\wtp_{\hat{m};23}$
by replacing each $F_{1,n,\gamma}(\alpha,\beta)$ in $\wtp_{\hat{m};0}, \wtp_{\hat{m};31}$ and $\wtp_{\hat{m};32}$ with $F_{4,n,\gamma}(\alpha+2,\beta)$. If we put for $\hat{m} = 1$ or 2,
\begin{align*}
&\ -24n(n-1) \cdot \wtp_{\hat{m}}(t)\\
&= -24n(n-1) \wtp_{\hat{m};0}(t) + {3 \over 2} \((n-1)^2-(1-2\gamma)^2\) \wtp_{\hat{m};1}(t)\\
&\quad + (1-2\gamma) \sum_{m=1}^{2d_0+1} \left[\prod_{\tm=1}^{m-1} {1 \over (2\tm+3)(N-2(\tm+1))}\right] \wtp_{\hat{m};2m}(t) \\
&\quad + \left\{(n+1)\(\gamma-{1 \over 2}\) - 2\(\gamma^2-{1 \over 4}\) \right\} \sum_{m=1}^{2d_0} \left[\prod_{\tm=1}^m {1 \over (2\tm+3)(N-2(\tm+1))}\right] (2m+3)\wtp_{\hat{m};3m}(t)
\end{align*}
where the value in the bracket in front of $\wtp_{\hat{m};21}$ is regarded as 1, then
\begin{equation}\label{eq-poly-1}
{\pa^2 \(J_1^{\gamma} + J_2^{\gamma}\) \over \pa\tau_i \pa\tau_j}(\delta,0) = \wtp_1(\delta^2) \wtw_{ij} + \wtp_2(\delta^2) \delta_{ij}|W|^2.
\end{equation}
\end{prop}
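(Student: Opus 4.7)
The plan is to compute $\pa^2_{\tau_i \tau_j}(J_1^\gamma + J_2^\gamma)(\delta, 0)$ by expanding in $\tau$ and reducing to spherical integrals handled by Lemmas \ref{lemma-J-sec-1} and \ref{lemma-J-sec-2}, in strict parallel with the proof of Proposition \ref{prop-poly}.

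First, for each integrand arising in $J_1^\gamma$ and $J_2^\gamma$, I perform the change of variables $\bx \mapsto \bx + \tau$, so that $W_{\dt}(\bx, x_N)$ becomes $W_{\delta, 0}(\bx, x_N)$ and the entire $\tau$-dependence is carried by $\oh_{ij}(\bx + \tau)$, the functions $C_{2m}(\bx + \tau)$, and, only in $J_1^\gamma$, the shifted auxiliary solution $\Psi^{A,*}_{\dt}(\bx, x_N) := \Psi^A_{\dt}(\bx + \tau, x_N)$, which solves the translated version of \eqref{eq-aux} (with $f(|\bx+\tau|^2)H_{ij}(\bx+\tau)$ on the right-hand side). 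Then $\Psi^{A,*}_{\delta, 0} = 0$ and the first-order contribution in $\tau$ vanishes on account of \eqref{eq-J-diff}.

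Next I apply the Leibniz rule to differentiate each integral twice in $\tau$ at $\tau = 0$. For the $J_2^\gamma$ pieces this produces integrals of the form
\[\int_{\mr_+^N} x_N^{\alpha - 2\gamma}\, \pa_{ij}\Theta(\bx)\, F(|\bx|, x_N)\, dx,\]
where $F \in \{W_{\delta, 0}^2,\, |\nabla_{\bx}W_{\delta, 0}|^2,\, (\pa_N W_{\delta, 0})^2\}$ and the tensorial factor $\Theta$ runs over $\oh_{il}\oh_{jl}$, $\Delta^{m-1}(\pa_k\oh_{pq})^2$, or the coefficients $C_{2m}$. Using Lemma \ref{lemma-J-sec-1} I decompose each such integrand along the eleven radial/tensorial basis elements, after which Lemma \ref{lemma-J-sec-2} performs the spherical averaging; only the components proportional to $\wtw_{ij}$ or $\delta_{ij}|W|^2$ survive, the remaining ones vanishing by the contraction identities of $W$. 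For the $J_1^\gamma$ piece, the second-order expansion yields
\[\sum_{k,l=1}^n \int x_N^{1-2\gamma}\!\left[\pa_j\oh_{kl}(\bx) \pa_{kl}W_{\delta, 0}\, \pa_{\tau_i}\Psi^{A,*}|_{0} + (i\leftrightarrow j) + \oh_{kl}(\bx)\pa_{kl}W_{\delta, 0}\, \pa^2_{\tau_i\tau_j}\Psi^{A,*}|_{0}\right] dx,\]
and testing the differentiated equations for $\Psi^{A,*}$ against themselves converts these terms back into the same type of spherical-integration problem, to which Lemmas \ref{lemma-J-sec-1} and \ref{lemma-J-sec-2} again apply.

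Finally, recognizing each radial integral as $F_{1,n,\gamma}(\alpha,\beta)$ or $F_{4,n,\gamma}(\alpha,\beta) = F_{2,n,\gamma}(\alpha,\beta)+F_{3,n,\gamma}(\alpha,\beta)$, and keeping careful track of the combinatorial factors inherited from the expansions \eqref{eq-R-bg-est-0} of $R_{\bg}$, \eqref{eq-rho-est} of $\rho$, and \eqref{eq-E-tilde}, \eqref{eq-E-tilde-2} of the error $E(\rho)$, one assembles \eqref{eq-poly-1} with the polynomials $\wtp_{\hat{m};0}, \wtp_{\hat{m};1}, \wtp_{\hat{m};2k}, \wtp_{\hat{m};3k}$ precisely as prescribed. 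The main obstacle is the sheer bookkeeping demanded by the eleven basis elements of Lemma \ref{lemma-J-sec-1}: one must verify term by term that only the $\wtw_{ij}$-- and $\delta_{ij}|W|^2$--components survive the angular averaging, and that the $\delta$-scaling of the radial integrals matches the prescribed powers $t = \delta^2$ in $\wtp_{\hat{m};k}$. This is mechanical but error-prone; the explicit radial functions $\wtg_{1,m},\dots,\wtg_{11,m}$ and the needed values of $F_{1,n,\gamma}(\alpha,\beta)$ should be collected in advance (as tabulated in \cite{KMW}) before identifying the result with \eqref{eq-poly-1}.
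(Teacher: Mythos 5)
Your treatment of the $J_2^{\gamma}$ terms matches the paper's: translate so the bubble is centered, apply the Leibniz rule, and reduce to spherical integrals handled by Lemmas \ref{lemma-J-sec-1} and \ref{lemma-J-sec-2}, then identify the radial factors with $F_{1,n,\gamma}$ and $F_{4,n,\gamma}$. One bookkeeping caveat: for the first term $J_{20}^{\gamma}$ of $J_2^{\gamma}$ the relevant spherical average is $\sum_l\int_{S^{n-1}}(\oh_{il}\oh_{jl})\,dS$, which is not among the eleven basis elements of Lemma \ref{lemma-J-sec-1}; the paper uses the identity $\pa_{\tau_i\tau_j}\left[(\oh_{pl}\oh_{ql})(\bx+\tau)\right]\big|_{\tau=0}x^px^q = 2(\oh_{il}\oh_{jl})(\bx)$ together with the first identity in the proof of \cite[Proposition 20]{BM}, and you would need that input separately.

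The genuine gap is your handling of $J_1^{\gamma}$. You propose to evaluate the cross terms $\pa_j\oh_{kl}\,\pa_{kl}W_{\delta}\,\pa_{\tau_i}\Psi^{A,*}|_{0}$ and the term $\oh_{kl}\,\pa_{kl}W_{\delta}\,\pa^2_{\tau_i\tau_j}\Psi^{A,*}|_{0}$ by ``testing the differentiated equations for $\Psi^{A,*}$ against themselves.'' This is not a workable recipe: $\Psi^A_{\dt}$ is defined only through the linear problem \eqref{eq-aux} and has no closed form, so such terms could never be expressed through the integrals $F_{1,n,\gamma}$, $F_{4,n,\gamma}$ --- and indeed none of the polynomials $\wtp_{\hat{m};k}$ in the statement contain a $J_1^{\gamma}$ contribution. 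The point you are missing (the paper's Step 1) is that the entire second derivative of $J_1^{\gamma}$ vanishes at $\tau=0$: since $W_{\delta}$ is radial in $\bx$, one has $\pa_{kl}W_{\delta} = \delta_{kl}A(r,x_N) + x^kx^lB(r,x_N)$, and the trace-free and antisymmetry properties of $W_{ikjl}$ force $\sum_{k,l}\oh_{kl}\pa_{kl}W_{\delta} = \sum_{k,l}\pa_i\oh_{kl}\,\pa_{kl}W_{\delta} = 0$ pointwise, which kills the cross terms and the $\pa^2_{\tau_i\tau_j}\Psi^{A,*}$ term, while the remaining term carries the factor $\Psi^A_{\delta,0}=0$. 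Hence $\pa^2_{\tau_i\tau_j}J_1^{\gamma}(\delta,0)=0$ and no computation with $\Psi^{A,*}$ is required; without observing this, your plan stalls at the $J_1^{\gamma}$ term.
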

\begin{proof}
\noindent \textsc{Step 1.}
We start the proof by showing that
\[{\pa^2 J_1^{\gamma} \over \pa\tau_i \pa\tau_j}(\delta,0)
= \sum_{k,l=1}^n \int_{\mr^N_+} x_N^{1-2\gamma} \pa_{kl}W_{\delta}(x)
\cdot \pa_{\tau_i\tau_j} \left. \( \oh_{kl}(\bx + \tau) \Psi_{\dt}^A (\bx + \tau) \)\right|_{\tau = 0} dx = 0.\]
Indeed, since $\Psi^A_{\dt}$ and $\pa_{\tau_i} \Psi^A_{\dt}$ are smooth in $\bx$ and $W_{\delta}(\bx,x_N) = W_{\delta}(r,x_N)$ where $r = |\bx|$, we have
\[\sum_{k,l=1}^n \pa_{kl}W_{\delta}(x) \cdot \pa_{\tau_i\tau_j} \left. \( \oh_{kl}(\bx + \tau) \Psi_{\dt}^A (\bx + \tau) \)\right|_{\tau = 0}
= \sum_{k,l=1}^n \pa_{kl}W_{\delta}(x) \cdot \pa_{ij} \oh_{kl}(\bx) \Psi_{\delta, 0}^A(x) = 0 \quad \text{for } x \in \mr^N_+.\]
Therefore the assertion is true.

\noindent \textsc{Step 2.}
We next treat the derivative of the first term $J_{20}^{\gamma}$ in $J_2^{\gamma}$ (defined in \eqref{eq-wjg}).
Because of the observation
\begin{align*}
\left. \sum_{l=1}^n \pa_{\tau_i\tau_j} \left[\(\oh_{pl}\oh_{ql}\)(\bx + \tau)\right] \right|_{\tau = 0} x^px^q
&= \left. \sum_{l=1}^n \pa_{\tau_i\tau_j} \left[\tau^k\tau^{\tilde{k}} f\(|\bx+\tau|^2\)^2 W_{pkls}W_{q\tilde{k}l\tilde{s}} x^sx^{\tilde{s}}x^px^q\right] \right|_{\tau = 0}\\
&= \left. \sum_{l=1}^n \pa_{\tau_i\tau_j} \left[\tau^k\tau^{\tilde{k}} f\(|\bx+\tau|^2\)^2 \(H_{kl}H_{\tilde{k}l}\)(\bx)\right] \right|_{\tau = 0} \\
&= 2\sum_{l=1}^n\(\oh_{il}\oh_{jl}\)(\bx)
\end{align*}
which is true for any fixed $x = (x^1, \cdots, x^n)$ and $\tau \in \mr^n$, we obtain
\begin{align*}
{\pa^2 J_{20}^{\gamma} \over \pa\tau_i \pa\tau_j}(\delta,0)
&= {1 \over 2} \int_{\mr_+^N} x_N^{1-2\gamma} \left. \sum_{l=1}^n \pa_{\tau_i\tau_j} \left[\(\oh_{pl}\oh_{ql}\)(\bx + \tau)\right] \right|_{\tau = 0} x^px^q {(\pa_rW_{\delta})^2(\bx,x_N) \over r^2} dx \\
&= \int_{\mr_+^N} x_N^{1-2\gamma} \sum_{l=1}^n \(\oh_{il}\oh_{jl}\)(\bx) |\bx|^{-2} |\nabla_{\bx} W_{\delta} (\bx, x_N)|^2 dx.
\end{align*}
Thus, after carrying out computations as in Subsection \ref{subsec-exp-2}
and in particular applying the first identity in the proof of \cite[Proposition 20]{BM}, we get
\begin{align*}
{\pa^2 J_{20}^{\gamma} \over \pa\tau_i \pa\tau_j}(\delta,0)
&= \int_0^{\infty} \int_0^{\infty} \left[\int_{S^{n-1}(0,r)} \sum_{l=1}^n \(\oh_{il}\oh_{jl}\)(\bx) dS_r \right] x_N^{1-2\gamma} r^{-2} |\nabla_{\bx} W_{\delta} (\bx,x_N)|^2 drdx_N\\
&= {\delta^2 \over 2n(n+2)} \wtw_{ij} \(a_1^2 \delta^4 F_2(1,6) + 2a_0a_1 \delta^2 F_2(1,4) + a_0^2 F_2(1,2)\)
\end{align*}
where $F_2 = F_{2,n,\gamma}$ is set in \eqref{eq-F}.

\medskip \noindent \textsc{Step 3.}
For $\gamma = 1$, one can compute the second derivatives $\pa_{\tau_i\tau_j} J_2^{\gamma} (\delta,0)$ as in \cite[Proposition 21]{Br}.
However, since the explicit formula for the bubble $W_{\delta}$ is unknown in our case except when $\gamma = 1/2$, we cannot follow it and need to devise an alternative approach.

As a matter of the fact, as we can expect from the previous step, it suffices to calculate the values
\[\left.{\pa^2 \over \pa\tau_i \pa\tau_j} \left[\int_{S^{n-1}(0,r)} \sum_{k,p,q=1}^n \Delta^m\(\pa_k \oh_{pq}\)^2(\bx) dS_r \right] \right|_{\tau = 0}
= \int_{S^{n-1}(0,r)} \sum_{k,p,q=1}^n \Delta^m \left[\pa_{ij}\(\pa_k \oh_{pq}\)^2\right](\bx) dS_r\]
for $m = 0, \cdots, 2d_0+1$.
Therefore we can achieve the result by applying Lemmas \ref{lemma-J-zero-a}, \ref{lemma-J-sec-1} and \ref{lemma-J-sec-2}.
The proof is concluded.
\end{proof}

\section{Search for a critical point of the polynomial $P$ and conclusion of the proof of Theorem \ref{thm-main}}\label{sec-conc}
\subsection{A positive local minimizer of the polynomial $P$}\label{subsec-high}
We now choose appropriate coefficients $a_0$ and $a_1$ of the polynomial $f(t) = a_0 + a_1t$ in \eqref{eq-h}
so that the function $J_1^{\gamma} + J_2^{\gamma}$ introduced in \eqref{eq-J1} and \eqref{eq-wjg} has a strict local minimum at $(1,0)$, provided that the dimension $n$ is sufficiently large.
By \eqref{eq-J-diff} and \eqref{eq-poly-0}, it suffices to confirm three conditions

\medskip \noindent (C1) $\pa_{\delta} (J_1^{\gamma} + J_2^{\gamma}) (1,0) = 2P'(1) = 0$;

\noindent (C2) $\pa_{\delta\delta} (J_1^{\gamma} + J_2^{\gamma}) (1,0) = 4P''(1) > 0$;

\noindent (C3) The matrix $\(\pa_{\tau_i\tau_j} \(J_1^{\gamma} + J_2^{\gamma}\)(1,0)\)_{i,j=1}^n$ is positive definite;

\medskip \noindent to guarantee that $(1,0)$ is a strict minimizer of $J_1^{\gamma} + J_2^{\gamma}$.

\medskip
As in Subsection 4.1 of \cite{Al}, we put $a_1 = -1$ and then denote
\begin{equation}\label{eq-P-der}
P'(1) = Q(a_0)
\end{equation}
where $P$ is the polynomial defined in \eqref{eq-poly-3}.
Then $Q(t) = b_0 + b_1t + b_2t^2$ ($b_2 < 0$) is a quadratic polynomial in $t \in \mr$ whose exact definition is described in \cite{KMW}.
Let $\text{disc}(Q) = \text{disc}(Q)(n,\gamma) = b_1^2 - 4b_0b_2$ be the discriminant of $Q$, which is a function of $n \in \mn$ and $\gamma \in (0,1)$.
Then we discover that it is positive for all $\gamma \in (0,1)$ whenever $n \ge 52$.
To check this fact, we observe that $\text{disc}(Q)(n,\gamma) = C(n,\gamma)R(n,\gamma)$ where
\[R(n,\gamma) \simeq 33075n^{17} - 3307500n^{16} + 132300 \(893 - 3\gamma^2\)n^{15}\]
is the 17th order polynomial in $n$ and $C(n,\gamma) > 0$ for every $n \ge 52$ and $\gamma \in (0,1)$.
After expanding $R(n,\gamma)$ in terms of $n$ and $\gamma$, we put $\gamma = 1$ (0, respectively) into each term whose coefficient is negative (nonnegative, respectively).
Then we get $\tilde{R}(n) \simeq 33075n^{17} - 3307500n^{16} + 117747000n^{15}$, which is obviously a lower bound of $R(n,\gamma)$ for any $\gamma \in (0,1)$.
Since the largest real solution of $\tilde{R}$ is $n \simeq 52.2022$, we conclude that $\tilde{R}(n) > 0$, hence $R(n,\gamma) > 0$ for each $n \ge 53$.
Also it can be directly checked that $R(52,\gamma) > 0$ for all $\gamma \in (0,1)$.
(The precise expression of $R(n,\gamma)$, $C(n,\gamma)$ and $\tilde{R}(n)$ can be found in \cite{KMW}.)

\medskip
Let us set
\begin{equation}\label{eq-a0} 
a_0 = {- b_1 - \sqrt{\text{disc}(Q)} \over 2b_2}
\end{equation}
so that $P'(1) = 0$. We now claim
\begin{prop}\label{prop-loc-min}
Fix $\gamma \in (0,1)$ and assume that $n \ge 52$.
If the coefficients $a_0$ and $a_1$ of the polynomial $f$ in \eqref{eq-h} are selected by \eqref{eq-a0} and $a_1 = -1$,
then $(1,0)$ is the strict local minimizer of the localized energy $J_1^{\gamma} + J_2^{\gamma}$.
\end{prop}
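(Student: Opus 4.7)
The plan is to verify the three conditions (C1), (C2), (C3) listed just before \eqref{eq-P-der}; combined with the vanishing of the mixed partials $\pa_\delta \pa_{\tau_i}(J_1^\gamma+J_2^\gamma)(1,0)$ recorded in \eqref{eq-J-diff}, these are exactly the second-order Taylor criteria for $(1,0)$ to be a strict local minimizer of $J_1^\gamma+J_2^\gamma$ on $\ma$.

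Condition (C1) is automatic from the choice \eqref{eq-a0}: since $\mathrm{disc}(Q)>0$ for every $n\ge 52$ and every $\gamma\in(0,1)$ by the discussion preceding \eqref{eq-a0}, the value $a_0$ defined by \eqref{eq-a0} is a real root of the quadratic $Q$ in \eqref{eq-P-der}, so that $P'(1)=Q(a_0)=0$.

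For (C2), I would compute $P''(1)$ directly from \eqref{eq-poly-3} with $a_1=-1$ fixed, obtaining an explicit polynomial in $a_0$ whose coefficients are rational in $n$ and $\gamma$ through the integrals $F_{i,n,\gamma}$ of \eqref{eq-F}; these are evaluated in closed form by Lemma \ref{lemma-W-rel} modulo the common positive factor $|S^{n-1}|A_1B_2$. Substituting the root \eqref{eq-a0} and clearing this factor reduces (C2) to the strict positivity of an explicit polynomial $\Pi(n,\gamma)$. I would then isolate the leading power of $n$ in $\Pi$ and bound each remaining term by evaluating $\gamma$ at whichever endpoint of $(0,1)$ makes that coefficient smallest, producing a one-variable lower bound in $n$ whose positivity for $n\ge 52$ can be certified by locating its largest real root — the exact strategy already employed in the paper for $\mathrm{disc}(Q)$. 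Finitely many borderline dimensions, if any, can be settled by direct substitution.

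For (C3), Proposition \ref{prop-J-sec} gives the Hessian in $\tau$ at $(1,0)$ as $\wtp_1(1)\wtw_{ij}+\wtp_2(1)\delta_{ij}|W|^2$. The tensor $\wtw$ of \eqref{eq-wtw} is a Gram matrix: for any $v\in\mr^n$,
\[
\sum_{i,j=1}^n v_i \wtw_{ij} v_j \;=\; \sum_{k,p,q=1}^n \Bigl(\sum_{i=1}^n v_i\,(W_{ikpq}+W_{kqip})\Bigr)^{\!2}\;\ge 0,
\]
so $\wtw$ is symmetric and positive semi-definite, while $\delta_{ij}|W|^2$ is strictly positive definite since $|W|>0$. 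Positive definiteness of the Hessian therefore follows from $\wtp_2(1)>0$ combined with $\wtp_1(1)\ge 0$, or, if $\wtp_1(1)$ happens to be negative, from the single scalar inequality $\wtp_1(1)\,\lambda_{\max}(\wtw)+\wtp_2(1)|W|^2>0$, where $\lambda_{\max}(\wtw)$ is a fixed multiple of $|W|^2$ bounded by $\mathrm{tr}\,\wtw$. In either sub-case, the verification is again a sign check for explicit polynomials in $(n,\gamma)$ built from Proposition \ref{prop-J-sec}, to be handled by the same dominant-term-plus-worst-case-$\gamma$ procedure.

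The main obstacle is not conceptual but algebraic: the polynomials $b_0,b_1,b_2,P''(1),\wtp_1(1),\wtp_2(1)$ involved are far too bulky to manipulate by hand, and their positivity is certified with computer algebra, the explicit forms being collected in \cite{KMW}. Once these three polynomial inequalities are in hand, Proposition \ref{prop-loc-min} follows at once from the second-order Taylor expansion of $J_1^\gamma+J_2^\gamma$ at $(1,0)$ together with \eqref{eq-J-diff}.
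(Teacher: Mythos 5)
Your proposal correctly reduces the statement to the sign conditions (C1)–(C3), handles (C1) exactly as the paper does, and the Gram-matrix observation $\sum_{i,j} v_i\wtw_{ij}v_j = \sum_{k,p,q}\bigl(\sum_i v_i(W_{ikpq}+W_{kqip})\bigr)^2\ge 0$ is a nice justification for the sufficiency of $\wtp_1(1),\wtp_2(1)>0$ (though the paper simply proves both are positive, so the $\wtp_1(1)<0$ branch of your case analysis never arises). Where you diverge from the paper is in the treatment of (C2) and (C3): you propose substituting the root $\tilde a_0$ of \eqref{eq-a0} directly into $P''(1)$, $\wtp_1(1)$, $\wtp_2(1)$ and then running a dominant-term bound — but since $\tilde a_0$ contains $\sqrt{\mathrm{disc}(Q)}$, this does \emph{not} produce a polynomial $\Pi(n,\gamma)$ as you assert; one is left with an expression of the form $\alpha(n,\gamma)+\beta(n,\gamma)\sqrt{\mathrm{disc}(Q)}$ whose sign must be certified by rationalizing (squaring), which noticeably inflates the algebra. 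The paper sidesteps this entirely with a cleaner device: since $Q(99/50)>0$ for all $\gamma\in(0,1)$, $n\ge 52$, and $b_2<0$, the larger root $\tilde a_0$ exceeds $99/50$; and since $P''(1)-P'(1)$ is an increasing linear function of $a_0$ (call it $\widetilde Q$), one gets $P''(1)=\widetilde Q(\tilde a_0)>\widetilde Q(99/50)>0$ using only $P'(\tilde a_0)=0$ — likewise for $\wtp_1(1)$ and $\wtp_2(1)$, which are also increasing linear in $a_0$. This converts every check into the positivity of a genuine polynomial in $(n,\gamma)$ evaluated at the fixed rational point $a_0=99/50$, avoiding any square roots. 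Your approach would ultimately work with the extra rationalization step, but the paper's monotonicity-plus-lower-bound trick is strictly simpler; it is worth incorporating.
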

\begin{proof}
Denote by $\tilde{a}_0$ the number which we chose for the coefficient $a_0$, namely, the right-hand side of \eqref{eq-a0}.
By the above discussion and \eqref{eq-J-diff}, we see that $(1,0)$ is the critical point of $J_1^{\gamma} + J_2^{\gamma}$.
We need to be verify that it satisfies conditions (C2) and (C3).

\medskip \noindent
\textsc{Step 1.} Let us establish (C2) first. Thanks to \eqref{eq-poly-0}, we have $\pa_{\delta\delta}(J_1^{\gamma} + J_2^{\gamma})(1,0) = 4P''(1)|_{a_0 = \tilde{a}_0}$.
Since $Q(99/50) > 0$ for all $\gamma \in (0,1)$ and the leading coefficient $b_2$ of $Q$ is negative, we have $\tilde{a}_0 > 99/50$.
On the other hand, one can check that $P''(1) - P'(1)$ is an increasing linear function in $a_0$, say $\widetilde{Q}(a_0)$, whose coefficients depend on $n$ and $\gamma$.
As a result, $P''(1) = \widetilde{Q}(\tilde{a}_0) > \widetilde{Q}(99/50) > 0$.

\medskip \noindent
\textsc{Step 2.} We check (C3).
This will be followed by Proposition \ref{prop-J-sec} and our assumption $|W| > 0$ once we derive that $\wtp_1(1) > 0$ and $\wtp_2(1) > 0$.
If we regard the functions $\wtp_1$ and $\wtp_2$ as a polynomial in $a_0$, then clearly their degrees are at most 2.
In fact, further computation shows that they are increasing linear functions in $a_0$ for any $n \ge 52$ and $\gamma \in (0,1)$.
From this fact, we get $\wtp_m(1)|_{a_0 = \tilde{a}_0} > \wtp_m(1)|_{a_0 = 99/50} > 0$ for $m = 1, 2$.
\end{proof}

\subsection{The lower dimensions} \label{subsec-low}
For lower dimensional case, we make the reduced energy functional $J_0^{\gamma}$ to have a local minimizer by inserting a polynomial $f$ of higher degree ($d_0 \ge 2$) in the definition of the tensor $h$ in \eqref{eq-h}.
This approach is pursued in the local cases by Brendle-Marques \cite{BM} ($\gamma = 1$), Almaraz \cite{Al} ($\gamma = 1/2$) and Wei-Zhao \cite{WZ} ($\gamma = 2$).
Here we will select a quartic polynomial $f(t) = \sum_{i=0}^4 a_it^i$ as in \cite{Al} and \cite{WZ}.
In \cite{BM}, the cubic polynomial was chosen.

\medskip
By using the computations in Subsections \ref{subsec-exp-3} and \ref{subsec-exp-4} again, we extend Propositions \ref{prop-poly} and \ref{prop-J-sec}.
\begin{prop}
Assume that $n > 2\gamma + 20$ and the degree of the polynomial $f$ is $d_0 = 4$.
Then \eqref{eq-poly-0} and \eqref{eq-poly-1} hold for some polynomials $P$ of degree 10, and $\wtp_1$ and $\wtp_2$ of degree 9, respectively.
The coefficients of $P$, $\wtp_1$ and $\wtp_2$ depends on $a_0, \cdots, a_4$.
(The full details can be found in \cite{KMW}.)
\end{prop}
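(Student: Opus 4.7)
The plan is to repeat verbatim the arguments of Propositions \ref{prop-poly} and \ref{prop-J-sec}, now with the quartic $f(t) = \sum_{i=0}^{4} a_i t^i$ in place of the linear $f(t) = a_0 + a_1 t$. Every structural step of the $d_0 = 1$ proof is algebraic in $f$ and therefore carries over; only the bookkeeping becomes substantially heavier. First I would extend Lemma \ref{lemma-lap-paH} and Lemma \ref{lemma-J-sec-1} to all orders $m = 0, 1, \ldots, 2d_0+1 = 9$ of the iterated Laplacian. The inductive recurrences for the radial coefficients $G_{j,m}$ and $\wtg_{k,m}$, derived via $x^k \pa_k H_{ij} = 2 H_{ij}$, $x^l \pa_{kl} H_{ij} = \pa_k H_{ij}$, $\sum_k \pa_{kk} H_{ij} = 0$ and $\sum_{i,j,k,l}(\pa_{kl} H_{ij})^2 = |W|^2$, are unaltered, so it suffices to seed the recursion with the richer initial datum coming from $f^2$ of degree eight in $|\bx|^2$ and iterate nine times; the tensorial basis $\{|W|^2,\ \sum H_{ij}^2,\ \sum (\pa_k H_{ij})^2\}$ (enriched by $\wtw_{ij}$ and the further $x_i x_j$-polarized pieces for the second $\tau$-derivative) remains sufficient to close the recursion.

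After substituting these expansions into \eqref{eq-wjg} and integrating on the spheres $S^{n-1}(0,r)$ via Lemmas \ref{lemma-J-zero-a} and \ref{lemma-J-sec-2}, the problem reduces to evaluating the enlarged family of one-dimensional moments $F_{1,n,\gamma}(1, 2k)$ and $F_{j,n,\gamma}(2m+1, 2k)$, $j = 2, 3$, for the ranges $k \le 8$ and $m \le 10$ forced by $d_0 = 4$. Each such moment I would handle by the Fourier-transform scheme of Lemma \ref{lemma-W-rel}: using $\whw_1(\xi, x_N) = \hw_1(|\xi|)\varphi(|\xi| x_N)$ from Lemma \ref{lemma-ode-1}, apply $(-\Delta_\xi)^{1+m}$ to bring down the factor $|\bx|^{2(1+m)}$, expand via the ODEs \eqref{eq-ode-1}--\eqref{eq-ode-2} so as to turn every derivative of $\hw_1$ or $\varphi$ back into lower-order power-weighted values, and reduce each resulting one-dimensional integral on $(0,\infty)$ to a rational multiple of $|S^{n-1}|A_1 B_2$ by repeated application of the recurrences \eqref{eq-ode-5}--\eqref{eq-ode-6}. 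The hypothesis $n > 2\gamma + 20$ is exactly what ensures that every such integral converges at infinity, given the Bessel asymptotics of $\varphi$ and $\hw_1$ recorded in Lemma \ref{lemma-ode-2}.

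The stated degrees follow from the rescaling $W_\delta(\bx, x_N) = \delta^{-(n-2\gamma)/2} W_1(\bx/\delta, x_N/\delta)$ combined with the pattern already visible in \eqref{eq-poly-2}. The substitution $(\bx, x_N) \mapsto \delta(\bx, x_N)$ shows that $\int x_N^{1+2m-2\gamma} |\bx|^{2k} W_\delta^2 \, dx$ produces exactly $\delta^{2(k+m+1)}$, i.e.\ $t^{k+m+1}$ in the variable $t = \delta^2$. Since $f^2$ has degree $8$ in $|\bx|^2$ and the spherical contraction of $(\pa_k H_{ij})^2$ contributes an additional $r^2$ (from Lemma \ref{lemma-J-zero-a}), each product $a_m a_{m'}$ appearing in $P$ carries $t^{m+m'+2}$, and the extremal term $a_4^2$ therefore generates $t^{10}$, giving $\deg P = 10$. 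In Proposition \ref{prop-J-sec} the extra $|\bx|^{-2}$ factor arising in Step 2 of its proof (coming from $x^p x^q / r^2$ in the $\tau$-differentiation of $\oh_{pl}\oh_{ql}$) shifts this count down by one, so $a_m a_{m'}$ contributes $t^{m+m'+1}$ in $\wtp_1$ and $\wtp_2$, yielding $\deg \wtp_1 = \deg \wtp_2 = 9$.

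The principal obstacle is sheer computational volume, not conceptual difficulty. Lemma \ref{lemma-J-sec-1} already furnishes eleven basis tensors at $d_0 = 1$, and at $d_0 = 4$ each must be propagated through nine applications of the Laplace recursion together with the fifteen parameters $a_i a_j$ and the full expanded family of Fourier moments. The resulting explicit coefficients of $P$, $\wtp_1$ and $\wtp_2$ are lengthy rational functions of $n$ and $\gamma$ that quadratically involve $(a_0, \ldots, a_4)$; none of this affects the polynomial structure or the stated degrees, which is the content of the proposition, and it is precisely for this reason that the explicit formulae are relegated to \cite{KMW}.
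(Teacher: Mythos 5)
Your proposal is correct and follows essentially the same route as the paper, whose own proof simply reruns the computations of Subsections \ref{subsec-exp-3} and \ref{subsec-exp-4} with the quartic $f$ and defers the explicit coefficients to \cite{KMW}; note that Lemmas \ref{lemma-lap-paH} and \ref{lemma-J-sec-1} are already stated for all $m \le 2d_0+1$ with $d_0 \le 4$, so the only genuinely new work is the enlarged family of Fourier moments, exactly as you describe. Your degree count ($a_m a_{m'}$ contributing $t^{m+m'+2}$ in $P$ and $t^{m+m'+1}$ in $\wtp_{1},\wtp_2$) and your identification of $n>2\gamma+20$ as the finiteness threshold for $F_{1,n,\gamma}(1,18)$ and $F_{4,n,\gamma}(3,18)$ are both right.
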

\begin{rmk}
As in the higher dimensional case, we obtain the polynomial of Almaraz \cite{Al} from $P$ when $\gamma = 1/2$.
Furthermore, if we take $f(s) = \tau + 5s - s^2 + {1 \over 20}s^3$ and $\gamma = 1$, then we again attain the function $I(s)$ defined in \cite[Proposition 18]{BM} as a factor of $P$.
\end{rmk}

Now, we set
\begin{equation}\label{eq-f-lower}
f(t) = t^4 - {882178 \over 10000} t^3 + {146178 \over 100} t^2 - {713925 \over 100} t + a_0
\end{equation}
leaving $a_0$ undetermined for a minute.
Defining the polynomial $Q$ as in \eqref{eq-P-der}, we again find that it is a quadratic polynomial.
Like above, let us write $Q(t) = b_0 + b_1t + b_2t^2$. We also deduce
\begin{enumerate}
\item $\text{disc}(Q)(n,\gamma) > 0$ for all $\gamma \in (0,1)$ whenever $25 \le n \le 51$;
\item the function $\gamma \mapsto \text{disc}(Q)(24,\gamma)$ is positive if $\gamma \in (0,\gamma^*)$ and negative if $\gamma \in (\gamma^*,1)$ where $\gamma^* \simeq 0.940197$.
\begin{figure}
\centering
\includegraphics[width=8cm]{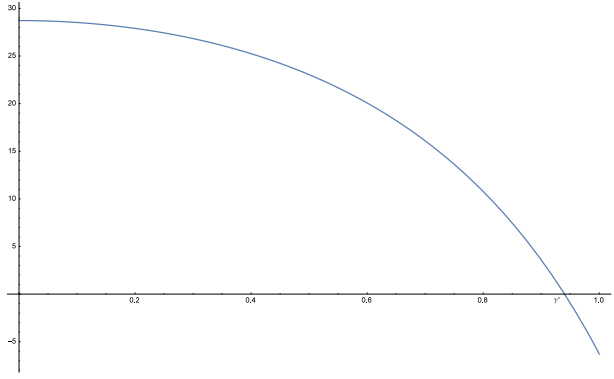}
\caption{\small The graph of $\gamma \mapsto \text{disc}(Q)(24,\gamma)$ ($0 < \gamma < 1$)}
\end{figure}
    We chose $f$ so that $(0,\gamma^*)$ well approximates the longest interval where the blow-up phenomenon occurs.
\item $\gamma \mapsto \text{disc}(Q)(23,\gamma) < 0$ for all $\gamma \in (0,1)$.
    As a matter of fact, we could not find any quartic polynomial $f$ which leads the positive discriminant of $Q$ for some $0 < \gamma < 1$ provided that $n = 23$.
\end{enumerate}
If we denote
\[n(\gamma) = \min\{n_0 \in \mn: \text{disc}(Q)(n_0,\gamma) > 0 \text{ for } n_0 \le n \le 51\}\]
and take $a_0$ as in \eqref{eq-a0},
the following assertion is valid. This is an analogue of Proposition \ref{prop-loc-min} for lower dimensions.
\begin{prop}\label{prop-loc-min2}
Fix $\gamma \in (0,1)$ and assume that $n(\gamma) \le n \le 51$.
If the polynomial $f$ is given by \eqref{eq-f-lower} with \eqref{eq-a0},
then $(1,0)$ is the strict local minimizer of the localized energy $J_1^{\gamma} + J_2^{\gamma}$.
\end{prop}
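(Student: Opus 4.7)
The plan is to mirror the proof of Proposition \ref{prop-loc-min}, verifying conditions (C1), (C2), (C3) at the candidate critical point $(\delta,\tau) = (1,0)$. By the $d_0 = 4$ extension of Proposition \ref{prop-poly} together with \eqref{eq-J-diff}, the function $J_1^\gamma + J_2^\gamma$ is stationary in $\tau$ at $\tau = 0$ for every $\delta > 0$, and $\pa_\delta(J_1^\gamma + J_2^\gamma)(1,0) = 2P'(1) = 2Q(a_0)$. The choice \eqref{eq-a0} of $a_0$ as the larger root of $Q$ is legitimate precisely where $\text{disc}(Q)(n,\gamma) > 0$, which is the content of the three numerical facts stated just above the proposition and defines the range $n(\gamma) \le n \le 51$ (with the further restriction $\gamma \in (0,\gamma^*)$ in the boundary case $n = 24$). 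By construction $Q(a_0) = 0$, so (C1) holds.

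For (C2), denote by $\tilde{a}_0$ the chosen value of $a_0$. Since the leading coefficient $b_2$ of $Q$ is negative, the larger root satisfies $\tilde{a}_0 > c$ whenever $Q(c) > 0$ uniformly on the admissible parameter range. As in Step 1 of Proposition \ref{prop-loc-min}, a direct inspection of \eqref{eq-poly-3} shows that $P''(1) - P'(1)$ is an affine function of $a_0$ --- call it $\widetilde{Q}(a_0)$ --- whose coefficients are rational functions of $n$ and $\gamma$. I would thus pick an explicit lower bound $c$ (playing the role of $99/50$ in the higher-dimensional proof), verify that $\widetilde{Q}$ is increasing in $a_0$ and that $\widetilde{Q}(c) > 0$ over all admissible $(n,\gamma)$, and conclude $P''(1)|_{a_0 = \tilde{a}_0} = \widetilde{Q}(\tilde{a}_0) > \widetilde{Q}(c) > 0$.

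For (C3), invoke the $d_0 = 4$ extension of Proposition \ref{prop-J-sec}. The standing hypothesis $|W| > 0$ and the positive semidefiniteness of the tensor $\widetilde{W}_{ij}$ defined in \eqref{eq-wtw} reduce positive definiteness of the Hessian $(\pa_{\tau_i\tau_j}(J_1^\gamma + J_2^\gamma)(1,0))$ to the two scalar inequalities $\wtp_1(1)|_{a_0 = \tilde{a}_0} > 0$ and $\wtp_2(1)|_{a_0 = \tilde{a}_0} > 0$. Both are again affine in $a_0$ after the leading cancellations, and I would check by the same monotonicity-and-evaluation-at-$c$ scheme that they are positive.

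The main obstacle is the sheer size of the polynomial expressions when $d_0 = 4$: here $P$ is of degree $10$ in $\delta^2$ while $\wtp_1,\wtp_2$ are of degree $9$, with coefficients that are rational functions of $n$ and $\gamma$ built out of the integrals $F_{k,n,\gamma}(\alpha,\beta)$ computed in Subsection \ref{subsec-exp-2}. No uniform explicit lower bound $c$ for $\tilde{a}_0$ can be guessed ad hoc; instead one analyses $Q(n,\gamma,\cdot)$ case by case, using the specific coefficients of $f$ in \eqref{eq-f-lower} that have been chosen to maximize the range of $(n,\gamma)$ with $\text{disc}(Q) > 0$. These verifications, although mechanical polynomial/rational manipulations in the two variables $n$ and $\gamma$, are voluminous and are recorded separately in \cite{KMW}.
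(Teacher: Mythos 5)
Your proposal is correct and follows essentially the same route as the paper: the paper's own proof simply says that, as in Proposition \ref{prop-loc-min}, one must verify (C2) and (C3), and that this verification can be carried out for each $n(\gamma) \le n \le 51$ (with the heavy polynomial computations deferred to the supplement \cite{KMW}). Your additional remarks on the monotonicity-in-$a_0$ scheme and the positive semidefiniteness of $\wtw_{ij}$ are consistent with Step 1 and Step 2 of the paper's proof of Proposition \ref{prop-loc-min}, which the paper implicitly reuses here.
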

\begin{proof}
As in the proof of Proposition \ref{prop-loc-min}, the assertion is justified if we check that conditions (C2) and (C3) are true.
Their verification can be done for each $n(\gamma) \le n \le 51$.
\end{proof}

\begin{rmk}
For a sufficiently small $\gamma > 0$, the best $n(\gamma)$ one can get with a cubic (a quadratic, respectively) polynomial $f$ is 25 (29, respectively).
Moreover we need $n > 2\gamma + 24$ when we put a quintic polynomial $f$ into the metric.
This is because the polynomial $P$ (see Proposition \ref{prop-poly}) would contain $F_1(1,22)$ and $F_4(3,22)$ as its coefficients
and they are finite only if the dimensional assumption $n > 2\gamma + 24$ holds.
\end{rmk}

\subsection{Completion of the proof of Theorem \ref{thm-main}}
From what we have obtained so far, we can deduce the following existence result.
\begin{prop}\label{prop-main}
Assume that $n \ge 24$ if $\gamma \in (0, \gamma^*)$ or $n \ge 25$ if $\gamma \in [\gamma^*, 1)$ (refer to Subsection \ref{subsec-low} for the definition of the number $\gamma^*$).
If $\ep > 0$ is a small parameter in \eqref{eq-h}, $\bg$ is the metric tensor and $\rho$ is the boundary defining function chosen in Section \ref{sec-setting},
then Eq. \eqref{eq-main-ext} possesses a positive solution $U_{\ep}$ in $\mr_+^{n+1}$,
whose restriction $u_{\ep}$ on $\mr^n$ satisfies the fractional Yamabe equation \eqref{eq-main} with $c = 1$ and $\|u_{\ep}\|_{L^{\infty}(\mr^n)} \ge C \ep^{-{n-2\gamma \over 2}}$.
\end{prop}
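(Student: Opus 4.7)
The plan is to assemble the finite-dimensional reduction of Section~\ref{sec-red}, the energy expansion of Section~\ref{sec-exp}, and the critical-point analysis of Subsections~\ref{subsec-high}--\ref{subsec-low}. First I would fix the polynomial $f$ in the tensor $h$ of \eqref{eq-h} according to the dimension: for $n\ge 52$ the linear choice $a_1=-1$ with $a_0$ given by \eqref{eq-a0} (Proposition~\ref{prop-loc-min}); for the low-dimensional range $n(\gamma)\le n\le 51$ the quartic \eqref{eq-f-lower} with $a_0$ from the analogous formula of Proposition~\ref{prop-loc-min2}. In both regimes the pair $(1,0)$ lies in the interior of the admissible set $\ma$ and is a strict local minimizer of the reduced-energy main part $J_1^{\gamma}+J_2^{\gamma}$. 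The threshold $n\ge 24$ vs.\ $n\ge 25$, controlled by the sign of $\text{disc}(Q)(24,\gamma)$, yields the transition exponent $\gamma^{\ast}\simeq 0.940197$.

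Second, Lemma~\ref{lemma-rem} together with Proposition~\ref{prop-expand} furnishes the uniform expansion
\[
J_0^{\gamma}(\dt)=C_{n,\gamma}+\frac{\kappa_{\gamma}}{2}\mu^{2}\ep^{4(d_0+1)}\bigl(J_1^{\gamma}+J_2^{\gamma}\bigr)(\dt)+o\bigl(\mu^{2}\ep^{4(d_0+1)}\bigr),
\]
where $C_{n,\gamma}$ collects the $(\dt)$-independent terms. Since $J_1^{\gamma}+J_2^{\gamma}$ is $C^{2}$ and has a strict interior local minimum at $(1,0)$, there is a closed Euclidean ball $\overline{B}_{r}((1,0))\subset\ma$ and $\eta>0$ with $(J_1^{\gamma}+J_2^{\gamma})(\dt)\ge (J_1^{\gamma}+J_2^{\gamma})(1,0)+2\eta$ on $\partial B_{r}((1,0))$. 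Substituting into the expansion, for every sufficiently small $\ep>0$ one keeps
\[
\min_{\partial B_{r}((1,0))}J_0^{\gamma}>J_0^{\gamma}(1,0)+\tfrac{1}{2}\kappa_{\gamma}\eta\,\mu^{2}\ep^{4(d_0+1)},
\]
so the continuous function $J_0^{\gamma}$ attains its minimum over $\overline{B}_{r}((1,0))$ at an interior point $(\delta(\ep),\tau(\ep))$, which is therefore a critical point of $J_0^{\gamma}$. By Lemma~\ref{lemma-red}(2) the function $U_{\ep}:=W_{\edte}+\Psi_{\edte}\in\mh_1$ is a critical point of the full energy $I^{\gamma}$, hence a weak solution of \eqref{eq-main-ext}; Lemma~\ref{lemma-red-2} then yields positivity, H\"older continuity on $\omrn$, and the lower bound $\|U_{\ep}\|_{L^{\infty}(\mr^{n})}\ge C\ep^{-(n-2\gamma)/2}$. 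By Proposition~\ref{prop-CG} the boundary trace $u_{\ep}=U_{\ep}|_{\mr^{n}}$ solves \eqref{eq-main} with $c=1$.

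The main obstacle is the second step: one must ensure that the remainder $o(\mu^{2}\ep^{4(d_0+1)})$ is \emph{uniform} in $(\dt)$ on compact subsets of $\ma$, so that the strict minimum barrier on $\partial B_{r}((1,0))$ is preserved by the perturbation. This uniformity is built into the proof of Lemma~\ref{lemma-rem} via the decay estimates of Lemma~\ref{lemma-Psi-B} and the continuous dependence of $\Psi_{\edt}$ on $(\dt)$ from Lemma~\ref{lemma-inter}, but it is worth writing out cleanly: the key term $\int_{\mr^{N}_{+}}\rho^{1-2\gamma}|\nabla\Psi_{\edt}^{A}|_{\bg}^{2}\,dx$ is continuous in $(\dt)$ with modulus independent of small $\ep$, as is the nonlinear correction. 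A secondary point is the verification that the selected $a_{0}$ keeps $(1,0)$ inside $\ma$ and avoids degeneracy of $P''(1)$; both reduce to the explicit discriminant computation already performed in Subsections~\ref{subsec-high}--\ref{subsec-low}, which is exactly what fixes the dimensional thresholds $24$ and $25$.
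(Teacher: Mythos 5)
Your proposal is correct and follows essentially the same route as the paper: fix $f$ via Propositions \ref{prop-loc-min} and \ref{prop-loc-min2} to make $(1,0)$ a strict interior local minimizer of $J_1^{\gamma}+J_2^{\gamma}$, transfer this to $J_0^{\gamma}$ through the uniform expansion of Lemma \ref{lemma-rem} and Proposition \ref{prop-expand}, and then invoke Lemma \ref{lemma-red}(2), Lemma \ref{lemma-red-2} and Proposition \ref{prop-CG} for the conclusion. The only difference is that you spell out the standard barrier argument for the persistence of the strict minimum under the uniform $o(\mu^{2}\ep^{4(d_0+1)})$ perturbation, which the paper leaves implicit.
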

\begin{proof}
For the existence of a positive solution to \eqref{eq-main-ext},
it suffices to search a critical point of the functional $J_0^{\gamma}$ by Lemma \ref{lemma-red}.
Lemma \ref{lemma-rem} and Proposition \ref{prop-expand} ensure that
if one finds a local minimizer of $J_1^{\gamma} + J_2^{\gamma}$ (see \eqref{eq-J1} and \eqref{eq-wjg}) in the admissible set
$\ma = (1-\varepsilon_0, 1+\varepsilon_0) \times B^n(0,\varepsilon_0)$ for some small $\varepsilon_0 > 0$,
then the set $\ma$ must contain also a local minimum $J_0^{\gamma}$.
However, we already know its validity from Propositions \ref{prop-loc-min} and \ref{prop-loc-min2}.
Thus \eqref{eq-main-ext} has a positive solution.
The lower $L^{\infty}(\mr^n)$-bound of the solution comes from \eqref{eq-lower-bd}.
\end{proof}

\noindent We are now ready to finish our proof of the main theorem.
\begin{proof}[Proof of Theorem \ref{thm-main}]
Define a smooth two-tensor $h_{ab}$ in $\mr_+^N$ as
\[h_{ab}(x) = \sum_{m = m_0}^{\infty} \chi\(4m^2|x-x_m|\)2^{-(d_0 + 1/6)m} f\(2^m|x-x_m|^2\) H_{ab}(x-x_m).\]
Here $\chi \in C^{\infty}(\mr)$ is a truncation function such that $\chi(t) = 1$ for $|t| \le 1$ and 0 for $|t| \ge 2$,
$H_{ab}$ is the tensor in \eqref{eq-h0}, $x_m = (m^{-1}, 0, \cdots, 0) \in \omrn$ and
\[d_0 = \begin{cases}
1 &\text{for } n \ge 52 \text{ (see Subsection \ref{subsec-high}),}\\
4 &\text{for } 24 \le n \le 51 \text{ and } \gamma \in (0, \gamma^*], \text{or } 25 \le n \le 51 \text{ and } \gamma \in (\gamma^*, 1) \text{ (see Subsection \ref{subsec-low}).}
\end{cases}\]
If we set $\bg = \exp{(h)}$, then one can construct a metric tensor $g^+$ and a defining function $\rho$ on $\mr_+^N$ as described in the proof of Proposition \ref{prop-rho-est}.
Moreover, \eqref{eq-rho-est} remains valid because the proof requires only the local structure of $\bg$ and the vanishing mean curvature condition $H = 0$ on $\mr^n$.
Therefore, by choosing $m_0 \in \mn$ so large that \eqref{eq-eta_0} holds, we can employ Proposition \ref{prop-main} with $\mu = 2^{-m/6}$, $\ep = 2^{-m/2}$ and $\nu = (4m)^{-1}$, completing our proof of Theorem \ref{thm-main}.
\end{proof}

\appendix
\section{The Loewner-Nirenberg problem}\label{sec-app}
\subsection{Existence and uniqueness of the solution}\label{subsec-app-a}
The existence theorem in Andersson-Chru\'{s}ciel-Friedrich \cite{ACF} to the singular Yamabe problem is presented in the setting of compact Riemannian manifolds.
In this subsection we illustrate how their result can be applied to the problem \eqref{eq-LN} defined in the upper half space.

\medskip
To this end, we define some notations:
Let $B_{1/2} := B^N((0,\cdots,0,-1/2),1/2)$ be the ball of radius $1/2$ centered at $(0,\cdots,0,-1/2) \in \mr^N$ and $y_s = (0, \cdots, 0, -1)$.
Moreover let $\mc: \omrn \to \overline{B_{1/2}} \setminus \{y_s\}$ be a conformal equivalence between the sets $\omrn$ and $\overline{B_{1/2}} \setminus \{y_s\}$, and $\md$ its inverse expressed as
\[\mc(x) = {(\bx, x_N+1) \over |\bx|^2+(x_N+1)^2} + (0, \cdots, 0, -1) \quad \text{and} \quad \md(y) = \mc(y)\]
for $x = (\bx, x_N) \in \omrn$ and $y \in \overline{B_{1/2}} \setminus \{y_s\}$.
Next if we denote
\[\mw_{1/2}(x) = {1 \over \(|\bx|^2+(x_N+1)^2\)^{N-2 \over 2}} \quad \text{for } x = (\bx, x_N) \in \omrn,\]
then it is the standard bubble in $\mr_+^N$ which is the same function as $W_{1,0}$ in \eqref{eq-Poisson} up to a constant multiple provided that $\gamma = 1/2$.
Introduce also pullback metrics
\[\bg_B := \md^*\(\mw_{1/2}^{4 \over N-2}\bg\) \quad \text{in } \overline{B_{1/2}} \setminus \{y_s\} \quad \text{and } \quad \tg_B := \md^*\(\tg\) \quad \text{in } B_{1/2}.\]
We can smoothly extend $\bg_B$ on the whole closed ball $\overline{B_{1/2}}$ by defining $\bg_B(y_s) = \delta_B(y_s)$,
where $\delta_B$ means the canonical metric on the ball $\overline{B_{1/2}}$,
because $\bg$ is equal to the standard metric $g_c$ outside of the half ball $\{|x| \le 1\}$.
Furthermore $\tg_B(y) = \md^*\(x_N^{-2}\bg(x)\) = x_N^{-2}(\mw_{1/2}(x))^{-{4 \over N-2}} \bg_B(y)$ for all $y = \mc(x) \in B_{1/2}$.

\begin{lemma}
Set $\rho_B(y) = (\md(y))_N (\mw_{1/2}(\md(y)))^{2 \over N-2}$ for $y = \mc(x) \in \overline{B_{1/2}} \setminus \{y_s\}$ and $\rho_B(y_s) = 0$.
Then it is a smooth boundary defining function for $B_{1/2}$ satisfying $|d\rho_B|_{\bg_B} = 1$ on $\pa B_{1/2}$.
\end{lemma}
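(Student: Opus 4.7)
The plan is to exhibit $\rho_B$ as the restriction to $\overline{B_{1/2}}$ of an explicit quadratic polynomial on $\mr^N$, from which smoothness and the defining-function property are transparent, and then verify the unit-length condition by pulling everything back through $\mc$ to the half space, where the boundary norm becomes a one-line identity.

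First I would substitute $x = \md(y)$ into $\rho_B(y) = x_N\mw_{1/2}(x)^{2/(N-2)}$. Writing $e_N = (0,\ldots,0,1)$, a direct expansion of the formula for $\mc$ yields the inversion identities
\begin{equation*}
|\bx|^2 + (x_N+1)^2 = \frac{1}{|y+e_N|^2}, \qquad x_N+1 = (y_N+1)|y+e_N|^2,
\end{equation*}
and since $\mw_{1/2}(x)^{2/(N-2)} = (|\bx|^2+(x_N+1)^2)^{-1} = |y+e_N|^2$, the product telescopes to
\begin{equation*}
\rho_B(y) = (y_N+1) - |y+e_N|^2 = \tfrac{1}{4} - |y - y_c|^2, \qquad y_c := -\tfrac{1}{2}e_N.
\end{equation*}
Because the right-hand side is a polynomial in $y$, it extends smoothly to $\mr^N$; it vanishes precisely on $\pa B_{1/2}$ with nonvanishing differential $d\rho_B = -2(y-y_c)\cdot dy$ there, is positive on $B_{1/2}$, and equals $0$ at $y_s = -e_N$ in agreement with the prescribed extension. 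This settles smoothness and the defining-function property simultaneously.

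Next I would establish $|d\rho_B|_{\bg_B} = 1$ on $\pa B_{1/2}\setminus\{y_s\}$ via the change of variables. Since $\mc^*\bg_B = \mw_{1/2}^{4/(N-2)}\bg$ on $\omrn$, the conformal rescaling law gives
\begin{equation*}
|d\rho_B|_{\bg_B}^2 \circ \mc = \mw_{1/2}^{-4/(N-2)}\,\bg^{ab}\,\pa_a(\rho_B\circ\mc)\,\pa_b(\rho_B\circ\mc).
\end{equation*}
Differentiating $(\rho_B\circ\mc)(x) = x_N\mw_{1/2}(x)^{2/(N-2)}$ at a boundary point $x = (\bx,0)$ of $\mr_+^N$, only $\pa_{x_N}$ contributes and its value is $\mw_{1/2}^{2/(N-2)}(\bx,0) = (|\bx|^2+1)^{-1}$. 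The hypothesis $h_{aN}\equiv 0$ from \eqref{eq-eta_0} makes $h$ block diagonal with a zero $(N,N)$-block, so $\bg = \exp(h)$ inherits the same block structure and in particular $\bg^{NN}\equiv 1$. Combining with $\mw_{1/2}^{-4/(N-2)}(\bx,0) = (|\bx|^2+1)^2$, the two factors cancel and $|d\rho_B|_{\bg_B}^2 = 1$ on $\pa B_{1/2}\setminus\{y_s\}$. The identity then propagates to $y_s$ by continuity, since $\bg_B$ is smoothly extended at $y_s$ as already noted in the text and $d\rho_B$ is continuous there.

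The main obstacle is really the algebraic collapse of $x_N\mw_{1/2}^{2/(N-2)}$ into the polynomial $\tfrac{1}{4} - |y-y_c|^2$; this is the key identity that simultaneously delivers the smooth extension across $y_s$, the defining-function property, and a clean expression for $d\rho_B$. Once it is established, both the smoothness discussion and the boundary norm computation become routine.
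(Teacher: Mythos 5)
Your proof is correct, and it takes a genuinely different route from the paper on both counts. For smoothness, the paper only remarks that $x_N\mw_{1/2}^{2/(N-2)}$ decays like $|x|^{-2}$ so that setting $\rho_B(y_s)=0$ extends $\rho_B$ across the singular point; your observation that $\rho_B(y)=(y_N+1)-|y+e_N|^2=\tfrac14-|y-y_c|^2$ is exactly a quadratic polynomial is sharper and makes the smooth extension, the positivity in $B_{1/2}$, the vanishing on $\pa B_{1/2}$, and the nondegeneracy of $d\rho_B$ all immediate, whereas the paper's decay remark by itself only yields continuity at $y_s$. For the condition $|d\rho_B|_{\bg_B}=1$, the paper argues indirectly through the equivalence with the sectional curvature of $\tg_B=\rho_B^{-2}\bg_B$ tending to $-1$ at the boundary, using that $\tg$ is exactly hyperbolic near infinity and that $|dx_N|_{\bg}=1$; your direct computation via the conformal rescaling of the co-metric and the block structure $\bg^{NN}=1$, $\bg^{iN}=0$ (forced by $h_{aN}=0$) reaches the same conclusion more elementarily, and is really the same underlying fact ($|dx_N|_{\bg}=1$) exploited without the detour through curvature asymptotics. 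One small slip: your displayed inversion identity should read $x_N+1=(y_N+1)\,|y+e_N|^{-2}$ rather than $(y_N+1)\,|y+e_N|^{2}$ (since $x+e_N=(y+e_N)/|y+e_N|^2$); the final formula for $\rho_B$ you state is the one that follows from the corrected identity, so nothing downstream is affected.
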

\begin{proof}
Clearly $\rho_B(y) > 0$ in $B_{1/2}$ and $\rho_B(y) = (\md(y))_N = 0$ for $y \in \pa B_{1/2} \setminus \{y_s\}$.
Since the decay of $\mw_{1/2}(x)^{2 \over N-2}$ is $|x|^{-2}$ for $|x|$ large enough, the definition $\rho_B(y_s) = 0$ gives the smooth extension of $\rho_B$ to the singularity.

On the other hand, the condition $|d\rho_B|_{\bg_B} = 1$ on $\pa B_{1/2}$ implies that the sectional curvature of $\tg_B = \rho_B^{-2}\bg_B$ approaches to $-1$ at $\pa B_{1/2}$, and vice versa.
Since $\tg$ is equal to the standard hyperbolic metric in $\{|x| \ge 1\}$, the sectional curvature of $\tg_B$ is precisely $-1$ in the neighborhood of $y_s$.
Moreover, we have $|dx_N|_{\bg} = 1$ on $\omrn$, which means that the sectional curvature of $\tg(x)$ goes to $-1$ as $x$ tends to $\mr^n$, so does the sectional curvature of $\tg_B(y)$ as $y$ converges to a point in $\pa B_{1/2} \setminus \{y_s\}$.
\end{proof}
\noindent In summary, $(B_{1/2}, \bg_B)$ is a compact manifold, $\rho_B$ is a smooth defining function for its boundary $\pa B_{1/2}$ and $\tg_B = \rho_B^{-2}\bg_B$ in $B_{1/2}$.
Therefore, according to \cite{ACF}, there is a unique solution $u_B \in C^{N-1}(\overline{B_{1/2}}) \cap C^{\infty}(B_{1/2})$ of
\begin{equation}\label{eq-LN-ball}
-{4(N-1) \over N-2}\Delta_{\tg_B} u + R_{\tg_B} u + N(N-1)u^{N+2 \over N-2} = 0 \quad \text{in } B_{1/2} \quad \text{and} \quad u = 1 \quad \text{on } \pa B_{1/2}.
\end{equation}
Then $u(x) = u_B(\mc(x))$ $\big($for $x \in \omrn\big)$ satisfies \eqref{eq-LN}.

\subsection{Expansions for the solution near the boundary}\label{subsec-app-b}
This subsection is devoted to give account of the derivation of Proposition \ref{prop-rho-est} under the assumption that $N \ge 22$.
To get information on the lower order terms of the expansion for $\rho$ (or equivalently, $u$) in terms of $x_N$, we will inspect the equation that $z := u-1$ satisfies.
We remark that our proof is inspired by Han-Jiang \cite{HJ}.

\medskip
Introduce a linear operator
\[\ml(\tz) = {4(N-1) \over N-2} \left[\Delta_{\bg}\tz - (N-2) x_N^{-1} \pa_N \tz - N x_N^{-2} \tz \right] - R_{\bg} \tz \quad \text{for } \tz \in C^2(\mr^N_+)\]
and a function $g: (-1, \infty) \to [0, \infty)$ by
\begin{equation}\label{eq-g}
g(t) := (1+t)^{N+2 \over N-2} - \(1 + {N+2 \over N-2}t\).
\end{equation}
Then, by employing the relations
\begin{equation}\label{eq-tg}
\Delta_{\tg} = x_N^2\Delta_{\bg} -(N-2)x_N\pa_N \quad \text{and} \quad R_{\tg} = - N(N-1) + R_{\bg}x_N^2
\end{equation}
which are valid due to the condition $H = 0$, one can deduce from \eqref{eq-LN} that $z$ is a solution of $\mq(\tz) = 0$ where $\mq$ is the operator
\begin{equation}\label{eq-rho-est-1}
\mq(\tz) = \ml(\tz) - N(N-1) x_N^{-2} g(\tz) - R_{\bg}.
\end{equation}

To approximate the function $z$ near the boundary $\mr^n$, let us set a polynomial $z_{d_0}$ in the $x_N$-variable,
\begin{equation}\label{eq-z-star}
z_{d_0}(\bx, x_N) = \sum_{m=1}^{2d_0+2} D_{2m}(\bx) x_N^{2m} \quad (1 \le d_0 \le 4)
\end{equation}
where smooth functions $D_{2m}$ in $\mr^n$ are determined in the next lemma.
We also remind that $\Delta_{\bg} = \Delta_{\hh} + \pa_{NN}$ and $R_{\bg}(\bx, x_N) = R_{\hh}(\bx)$ if $x_N > 0$ is small enough.
Then the main order term of $D_{2m}$ will turn out to be equal to $\Delta_{\hh}^{m-1} R_{\hh}$ up to a constant factor.
\begin{lemma}\label{lemma-Q-est}
Let $\mcr(\vr_1, \vr_2) = B^n(0, \vr_1) \times (0, \vr_2)$ be a cylinder in $\mr^N_+$.
Then, for a fixed small number $\vr_2 \in (0,1)$, there exist a constant $C = C(\vr_2) > 0$ (independent of $\vr_1$) and functions $D_{2m} \in C^{\infty}(\mr_+^N)$ for $m = 1, \cdots, 2d_0+2$ satisfying
\[\left|\mq(z_{d_0})(x)\right| \le C x_N^{2(2d_0+2)} \quad \text{for all } x = (\bx, x_N) \in \mcr(\vr_1, \vr_2).\]
\end{lemma}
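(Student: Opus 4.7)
The plan is to choose $D_2, D_4, \ldots, D_{2(2d_0+2)}$ iteratively so that when $z_{d_0}$ is plugged into $\mq$, the resulting expansion in powers of $x_N$ has vanishing coefficients of $x_N^{2k}$ for every $k = 0, 1, \ldots, 2d_0+1$. The remaining terms then all carry a factor $x_N^{2(2d_0+2)}$, which yields the desired pointwise bound. Throughout the cylinder $\mcr(\vr_1, \vr_2)$ with $\vr_2 < \nu$, the tensor $h$ is independent of $x_N$, so $\bg = \hh + dx_N^2$ splits, giving $\Delta_{\bg} = \Delta_{\hh} + \pa_{NN}$ and $R_{\bg}(\bx, x_N) = R_{\hh}(\bx)$, which makes the $x_N$-expansion transparent.

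First I would compute the action of $\ml$ on a monomial $D(\bx) x_N^{2m}$. A direct calculation gives
\[
\ml\(D \, x_N^{2m}\) = \frac{4(N-1)}{N-2}(2m+1)(2m-N) \, D \, x_N^{2m-2}
 + \left\{\frac{4(N-1)}{N-2}\Delta_{\hh} D - R_{\bg} D\right\} x_N^{2m},
\]
so each $D_{2m} x_N^{2m}$ produces a principal contribution at order $x_N^{2m-2}$ with the explicit coefficient $\frac{4(N-1)}{N-2}(2m+1)(2m-N) D_{2m}$, together with a lower-order contribution at $x_N^{2m}$. The nonlinear piece $-N(N-1) x_N^{-2} g(z_{d_0})$ is a polynomial in $x_N$ starting at order $x_N^2$ (because $g(t) = O(t^2)$ and $z_{d_0} = O(x_N^2)$), and its coefficient at order $x_N^{2(m-1)}$ is a universal polynomial in $D_2, \ldots, D_{2(m-1)}$.

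Next I would match coefficients order by order. At order $x_N^0$, the only contributions come from the principal part of $\ml(D_2 x_N^2)$ and from $-R_{\bg}$, giving $-12(N-1) D_2 - R_{\bg}|_{\mr^n} = 0$, which determines $D_2 = -R_{\hh}/(12(N-1))$. Inductively, for $2 \le m \le 2d_0+2$, the coefficient of $x_N^{2(m-1)}$ in $\mq(z_{d_0})$ takes the form
\[
\frac{4(N-1)}{N-2}(2m+1)(2m-N) \, D_{2m}(\bx) + F_m\bigl(\bx\bigr),
\]
where $F_m$ is a smooth function on $\mr^n$ depending only on $R_{\hh}$ and the previously constructed $D_{2k}$ for $k < m$, together with their $\hh$-derivatives. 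The hypothesis $N \ge 22 > 4d_0+4$ guarantees that the prefactor $(2m+1)(2m-N)$ is nonzero for every $1 \le m \le 2d_0+2$, so the linear equation is uniquely solvable, yielding a smooth $D_{2m}$ on $\mr^n$. By construction this cancels all the coefficients $x_N^{2k}$, $0 \le k \le 2d_0+1$.

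Finally, since $z_{d_0}$ is a polynomial of degree $2(2d_0+2)$ in $x_N$ with coefficients smooth in $\bx$, since $\bg, R_{\bg}$ are smooth on the cylinder, and since $g$ is smooth near $0$ (and $z_{d_0}$ is uniformly small when $\vr_2$ is chosen small enough), the function $\mq(z_{d_0})$ is smooth on $\mcr(\vr_1, \vr_2)$. The cancellations force the Taylor expansion of $\mq(z_{d_0})(\bx, \cdot)$ in $x_N$ to start at order $x_N^{2(2d_0+2)}$, which gives the bound $|\mq(z_{d_0})(x)| \le C x_N^{2(2d_0+2)}$ with $C$ independent of $\vr_1$ (since the construction is local in $x_N$ and uniform in $\bx$, the tensor $h$ being globally bounded on $\mr^n$ by \eqref{eq-eta_0}). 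The main bookkeeping obstacle is tracking the nonlinear contribution $g(z_{d_0})$ through high orders; this is routine but must be handled carefully so that $F_m$ is well-defined at each stage of the induction, and the leading behavior $D_{2m} \propto \Delta_{\hh}^{m-1} R_{\hh}$ (modulo lower-order curvature combinations) emerges, matching the structure of $C_{2m}$ in Proposition \ref{prop-rho-est}.
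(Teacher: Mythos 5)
Your proposal is correct and follows essentially the same route as the paper: expand $\mq(z_{d_0})$ in even powers of $x_N$ using the product structure $\Delta_{\bg} = \Delta_{\hh} + \pa_{NN}$ (valid since $h$ is $x_N$-independent in the cylinder), compute the action of $\ml$ on a monomial $D(\bx)x_N^{2m}$ to obtain the prefactor $\frac{4(N-1)}{N-2}(2m+1)(2m-N)$ at order $x_N^{2m-2}$, and solve inductively for $D_{2m}$ using that the nonlinear term $x_N^{-2}g(z_{d_0})$ contributes at order $x_N^{2(m-1)}$ only through $D_{2k}$ with $k\le m-1$; the restriction $N\ge 22$ keeps the prefactor nonvanishing. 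The one small imprecision is calling $x_N^{-2}g(z_{d_0})$ "a polynomial in $x_N$" — since $(N+2)/(N-2)$ is not an integer in general it is only a power series — but what matters, and what you use, is that its Taylor coefficients through order $x_N^{2(2d_0+1)}$ depend only on the lower-order $D$'s, which is correct, so this does not affect the argument.
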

\begin{proof}
By putting the polynomial $z_{d_0}$ given \eqref{eq-z-star} into \eqref{eq-rho-est-1}, we observe that
\begin{align*}
\mq(z_{d_0}) &= \(-12(N-1)D_2-R_{\hh}\) \\
&\ + \sum_{m=1}^{2d_0+1} \left[{4(N-1) \over N-2} \(\Delta_{\hh}D_{2m} - (2m+3)(N-2(m+1)) D_{2(m+1)}\) - D_{2m} R_{\hh} \right] x_N^{2m}\\
&\ + \({4(N-1) \over N-2} \Delta_{\hh} D_{2(2d_0+2)} - D_{2(2d_0+2)} R_{\hh}\) x_N^{2(2d_0+2)} - N(N-1) x_N^{-2} g\(z_{d_0}\)
\end{align*}
where
\[g\(z_{d_0}\) = {{N+2 \over N-2} \choose 2} D_2^2 x_N^4 + {{N+2 \over N-2} \choose 3} \(2D_2D_4 + D_2^3\) x_N^6 + \cdots\]
is a power series whose coefficients are sums of products of two or more $D_{2m}$'s.
Expanding $\mq(z_{d_0})$ in ascending power of $x_N$ up to the $2(2d_0+2)$-th order yields
\begin{align*}
\mq(z_{d_0}) &= \sum_{m=0}^{2d_0+1} \mg_m\(D_2, \cdots, D_{2m}, \Delta_{\hh}D_{2m}, D_{2(m+1)}, R_{\hh}\) x_N^{2m} \\
&\ + \mg_{2d_0+2} \(D_2, \cdots, D_{2(2d_0+2)}, \Delta_{\hh}D_{2(2d_0+2)}, R_{\hh}\) x_N^{2(2d_0+2)} + O\(x_N^{2(2d_0+3)}\)
\end{align*}
where $\mg_m$ is a function which can be explicitly written
(setting $D_0 = 0$). For example, we have
\[\mg_0 = -12(N-1)D_2-R_{\hh} \quad \text{and} \quad \mg_1 = {4(N-1) \over N-2} \(\Delta_{\hh}D_2 - 5(N-4) D_4\) - D_2 R_{\hh} - N(N-1)D_2^2.\]
Solving the equations $\mg_0 = \cdots = \mg_{2d_0+1} = 0$ inductively, we obtain
\begin{equation}\label{eq-D}
D_2 = - {R_{\hh} \over 12(N-1)} \quad \text{and} \quad D_{2(m+1)} = {\Delta_{\hh}D_{2m} \over (2m+3)(N-2(m+1))} + \mathcal{R}_m\(D_2, \cdots, D_{2m}, R_{\hh}\)
\end{equation}
for $m = 1, \cdots, 2d_0+1$, where the remainder $\mathcal{R}_m$ is a sum of products of two or more its arguments.
By $\eqref{eq-eta_0}$ and $\eqref{eq-R-bg-est-0}$, $\|\mg_{2d_0+2}\|_{L^{\infty}(\mr^n)}$ is controlled by the small number $\eta_0$.
Hence the proof is completed.
\end{proof}
\noindent As a result of the previous lemma, one gets
\begin{equation}\label{eq-L-z-zd}
\ml(z-z_{d_0}) = O\(x_N^{2(2d_0+2)}\) + N(N-1)x_N^{-2}\ell(x)(z-z_{d_0}) \quad \text{where } \ell(x) = {g(z(x))-g(z_{d_0}(x)) \over z(x)-z_{d_0}(x)}.
\end{equation}
Furthermore, given any fixed $\eta_1 > 0$, we may assume that $z(x),\ z_{d_0}(x) \ge -\eta_1$ for all $x \in \mcr(\vr_1, \vr_2)$
by decreasing $\vr_2 > 0$ in the statement of Lemma \ref{lemma-Q-est} and $\eta_0 > 0$ in \eqref{eq-eta_0} if necessary. 
Therefore we have $\ell(x) \ge - C\eta_1$ for $x \in \mcr(\vr_1, \vr_2)$, which makes possible to deduce the comparison principle to the operator
\begin{equation}\label{eq-L1}
\ml_1(\tz) = \ml(\tz) - N(N-1)x_N^{-2} \ell(x) \tz \quad \text{defined for } \tz \in C^2(\mcr(\vr_1, \vr_2)).
\end{equation}
\begin{lemma}\label{lemma-L1-mp}
Choose a small number $\eta_2 > 0$ such that $|R_{\bg}| \le \eta_2$ in $\mr^N_+$, which is possible due to \eqref{eq-eta_0} and \eqref{eq-R-bg-est-0}.
In addition, let $\vr_1 > 0$ be any number and $\vr_2 \in (0,1)$ sufficiently small so that $\ell(x) \ge - C\eta_1$ for every $x \in \mcr(\vr_1, \vr_2)$.
If $\ml_1(\tz_1) \ge \ml_1(\tz_2)$ in the set $\mcr(\vr_1, \vr_2)$ and $\tz_1 \le \tz_2$ on its boundary $\pa \mcr(\vr_1, \vr_2)$,
then $\tz_1 \le \tz_2$ in $\mcr(\vr_1, \vr_2)$.
\end{lemma}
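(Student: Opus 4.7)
The plan is to apply a classical weak maximum principle to the linear operator $\ml_1$, with the observation that after selecting $\eta_1, \eta_2 > 0$ sufficiently small the zeroth-order coefficient is \emph{strictly} negative throughout $\mcr(\vr_1, \vr_2)$. Setting $w = \tz_1 - \tz_2 \in C^2(\mcr(\vr_1,\vr_2)) \cap C(\overline{\mcr(\vr_1,\vr_2)})$, linearity of $\ml_1$ reduces the statement to: if $\ml_1 w \ge 0$ in $\mcr(\vr_1, \vr_2)$ and $w \le 0$ on $\pa \mcr(\vr_1,\vr_2)$, then $w \le 0$ in $\mcr(\vr_1, \vr_2)$.

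The key step is to expand $\ml_1$ as a genuine elliptic operator on the bounded domain $\mcr(\vr_1,\vr_2)$ and identify its zeroth-order coefficient
\[
c(x) \;=\; -x_N^{-2}\!\left[\frac{4N(N-1)}{N-2} + N(N-1)\ell(x)\right] - R_{\bg}(x).
\]
Using the standing hypothesis $\ell(x) \ge -C\eta_1$ on $\mcr(\vr_1, \vr_2)$ and the uniform bound $|R_{\bg}| \le \eta_2$, I would shrink $\eta_1$ (so that the bracket exceeds, say, $2N(N-1)(N-2)^{-1}$) and $\eta_2$ (so that $\eta_2 \vr_2^2$ is negligible) to conclude
\[
c(x) \;\le\; -c_0\, x_N^{-2} + \eta_2 \;\le\; -\tfrac{c_0}{2}\, x_N^{-2} \;<\; 0 \qquad \text{for all } x \in \mcr(\vr_1, \vr_2),
\]
where $c_0 > 0$ depends only on $N$. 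In particular the second-order part of $\ml_1$ is uniformly elliptic on each compact subset of the open cylinder with bounded coefficients there, and the zeroth-order term is strictly negative.

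Once this is in place, the conclusion follows from a textbook interior-maximum argument. Since $\overline{\mcr(\vr_1, \vr_2)}$ is compact and $w$ is continuous, $w$ attains its maximum. If this maximum is $\le 0$, we are done; otherwise it is attained at some interior point $x_0$, at which $\nabla w(x_0) = 0$ and the Hessian $D^2 w(x_0)$ is negative semidefinite, so that $\Delta_{\bg} w(x_0) = \bg^{ab}(x_0)\pa_{ab}w(x_0) \le 0$. Substituting into the definition of $\ml_1$ yields
\[
0 \;\le\; \ml_1 w(x_0) \;\le\; c(x_0)\, w(x_0) \;<\; 0,
\]
a contradiction. Because $x_0$ lies in the open cylinder, we have $x_N(x_0) > 0$ and all singular factors in $\ml_1$ are finite at $x_0$, so no additional argument at the $\{x_N = 0\}$ face is required; the potential obstacle of the $x_N^{-2}$ singularity in fact works in our favor, since it only strengthens the negativity of $c(x)$ near $\mr^n$. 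The only subtlety I anticipate is verifying the smallness of $\eta_1$ and $\eta_2$ quantitatively so that $c(x) < 0$ uniformly, which is cleanly settled by the mean-value identity $\ell(x) = g'(\xi(x))$ with $\xi(x) \ge -\eta_1$ together with $g'(0) = 0$ and smoothness of $g$ near $0$.
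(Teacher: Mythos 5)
Your proposal is correct and follows essentially the same route as the paper: both arguments set $w=\tz_1-\tz_2$, assume an interior point where $w>0$ is maximized (the boundary hypothesis rules out the faces, including $\{x_N=0\}$), use $\nabla w=0$ and $\Delta_{\bg}w\le 0$ there, and derive a contradiction from the strict negativity of the zeroth-order coefficient, which both of you secure from $\ell\ge -C\eta_1$, $|R_{\bg}|\le\eta_2$ and the smallness of $\vr_2$ (so that the $x_N^{-2}$ term dominates $\eta_2$). The only cosmetic difference is that the paper fixes $\eta_1,\eta_2$ and shrinks $\vr_2$, whereas you phrase the smallness condition in terms of $\eta_1,\eta_2$; the resulting inequality is the same.
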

\begin{proof}
Suppose not. Then we can choose a point $x_0 = (\bx_0, (x_0)_N) \in \mcr(\vr_1, \vr_2)$
satisfying $(x_0)_N > 0$, $(\tz_1-\tz_2)(x_0) > 0$, $\nabla(\tz_1-\tz_2)(x_0) = 0$ and $(\Delta_{\hh}(\tz_1-\tz_2) + \pa_{NN}(\tz_1-\tz_2))(x_0) \le 0$. Thus
\begin{align*}
0 &\ge {4(N-1) \over N-2} \left[\Delta_{\hh}(\tz_1-\tz_2) + \pa_{NN}(\tz_1-\tz_2)\right](x_0) \\
&\ge \left[{4(N-1)N \over N-2} \cdot (x_0)_N^{-2} + N(N-1) (x_0)_N^{-2}\ell(x_0) + R_{\hh}(x_0) \right] \cdot \(\tz_1-\tz_2\)(x_0)\\
&\ge \left[N(N-1) \left\{{4 \over N-2} - C\eta_1\right\} (\vr_2)^{-2} - \eta_2 \right] \cdot \(\tz_1-\tz_2\)(x_0) > 0
\end{align*}
provided that $\vr_2^2 \in \(0, \min\left\{1, N(N-1)\left\{{4 \over N-2}-C\eta_1\right\}\eta_2^{-1}\right\}\)$.
Accordingly we reach the contradiction. The lemma should hold.
\end{proof}
\noindent Together with Lemmas \ref{lemma-Q-est} and \ref{lemma-L1-mp}, we are able to estimate the difference between $z$ and its approximation $z_0$.
\begin{lemma}
Fix any $\eta_3 > 0$ and small $\nu > 0$. Then it holds that
\begin{equation}\label{eq-z-error}
\left|z(x) - z_{d_0}(x)\right| \le Cx_N^{2(2d_0+3)-\eta_3} \quad \text{for all } x \in \mcr(\nu, \nu).
\end{equation}
\end{lemma}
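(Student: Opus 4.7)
The strategy is to apply the weighted comparison principle of Lemma \ref{lemma-L1-mp} to the operator $\ml_1$ on the slightly enlarged cylinder $\mcr(2\nu, \nu)$, using an explicit power-type barrier
\[\varphi^\pm(x) = \pm A_1 x_N^\alpha + A_2\, \chi(|\bx|/\nu), \qquad \alpha := 2(2d_0+3) - \eta_3,\]
where $\chi \in C^\infty([0,\infty))$ vanishes on $[0,1]$ and equals $1$ on $[3/2,\infty)$, and then to restrict the resulting inequality to $\mcr(\nu,\nu)$, where $\chi(|\bx|/\nu) = 0$. From \eqref{eq-L-z-zd} together with the definition \eqref{eq-L1} of $\ml_1$, we already have $\ml_1(z - z_{d_0}) = O(x_N^{2(2d_0+2)})$ in this region.

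The core of the argument is a direct computation: using $\Delta_{\bg} = \Delta_{\hh} + \pa_{NN}$ on $\{0 < x_N < \nu\}$, boundedness of $R_{\bg}$, and the bound $|\ell(x)| \le C x_N^2$ (which comes from $g'(0) = 0$ together with $|z|, |z_{d_0}| = O(x_N^2)$ via the mean value theorem), one obtains
\[\ml_1(x_N^\alpha) = \frac{4(N-1)}{N-2}(\alpha - N)(\alpha + 1)\, x_N^{\alpha - 2} + O(x_N^\alpha).\]
Under our standing assumption $N \ge 22$ together with $d_0 \le 4$, we have $\alpha \le 22 - \eta_3 < N$, so the coefficient $(\alpha - N)(\alpha + 1)$ is strictly negative. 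Consequently $\ml_1(A_1 x_N^\alpha) \le -C_0 A_1 x_N^{\alpha - 2} = -C_0 A_1 x_N^{2(2d_0+2) - \eta_3}$, which dominates $|\ml_1(z - z_{d_0})| = O(x_N^{2(2d_0+2)})$ once $A_1$ is large and $\nu$ is small. For the cutoff term, the same type of computation gives $\ml_1(A_2 \chi(|\bx|/\nu))$ a strictly negative leading part $-\frac{4(N-1)N}{N-2} A_2 \chi(|\bx|/\nu)\, x_N^{-2}$ (coming from the zeroth-order coefficient in $\ml$, since $\chi$ is independent of $x_N$), so the differential inequality $\ml_1(\varphi^+) \le \ml_1(z - z_{d_0})$ survives the addition of the cutoff throughout $\mcr(2\nu, \nu)$.

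For the boundary data: on $\{x_N = 0\}$ both $z - z_{d_0}$ and $\pm A_1 x_N^\alpha$ vanish and $\varphi^\pm$ retains the correct sign; on the top face $\{x_N = \nu\}$ the $L^\infty$-bound on $z$ furnished by the Andersson--Chru\'sciel--Friedrich existence theorem together with smoothness of $z_{d_0}$ gives $|z - z_{d_0}| \le A_1 \nu^\alpha$ for $A_1$ large; on the outer lateral boundary $\{|\bx| = 2\nu\}$ one has $\chi(2) = 1$, so $A_2$ is chosen large enough to dominate $|z - z_{d_0}|$ there. Lemma \ref{lemma-L1-mp} then yields $|z - z_{d_0}| \le A_1 x_N^\alpha + A_2 \chi(|\bx|/\nu)$ throughout $\mcr(2\nu,\nu)$, and restricting to $\mcr(\nu,\nu)$ (where $\chi \equiv 0$) produces exactly \eqref{eq-z-error}. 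The main obstacle is the lateral boundary $\{|\bx| = \nu\}$, where a pure $x_N^\alpha$ barrier cannot dominate $|z - z_{d_0}|$ for small $x_N$; the enlarged-cylinder-plus-cutoff device handles this, but relies crucially on the negativity of the coefficient $-\frac{4(N-1)N}{N-2} x_N^{-2}$ in $\ml$ — the same sign structure that underlies Lemma \ref{lemma-L1-mp} itself and that originates from linearizing the Loewner--Nirenberg nonlinearity.
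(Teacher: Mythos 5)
Your overall strategy coincides with the paper's: both proofs run the comparison principle of Lemma \ref{lemma-L1-mp} for $\ml_1$ against a barrier whose vertical part is $x_N^{\alpha}$ with $\alpha=2(2d_0+3)-\eta_3$, admissible because $\alpha^2-(N-1)\alpha-N=(\alpha-N)(\alpha+1)<0$ once $N\ge 22$ and $d_0\le 4$, plus a lateral part that is discarded at the end (the paper sends the cylinder radius $\vr_1$ to its limit so that its $\sin$--$\cos$ lateral term disappears; you restrict to $\mcr(\nu,\nu)$ where your cutoff vanishes). Your computation of $\ml_1(x_N^\alpha)$, the bound $|\ell|\le Cx_N^2$, and the treatment of the bottom, top and outer lateral boundary data are all correct.

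The gap is in the lateral term. You claim the inequality $\ml_1(\varphi^+)\le\ml_1(z-z_{d_0})$ ``survives the addition of the cutoff'' because of the negative contribution $-\tfrac{4(N-1)N}{N-2}A_2\chi(|\bx|/\nu)x_N^{-2}$, but that contribution is proportional to $\chi$, whereas the dangerous contribution $\tfrac{4(N-1)}{N-2}A_2\Delta_{\hh}(\chi(|\bx|/\nu))\approx CA_2\nu^{-2}\chi''$ is not. A smooth cutoff that vanishes on $[0,1]$ and reaches $1$ on $[3/2,\infty)$ necessarily has points in the transition zone where $\chi''=c>0$ while $\chi=\epsilon$ is arbitrarily small. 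At such a point the required inequality forces $A_2c\nu^{-2}\lesssim NA_2\epsilon x_N^{-2}+A_1x_N^{\alpha-2}$ for every $x_N\in(0,\nu)$; minimizing the right-hand side over $x_N$ yields a quantity of order $(A_2\epsilon)^{1-2/\alpha}A_1^{2/\alpha}$, so for a generic cutoff (with $c$ bounded below as $\epsilon\to0$) no finite $A_1$ works. What is actually needed is a lateral barrier $\Lambda$ with the pointwise property $\Delta_{\hh}\Lambda\le C\nu^{-2}\Lambda$, so that the zeroth-order coefficient $-Nx_N^{-2}$ in $\ml$ absorbs its own Laplacian; this is precisely what the paper's combination $\sin(\pi|\bx|^2/(2\vr_1^2))+\tfrac{\pi}{2\vr_1^2}\cos(\pi|\bx|^2/(2\vr_1^2))$ is engineered to satisfy on the cylinder. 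Your fixed-cylinder variant becomes correct if you additionally require $\chi$ to vanish to order at least $\alpha$ (or infinitely flatly) at the inner edge of its transition region, so that $\chi''_+\lesssim\chi^{1-2/\alpha}$; as stated, the step fails.
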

\begin{proof}
Its proof will be carried out in three steps.

\medskip \noindent \textsc{Step 1.}
Define
\[z^*(\bx, x_N) = C_1^*\left[\sin\({\pi \over 2\vr_1^2}|\bx|^2\) + {\pi \over 2\vr_1^2} \cos\({\pi \over 2\vr_1^2}|\bx|^2\)\right]+ C_2^* x_N^{2(2d_0+3)-\eta_3}\]
with $C_1^*, C_2^* > 0$ to be determined soon.
We claim that there is $C = C(\vr_2) > 0$ such that
\begin{equation}\label{eq-L1-zstar}
\ml_1(z^*) \le -Cx_N^{2(2d_0+2)-\eta_3} \quad \text{in } \mcr(\vr_1, \vr_2)
\end{equation}
where $(\vr_1, \vr_2)$ is the pair for which Lemma \ref{lemma-L1-mp} is true.

Write $\vr_1^* = 2\vr_1^2/\pi$ for simplicity.
Since $|h_{ab}(x)| = 0$ for $|x| \le 1$ (see \eqref{eq-h}), we see that
\begin{align*}
&\ \Delta_{\hh}\(\sin\(r^2/\vr_1^*\) + \cos\(r^2/\vr_1^*\)/\vr_1^*\) \\
&\le 2\cos\(r^2/\vr_1^*\)/\vr_1^*
+ O\(|h_{ab}| + |Dh_{ab}|\)\(\sin\(r^2/\vr_1^*\)/\(\vr_1^*\)^2 + \cos\(r^2/\vr_1^*\)/\vr_1^*\).
\end{align*}
Therefore we have
\begin{equation}\label{eq-L1-zstar-1}
\begin{aligned}
&\ \ml_1\(\sin\(r^2/\vr_1^*\) + \cos\(r^2/\vr_1^*\)/\vr_1^*\) \\
&\le {4(N-1) \over N-2} \left[\Delta_{\hh} - N\vr_2^{-2}\(1-{N-2 \over 4} C\eta_1\) + {N-2 \over 4(N-1)} \eta_2 \right]\(\sin\(r^2/\vr_1^*\) + \cos\(r^2/\vr_1^*\)/\vr_1^*\) \le 0
\end{aligned}
\end{equation}
where $r = |\bx|$.

Moreover the polynomial $\alpha \in \mr \mapsto \alpha^2 - (N-1)\alpha -N$ has $N$ and $-1$ as its zeros.
Hence given that $N \ge 22$, we compute
\begin{equation}\label{eq-L1-zstar-2}
\ml_1\(x_N^{\alpha_0}\) \le {4(N-1) \over N-2} \left[\(\alpha_0^2 - (N-1)\alpha_0 -N\) + {N-2 \over 4(N-1)} \eta_2\vr_2^2 + {N(N-2) \over 4} C\eta_1 \right] x_N^{\alpha_0-2} \le -\widetilde{C}x_N^{\alpha_0-2}
\end{equation}
in $\mcr(\vr_1, \vr_2)$ where $\alpha_0 = 2(2d_0+3)-\eta_3$.

Consequently \eqref{eq-L1-zstar} follows from \eqref{eq-L1-zstar-1} and \eqref{eq-L1-zstar-2}.

\medskip \noindent \textsc{Step 2.} Combining \eqref{eq-L-z-zd} and \eqref{eq-L1-zstar}, we obtain
\[\ml_1(z^*) - \ml_1\(z-z_{d_0}\) \le -Cx_N^{2(2d_0+2)-\eta_3}\(1-\widetilde{C}x_N^{\eta_3}\) \le 0 \quad \text{in } \mcr(\vr_1, \vr_2)\]
for some $C,\ \widetilde{C} > 0$.
Moreover $\|z-z_{d_0}\|_{L^{\infty}(\{x \in \mr^N_+: x_N \le \vr_2\})}$ is bounded, so we can choose $C_1^*,\ C_2^* > 0$ so large that $z - z_{d_0} \le z^*$ on $\pa \mcr(\vr_1, \vr_2)$.
Thus we infer from the maximum principle in Lemma \ref{lemma-L1-mp} that $z - z_{d_0} \le z^*$ holds in $\mcr(\vr_1, \vr_2)$.
By taking $\vr_1 \to 0$ and $\vr_2 = \nu$, we observe that
$(z - z_{d_0})(x) \le C_2^* x_N^{2(2d_0+3)-\eta_3}$ for $x \in \mcr(\nu,\nu)$.

\medskip \noindent \textsc{Step 3.}
Similarly we have $(z - z_{d_0})(x) \ge -z^*(x)$ for all $x \in \mcr(\vr_1, \vr_2)$.
By letting $\vr_1 \to 0$ again in the square $\mcr(\nu,\nu)$, we conclude that \eqref{eq-z-error} is true.
\end{proof}

\noindent By elliptic regularity, we also obtain decay estimates for the first and second derivatives of $z-z_{d_0}$ (cf. \cite{Lin, HJ}).
\begin{lemma}\label{lemma-z-reg}
There exists a constant $C > 0$ such that
\begin{align*}
\left|D_{\bx}(z(x) - z_{d_0}(x))\right| + \left|D_{\bx}^2(z(x) - z_{d_0}(x))\right| &\le Cx_N^{2(2d_0+3)-\eta_3},\\
\left|\pa_{x_N}(z(x) - z_{d_0}(x))\right| &\le Cx_N^{2(2d_0+2)-\eta_3},\\
\left|\pa_{x_Nx_N}(z(x) - z_{d_0}(x))\right| &\le Cx_N^{2(2d_0+1)-\eta_3}
\end{align*}
for every $x \in \mcr(\nu, \nu)$.
Here $D_{\bx}$ implies the derivative with respect to the $\bx$-variable and so forth.
\end{lemma}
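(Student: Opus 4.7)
\textbf{Proof plan for Lemma \ref{lemma-z-reg}.}

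The plan is to combine an interior scaling/Schauder argument (sufficient for the normal derivative bounds) with a tangential-differentiation plus barrier argument (needed to recover the sharper tangential bounds). Write $f = z - z_{d_0}$ and $\alpha = 2(2d_0+3) - \eta_3$, so that Eq.~\eqref{eq-z-error} reads $|f(x)| \le C x_N^{\alpha}$ in $\mcr(\nu,\nu)$. The operator $\ml_1$ in \eqref{eq-L1} is uniformly elliptic once $x_N$ is bounded below.

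First I would fix a point $x_0 = (\bx_0, t_0)$ with $t_0 \in (0, \nu/2)$ and rescale:
\[
v(y) := t_0^{-\alpha}\, f(\bx_0 + t_0\bar y, \, t_0 y_N), \qquad y \in B^N\bigl((\mathbf{0},1), \tfrac12\bigr).
\]
By \eqref{eq-z-error}, $\|v\|_{L^\infty} \le C$. On this reference ball the coordinate $y_N \in [1/2, 3/2]$ is bounded away from $0$, so the rescaled PDE inherited from \eqref{eq-L-z-zd} is uniformly elliptic with smooth bounded coefficients (the singular weight $x_N^{-2}$ becomes the harmless $y_N^{-2}$), and the rescaled right-hand side is bounded. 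Interior Schauder estimates then yield $|\nabla_y v(0)|, |\nabla_y^2 v(0)| \le C$. Unwinding the scaling gives
\[
|\nabla_x f(x_0)| \le C\, t_0^{\alpha-1}, \qquad |\nabla_x^2 f(x_0)| \le C\, t_0^{\alpha-2}.
\]
For the normal derivatives the claim asks for $C x_N^{2(2d_0+2)-\eta_3} = C x_N^{\alpha-2}$ and $C x_N^{2(2d_0+1)-\eta_3} = C x_N^{\alpha-4}$, which are weaker than (or equal to) what scaling delivers, so $|\partial_{x_N} f|$ and $|\partial_{x_N x_N} f|$ are controlled by the above inequalities at once.

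For tangential derivatives the scaling bound is off by one power of $x_N$, so I would differentiate tangentially. Fix $i \in \{1,\dots,n\}$ and apply $\partial_i$ to $\mq(z) = 0$ and to $\mq(z_{d_0}) = O(x_N^{2(2d_0+2)})$ from Lemma \ref{lemma-Q-est}. Since $\partial_i$ commutes with $\partial_{x_N}$ and does not alter $x_N$-powers, $f_i := \partial_i f$ solves
\[
\ml_1(f_i) = O\bigl(x_N^{2(2d_0+2)}\bigr) + (\text{zeroth-order terms in } f, f_i),
\]
where the zeroth-order terms arise from differentiating the coefficients of $\ml_1$ and $\ell(x)$; by the $L^\infty$ bound on $f$ and the smoothness of the coefficients these are also $O(x_N^{2(2d_0+2)})$. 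Moreover $f_i$ vanishes on $\{x_N=0\}$ and, by the scaling step above, is uniformly bounded on the lateral boundary $\{|\bx|=\vr_1\}$ by $C\vr_1^{\alpha-1}$. I would then repeat the barrier construction from the proof of \eqref{eq-z-error}, using the same $z^* = C_1^*[\sin(r^2/\vr_1^*) + \vr_1^{*-1}\cos(r^2/\vr_1^*)] + C_2^* x_N^{\alpha}$ with $C_1^*, C_2^*$ adjusted to dominate $f_i$ on $\partial \mcr(\vr_1,\vr_2)$, invoke the comparison principle of Lemma \ref{lemma-L1-mp} (which applies to $\ml_1$ up to harmless zeroth-order perturbations), and let $\vr_1 \to 0$ to obtain $|f_i| \le C x_N^{\alpha}$ in $\mcr(\nu,\nu)$. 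Differentiating once more in $\bx$ and iterating the same procedure upgrades this to $|D^2_{\bx} f| \le C x_N^{\alpha}$.

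The main technical obstacle is the second step: the tangentially-differentiated PDE is not exactly of the form $\ml_1(\cdot) = \text{RHS}$, because $\partial_i$ acting on $\ml$ (whose coefficients involve $\bg$ and $x_N^{-1}, x_N^{-2}$) and on the nonlinear remainder $\ell(x)(z-z_{d_0})$ generates extra zeroth- and first-order terms in $f_i$. I would absorb the zeroth-order terms into $\ml_1$ (they come with an arbitrarily small smallness constant from $\eta_0, \eta_1$, so the comparison principle still holds for $\vr_2$ small), and treat the first-order tangential terms as an $L^\infty$ perturbation by incorporating them into the RHS using the already-established bound on $f_i$ at points $x$ with $x_N \ge \nu/2$; the ordering $\alpha - 1 \ge 2(2d_0+2)$ ensures the error remains compatible with the barrier $C_2^* x_N^{\alpha}$. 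Once this bookkeeping is done, the conclusion follows in the same way as Lemma's proof of \eqref{eq-z-error}.
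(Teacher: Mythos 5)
Your plan is correct and is exactly the argument the paper has in mind: the paper offers no proof of this lemma beyond the remark ``by elliptic regularity (cf.\ Lin, Han--Jiang),'' and those references proceed precisely as you do, obtaining the normal-derivative bounds (with one power of $x_N$ lost per derivative) from interior rescaled Schauder estimates on balls $B(x_0, x_{0,N}/2)$, and recovering the full $x_N^{2(2d_0+3)-\eta_3}$ decay for tangential derivatives by differentiating the equation in $\bx$ and rerunning the barrier/comparison argument of Lemma \ref{lemma-L1-mp}, absorbing the commutator terms as you describe. The only point worth recording is that the borderline commutator contributions of order $x_N^{\alpha-2}$ carry a factor $\eta_0$ (or a fixed Schauder constant independent of $C_2^*$), so they are indeed dominated by $\ml_1(C_2^* x_N^{\alpha})$ for $C_2^*$ large.
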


We are now ready to conclude the proof of Proposition \ref{prop-rho-est}.
However, before initiating the proof, it may as well note that the rescaled function $(\Delta_{\hh}^{m-1} R_{\hh})(\ep \bx)$ of the main term of $D_{2m}$
is comparable to $\mu^2 \ep^{4d_0+4-2m} \Delta_{\hh}^{m-1}(\pa_k \oh_{ij})^2(\bx)$ in the set $\{|\bx| \le \nu/\ep\}$ where $\oh_{ij}(\bx) = f(|\bx|^2)H_{ij}(\bx)$.
This is because we have by \eqref{eq-h} and \eqref{eq-R-bg-est} that
\begin{equation}\label{eq-R-bg-est-2}
R_{\hh}(\ep \bx) = -{1 \over 4} \mu^2 \ep^{4d_0+2} \sum_{i,j,k=1}^n \(\pa_k \oh_{ij}(\bx)\)^2 + O\(\mu^3\ep^{6d_0+4}|\bx|^4\(1+|\bx|^{6d_0}\)\) \quad \text{in } C^{\infty}(\{|\bx| \le \nu/\ep\}).
\end{equation}
Since $\pa_k \oh_{ij}$ is a polynomial of degree $2d_0+1$, we have also that $\Delta_{\hh}^{(2d_0+3)-1}(\pa_k \oh_{ij})^2 = 0$.
\begin{proof}[Proof of Proposition \ref{prop-rho-est}]
A combination of \eqref{eq-z-star} and \eqref{eq-z-error} implies
\begin{equation}\label{eq-z-1}
z(\ep x) = \sum_{m=1}^{2d_0+2} D_{2m}(\ep\bx) (\ep x_N)^{2m} + O\((\ep x_N)^{2(2d_0+3)-\eta_3}\) \quad \text{in } B_+^N(0,\nu/\ep).
\end{equation}
Moreover, by virtue of \eqref{eq-hh-inv-est}, it holds
\begin{align*}
\Delta_{\hh(\ep\cdot)}v &= \pa_i\(\hh^{ij}(\ep\cdot) \pa_j v\) = \hh^{ij}(\ep\cdot)\pa_{ij} v + O\(|h(\ep\cdot)||D(h(\ep\cdot))||Dv|\)\\
&= \Delta v + O\(\mu\ep^{2(d_0+1)}|\bx|^2\(1+|\bx|^{2d_0}\)|D^2v|\) + O\(\mu^2\ep^{4(d_0+1)}|\bx|^3\(1+|\bx|^{4d_0}\)|Dv|\)
\end{align*}
for any $v \in C^{\infty}(\mr^n)$, so we get from \eqref{eq-D} and \eqref{eq-R-bg-est-2} that
\begin{equation}\label{eq-z-2}
D_2(\ep\bx) (\ep x_N)^2 = {\mu^2\ep^{4(d_0+1)} \over 48(N-1)} \sum_{i,j,k=1}^n \(\pa_k \oh_{ij}(\bx)\)^2 x_N^2 + O\(\mu \nu^{2(d_0+1)} \cdot \mu^2 \ep^{4(d_0+1)} |\bx|^2\(1+|\bx|^{4d_0}\) x_N^2\)
\end{equation}
and
\begin{equation}\label{eq-z-3}
\begin{aligned}
&\ D_{2(m+1)}(\ep\bx) (\ep x_N)^{2(m+1)} \\
&= \left[ {\ep^{2m} \Delta \(D_{2m}(\ep\bx)\) \over (2m+3)(N-2(m+1))} + O\(\mu^3\ep^{6(d_0+1)} |\bx|^2\(1+|\bx|^{6d_0+2-2m}\)\) \right] x_N^{2(m+1)} \\
&\ + O\(|D_{2m}(\ep\cdot)|^2 \ep^{2(m+1)}x_N^{2(m+1)}\) \\
&= {\mu^2\ep^{4(d_0+1)} \over 48(N-1)} \left[\prod_{\tm=1}^m {1 \over (2\tm+3)(N-2(\tm+1))} \right] \sum_{i,j,k=1}^n \Delta^m\(\pa_k \oh_{ij}(\bx)\)^2 x_N^{2(m+1)} \\
&\ + O\(\(\mu \nu^{2(d_0+1)} + \mu^2 \nu^{4d_0+6-2m}\) \cdot \mu^2\ep^{4(d_0+1)}\(1+|\bx|^{4d_0+2-2m}\)x_N^{2(m+1)}\)
\end{aligned}
\end{equation}
in $B_+^N(0,\nu/\ep)$ for each $m = 1, \cdots, 2d_0+1$.
Subsequently we get from \eqref{eq-z-1}-\eqref{eq-z-3} that
\begin{equation}\label{eq-z-4}
\begin{aligned}
z(\ep x) &= {\mu^2\ep^{4(d_0+1)} \over 48(N-1)} \sum_{m=1}^{2d_0+2} \(\left[\prod_{\tm=1}^{m-1} {1 \over (2\tm+3)(N-2(\tm+1))} \right] \sum_{i,j,k=1}^n \Delta^{m-1} \(\pa_k \oh_{ij}(\bx)\)^2\) x_N^{2m} \\
&\ + O\(\mu^3 \ep^{4(d_0+1)}|x|^2\(1+|x|^{4d_0}\)x_N^2\) + O\((\ep x_N)^{2(2d_0+3)-\eta_3}\)
\end{aligned}
\end{equation}
in $B_+^N(0,\nu/\ep)$.
Since the magnitude of $z(\ep \cdot)$ is $O(\mu^2 \ep^{4(d_0+1)} |\cdot|^{4(d_0+1)})$, the $m$-th power of $z$ for $m \ge 2$ can be ignored.
Accordingly \eqref{eq-rho-est} follows from \eqref{eq-z-4} and $\rho = (1+z)^{-{2 \over N-2}}x_N$.
Lemma \ref{lemma-z-reg} guarantees the $C^2$-validity of \eqref{eq-rho-est}, establishing the proposition.
\end{proof}

\subsection{Global behavior of the solution}\label{subsec-app-c}
Here we investigate the behavior of $z = u-1$ in the whole space $\mr^N_+$ $(N \ge 3)$ where $u$ is the solution of \eqref{eq-LN}.
It is one of the key parts in the proof of Proposition \ref{prop-lin}.

\begin{lemma}\label{lemma-z-est-RN}
Let $\eta_0 > 0$ be a fixed number in \eqref{eq-eta_0}, which can be reduced if needed. 
Then there is a constant $C > 0$ relying only on $N$ such that
\begin{equation}\label{eq-z-ann}
\left|\nabla_{\bx}^m z(x)\right| \le {C\eta_0 x_N^2 \over 1+|x|^{4+m}} \quad \text{and} \quad
\left|\pa_{x_N}^m z(x)\right| \le {C\eta_0 x_N^{2-m} \over 1+|x|^4}
\quad \text{for any } x \in \mr^N_+ \text{ and } m = 0, 1, 2.
\end{equation}
\end{lemma}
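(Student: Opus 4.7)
The plan is to combine the boundary expansion of Appendix \ref{subsec-app-b} with a global comparison argument using an explicit barrier for the linearized Loewner--Nirenberg operator, and then obtain the derivative bounds via scaled interior Schauder estimates. On the small cylinder $\mcr(\nu, \nu)$, the expansion $z = \sum_{m=1}^{2d_0+2} D_{2m}(\bx) x_N^{2m} + O(x_N^{2(2d_0+3)-\eta_3})$ from \eqref{eq-z-star}--\eqref{eq-z-error}, together with Lemma \ref{lemma-z-reg} and the bound $|D_{2m}| \lesssim \eta_0$ (which follows from \eqref{eq-D}, \eqref{eq-eta_0}, and \eqref{eq-R-bg-est-0}), already yields \eqref{eq-z-ann} on this region. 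So the remaining task is to bound $z$ and its derivatives on the exterior region $\{|x| \ge \nu/2\} \cap \omrn$.

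On $\{|x| \ge 1\}$ we have $\bg = g_c$ and $R_{\bg} = 0$, so \eqref{eq-LN} reduces via \eqref{eq-tg} to the semilinear equation
\[L_0 z = -\tfrac{N(N-2)}{4}\, g(z), \qquad L_0 := x_N^2 \Delta - (N-2)\, x_N \pa_N + N,\]
with $g$ as in \eqref{eq-g} and $g(t) = O(t^2)$ near $t = 0$. A direct computation using $\Delta\(x_N^a|x|^{-b}\) = a(a-1)x_N^{a-2}|x|^{-b} + b(b-N-2a+2)x_N^a|x|^{-b-2}$ yields $L_0\(x_N^2|x|^{-4}\) = (6-N)\, x_N^2|x|^{-4}$, which is strictly negative under our hypothesis $N \ge 22$. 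Hence the radial model $\phi_*(x) := x_N^2/(1+|x|^4)$ is a genuine strict supersolution for the linearized problem, and the substitution $z = \phi_* v$ transforms the equation into
\[\mathcal{M} v := \phi_* \bigl[x_N^2 \Delta v - (N-2)x_N \pa_N v\bigr] + 2x_N^2 \nabla \phi_* \cdot \nabla v + (6-N)\phi_*\, v = -\tfrac{N(N-2)}{4}\, g(\phi_* v),\]
whose zero-order coefficient $(6-N)\phi_* < 0$ has the sign suitable for the degenerate-elliptic maximum principle.

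The pointwise bound $|z(x)| \le C\eta_0\, \phi_*(x)$ is obtained by applying this maximum principle to $v - M\eta_0$ for $M$ large. By the inner estimate, $v \le C\eta_0$ on $\{|x|=\nu/2\}$; on $\mr^n$ the ratio $v$ extends continuously to $\bx \mapsto (1+|\bx|^4) D_2(\bx)$, which is bounded by $C\eta_0$ because $D_2 = -R_{\hh}/[12(N-1)]$ has compact support in $\{|\bx| \le 1\}$; at infinity the conformal reformulation of Appendix \ref{subsec-app-a} identifies $|x|=\infty$ with an interior point $y_s \in B_{1/2}$ near which the background metric $\bg_B$ is only an $\eta_0$-perturbation of Euclidean, so $z$ stays $O(\eta_0)$ and $v$ remains bounded. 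If $v - M\eta_0$ attained a positive interior maximum at some $x^*$, the sign $(6-N) < 0$ combined with the standard second-derivative criterion would contradict $\mathcal{M} v(x^*) = O(\eta_0 v(x^*) \phi_*(x^*))$ for small $\eta_0$ (using $g(\phi_* v) = O((\phi_* v)^2)$ and the a priori $L^\infty$-bound $\|z\|_\infty \lesssim \eta_0$). A symmetric argument yields the lower bound, establishing the $m=0$ case of \eqref{eq-z-ann}.

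Derivative estimates follow from scaled interior regularity. For $x_0 \in \mr^N_+$ with $s := x_{0,N} > 0$, set $\vr_0 := \min\{s, 1+|x_0|\}/4$ and rescale $\widetilde{z}(y) := z(x_0 + \vr_0 y)$; on $B(0,1)$ the resulting equation is uniformly elliptic (no longer degenerate, since we are away from $\{x_N = 0\}$ at scale $\vr_0$) with smooth bounded coefficients, so Schauder estimates give $\|\widetilde{z}\|_{C^2(B(0, 1/2))} \lesssim \|\widetilde{z}\|_{L^\infty(B(0,1))}$. Un-scaling and invoking the pointwise bound produces $|D^m z(x_0)| \lesssim \vr_0^{-m}\, \eta_0\, x_{0,N}^2/(1+|x_0|^4)$; separating into the regimes $\vr_0 \sim s$ (close to $\mr^n$, paying $s^{-1}$ per normal derivative) and $\vr_0 \sim 1+|x_0|$ (deep interior, paying $|x_0|^{-1}$ per tangential derivative) produces exactly the two families of bounds in \eqref{eq-z-ann}. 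The hardest step is the global comparison argument: $L_0$ is degenerate at $\{x_N=0\}$ and not pointwise coercive, so the maximum principle only applies after conjugation by the barrier $\phi_*$, and the specific choice $\phi_* = x_N^2 |x|^{-4}$ is essentially forced --- it is the unique (up to scale) separable eigenfunction of $L_0$ in the exterior that vanishes to the correct order at $\{x_N=0\}$ and decays fast enough at infinity.
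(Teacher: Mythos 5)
Your proposal is a correct-in-spirit but genuinely different route from the paper's, and as written it has gaps at exactly the points where the real work lies. The paper does not patch an inner expansion to an exterior comparison: it exhibits \emph{global} super- and sub-solutions $1 \pm C\eta_0 x_N^2 \mw_{1/2}^{4 \over N-2}(x)$ of \eqref{eq-LN} on all of $\mr^N_+$ (note $\mw_{1/2}^{4/(N-2)} \sim (1+|x|^4)^{-1}$), transfers them to the ball $B_{1/2}$ where they become $1 \pm C\eta_0\rho_B^2$, runs the monotone iteration scheme there, and invokes uniqueness of the solution to \eqref{eq-LN-ball} to conclude $|z| \le C\eta_0 x_N^2\mw_{1/2}^{4/(N-2)}$. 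This one stroke simultaneously handles the perturbed region, the point at infinity, and the a priori smallness of $z$ --- the three things your argument leaves open.

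Concretely: (i) your barrier computation for $L_0$ is carried out only where $\bg = g_c$ and $R_{\bg}=0$, i.e.\ $\{|x|\ge 1\}$, but your comparison region starts at $\{|x|=\nu/2\}$; on $\{x : |x|\le 1\}\setminus \mcr(\nu,\nu)$ the metric is a nontrivial $\eta_0$-perturbation with $R_{\bg}\ne 0$, and you never absorb those terms. (ii) The maximum principle for $v - M\eta_0$ on an unbounded region requires knowing that the supremum is not attained ``at infinity''; asserting that ``$v$ remains bounded'' near $y_s$ is not enough, and showing $v = z(1+|x|^4)/x_N^2 = O(\eta_0)$ there is essentially the statement being proved --- the paper gets it from the barrier $\rho_B^2$ at the boundary point $y_s \in \pa B_{1/2}$ (which is a boundary, not interior, point of the ball). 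You also use $\|z\|_\infty \lesssim \eta_0$ as an input, which itself needs proof. (iii) For the derivatives, interior rescaling at scale $\vr_0 \sim x_N$ gives $|\nabla_{\bx} z| \lesssim \eta_0 x_N/(1+|x|^4)$ when $x_N \ll 1+|x|$, which does \emph{not} imply the claimed $\eta_0 x_N^2/(1+|x|^5)$; the tangential gain of $(1+|x|)^{-1}$ per derivative near $\{x_N=0\}$ requires estimates up to the degenerate boundary (the scaling argument of \cite{LZ2} cited by the paper), not the purely interior Schauder step you describe. Finally, a small slip: the zeroth-order coefficient of the linearized operator is $-N$, not $+N$ (cf.\ $\ml$ in Appendix \ref{subsec-app-b}), so $L_0(x_N^2|x|^{-4}) = -3(N-2)\,x_N^2|x|^{-4}$ rather than $(6-N)\,x_N^2|x|^{-4}$; the sign is still negative for all $N\ge 3$, so nothing hinges on $N\ge 22$ here, but the computation as stated is wrong.
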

\begin{proof}
The notations in Appendix \ref{subsec-app-a} will be used again in this proof.

By \eqref{eq-eta_0}, \eqref{eq-R-bg-est-0} and \eqref{eq-tg}, one can pick a number $C > 0$ (depending only on $N$) so large that the function $\bar{u} := 1 + C \eta_0 x_N^2 \mw_{1/2}^{4 \over N-2}(x)$ satisfy
\[-{4(N-1) \over N-2} \Delta_{\tg} u = {4(N-1) \over N-2} C\eta_0x_N^2 \left[ {2\left\{(N-3)\(1+|\bx|^2\) + 6x_N + (N-3)x_N^2\right\} \over \(|\bx|^2+(x_N+1)^2\)^3} + O(|h|)\right] \ge 0 \]
and
\[R_{\tg} u + N(N-1)u^{N+2 \over N-2} = \left[N(N-1)\(u^{4 \over N-2}-1\) + O\(|Dh|^2\) x_N^2\right] u \ge C \eta_0 x_N^2 \mw_{1/2}^{4 \over N-2}(x) \cdot u \ge 0\]
in $\mr^N_+$. This implies that $\bar{u}$ is a global upper solution to \eqref{eq-LN}.
Similarly, one sees that $\underline{u} := 1 - C \eta_0 x_N^2 \mw_{1/2}^{4 \over N-2}(x)$ is a lower solution to \eqref{eq-LN}.
Because of the conformal equivalence between $\mr^N_+$ and $B_{1/2}$, it follows that the functions $\bar{v} := 1 + C \eta_0 \rho_B^2(y)$
and $\underline{v} := 1 - C \eta_0 \rho_B^2(y)$ in $B_{1/2}$ are super- and sub-solutions of \eqref{eq-LN-ball}.
Therefore the standard monotone iteration scheme produces a solution $v$ of \eqref{eq-LN-ball} such that $\underline{v} \le v \le \bar{v}$ (adapt the proof of Proposition 2.1 in \cite{AM2}).
Since $v = u_B$ by the uniqueness, we get Eq. \eqref{eq-z-ann} with $m = 0$.

The higher regularity results (\eqref{eq-z-ann} with $m = 1,2$) follows from the scaling property of equation $\mq(z) = 0$ in $\mr^N_+$ (cf. \cite[Proposition 3.1]{LZ2}).
\end{proof}

\begin{rmk} 
By applying the maximum principle, it is possible to improve the decay estimate of $z$.
Especially, we observe that $z$ decays faster as the dimension $N$ gets higher: It holds that
\begin{equation}\label{eq-z-est-decay}
|z(x)| \le C \eta_0 x_N^2|x|^{-\(2+\sqrt{3N-2}\)} \quad \text{in } \left\{x \in \mr^N_+: |x| > 1\right\}
\end{equation}
for $C > 0$ depending only on $N$.
\begin{proof}
Set $\bar{z}(x) = \|z\|_{C^2\big(\mr_+^N\big)} x_N^2|x|^{-\alpha}$
in $\{|x| > 1\}$ with any fixed $\alpha \ge 4$.
Since $\bg_{ab} = \delta_{ab}$ if $|x| > 1$ and $g(t) \ge 0$ for any $t \ge -1$ (see \eqref{eq-g} for the definition of $g$), we see
\begin{align*}
\mq(\bar{z}) &\le {4(N-1) \over N-2} \left[\Delta\bar{z} - (N-2) x_N^{-1} \pa_N \bar{z} - N x_N^{-2} \bar{z} \right] \\
&= {4(N-1) \over N-2} \|z\|_{C^2\big(\mr_+^N\big)} |x|^{-\alpha} \left[-3(N-2) + \alpha(\alpha-4) x_N^2|x|^{-2} \right] \le 0 = \mq(z)
\end{align*}
provided that $4 \le \alpha \le 2+\sqrt{3N-2}$.
Moreover, it holds that $z = 0 \le \bar{z}$ in $\{|x| > 1 \text{ and } x_N = 0\}$
and
$|z(x)| \le \|\pa_{NN}z\|_{L^{\infty}(\{x_N < 1\})} x_N^2/2 \le \bar{z}(x)$ in $\{|x| = 1 \text{ and } x_N > 0\}$.

By Lemma \ref{lemma-z-est-RN}, $\|z\|_{C^2\big(\mr_+^N\big)} \le C\eta_0$.
Besides $\bar{z}(x), z(x) \to 0$ uniformly as $|x| \to \infty$,
so we can apply the argument in the proof of Lemma \ref{lemma-L1-mp} to derive that $z(x) \le \bar{z}(x)$ for $|x| > 1$.
Analogously, $z(x) \ge -\bar{z}(x)$ is true for $|x| > 1$, validating \eqref{eq-z-est-decay}.
\end{proof}
\end{rmk}

\begin{proof}[Proof of \eqref{eq-eig}]
Denote by $g^+_{\text{h}}$ by the hyperbolic metric in $\mr^N_+$, i.e., $g^+_{\text{h}} = x_N^{-2}(d\bx^2+dx_N^2)$.
It is well known that
\[\lambda_1\(-\Delta_{g^+_{\text{h}}}\)
= \inf_{V \in C^{\infty}_c(\mr^N_+) \setminus \{0\}} {\int_{\mr^N_+} \(g^+_{\text{h}}\)^{ab} \pa_aV \pa_bV \sqrt{\left|g^+_{\text{h}}\right|} dx \over \int_{\mr^N_+} V^2 \sqrt{\left|g^+_{\text{h}}\right|} dx}
= \inf_{V \in C^{\infty}_c(\mr^N_+) \setminus \{0\}} {\int_{\mr^N_+} x_N^{2-N} |\nabla V|^2 dx \over \int_{\mr^N_+} x_N^{-N} V^2 dx} = {(N-1)^2 \over 4}\]
(see \cite{Mc}).
By \eqref{eq-eta_0} and Lemma \ref{lemma-z-est-RN}, there are a bounded function $\hat{z}$ and a two-tensor $\bar{h}_{ab}$ in $\mr^N_+$
such that $|\bar{h}_{ab}|$ is uniformly bounded, $u^{4 \over N-2} = 1 + \eta_0 \hat{z}$ and $\bg_{ab} = \delta_{ab} + \eta_0 \bar{h}_{ab}$ in $\mr^N_+$.
Hence it follows from the definition of $g^+$ that
\[g^+_{ab} = x_N^{-2}(1 + \eta_0 \hat{z})(\delta_{ab} + \eta_0 \bar{h}_{ab}) := (g^+_{\text{h}})_{ab} + x_N^{-2} \eta_0 \bar{h}'_{ab}.\]
This implies that $\sqrt{|g^+|} = x_N^{-N}(1+O(\eta_0))$ and $(g^+-g^+_{\text{h}})^{ab} = x_N^2 \eta_0 \bar{h}''_{ab}$ for some tensor $\bar{h}''$ in $\mr^N_+$ having the bounded norm.
We obtain accordingly
\[\lambda_1\(-\Delta_{g^+}\) = \inf_{V \in C^{\infty}_c(\mr^N_+) \setminus \{0\}} {(1+O(\eta_0)) \(\int_{\mr^N_+} x_N^{2-N} |\nabla V|^2 dx\) \over (1+O(\eta_0))\(\int_{\mr^N_+} x_N^{-N} V^2 dx\)}
\ge (1-C\eta_0) \cdot \lambda_1\(-\Delta_{g^+_{\text{h}}}\) > {n^2 \over 4} - \gamma^2\]
by choosing $\eta_0 > 0$ small.
\end{proof}

\section{The values of integrals $F_{1,n,\gamma}$, $F_{2,n,\gamma}$ and $F_{3,n,\gamma}$} \label{sec-app-b}
The next lemma enumerate some values of integrals $F_{1,n,\gamma}$, $F_{2,n,\gamma}$ and $F_{3,n,\gamma}$ defined in \eqref{eq-F} which are necessary to calculate the function $J_2^{\gamma}$ in \eqref{eq-wjg} for the case $d_0 = 1$.
It can be derived in a similar manner to Lemma \ref{lemma-W-rel}
However, the computation becomes much more involved, so we carried out it by using Mathematica.
More values required to deal with the case $d_0 = 4$ can be found in the supplement \cite{KMW}.

\begin{lemma} \label{lemma-W-rel-2}
We have
\begin{align*}
F_{1,n,\gamma}(3,0) &= \left|S^{n-1}\right|\left[{8(n-3)\(1-\gamma^2\) \over 3(n-4)(n-2\gamma-4)(n-2\gamma+4)}\right]A_1B_2,\\
F_{1,n,\gamma}(3,2) &= \left|S^{n-1}\right|\left[{8(n-3)n\(1-\gamma^2\)(5(n-3)(n-5)+(1-2\gamma)(1+2\gamma)) \over 15(n-4)(n-6)(n-2\gamma-4)(n-2\gamma+4)(n-2\gamma-6)(n+2\gamma-6)}\right]A_1B_2,\\
F_{1,n,\gamma}(3,4) &= \left|S^{n-1}\right|\left[\tfrac{8(n-3)n(n+2)\(1-\gamma^2\)\(35(n-3)(n-5)^2(n-7)+R_{1,n,\gamma}(3,4)\)} {105(n-4)(n-6)(n-8)(n-2\gamma-4)(n-2\gamma+4)(n-2\gamma-6)(n+2\gamma-6)(n-2\gamma-8)(n+2\gamma-8)}\right]A_1B_2,\\
F_{1,n,\gamma}(5,0) &= \left|S^{n-1}\right|\left[{128(n-5)(n-3)\(4-\gamma^2\)\(1-\gamma^2\) \over 15(n-4)(n-6)(n-2\gamma-4)(n+2\gamma-4)
(n-2\gamma-6)(n+2\gamma-6)}\right]A_1B_2,\\
F_{1,n,\gamma}(5,2) &= \left|S^{n-1}\right|\left[\tfrac{128(n-5)(n-3)n\(4-\gamma^2\)\(1-\gamma^2\)(7(n-3)(n-7)+(1-2\gamma)(1+2\gamma))} {105(n-4)(n-6)(n-8)(n-2\gamma-4)(n-2\gamma+4)(n-2\gamma-6)(n+2\gamma-6)(n-2\gamma-8)(n+2\gamma-8) }\right]A_1B_2,\\
F_{1,n,\gamma}(7,0) &= \left|S^{n-1}\right|\left[\tfrac{1024(n-7)(n-5)(n-3)\(9-\gamma^2\)\(4-\gamma^2\)\(1-\gamma^2\)} {35(n-4)(n-6)(n-8)(n-2\gamma-4)(n-2\gamma+4)(n-2\gamma-6)(n+2\gamma-6)(n-2\gamma-8)(n+2\gamma-8) }\right]A_1B_2,\\
F_{2,n,\gamma}(1,2) &= \left|S^{n-1}\right| \left[{(n+2) \(3(n-1)^2 + \(1-4\gamma^2\)\) \over 12(n-1)}\right] A_1B_2, \\
F_{2,n,\gamma}(1,4) &= \left|S^{n-1}\right| \left[{(n+2)(n+4) \(15(n-1)^2(n-3)^2 + R_{2,n,\gamma}(1,4)\) \over 60(n-1)(n-4)(n-2\gamma-4)(n+2\gamma-4)}\right] A_1B_2, \\ F_{2,n,\gamma}(1,6) &= \left|S^{n-1}\right| \left[ \tfrac{(n+2)(n+4)(n+6) \(35(n-1)^2(n-3)^2(n-5)^2 + R_{2,n,\gamma}(1,6)\)} {140(n-1)(n-4)(n-6)(n-2\gamma-4)(n+2\gamma-4)(n-2\gamma-6)(n+2\gamma-6)}\right] A_1B_2,\\
F_{2,n,\gamma}(3,4) &= \left|S^{n-1}\right| \left[{2(n+2)(n+4)\(1-\gamma^2\)\(35(n-1)(n-3)^2(n-5) + R_{2,n,\gamma}(3,4)\) \over 105(n-4)(n-6)(n-2\gamma-4)(n+2\gamma-4)
(n-2\gamma-6)(n+2\gamma-6)}\right] A_1B_2,\\
F_{2,n,\gamma}(3,6) &= \left|S^{n-1}\right| \left[ \tfrac{2(n+2)(n+4)(n+6)\(1-\gamma^2\)\(105(n-1)(n-3)^2(n-5)^2(n-7) +R_{2,n,\gamma}(3,6)\)}{315(n-4)(n-6)(n-8)(n-2\gamma-4)(n+2\gamma-4)
(n-2\gamma-6)(n+2\gamma-6)(n-2\gamma-8)(n+2\gamma-8)}\right] A_1B_2,\\
F_{2,n,\gamma}(5,0) &= \left|S^{n-1}\right|\left[{32(n-3)\(4-\gamma^2\)\(1-\gamma^2\) \over 15(n-4)(n-2\gamma-4)(n+2\gamma-4)}\right]A_1B_2,\\
F_{2,n,\gamma}(5,2) &= \left|S^{n-1}\right|\left[\tfrac{32(n-3)(n+2) \(4-\gamma^2\)\(1-\gamma^2\)\(7(n-1)(n-5)+(1-2\gamma)(1+2\gamma) \)} {105(n-4)(n-6)(n-2\gamma-4)(n+2\gamma-4)(n-2\gamma-6)(n+2\gamma-6)}\right]A_1B_2,\\
F_{2,n,\gamma}(5,4) &= \left|S^{n-1}\right|\left[\tfrac{32(n-3)(n+2)(n+4) \(4-\gamma^2\)\(1-\gamma^2\) \(21(n-1)(n-3)(n-5)(n-7) + R_{2,n,\gamma}(5,4)\)} {315(n-4)(n-6)(n-8)(n-2\gamma-4)(n+2\gamma-4)
(n-2\gamma-6)(n+2\gamma-6)(n-2\gamma-8)(n+2\gamma-8)}\right]A_1B_2,\\
F_{2,n,\gamma}(7,0) &= \left|S^{n-1}\right|\left[{256(n-5)(n-3) \(9-\gamma^2\)\(4-\gamma^2\)\(1-\gamma^2\) \over 35(n-4)(n-6)(n-2\gamma-4)(n+2\gamma-4)(n-2\gamma-6)(n+2\gamma-6)} \right]A_1B_2,\\
F_{2,n,\gamma}(7,2) &= \left|S^{n-1}\right|\left[\tfrac{256(n-5)(n-3)(n+2) \(9-\gamma^2\)\(4-\gamma^2\)\(1-\gamma^2\) \(9(n-1)(n-7)+(1-2\gamma)(1+2\gamma)\)} {315(n-4)(n-6)(n-8)(n-2\gamma-4)(n+2\gamma-4)
(n-2\gamma-6)(n+2\gamma-6)(n-2\gamma-8)(n+2\gamma-8)}\right]A_1B_2,\\
F_{2,n,\gamma}(9,0) &= \left|S^{n-1}\right|\left[\tfrac{8192(n-7)(n-5)(n-3) \(16-\gamma^2\)\(9-\gamma^2\)\(4-\gamma^2\)\(1-\gamma^2\)} {315(n-4)(n-6)(n-8)(n-2\gamma-4)(n+2\gamma-4)
(n-2\gamma-6)(n+2\gamma-6)(n-2\gamma-8)(n+2\gamma-8)}\right]A_1B_2,\\
F_{3,n,\gamma}(3,4) &= \left|S^{n-1}\right| \left[{2(n+2)(2-\gamma)(1-\gamma)\(35(n-1)(n-3)^2(n-4)(n-5)
-R_{3,n,\gamma}(3,4)\) \over 105(n-4)(n-6)(n-2\gamma-4)(n+2\gamma-4)
(n-2\gamma-6)(n+2\gamma-6)}\right] A_1B_2,\\
F_{3,n,\gamma}(3,6) &= \left|S^{n-1}\right| \left[\tfrac{ 2(n+2)(n+4)(2-\gamma)(1-\gamma)\(105(n-1)(n-3)^2(n-5)^2(n-6)(n-7)-R_{3,n,\gamma}(3,6)\)} {315(n-4)(n-6)(n-8)(n-2\gamma-4)(n+2\gamma-4) (n-2\gamma-6)(n+2\gamma-6)(n-2\gamma-8)(n+2\gamma-8)} \right] A_1B_2,\\
F_{3,n,\gamma}(5,0) &= \left|S^{n-1}\right|\left[{32(n-3)(3-\gamma)(2-\gamma)\(1-\gamma^2\) \over 15(n-4)(n-2\gamma-4)(n+2\gamma-4)}\right]A_1B_2,\\
F_{3,n,\gamma}(5,2) &= \left|S^{n-1}\right|\left[\tfrac{32(n-3)(3-\gamma)(2-\gamma) \(1-\gamma^2\)\(7(n-1)(n-2)(n-5) - R_{3,n,\gamma}(5,2)\)} {105(n-4)(n-6)(n-2\gamma-4)(n+2\gamma-4)
(n-2\gamma-6)(n+2\gamma-6)} \right]A_1B_2,\\
F_{3,n,\gamma}(5,4) &= \left|S^{n-1}\right|\left[\tfrac{32(n-3)(n+2)(3-\gamma)(2-\gamma) \(1-\gamma^2\)\(21(n-7)(n-5)(n-4)(n-3)(n-1) - R_{3,n,\gamma}(5,4)\)} {315(n-4)(n-6)(n-8)(n-2\gamma-4)(n+2\gamma-4)
(n-2\gamma-6)(n+2\gamma-6)(n-2\gamma-8)(n+2\gamma-8)}\right]A_1B_2,\\
F_{3,n,\gamma}(7,0) &= \left|S^{n-1}\right|\left[{256(n-5)(n-3) (4-\gamma)(3-\gamma)\(4-\gamma^2\)\(1-\gamma^2\) \over 35(n-4)(n-6)(n-2\gamma-4)(n+2\gamma-4)(n-2\gamma-6)(n+2\gamma-6)} \right]A_1B_2,\\
F_{3,n,\gamma}(7,2) &= \left|S^{n-1}\right|\left[\tfrac{256(n-5)(n-3) (4-\gamma)(3-\gamma)\(4-\gamma^2\)\(1-\gamma^2\)\(9(n-7)(n-2)(n-1) - R_{3,n,\gamma}(7,2)\)} {315(n-4)(n-6)(n-8)(n-2\gamma-4)(n+2\gamma-4) (n-2\gamma-6)(n+2\gamma-6)(n-2\gamma-8)(n+2\gamma-8)} \right]A_1B_2,\\
F_{3,n,\gamma}(9,0) &= \left|S^{n-1}\right|\left[\tfrac{8192(n-7)(n-5)(n-3) (5-\gamma)(4-\gamma)\(9-\gamma^2\)\(4-\gamma^2\)\(1-\gamma^2\)} {315(n-4)(n-6)(n-8)(n-2\gamma-4)(n+2\gamma-4) (n-2\gamma-6)(n+2\gamma-6)(n-2\gamma-8)(n+2\gamma-8)} \right]A_1B_2
\end{align*}
for $n > 2\gamma + 8$ where
\begin{align*}
R_{1,n,\gamma}(3,4) &= \(1-4\gamma^2\)\left[14n^2 - 140n + 377 - 12\gamma^2\right],\\
R_{2,n,\gamma}(1,4) &= \(1-4\gamma^2\)\left[10n^2 - 40n + 57 - 12\gamma^2\right],\\
R_{2,n,\gamma}(1,6) &= \(1-4\gamma^2\)\left[35n^4 - 420n^3 + 1939n^2 - 4074n + 3645 \right. \\
&\hspace{120pt} \left. + 80\gamma^4 - 4\gamma^2\(21n^2-126n+275\)\right],\\
R_{2,n,\gamma}(3,4) &= \(1-4\gamma^2\)\left[14n^2 - 84n + 153 - 12\gamma^2\right],\\
R_{2,n,\gamma}(3,6) &= \(1-4\gamma^2\)\left[9\(7n^4 - 112n^3 + 685n^2 - 1896n + 2105\) \right. \\
&\hspace{120pt} \left. + 80 \gamma^4 - 4\gamma^2 \(27n^2 - 216n + 575\)\right],\\
R_{2,n,\gamma}(5,4) &= \(1-4\gamma^2\)\left[6n^2-48n+99-4\gamma^2\right],\\
R_{3,n,\gamma}(3,4) &= (1-2\gamma) \left[42n^3 - 532n^2 + 2103n - 2844 + 24 \gamma^3 (n+4) \right.\\
&\hspace{120pt} \left. - 84 \gamma^2 (n-4) - 14\gamma \(2n^3-12n^2+15n+36\)\right],\\
R_{3,n,\gamma}(3,6) &= (1-2\gamma) \left[9\(21n^5 - 518n^4 + 4931n^3 - 22922n^2 + 52567n - 48810\) - 160 \gamma^5 (n+6) \right.\\
&\hspace{120pt} + 80 \gamma^4 (11n-42) + 8 \gamma^3 \(27n^3 - 162n^2 - 97n + 2550\) \\
&\hspace{120pt} - \gamma^2 \(756n^3 - 10584n^2 + 50308n - 82200\) \\
&\hspace{120pt} \left.- 18 \gamma \(7n^5 - 126n^4 + 861n^3 - 2534n^2 + 2153n + 2730\)\right],\\
R_{3,n,\gamma}(5,2) &= (1-2\gamma)\left[(3n-22)-2\gamma(n+2)\right],\\
R_{3,n,\gamma}(5,4) &= (1-2\gamma)\left[3\(6n^3-104n^2+543n-892\) + 8\gamma^3(n+4) \right.\\
&\hspace{120pt} \left. - 4\gamma^2\(3n^3+7n-44\) + 2\gamma\(48n^2-83n-116\) \right],\\
R_{3,n,\gamma}(7,2) &= (1-2\gamma)\left[3(n-10)-2\gamma(n+2) \right].
\end{align*}
\end{lemma}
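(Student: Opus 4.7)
The plan is to reuse the Fourier-analytic machinery already laid down for Lemma \ref{lemma-W-rel}, since each integral $F_{i,n,\gamma}(\alpha,\beta)$ has the same general structure: a $\bx$-integral of a polynomial weight times a squared (derivative of a) bubble, integrated against $x_N^{\alpha-2\gamma}\,dx_N$. By Plancherel's identity combined with Lemma \ref{lemma-ode-1}, each such $\bx$-integral becomes
\[
\int_{\mr^n} \big[\text{multiplier in } \xi\big]\cdot \hw_1(|\xi|)^{2}\,\varphi(|\xi|x_N)\varphi^{(\bullet)}(|\xi|x_N)\,d\xi
\]
after applying $(-\Delta_\xi)^{(\beta/2)+1}$ to the factor $\hw_1(|\xi|)\varphi(|\xi|x_N)$ (for $F_{1,n,\gamma}$), or to $i\xi_i \hw_1\varphi$ (for $F_{2,n,\gamma}$, cf. \eqref{eq-W-rel-8}), or to $|\xi|\hw_1\varphi'$ (for $F_{3,n,\gamma}$, cf. \eqref{eq-W-rel-9}).

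Step one is to carry out the iterated radial Laplacians using the ODEs \eqref{eq-ode-1} for $\varphi$ and \eqref{eq-ode-2} for $\hw_1$ to eliminate all second-order derivatives in favor of first-order ones and the functions themselves. After the substitution $t=\rho x_N$, the $d\xi\,dx_N$ integral factorizes into a sum of products of two one-dimensional integrals, one of the form $\int_0^\infty t^{\eta-2\gamma}\varphi^{(a)}(t)\varphi^{(b)}(t)\,dt$ and the other of the form $\int_0^\infty \rho^{-\eta+2\gamma}\hw_1^{(c)}(\rho)\hw_1^{(d)}(\rho)\rho^{n-1}\,d\rho$. Step two applies Lemma \ref{lemma-ode-3} (together with integration by parts on the cross terms $\phi\phi'$) repeatedly to reduce every such one-dimensional integral to a rational multiple of $A_1$ or $B_2$; each reduction introduces a factor of the form $(\eta-\alpha)^{\pm1}$ with $\alpha\in\{1\pm 2\gamma\}$, which accounts for the denominators $(n-2\gamma-2k)(n+2\gamma-2k)$ appearing in the statement. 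Finiteness throughout is guaranteed by the hypothesis $n>2\gamma+8$, which keeps $\eta$ strictly above the thresholds demanded by Lemma \ref{lemma-ode-3}.

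Step three is bookkeeping: organize the resulting rational functions of $n$ and $\gamma$ into the compact form with explicit residual polynomials $R_{1,n,\gamma}(3,4),\,R_{2,n,\gamma}(1,6),\,R_{3,n,\gamma}(3,6),$ etc. Here one exploits the identity $\det\bg=1$, the isotropy of the angular integrals (which produces the factors $|S^{n-1}|$ and the combinatorial numerators such as $n(n+2)(n+4)$), and the symmetry $\gamma\leftrightarrow-\gamma$ in the even-index pieces, which both trims and cross-checks the output.

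The main obstacle is sheer combinatorial bulk rather than any conceptual difficulty. For the entries with $\alpha+\beta$ as large as $9$, the operator $(-\Delta_\xi)^{(\beta/2)+1}$ applied to $\hw_1(|\xi|)\varphi(|\xi|x_N)$ expands into $O(10^2)$ monomial terms of the form $\rho^{-j}\hw_1^{(a)}\varphi^{(b)}x_N^c$; each must be reduced via the ODEs and Lemma \ref{lemma-ode-3}, and then the many rational fragments have to be combined over a common denominator to reveal the factorization $(1-\gamma^2)(4-\gamma^2)(9-\gamma^2)\cdots$ visible in the statement. This is precisely the point at which it becomes essentially mandatory to delegate the algebra to a symbolic engine (as the authors note), reserving hand computation only for verifying a few low-order cases such as $F_{1,n,\gamma}(3,0)$ as a sanity check against the template established in the proof of Lemma \ref{lemma-W-rel}.
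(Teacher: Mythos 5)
Your proposal follows exactly the route the paper takes: the authors state that Lemma \ref{lemma-W-rel-2} "can be derived in a similar manner to Lemma \ref{lemma-W-rel}" — i.e., Plancherel plus Lemma \ref{lemma-ode-1}, the identities \eqref{eq-W-rel-1}, \eqref{eq-W-rel-8}, \eqref{eq-W-rel-9}, reduction via the ODEs \eqref{eq-ode-1}–\eqref{eq-ode-2} and Lemma \ref{lemma-ode-3} to multiples of $A_1B_2$ — with the heavy algebra delegated to Mathematica, just as you describe. The only stray remark is your appeal to $\det\bg=1$, which plays no role in these purely Euclidean bubble integrals, but it does not affect the argument.
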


\bigskip \noindent \textbf{Acknowledgement.}
S. Kim is indebted to Professor M. d. M. Gonz\'alez and Dr. W. Choi for their valuable comments.
Also, part of the paper was written when he was visiting the University of British Columbia and Universit\`{a} di Torino.
He appreciates the both institutions, and especially Professor S. Terracini, for their hospitality and financial support.
He is supported by FONDECYT Grant 3140530.
The research of M. Musso has been partly supported by FONDECYT Grant 1120151 and Millennium Nucleus Center for Analysis of PDE, NC130017.
The research of J. Wei is partially supported by NSERC of Canada.

{\footnotesize
}

\Addresses

\begin{thebibliography}{10}
\bibitem{ACH}
W. Abdelhedi, H. Chtioui, H. Hajaiej, \emph{A complete study of the lack of compactness and existence results of a Fractional Nirenberg Equation via a flatness hypothesis: Part I.}, preprint, arXiv:1409.5884.

\bibitem{AS}
M. Abramowitz, I. A. Stegun, \emph{Handbook of mathematical functions with formulas, graphs, and mathematical tables},
National Bureau of Standards Applied Mathematics Series 55, For sale by the Superintendent of Documents, U.S. Government Printing Office, Washington, D.C., 1964.

\bibitem{Al0}
S. Almaraz, \emph{An existence theorem of conformal scalar-flat metrics on manifolds with boundary}, Pacific J. Math. \textbf{248} (2010), 1--22.

\bibitem{Al}
S. Almaraz, \emph{Blow-up phenomena for scalar-flat metrics on manifolds with boundary}, J. Differential Equations \textbf{251} (2011), 1813--1840.

\bibitem{Al2}
S. Almaraz, \emph{A compactness theorem for scalar-flat metrics on manifolds with boundary}, Calc. Var. Partial Differential Equations \textbf{41} (2011), 341--386.

\bibitem{AmM}
A. Ambrosetti, A. Malchiodi, \emph{A multiplicity result for the Yamabe problem $S^n$}, J. Funct. Anal. \textbf{168} (1999), 529--561.

\bibitem{ACF}
L. Andersson, P. T. Chru\'{s}ciel, H. Friedrich, \emph{On the regularity of solutions to the Yamabe equation and the existence
of smooth hyperboloidal initial data for Einstein's field equations}, Comm. Math. Phys. \textbf{149} (1992), 587--612.

\bibitem{Au}
T. Aubin, \emph{\'{E}quations diff\'{e}rentielles non lin\'{e}aires et probl\`{e}me de Yamabe concernant la courbure scalaire}, J. Math. Pures Appl. \textbf{55} (1976), 269--296.

\bibitem{AM}
P. Aviles, R. C. McOwen, \emph{Complete conformal metrics with negative scalar curvature in compact Riemannian manifolds}, Duke Math. J. \textbf{56} (1988), 395--398.

\bibitem{AM2}
P. Aviles, R. C. McOwen, \emph{Conformal deformation to constant negative scalar curvature on noncompact Riemannian manifolds}, J. Differential Geom. \textbf{27} (1988), 225--239.

\bibitem{abbas}
A. Bahri, \emph{Proof of the Yamabe conjecture, without the positive mass theorem, for locally conformally flat manifolds},  Einstein metrics and Yang-Mills connections (ed. by Toshiki Mabuchi and Shigeru Mukai), Lecture Notes in Pure and Pllied Mathematics, vol. 145, Marcel Dekker, New York, (1993), 1--26.

\bibitem{BeM}
M. Berti, A. Malchiodi, \emph{Non-compactness and multiplicity results for the Yamabe problem on $S^n$}, J. Funct. Anal. \textbf{180} (2001), 210--241.

\bibitem{Br}
S. Brendle, \emph{Blow-up phenomena for the Yamabe equation}, J. Amer. Math. Soc. \textbf{21} (2008), 951--979.

\bibitem{BC}
S. Brendle, S. Chen, \emph{An existence theorem for the Yamabe problem on manifolds with boundary}, J. Eur. Math. Soc. \textbf{16} (2014), 991--1016.

\bibitem{BM}
S. Brendle, F. Marques, \emph{Blow-up phenomena for the Yamabe equation II}, J. Differential Geom. \textbf{81} (2009), 225--250.

\bibitem{CS}
X.~Cabr{\'e}, Y. Sire, \emph{Nonlinear equations for fractional Laplacians, I: Regularity, maximum principles, and Hamiltonian estimates}, Ann. Inst. H. Poincar\'e Anal. Non Lin\'eaire \textbf{31} (2014), 23--53.

\bibitem{CS2}
X.~Cabr{\'e}, Y. Sire, \emph{Nonlinear equations for fractional Laplacians, II: Existence, uniqueness, and qualitative properties of solutions}, Trans. Amer. Math. Soc. \textbf{367} (2015), 911--941.

\bibitem{CaS}
L.~Caffarelli, L.~Silvestre, \emph{An extension problem related to the fractional {L}aplacian}, Comm. Partial Differential Equations \textbf{32} (2007), 1245--1260.

\bibitem{CC}
J. S. Case,  S.-Y. A. Chang, \emph{On fractional GJMS operators}, preprint, to appear in Comm. Pure Appl. Math., arXiv:1406.1846.

\bibitem{CG}
S.-Y.~A. Chang, M.~d.~M. Gonz\'{a}lez, \emph{Fractional {L}aplacian in conformal geometry}, Adv. Math. \textbf{226} (2011), 1410--1432.

\bibitem{CLZ}
Y.-H. Chen, C. Liu, Y. Zheng, \emph{Existence results for the fractional Nirenberg problem}, preprint, arXiv:1406.2770.

\bibitem{CK}
W.~Choi, S.~Kim, \emph{On perturbations of the fractional Yamabe problem}, preprint, arXiv:1501.00641.

\bibitem{CKL}
W.~Choi, S.~Kim, K.~Lee, \emph{Asymptotic behavior of solutions for nonlinear elliptic problems with the fractional Laplacian}, J. Funct. Anal. \textbf{266} (2014), 6531--6598.

\bibitem{DDS}
J.~D\'{a}vila, M.~del Pino, Y.~Sire, \emph{Non degeneracy of the bubble in the critical case for non local equations}, Proc. Amer. Math. Soc. \textbf{141} (2013), 3865--3870.

\bibitem{DDW}
J. D\'{a}vila, M. del Pino, J. Wei, \emph{Concentrating standing waves for the fractional nonlinear Schr\"{o}dinger
equations}, J. Differential Equations \textbf{256} (2014), 858--892.

\bibitem{DLS}
J.~D\'{a}vila, L. L\'{o}pez, Y. Sire, \emph{Bubbling solutions for nonlocal elliptic problems}, preprint, arXiv:1410.5461.

\bibitem{DP}
S.~Deng, A.~Pistoia, \emph{Blow-up solutions for {P}aneitz-{B}ranson type equations with critical growth}, Asymptot. Anal. \textbf{73} (2011), 225--248.

\bibitem{DK}
M. M. Disconzi, M. A. Khuri, \emph{Compactness and non-compactness for the Yamabe problem on manifolds with boundary}, to appear in J. Reine Angew. Math., arXiv:1201.4559.

\bibitem{Dr2}
O. Druet, \emph{From one bubble to several bubbles: the low-dimensional case}, J. Differential Geom. \textbf{63} (2003), 399--473.

\bibitem{Dr}
O. Druet, \emph{Compactness for Yamabe metrics in low dimensions}, Int. Math. Res. Not. \textbf{23} (2004), 1143--1191.

\bibitem{DH}
O. Druet, E. Hebey, \emph{Blow-up examples for second order elliptic PDEs of critical Sobolev growth}, Trans. Amer. Math. Soc. \textbf{357} (2005), 1915--1929.

\bibitem{Es1}
J.~F. Escobar, \emph{Conformal deformation of a Riemannian metric to a scalar flat metric with constant mean curvature on the boundary}, Ann. of Math. \textbf{136} (1992), 1--50.

\bibitem{Es2}
J.~F. Escobar, \emph{The Yamabe problem on manifolds with boundary}, J. Differential Geom. \textbf{35} (1992), 21--84.

\bibitem{EPV}
P.~Esposito, A.~Pistoia, J.~V{\'e}tois, \emph{The effect of linear perturbations on the {Y}amabe problem}, Math. Ann. \textbf{358} (2014), 511--560.

\bibitem{FKS}
E. B. Fabes, C. E. Kenig, R. P. Serapioni, \emph{The local regularity of solutions of degenerate elliptic equations}, Comm. Partial Differential Equations \textbf{7} (1982), 77--116.

\bibitem{FO}
V. Felli, M. Ould Ahmedou. \emph{Compactness results in conformal deformations of Riemannian metrics on manifolds
with boundary}, Math. Z. \textbf{244} (2003), 175--210.

\bibitem{FO1}
V. Felli, M. Ould Ahmedou. \emph{A geometric equation with critical nonlinearity on the boundary}. Pacific J. Math. \textbf{218} (2005), no. 1, 75--99.

\bibitem{Go}
M.~d.~M. Gonz\'{a}lez, \emph{Gamma convergence of an energy functional related to the fractional Laplacian}, Calc. Var. Partial Differential Equations \textbf{36} (2009), 173--210.

\bibitem{GMS}
M.~d.~M. Gonz\'{a}lez, R. Mazzeo, Y. Sire, \emph{Gamma convergence of an energy functional related to the fractional {L}aplacian}, Calc. Var. Partial Differential Equations \textbf{36} (2009), 173--210.

\bibitem{GQ}
M.~d.~M. Gonz\'{a}lez, J.~Qing, \emph{Fractional conformal {L}aplacians and fractional {Y}amabe problems}, Analysis and PDE \textbf{6} (2013), 1535--1576.

\bibitem{GW}
M.~d.~M. Gonz\'{a}lez, M. Wang, \emph{Further results on the fractional Yamabe problem: the umbilic case}, preprint, arXiv:1503.02862.

\bibitem{GZ}
C.~R. Graham, M.~Zworski, \emph{Scattering matrix in conformal geometry}, Invent. Math. \textbf{152} (2003), 89--118.

\bibitem{HJ}
Q. Han, X. Jiang, \emph{Boundary expansion for minimal graphs in the hyperbolic space}, preprint, arXiv:1412.7018.

\bibitem{HL}
Z.-C. Han, Y. Y. Li, \emph{The Yamabe problem on manifolds with boundary: existence and compactness results}, Duke Math. J. \textbf{99} (1999), 485--541.

\bibitem{HR}
E. Hebey, F. Robert. \emph{ Compactness and global estimates for the geometric Paneitz equation in high
dimensions.} Electron. Res. Announc. AMS \textbf{10},  (2004), 135--141.

\bibitem{HH}
E. Hebey, F. Robert, Y.L. Wen, \emph{ Compactness and global estimates for a fourth order equation of critical
Sobolev growth arising from conformal geometry.} Commun. Contemp. Math. \textbf{8}, (2006), 9--65.

\bibitem{JLX}
T. Jin, Y. Y. Li, J. Xiong, \emph{On a fractional Nirenberg problem, part I: blow up analysis and compactness of solutions}, J. Eur. Math. Soc., \textbf{16} (2014), 1111--1171.

\bibitem{JLX2}
T. Jin, Y. Y. Li, J. Xiong, \emph{On a fractional Nirenberg problem, part II: existence of solutions}, to appear in Int. Math. Res. Not., arXiv:1309.4666.

\bibitem{JLX3}
T. Jin, Y. Y. Li, J. Xiong, \emph{The Nirenberg problem and its generalizations: A unified approach}, preprint, arXiv: 1411.7543.


\bibitem{JX}
T. Jin, J. Xiong, \emph{A fractional Yamabe flow and some applications}, J. Reine Angew. Math. \textbf{696} (2014), 187--223.

\bibitem{KMS}
M. Khuri, F. Marques, R. Schoen, \emph{A compactness theorem for the Yamabe problem}, J. Differential Geom. \textbf{81} (2009), 143--196.

\bibitem{KMW}
S. Kim, M. Musso, J. Wei, \emph{A non-compactness result on the fractional Yamabe problem in large dimensions (Supplement)}, preprint.

\bibitem{LP}
J. M. Lee, T. H. Parker, \emph{The Yamabe problem}, Bull. Amer. Math. Soc. \textbf{17} (1987), 37--91.

\bibitem{Li}
G. Li, \emph{A compactness theorem on Branson's $Q$-curvature equation}, preprint, arXiv:1505.07692.

\bibitem{LX}
Y. Y. Li, J. Xiong , \emph{Compactness of conformal metrics with constant $Q$-curvature. I}, preprint, arXiv:1506.00739.

\bibitem{LZh}
Y. Y. Li, L. Zhang, \emph{Compactness of solutions to the Yamabe problem. II}, Calc. Var. Partial Differential Equations \textbf{24} (2005), 185--237.

\bibitem{LZh2}
Y. Y. Li, L. Zhang, \emph{Compactness of solutions to the Yamabe problem. III}, J. Funct. Anal. \textbf{245} (2007), 438--474.

\bibitem{LZ2}
Y. Y. Li, M. J. Zhu, \emph{Sharp Sobolev trace inequalities on Riemannian manifolds with boundaries}, Comm. Pure Appl. Math. \textbf{50} (1997), 427--465.

\bibitem{LZ}
Y. Y. Li, M. J. Zhu, \emph{Yamabe type equations on three-dimensional Riemannian manifolds}, Commum. Contemp. Math. \textbf{1} (1999), 1--50.

\bibitem{Lin}
F.-H. Lin, \emph{On the Dirichlet problem for minimal graphs in hyperbolic space}, Invent. Math., \textbf{96} (1989), 593--612.

\bibitem{LN}
C. Loewner, L. Nirenberg, \emph{Partial differential equations in variant under conformal or projective
transformations}, Contributions to Analysis. L.V. Ahlfors, et al.(eds.) NewYork: Academic Press 1974.

\bibitem{Ma}
F. C. Marques, \emph{A priori estimates for the Yamabe problem in the non-locally conformally flat case}, J. Differential Geom. \textbf{71} (2005), 315--346.

\bibitem{Ma2}
F. C. Marques, \emph{Existence results for the Yamabe problem on manifolds with boundary}, Indiana Univ. Math. J. \textbf{54} (2005), 1599--1620.

\bibitem{Ma3}
F. C. Marques, \emph{Conformal deformations to scalar-flat metris with constant mean curvature on the boundary}, Comm. Anal. Geom. \textbf{15} (2007), 381--405.

\bibitem{Mc}
H. P. McKean, \emph{An upper bound to the spectrum of $\Delta$ on a manifold of negative curvature}, J. Differential Geom. \textbf{4} (1970), 359--366.

\bibitem{MPV}
A.~M. Micheletti, A.~Pistoia,  J.~V{\'e}tois, \emph{Blow-up solutions for asymptotically critical elliptic equations on {R}iemannian manifolds}, Indiana Univ. Math. J. \textbf{58} (2009), 1719--1746.

\bibitem{Pi}
R. Piessens, \emph{The Hankel Transform}, The Transforms and Applications Handbook: Second Edition, Ed. A. D.
Poularikas, Boca Raton: CRC Press LLC, 2000.

\bibitem{PV}
A.~Pistoia, G.~Vaira, \emph{On the stability for {P}aneitz-type equations}, Int. Math. Res. Not. (2013), 3133--3158.

\bibitem{Po}
D. Pollack, \emph{Nonuniqueness and high energy solutions for a conformally invariant scalar curvature equation}, Comm. Anal. and Geom. \textbf{1} (1993), 347--414.

\bibitem{QR}
J. Qing, D. Raske, \emph{On positive solutions to semilinear conformally invariant equations on locally conformally flat manifolds}, Int. Math. Res. Not. (2006), Art. ID 94172, 20 pp.

\bibitem{Sc0}
R. M. Schoen, \emph{Conformal deformation of a Riemannian metric to constant scalar curvature}, J. Differential Geom. \textbf{20} (1984), 479--495.

\bibitem{Sc3}
R. M. Schoen, \emph{Variational theory for the total scalar curvature functional for Riemannian metrics and related topics}, Topics in Calculus of Variations, 120--154, Lecture Notes in Mathematics \textbf{1365}, Springer-Verlag, New York, 1989.

\bibitem{Sc2}
R. M. Schoen, \emph{A report on some recent progress on nonlinear problems in geometry}, Surveys in Differential Geometry (Cambridge, MA, 1990), 201--241, Lehigh Univ., Bethlehem, PA, 1991.

\bibitem{Sc1}
R. M. Schoen, \emph{On the number of constant scalar curvature metrics in a conformal class}, Differential geometry, 311--320, Pitman Monogr. Surveys Pure Appl. Math., 52, Longman Sci. Tech., Harlow, 1991.

\bibitem{SV}
R.~Servadei, E.~Valdinoci, \emph{The Brezis-Nirenberg result for the fractional Laplacian}, Trans. Amer. Math. Soc. \textbf{367} (2015), 67--102.


\bibitem{T}
J.~Tan, \emph{The {B}rezis-{N}irenberg type problem involving the square root of the {L}aplacian}, Calc. Var. Partial Differential Equations \textbf{42} (2011), 21--41.

\bibitem{TX}
J. Tan, J. Xiong, \emph{A Harnack inequality for fractional Laplace equations with lower order terms}, Discrete Contin. Dyn. Syst. \textbf{31} (2011), 975--983.

\bibitem{Tr}
N. Trudinger, \emph{Remarks concerning the conformal deformation of Riemannian structures on compact manifolds}, Ann. Scuola Norm. Sup. Pisa Cl. Sci. \textbf{22} (1968), 265--274.

\bibitem{Xi}
J. Xiao, \emph{A sharp Sobolev trace inequality for the fractional-order derivatives}, Bull. Sci. Math. \textbf{130} (2006), 87--96.

\bibitem{WY}
J. Wei, S. Yan, \emph{Infinitely many solutions for the prescribed scalar curvature problem on $\mathbb{S}^N$}, J. Funct. Anal. \textbf{258} (2010), 3048--3081.

\bibitem{WZ}
J. Wei, C. Zhao, \emph{Non-compactness of the prescribed $Q$-curvature problem in large dimensions}, Calc. Var. Partial Differential Equations \textbf{46} (2013), 123--164.

\bibitem{Ya}
H. Yamabe, \emph{On a deformation of Riemannian structures on compact manifolds}, Osaka Math. J. \textbf{12} (1960), 21--37.

\bibitem{Yan}
R. Yang, \emph{On higher order extensions for the fractional Laplacian}, preprint, arXiv:1302.4413.

\end{thebibliography}
\end{document}